\documentclass{amsart}

\usepackage[OT2,T1]{fontenc}
\usepackage[utf8]{inputenc}
\usepackage{tilmansdef}
\usepackage{tikz}
\usepackage{tikz-cd}
\usepackage{bm}
\usepackage[colorlinks=true,linkcolor=blue,citecolor=blue]{hyperref} 
\usetikzlibrary{matrix,arrows,decorations}

\usepackage{mathtools,stmaryrd}
\usepackage[backend=bibtex,style=alphabetic,doi=false,url=false,isbn=false]{biblatex}

\addbibresource{bibliography.bib}

\DeclareFontFamily{OT1}{pzc}{}
\DeclareFontShape{OT1}{pzc}{m}{it}{<-> s * [1.10] pzcmi7t}{}
\DeclareMathAlphabet{\mathpzc}{OT1}{pzc}{m}{it}



\newcommand{\IHom}{\underline{\Hom}}
\newcommand{\FgpsV}{\operatorname{Fgps}^V_{W(k)}} 
\newcommand{\FHopfc}{\operatorname{FHopf}^c_{k}} 
\newcommand{\FHopfe}{\operatorname{FHopf}^e_{k}}
\newcommand{\FHopf}{\operatorname{FHopf}_{k}} 
\newcommand{\FrM}{\operatorname{Fr}} 
\newcommand{\WtH}{\underline{\operatorname{Wt}}} 

\newcommand{\Sch}[1]{\operatorname{Sch}_{#1}}
\newcommand{\FSch}[1]{\operatorname{FSch}_{#1}}
\newcommand{\FGps}[1]{\operatorname{Fgps}_{#1}} 
\newcommand{\FGpsc}[1]{\operatorname{Fgps}^c_{#1}} 
\newcommand{\FGpse}[1]{\operatorname{Fgps}^{e}_{#1}} 
\newcommand{\AbSch}[1]{\operatorname{AbSch}_{#1}} 
\newcommand{\AbSchu}[1]{\operatorname{AbSch}^u_{#1}} 
\newcommand{\AbSchm}[1]{\operatorname{AbSch}^m_{#1}} 
\newcommand{\Hopf}[1]{\operatorname{Hopf}_{#1}}
\newcommand{\Hopfm}[1]{\operatorname{Hopf}^m_{#1}}
\newcommand{\Hopfu}[1]{\operatorname{Hopf}^u_{#1}}

\newcommand{\DMod}[1]{\operatorname{Dmod}_{#1}}
\newcommand{\DModV}[1]{\operatorname{Dmod}_{#1}^{V,\operatorname{nil}}}
\newcommand{\DDModF}[1]{\mathbb{D}\operatorname{mod}_{#1}^{F}}
\newcommand{\DDModFc}[1]{\mathbb{D}\operatorname{mod}_{#1}^{F,c}}
\newcommand{\DDModFet}[1]{\mathbb{D}\operatorname{mod}_{#1}^{F,et}}
\newcommand{\DDMod}[1]{\mathbb{D}\operatorname{mod}_{#1}}
\newcommand{\DieuP}{\operatorname{DieuP}}

\renewcommand{\ell}{\mathpzc{l}}

\DeclareMathOperator{\fin}{fin}


\newcommand{\modtensor}[2]{\rtimes}
\newcommand{\modcotensor}[2]{\hom}
\newcommand{\Spf}[1]{\operatorname{Spf}{#1}}
\newcommand{\Reg}[1]{\mathcal O_{#1}}


\newcommand{\FAlg}[1]{\operatorname{FAlg}_{#1}}
\DeclareMathOperator{\alg}{alg}
\DeclareMathOperator{\Cof}{Cof}
\DeclareMathOperator{\Bil}{Bil}
\DeclareMathOperator{\Vect}{Vect}

\DeclareMathOperator{\Frac}{Frac}
\DeclareMathOperator{\nil}{nil}

\renewcommand{\Coalg}[1]{\operatorname{Coalg}_{#1}}
\newcommand{\FCoalg}[1]{\operatorname{FCoalg}_{#1}}

\newcommand{\DieuFor}{\mathbb D^{\operatorname{f}}}

\title{Tensor products of affine and formal abelian groups}
\author{Tilman Bauer}
\address{Department of mathematics, Kungliga Tekniska H\"ogskolan\\
Lindstedtsv\"agen 25, 10044 Stockholm, Sweden}
\email{tilmanb@kth.se}
\author{Magnus Carlson}
\address{Department of mathematics, Kungliga Tekniska H\"ogskolan\\
Lindstedtsv\"agen 25, 10044 Stockholm, Sweden}
\email{macarlso@kth.se}
\date\today

\begin{document}
\maketitle
\begin{abstract}
In this paper we study tensor products of affine abelian group schemes over a perfect field $k.$ We first prove that the tensor product $G_1 \otimes G_2$ of two affine abelian group schemes $G_1,G_2$ over a perfect field $k$ exists. We then describe the multiplicative and unipotent part of the group scheme $G_1 \otimes G_2$. The multiplicative part is described in terms of Galois modules over the absolute Galois group of $k.$ We describe the unipotent part of $G_1 \otimes G_2$ explicitly, using Dieudonné theory in positive characteristic. We relate these constructions to previously studied tensor products of formal group schemes.
\end{abstract}

\section{Introduction} \label{sec:intro}

Let $\C$ be a category with finite products. The category $\Ab(\C)$ of abelian group objects in $\C$ consists of objects $A \in \C$ together with a lift of the Yoneda functor $\Hom_\C(-,A)\colon \C^\op \to \Set$ to the category $\Ab$ of abelian groups. Alternatively, $\Ab(\C)$ consists of objects $A \in \C$ with an abelian group structure $\mu\colon A \times A \to A$ with unit $\eta\colon I \to A$, where $I$ is the terminal object in $\C$, and morphisms compatible with this abelian groups structure.

On this additive category $\Ab(\C)$, it makes sense to talk about bilinear maps:

\begin{defn}
Let $\C$ be as above, and $A$, $A'$, $B \in \Ab(\C)$. Then a morphism $b\in \Hom_\C(A \times A',B)$ is called \emph{bilinear} if the induced map
\[
\Hom_\C(-,A) \times \Hom_\C(-,A') \to \Hom_\C(-,B)
\]
is a bilinear natural transformation of abelian groups. Denote by $\Bil(A,A';B)$ the set of such bilinear morphisms.

A (necessarily unique) object $A \otimes A'$ together with a bilinear morphism $a\colon A \times A \to A \otimes A'$ in $\C$ is called a \emph{tensor product} if the natural transformation
\[
\Hom_{\Ab(\C)}(A \otimes A',-) \to \Bil(A,A';-); \quad f \mapsto f \circ a, 
\]
is a natural isomorphism. 
\end{defn}

Goerss showed:

\begin{thm}[{\cite[Proposition ~5.5]{goerss:hopf-rings}}] \label{thm:goerss}
Suppose $\C$ has finite products, $\Ab(\C)$ has coequalizers and the forgetful functor $\Ab(\C) \to \C$ has a left adjoint. Then tensor products exist in $\Ab(\C)$.\qed
\end{thm}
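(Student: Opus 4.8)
The plan is to present the universal bilinear object by generators and relations, realizing the functor $B \mapsto \Bil(A,A';B)$ as an iterated coequalizer of free objects. Write $U\colon \Ab(\C)\to\C$ for the forgetful functor and $F$ for its left adjoint. Two preliminary observations drive everything. First, since any $B$ is an abelian group object, the sum of two morphisms into $B$ is again a morphism, so each hom-set $\Hom_{\Ab(\C)}(-,B)$ is naturally an abelian group and $\Ab(\C)$ is enriched in abelian groups; moreover the adjunction bijection $\Hom_{\Ab(\C)}(FX,B)\cong\Hom_\C(X,UB)$ is an \emph{isomorphism of abelian groups}, the right-hand group structure coming from $UB$ being a group object in $\C$. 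Second, $U$ preserves products, so the underlying object of $A\times A'$ is the product in $\C$, and $\Bil(A,A';B)$ is a subset of $\Hom_\C(A\times A',UB)\cong\Hom_{\Ab(\C)}(F(A\times A'),B)$.

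The free object $F(A\times A')$ thus corepresents the functor of \emph{all} morphisms $A\times A'\to UB$, and it remains to cut out the bilinear ones. I would encode additivity in the first variable by two morphisms $\phi_1,\phi_2\colon F(A\times A\times A')\to F(A\times A')$, where $\phi_1=F(\mu\times\mathrm{id})$ with $\mu\colon A\times A\to A$ the group law, and $\phi_2=F(\pi_{13})+F(\pi_{23})$, the sum formed using the group structure on maps into the group object $F(A\times A')$ and $\pi_{13},\pi_{23}\colon A\times A\times A'\to A\times A'$ the evident projections. Symmetrically, additivity in the second variable is encoded by $\psi_1,\psi_2\colon F(A\times A'\times A')\to F(A\times A')$. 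These two additivity laws are exactly bilinearity for abelian-group-valued maps (unitality and compatibility with inverses being consequences), so no further relations are needed.

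Now form the iterated coequalizer: let $c_1\colon F(A\times A')\to Q_1$ coequalize $\phi_1,\phi_2$, and let $c_2\colon Q_1\to A\otimes A'$ coequalize $c_1\psi_1,c_1\psi_2$; both exist by hypothesis. Define the bilinear map $a\colon A\times A'\to A\otimes A'$ as the morphism in $\C$ adjoint to $c_2c_1$. Chasing the universal properties of the two coequalizers identifies $\Hom_{\Ab(\C)}(A\otimes A',B)$ naturally with the set of $f\colon F(A\times A')\to B$ satisfying $f\phi_1=f\phi_2$ and $f\psi_1=f\psi_2$. Transporting this across the adjunction, and using that it is a group isomorphism, the equation $f\phi_1=f\phi_2$ becomes precisely $\bar f(a_1+a_2,a')=\bar f(a_1,a')+\bar f(a_2,a')$ for the adjoint $\bar f\colon A\times A'\to UB$, and likewise for $\psi$; hence these $f$ correspond bijectively and naturally to $\Bil(A,A';B)$, the bijection being $g\mapsto g\circ a$.

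The routine points are that the adjunction respects the group structure on hom-sets and that $\phi_2=F(\pi_{13})+F(\pi_{23})$ translates into pointwise addition under adjunction; both follow from naturality of the unit and counit. The genuinely delicate step is the bookkeeping that the iterated coequalizer imposes \emph{both} relations and no more: each coequalizer, being computed against an arbitrary test object $B$, hands back exactly the maps annihilating its relation, so composing the two universal properties yields the conjunction of the two conditions. Since $\Ab(\C)$ is assumed only to have coequalizers and not coproducts, imposing the relations one after another, rather than as a single coequalizer over a coproduct of relation objects, is what keeps the construction within the stated hypotheses.
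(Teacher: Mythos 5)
Your construction is correct: the additivity of the adjunction, the translation of the two relation pairs $\phi_1=\phi_2$, $\psi_1=\psi_2$ into additivity of $\bar f$ in each variable, and the bookkeeping that the iterated coequalizer represents exactly the conjunction of the two conditions all check out, and using successive coequalizers keeps you within the hypotheses (no coproducts needed). The paper itself gives no proof of this statement---it is quoted from Goerss with a \qed---and your generators-and-relations coequalizer presentation of the universal bilinear object is essentially the argument of the cited source, so there is nothing genuinely different to compare.
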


In this paper, we study tensor products in the category $\AbSch{k}$ of abelian group objects in affine schemes over a field $k$, that is, abelian affine group schemes, or, in short, affine groups. The category of affine groups is anti-equivalent to the category $\Hopf{k}$ of abelian (i.~e. bicommutative) Hopf algebras over $k$.

Most of our results are known for finite affine groups, that is, groups $X=\Spec{H}$ where $H$ is a finite-dimensional abelian Hopf algebra over $k$ \cite{demazure-gabriel:groupes-algebriques-1,fontaine:groupes-divisibles}. The group schemes considered here are expressly not finite.

We first show that Thm.~\ref{thm:goerss} is applicable in the cases of interest:
\begin{thm}\label{thm:existence}
Tensor products exist in $\Hopf{k}$. If $k$ is perfect, then tensor products also exist in $\AbSch{k}$.
\end{thm}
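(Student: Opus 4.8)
The plan is to establish both assertions as instances of Goerss's criterion (Theorem~\ref{thm:goerss}): in each case I would exhibit a category $\C$ with finite products whose category of abelian group objects is the one in question, check that $\Ab(\C)$ has coequalizers, and produce a left adjoint to the forgetful functor $\Ab(\C)\to\C$. The two cases differ entirely in the difficulty of this last step, and that is where the hypothesis of perfectness will be forced on us in the second case but not the first.

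For $\Hopf{k}$ I would take $\C=\Coalg{k}$, the category of cocommutative coalgebras over $k$, in which the categorical product is the tensor product $\otimes_k$ and the terminal object is $k$. An abelian group object in $\Coalg{k}$ is exactly a commutative algebra structure on a cocommutative coalgebra compatible with the comultiplication together with an antipode, i.e. a bicommutative Hopf algebra, so $\Hopf{k}=\Ab(\Coalg{k})$. The forgetful functor $\Hopf{k}\to\Coalg{k}$ has a left adjoint, the free-abelian-group-object functor: on a coalgebra $C$ it is the symmetric algebra $S(C)$, endowed with the commutative bialgebra structure obtained by extending $\Delta_C$ multiplicatively, followed by the group completion that adjoins an antipode (on the terminal coalgebra $k$ this recovers $k[t^{\pm1}]$ from $k[t]$). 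The universal property is a direct check. Finally, $\Hopf{k}$ has coequalizers because it is an abelian category, which is standard for bicommutative Hopf algebras over a field. Goerss's theorem then gives tensor products in $\Hopf{k}$ with no hypothesis on $k$.

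For $\AbSch{k}$ I would instead take $\C=\FSch{k}=\Alg_k^{\op}$, so that $\AbSch{k}=\Ab(\FSch{k})$; here finite products in $\FSch{k}$ are tensor products of algebras, with terminal object $\Spec k$. Coequalizers in $\AbSch{k}$ are equalizers in $\Hopf{k}$, which exist as the latter is abelian. The remaining input is a left adjoint to the forgetful functor $U\colon\AbSch{k}\to\FSch{k}$, namely the free abelian affine group scheme on an affine scheme; dually this is a cofree bicommutative Hopf algebra on a commutative algebra, a construction far more delicate than the symmetric-algebra adjoint of the previous case. I would obtain it from the general adjoint functor theorem: $\AbSch{k}$ is complete, having arbitrary products (formed by the tensor product of the corresponding Hopf algebras, which is their coproduct) and equalizers, and $U$ preserves all limits since the forgetful functor from group objects creates limits. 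It therefore remains only to verify the solution-set condition.

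This solution-set condition is where I expect the real difficulty to lie, and where perfectness of $k$ enters. Given $X=\Spec A$ and a morphism $f\colon X\to UG$, I would let $G_f\subseteq G$ be the smallest closed subgroup scheme through which $f$ factors, the subgroup generated by the image of $X$, and argue that for perfect $k$ the isomorphism type of $G_f$ is constrained by $A$ alone, independently of $G$: its Hopf algebra is a subquotient whose size is bounded in terms of $A$, so that only a set of such $G_f$ can occur up to isomorphism. Perfectness is what makes this generated-subgroup construction well behaved, ensuring that forming images and the subgroup generated does not meet the pathologies of Frobenius and inseparability present over imperfect fields. Granting a bounded solution set, the adjoint functor theorem produces the left adjoint, and a final appeal to Theorem~\ref{thm:goerss} yields tensor products in $\AbSch{k}$.
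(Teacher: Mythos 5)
Your proof of the first statement is correct and is essentially the paper's own argument: $\Hopf{k}=\Ab(\Coalg{k})$, coequalizers exist because $\Hopf{k}$ is abelian, the forgetful functor to $\Coalg{k}$ has a left adjoint (the free abelian Hopf algebra, which you reasonably describe as the Hopf envelope of $\Sym(C)$, recovering $k[\Z]$ on $C=k$ as in Remark~\ref{rem:formalgroupunit}), and Theorem~\ref{thm:goerss} applies. For the second statement your reduction is also sound: apart from the slip that the base category should be $\Sch{k}=\Alg_k^{\op}$ (in this paper $\FSch{k}$ denotes formal schemes), you correctly note that $\AbSch{k}$ has coequalizers, is complete, and that $U\colon\AbSch{k}\to\Sch{k}$ preserves limits, so everything hinges on the solution-set condition. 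That condition is precisely what you do not prove. The existence of the smallest closed subgroup $G_f$ through which $f\colon\Spec A\to UG$ factors is easy (the sum of all Hopf ideals of $\Reg{G}$ contained in $\ker f^{\sharp}$ is again a Hopf ideal, the largest one), but this abstract description carries no size bound; the claim that ``its Hopf algebra is a subquotient whose size is bounded in terms of $A$'' is exactly what needs proof, and none is offered. A genuine argument would, for instance, identify $\Reg{G_f}$ with $\Reg{G}/\bigcap_n K_n$, where $K_n$ is the intersection of the kernels of the algebra maps $\Reg{G}\to\Reg{G}^{\otimes n}\to A^{\otimes n}$ built from length-$n$ words in $f^{\sharp}$, $f^{\sharp}\circ S$ and $\eta_A\epsilon$; one must then check that $\bigcap_n K_n$ is a coideal, which rests on a nontrivial linear-algebra lemma, namely that for the decreasing chain $(K_n)$ one has
\[
\bigcap_{m,n}\bigl(K_m\otimes\Reg{G}+\Reg{G}\otimes K_n\bigr)=\Bigl(\bigcap_m K_m\Bigr)\otimes\Reg{G}+\Reg{G}\otimes\Bigl(\bigcap_n K_n\Bigr)
\]
(false for general families of subspaces), after which $\Reg{G_f}\hookrightarrow\prod_n\Reg{G}/K_n$ gives the required cardinality bound. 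Without something of this kind the adjoint functor theorem cannot be invoked.

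Moreover, the role you assign to perfectness is spurious. Scheme-theoretic images and generated closed subgroups of affine group schemes are unproblematic over every field; Frobenius and inseparability cause no difficulty there, and nothing in the outline above uses perfectness, so if your argument can be completed at all, it proves the existence of the free affine abelian group over an arbitrary field. (That would not contradict the paper, which asserts nothing about imperfect fields, but it shows the hypothesis cannot enter your proof where you claim it does.) In the paper, perfectness is used for a completely different and concrete reason: it yields the splitting \eqref{eq:antiequivalenceswithsplitting} of $\AbSch{k}$ into multiplicative and unipotent parts, and the cofree Hopf algebra is then constructed explicitly on each factor --- as $\bar{k}[(A\otimes_k\bar{k})^{*}]^{\Gamma}$ on the multiplicative side, via Thm.~\ref{thm:multiplicativegroupclassification} and Galois descent (Prop.~\ref{prop:galoisdescent}), and as Takeuchi's Hopf algebra of symmetric tensors $\bigoplus_{n\geq0}(A^{\otimes_k n})^{\Sigma_n}$ on the unipotent side. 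So your route is genuinely different (non-constructive where the paper is explicit), but as written it is incomplete at its central step and misdiagnoses why perfectness is assumed.
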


We will now briefly recall the classification of affine groups, formal groups, and finite group schemes. An affine formal scheme in the sense of Fontaine \cite{fontaine:groupes-divisibles} is an ind-representable functor $X\colon \alg_k \to \Set$ from finite-dimensional $k$-algebras to sets. The category $\FGps{k}$ of (affine, commutative) formal groups over $k$ consists of the abelian group objects in the category $\FSch{k}$ of formal schemes, i.~e. functors $G\colon \alg_k \to \Ab$ whose underlying functor to sets is a formal scheme. The category $\FGps{k}$ of formal groups is anti-equivalent to the category $\FHopf$ of formal Hopf algebras, i.~e. complete pro-(finite dimensional) $k$-algebras $H$ with a comultiplication $H \to H \hat\otimes_k H$ and an antipode. 

The category $\AbSch{k}$ of affine groups is anti-equivalent to the category $\FGps{k}$ of formal groups by Cartier duality \cite[\textsection I.5]{fontaine:groupes-divisibles}. Explicitly, we have the following diagram of anti-equivalences:
\begin{equation}\label{eq:antiequivalences}
\begin{tikzcd}[column sep=large]
\AbSch{k} \arrow[d,bend left=20,"\Reg{}"] \arrow[r,bend left=5,"(-)^*"] & \FGps{k} \arrow[d,bend left=20,"\Reg{}"] \arrow[l,bend left=5,"(-)^*"]\\
\Hopf{k} \arrow[u,bend left=20,"\Spec{}"]\arrow[r,bend left=5,"{\Hom_k(-,k)}"] & \FHopf \arrow[u,bend left=20,"\Spf{}"] \arrow[l,bend left=5,"{\Hom^c_k(-,k)}"]
\end{tikzcd}
\end{equation}
Here $\Hom^c_k(H,k)$ denotes the continuous $k$-linear dual. The category of finite group schemes is, in a certain sense, the intersection of $\AbSch{k}$ and $\FGps{k}$, and thus Cartier duality gives an anti-auto-equivalence $G \mapsto G^*$ of finite affine groups, which can be thought of as the internal homomorphism object $G^* = \IHom(G,\mathbb G_m)$ into the multiplicative group $\mathbb G_m = \Spec k[\Z]$.

The reason for restricting attention to perfect fields in Thm.~\ref{thm:existence} is that the category of affine groups over them is a product of the full subcategories of \emph{multiplicative} and of \emph{unipotent} groups \cite[\textsection I.7]{fontaine:groupes-divisibles}.

 An affine group is called multiplicative if it is isomorphic to a group ring $k[G]$ for some abelian group $G$, possibly after base change to the separable (and hence algebraic) closure $\overline k$ of $k$. It is unipotent if it has no multiplicative subgroups, which is equivalent to its Hopf algebra $H$ being \emph{conilpotent}: for every element $x \in H$, there is an $n \geq 0$ such that the $n$th iterated reduced comultiplication $\psi^{(n-1)}\colon \tilde H \to (\tilde H)^{\otimes n}$ on the augmentation coideal $\tilde H = H/k$ is zero. The corresponding splitting on the formal group side is into \emph{\'etale} formal groups and \emph{connected} formal groups. A formal group $G$ is \'etale if $\Reg{G}$ is a (possibly infinite) product of finite separable field extensions of $k$, and it is called connected if $\Reg{G}$ is local, or, equivalently, if $G(k') = 0$ for all finite field extensions $k'$ of $k$. The anti-equivalences of \eqref{eq:antiequivalences} thus respect these splittings into full subcategories:

\begin{equation}\label{eq:antiequivalenceswithsplitting}
\begin{tikzcd}[column sep=small]
\AbSch{k} \ar[r,"\cong"] & \AbSchm{k} \times \AbSchu{k} \arrow[d,bend left=20,"\Reg{}"] \arrow[rr,bend left=5,"(-)^*"] && \FGpse{k} \times \FGpsc{k} \arrow[d,bend left=20,"\Reg{}"] \arrow[ll,bend left=5,"(-)^*"] \arrow[r,"\cong"] & \FGps{k}\\
\Hopf{k} \ar[r,"\cong"] & \Hopfm{k} \times \Hopfu{k} \arrow[u,bend left=20,"\Spec{}"]\arrow[rr,bend left=5,"{\Hom_k(-,k)}"] && \FHopfe \times \FHopfc \arrow[u,bend left=20,"\Spf{}"] \arrow[ll,bend left=5,"{\Hom^c_k(-,k)}"] \arrow[r,"\cong"] & \FHopf.
\end{tikzcd}
\end{equation}

Finite group schemes, which are both affine and formal, thus split into four types: multiplicative-\'etale, multiplicative-connected, unipotent-\'etale, and unipotent-connected, but such a fine splitting does not generalize to infinite groups.

Theorem~\ref{thm:existence} does not give us an explicit way to compute tensor products. The main part of this paper deals with this. In order to do this, we use alternative descriptions of the above categories:

\begin{thm}[{\cite[\textsection I.7]{fontaine:groupes-divisibles}}] \label{thm:multiplicativegroupclassification}
Let $k$ be a field with absolute Galois group $\Gamma$. Then the category $\FGpse{k}$ is equivalent to the category $\Mod_{\Gamma}$ of abelian groups with a discrete $\Gamma$-action.
Concretely, the equivalence is given by
\[
\begin{tikzcd}[column sep=5cm]
\FGpse{k} \arrow[r,bend left=5,"{G \mapsto \colim_{k \subseteq k' \subseteq \bar k} G(k')}"] & \Mod_{\Gamma}. \arrow[l,bend left=5,"{\Spf\map^\Gamma(M,\bar k) \mapsfrom M}"]
\end{tikzcd}
\]
Similarily, the category of multiplicative Hopf algebras is equivalent to $\Mod_{\Gamma}$ by
\[
\begin{tikzcd}[column sep=5cm]
\Hopfm{k} \arrow[r,bend left=5,"{\overline\Gr\colon H \mapsto \Gr(H \otimes_k \bar k)}"] & \Mod_{\Gamma}, \arrow[l,bend left=5,"{\bar k[M]^{\Gamma} \mapsfrom M}"]
\end{tikzcd}
\]
where $\Gr$ denotes the functor of grouplike elements of a Hopf algebra.
\end{thm}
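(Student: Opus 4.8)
The plan is to deduce both equivalences from Grothendieck's form of Galois theory---the equivalence between finite étale $k$-algebras and finite sets carrying a continuous action of $\Gamma=\Gal(\bar k/k)$---by first treating finite objects and then passing to filtered colimits.

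For the first equivalence I would begin by checking that $G\mapsto\colim_{k'}G(k')$ lands in $\Mod_\Gamma$. Because $\Reg{G}$ is a product of finite separable extensions of $k$, the formal group $G$ is the filtered union of its finite étale subgroups, so $G(\bar k)=\colim_{k'}G(k')$ is a filtered colimit in which every element is already defined over some finite $k'$ and is therefore fixed by the open subgroup $\Gal(\bar k/k')\leq\Gamma$; hence the $\Gamma$-action is discrete. On finite étale group schemes $G\mapsto G(\bar k)$ is exactly the covariant equivalence of Grothendieck Galois theory, upgraded from $\Gamma$-sets to $\Gamma$-modules by transporting the group structure along it. Its inverse on finite modules sends $M$ to the finite étale group with coordinate ring $\map^\Gamma(M,\bar k)=(\bar k^{M})^{\Gamma}$, the Galois descent of the split algebra $\bar k^{M}$---that is, to $\Spf\map^\Gamma(M,\bar k)$. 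Since every discrete $\Gamma$-module is the filtered colimit of its finite $\Gamma$-submodules and, correspondingly, every étale formal group is the filtered union of its finite étale subgroups, the two functors extend to quasi-inverse equivalences by continuity.

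For the second equivalence the quickest route is to compose the first with the antiequivalences of \eqref{eq:antiequivalenceswithsplitting}: $\Hopfm{k}$ is antiequivalent to $\AbSchm{k}$ via $\Spec{}$, and $\AbSchm{k}$ is Cartier-antiequivalent to $\FGpse{k}$, so the composite $\Hopfm{k}\simeq\FGpse{k}\simeq\Mod_\Gamma$ is a covariant equivalence. To see that it is computed by the stated functors, recall that over the separably closed field $\bar k$ every multiplicative Hopf algebra is a group ring $\bar k[G]$, whose grouplike elements recover $G$ since distinct grouplikes are linearly independent; equivalently $\Gr(\Reg{\mathbb G})=\Hom(\mathbb G,\mathbb G_m)$ is the character group. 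Thus $\overline{\Gr}(H)=\Gr(H\otimes_k\bar k)$ is the character group of $\mathbb G_{\bar k}$ equipped with the discrete $\Gamma$-action induced by $1\otimes\gamma$ on $H\otimes_k\bar k$. Conversely $\bar k[M]$ is the Hopf algebra of the split diagonalizable group on the set $M$, on which $\Gamma$ acts diagonally---semilinearly on $\bar k$ and through the module structure on the grouplike basis $M$---so that $\bar k[M]^{\Gamma}$ is a $k$-form by Galois descent; base change recovers $\bar k[M]^{\Gamma}\otimes_k\bar k\cong\bar k[M]$, and applying $\Gr$ returns $M$ with its $\Gamma$-action, so the two functors are mutually inverse.

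The main obstacle is the passage from the finite to the infinite setting. One must verify that the ind/pro-structures on both sides match---that every object of $\FGpse{k}$ and of $\Hopfm{k}$ is canonically a filtered colimit of finite objects, that the discrete $\Gamma$-modules are precisely the filtered colimits of finite ones, and that both explicit functors commute with these colimits, so that the finite-level descent isomorphisms glue to a genuine quasi-inverse. Checking that $\bar k[M]^{\Gamma}$ remains faithfully flat (indeed étale) over $k$ and that its formation commutes with directed unions in $M$---i.e. that Galois descent is compatible with filtered colimits---is the one point needing real care; the remainder is the classical dictionary between Galois theory and diagonalizable groups.
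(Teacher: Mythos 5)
Your overall strategy (Grothendieck Galois theory on finite objects, then an ind-extension, then composing with Cartier duality and $\Spec$ for the Hopf-algebra statement) is the right family of ideas, and in fact the paper offers no proof of its own to compare against --- it cites Fontaine for this theorem. But as written your reduction has a genuine gap: it is false that every \'etale formal group is the filtered union of its finite \'etale subgroups, and equally false that every discrete $\Gamma$-module is the filtered colimit of its finite $\Gamma$-submodules. The constant formal group $\underline{\Z} = \Spf{k^\Z}$ is \'etale (its coordinate ring is a product of copies of $k$) but has no nonzero finite subgroups, and correspondingly $\Z$ with the trivial $\Gamma$-action is a discrete module whose only finite submodule is $0$. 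These are not marginal objects here: $\underline{\Z}$ is the unit of the tensor product of formal groups (Rem.~\ref{rem:formalgroupunit}) and $(\Z,0)$ is the unit in Thm.~\ref{thm:Hopftensorchar0}. So the ``continuity'' step you invoke --- gluing finite-level equivalences over filtered colimits of finite subobjects --- cannot be carried out: discreteness of a $\Gamma$-module means every \emph{orbit} is finite (stabilizers are open, hence of finite index), not that the module is locally finite.

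The repair is to run the ind-extension one level down, on underlying objects rather than on group objects. Discrete $\Gamma$-\emph{sets} genuinely are the filtered unions of their finite $\Gamma$-stable subsets (finitely many finite orbits), and \'etale formal \emph{schemes} are by definition ind-(finite \'etale $k$-schemes); hence Grothendieck Galois theory extends to an equivalence between \'etale formal schemes over $k$ and discrete $\Gamma$-sets, given on points by $X \mapsto \colim_{k'} X(k')$. This equivalence preserves finite products (finite products commute with filtered colimits on both sides), so it induces an equivalence between abelian group objects on the two sides, i.e. between $\FGpse{k}$ and $\Mod_{\Gamma}$: the group structure lives on the colimit, not on the finite pieces. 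With the first equivalence established this way, the rest of your argument does go through: composing with $\Spec$ and Cartier duality yields a covariant equivalence $\Hopfm{k} \to \Mod_{\Gamma}$, and your identification of the composite with $H \mapsto \Gr(H \otimes_k \bar k)$ (grouplikes $=$ characters $=$ $\bar k$-points of the Cartier dual) and of the inverse with $M \mapsto \bar k[M]^{\Gamma}$ via Galois descent (Prop.~\ref{prop:galoisdescent}) is correct.
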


In characteristic $0$, any unipotent Hopf algebra $H$ is generated by its primitives $PH$ and isomorphic to $\Sym(PH)$ by \cite{milnor-moore:hopf}; in particular, the functor $P$ is an equivalence of categories with the category $\Vect_k$ of vector spaces. We prove:

\begin{thm}\label{thm:Hopftensorchar0}
Let $k$ be a field of characteristic $0$ with absolute Galois group $\Gamma$. Then under the equivalence of categories $(\overline\Gr,P)\colon \Hopf{k} \xrightarrow{\sim} \Mod_\Gamma \times \Vect_k$, the tensor product is given by
\[
(M_1,V_1) \otimes (M_2,V_2) = \left(M_1 \otimes M_2,(M_1 \otimes \bar k)^{\Gamma} \otimes_k V_2 \oplus (M_2 \otimes \bar k)^{\Gamma} \otimes_k V_1 \oplus V_1 \otimes_k V_2\right)
\]
with unit $(\Z,0)$.
\end{thm}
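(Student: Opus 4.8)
The plan is to verify the formula through the universal property defining the tensor product: $H_1\otimes H_2$ corepresents $H_3\mapsto\Bil(H_1,H_2;H_3)$ on $\Hopf{k}$, so it suffices to exhibit, naturally in $H_3\leftrightarrow(M_3,V_3)$, an isomorphism
\[
\Bil(H_1,H_2;H_3)\;\cong\;\Hom_{\Mod_\Gamma}(M_1\otimes M_2,M_3)\times\Hom_{\Vect_k}\bigl(V_{12},V_3\bigr),
\]
where $V_{12}$ denotes the claimed $V$-component. I will use the characteristic-$0$ splitting $\Hopf{k}\cong\Hopfm{k}\times\Hopfu{k}$ of \eqref{eq:antiequivalenceswithsplitting}, under which $H_i=H_i^m\otimes H_i^u$ with $H_i^m$ multiplicative and $H_i^u\cong\Sym(V_i)$ unipotent by \cite{milnor-moore:hopf}. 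Since $\Bil(-,-;H_3)$ sends biproducts in either slot to products, it splits as $\prod_{a,b\in\{m,u\}}\Bil(H_1^a,H_2^b;H_3)$, reducing the problem to identifying the four pieces $H_1^a\otimes H_2^b$ separately. Throughout I compute over $\bar k$ and descend, using that a bilinear map over $k$ is the same as a $\Gamma$-invariant bilinear map after base change, i.e.\ $\Bil_k=(\Bil_{\bar k})^\Gamma$.

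The two pure pieces are routine. A bilinear map out of multiplicative Hopf algebras is detected on, and lands in, grouplikes, so over $\bar k$ the piece $\Bil(H_1^m,H_2^m;H_3)$ consists of unital biadditive maps $M_1\times M_2\to\Gr(H_3)$, i.e.\ $\Hom(M_1\otimes_\Z M_2,\Gr(H_3))$; taking $\Gamma$-invariants and applying Thm~\ref{thm:multiplicativegroupclassification} identifies this piece with the Galois module $M_1\otimes M_2$ and shows it contributes no primitives. For the unipotent--unipotent piece, $H_1^u\otimes H_2^u\cong\Sym(V_1\oplus V_2)$, and the bihomomorphism and unit conditions force a bilinear coalgebra map into $H_3$ to be bihomogeneous of bidegree $(1,1)$, hence to be classified by a $k$-linear map $V_1\otimes_k V_2\to P(H_3)$; this yields the summand $V_1\otimes_k V_2$ of $V_{12}$.

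The cross terms carry the real content. Over $\bar k$ consider a bilinear $b\colon \bar k[M_1]\otimes\Sym(\bar V_2)\to H_3$ with $\bar V_2=V_2\otimes_k\bar k$. The unit condition in the second variable forces $b(g\otimes 1)=1$ for every grouplike $g\in M_1$; consequently, for a primitive $p\in\bar V_2$, the element $b(g\otimes p)$ is $(1,1)$-skew-primitive, hence an honest primitive of $H_3$. Bilinearity then makes $(g,p)\mapsto b(g\otimes p)$ additive in $g\in M_1$ and $\bar k$-linear in $p$, and the bihomomorphism property reconstructs $b$ on all of $\bar k[M_1]\otimes\Sym(\bar V_2)$ from these values. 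Thus over $\bar k$ this piece is classified by $\Hom_{\bar k}\bigl((M_1\otimes_\Z\bar k)\otimes_{\bar k}\bar V_2,\,P(H_3)\bigr)$. Passing to $\Gamma$-invariants and using the identification $(M_1\otimes_\Z\bar k)\otimes_{\bar k}\bar V_2\cong(M_1\otimes\bar k)\otimes_k V_2$, with $\Gamma$ acting trivially on the factor $V_2$, yields the summand $(M_1\otimes\bar k)^\Gamma\otimes_k V_2$; the symmetric computation gives $(M_2\otimes\bar k)^\Gamma\otimes_k V_1$.

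Assembling the four contributions produces exactly the stated $V_{12}$ together with $M$-component $M_1\otimes M_2$, and naturality in $H_3$ is immediate from the constructions, so the corepresenting object is the claimed one. The unit assertion is a direct check: $(\Z,0)$ corresponds to $\bar k[\Z]^\Gamma=\Reg{\mathbb{G}_m}$, and the formula returns $(\Z,0)\otimes(M,V)=(M,(\Z\otimes\bar k)^\Gamma\otimes_k V)=(M,V)$ since $(\Z\otimes\bar k)^\Gamma=\bar k^\Gamma=k$. I expect the main obstacle to be the cross-term analysis: showing that grouplike-times-primitive elements land in genuine primitives—which hinges on the unit condition collapsing the skew-primitive base point to $1$—and that Galois descent converts $(M_1\otimes\bar k)\otimes_{\bar k}\bar V_2$ into the invariant form $(M_1\otimes\bar k)^\Gamma\otimes_k V_2$; the pure pieces and the unit check are comparatively formal.
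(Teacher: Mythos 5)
Your proof is correct and follows essentially the same route as the paper: split $\Hopf{k}$ into multiplicative and unipotent parts, compute the pieces over $\bar k$ (obtaining $\bar k[M_1\otimes M_2]$ for mult--mult, $\Sym(V_1\otimes V_2)$ for unip--unip, and $\Sym\bigl((M_i\otimes\bar k)\otimes V_j\bigr)$ for the cross terms), and then descend to $k$ by Galois equivariance as in Prop.~\ref{prop:galoisdescent}. The only difference is one of detail: where the paper reads off each piece directly from the equivalences $(\overline\Gr,P)$ and the $\Sym$/primitives correspondence, you verify corepresentability against an arbitrary target $H_3$ by tracking where grouplikes and primitives go under bilinear maps (in particular that the unit condition forces $b(g\otimes 1)=1$, making $b(g\otimes p)$ honestly primitive) --- which is precisely the justification the paper leaves implicit in its ``follows immediately'' steps.
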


\begin{thm}\label{thm:affinetensorchar0}
Let $k$ be a field of characteristic $0$ with absolute Galois group $\Gamma$. Then under the contravariant equivalence of categories $\AbSch{k} \simeq \Mod_\Gamma \times \Vect_k$, the tensor product is given by
\[
(M_1,V_1) \otimes (M_2,V_2) = (\Tor^\Z(M_1,M_2),V_1 \otimes V_2).
\]
\end{thm}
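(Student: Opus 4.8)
The plan is to exploit the product decomposition $\AbSch{k}\cong\AbSchm{k}\times\AbSchu{k}$ of \eqref{eq:antiequivalenceswithsplitting} together with the biadditivity of the tensor product. Writing $A=A_m\times A_u$ and $A'=A'_m\times A'_u$, biadditivity gives
\[
A\otimes A'\cong(A_m\otimes A'_m)\oplus(A_m\otimes A'_u)\oplus(A_u\otimes A'_m)\oplus(A_u\otimes A'_u),
\]
so it suffices to treat the factors separately and match them against the claimed formula: the mixed terms should vanish, the unipotent--unipotent term should produce $V_1\otimes_k V_2$, and the multiplicative--multiplicative term should produce $\Tor^\Z(M_1,M_2)$. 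I would first record that $\Hom_{\AbSch{k}}$ between a multiplicative and a unipotent group vanishes in either direction, since the image of a homomorphism out of a multiplicative (resp.\ unipotent) group is again multiplicative (resp.\ unipotent), and the two subcategories are orthogonal.

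For the vanishing of the mixed terms I would show directly that $\Bil(A_m,A'_u;B)=0$ for every $B$. Decomposing $B=B_m\times B_u$ and a bilinear map $b=(b_m,b_u)$ accordingly, fixing the first argument makes $b_u(a,-)\colon A'_u\to B_u$ a homomorphism, while fixing the second makes $b_u(-,a')\colon A_m\to B_u$ a homomorphism from a multiplicative to a unipotent group, hence zero; therefore $b_u=0$, and symmetrically $b_m=0$ using $\Hom(\text{unipotent},\text{multiplicative})=0$. Thus $A_m\otimes A'_u=0=A_u\otimes A'_m$. For the unipotent--unipotent term I would use that in characteristic $0$, by Milnor--Moore, a unipotent group is a vector group $\Spec\Sym(V)$ with $V=P(\Reg{A})$, and compute the universal bilinear map of vector groups as the tensor of points $(x,y)\mapsto x\otimes y$; this is represented by the vector group attached to $V_1\otimes_k V_2$, yielding the $\Vect_k$-component of the formula.

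The main work is the multiplicative--multiplicative term, and this is where I expect the real obstacle. Here I would reduce to $\bar k$ by Galois descent, so that the problem becomes computing the Goerss tensor product on the abelian category $\AbSchm{\bar k}\cong\Ab^{\op}$ of diagonalizable groups $D(M)=\Spec\bar k[M]$. Since $-\otimes D(M_2)$ is a left adjoint, it is right exact and commutes with filtered colimits, reducing the computation to finitely generated and then to cyclic $M_1$. The decisive base cases are $\mathbb G_m\otimes\mathbb G_m=0$ and $\mathbb G_m\otimes\mu_n=0$---both because any bilinear map out of the connected group $\mathbb G_m$ into a multiplicative target is trivial---together with the finite pairing $\mu_n\otimes\mu_m\cong\mu_{\gcd(n,m)}$, which I would extract either from the known classification of tensor products of finite group schemes under Cartier duality or directly from the Kummer sequence $0\to\mu_n\to\mathbb G_m\xrightarrow{[n]}\mathbb G_m\to0$. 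Matching these values against $\Tor^\Z(\Z,\Z)=0$, $\Tor^\Z(\Z,\Z/n)=0$, and $\Tor^\Z(\Z/n,\Z/m)=\Z/\gcd(n,m)$ identifies the bifunctor with $\Tor^\Z$ on generators; the crux is then to upgrade this to a natural, $\Gamma$-equivariant isomorphism $D(M_1)\otimes D(M_2)\cong D(\Tor^\Z(M_1,M_2))$ compatible with filtered colimits, so that it descends to the stated formula over $k$. The appearance of the derived functor $\Tor^\Z$---rather than $\otimes_\Z$, as on the Hopf side in Theorem~\ref{thm:Hopftensorchar0}---is forced by the contravariance of $\Reg{}$, which turns the right-exact tensor product into a left-exact functor vanishing on the tori $D(\text{free})$; the hardest point is controlling the infinite multiplicative groups, where the tori contribute exactly the vanishing that is invisible in the finite theory.
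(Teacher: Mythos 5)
Your skeleton---splitting $A=A_m\times A_u$ via \eqref{eq:antiequivalenceswithsplitting}, killing the mixed terms, computing the unipotent--unipotent part as $\Sym(V_1\otimes V_2)$, and isolating the multiplicative--multiplicative term as the real content---is exactly the paper's, and your unipotent--unipotent computation is correct and essentially the same as \eqref{eq:unipotentgrouptensorchar0}. But the two mechanisms you propose for the vanishing statements and for the multiplicative term both fail as stated, and they fail for the same reason: you are missing the rigidity input that drives Prop.~\ref{prop:multiplicativetensor}.

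First, the orthogonality you invoke is false in the direction you need it. Bilinearity must be tested on $R$-points for \emph{every} $k$-algebra $R$, so ``fixing an argument'' produces homomorphisms of group schemes over $R$, not over $k$. Over a base the direction multiplicative $\to$ unipotent does vanish (this is \cite[Exposé XII, Lemme 4.4.1]{Gille:SGA3}, and in characteristic $0$ every unipotent group is a product of copies of $\mathbb G_a$), so your argument does prove $b_u=0$. But $\Hom_R(\text{unipotent},\text{multiplicative})$ is \emph{not} zero over non-reduced $R$: over $R=k[\epsilon]/(\epsilon^2)$ the map $x\mapsto 1+\epsilon x$ is a nontrivial homomorphism $\mathbb G_{a,R}\to\mathbb G_{m,R}$. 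Hence your proof that $b_m=0$ collapses, and the same flaw undermines the (implicitly needed) claim that $A_u\otimes A'_u$ has no multiplicative part, as well as your base cases $\mathbb G_m\otimes\mathbb G_m=0=\mathbb G_m\otimes\mu_n$, whose stated justification (``any bilinear map out of a connected group into a multiplicative target is trivial'') is true but not by connectedness alone. What actually closes all of these gaps is the paper's argument: for $K$ multiplicative, $\Bil(G_1,G_2;K)\cong\Hom_{\AbSch{k}}(G_1,\IHom(G_2,K))$, and $\IHom(G_2,K)$ is (pro-)\'etale by \cite[Exposé VIII, Corollaire 1.5]{Gille:SGA3} plus fpqc descent; a morphism from $G_1$ to an \'etale group kills the identity component, whence all your vanishings. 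Pointwise orthogonality is not a substitute for this.

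Second, the reduction of the multiplicative--multiplicative term to finitely generated and cyclic modules has a variance error. Under the contravariant equivalence $\AbSchm{\bar k}\simeq\Ab^{\op}$, writing $M_1=\colim_i M_1^{(i)}$ as a filtered colimit of finitely generated submodules exhibits $\Spec\bar k[M_1]$ as a cofiltered \emph{limit} $\varprojlim_i\Spec\bar k[M_1^{(i)}]$ of group schemes; a left adjoint preserves colimits of group schemes, not such limits, so ``commutes with filtered colimits'' gives you nothing here. Making this reduction legitimate is precisely the content of Prop.~\ref{prop:multiplicativetensor} together with Lemma~\ref{lemma:pi0ofmultiplicative}: the \'etaleness of $\IHom$ turns bilinear maps into \emph{continuous} homomorphisms out of the profinite group $\hat\pi_0(G_1)(\bar k)\cong\varprojlim_{M'}\Hom(M',\bar k^\times)$ ($M'$ the finitely generated torsion submodules), and continuity from a profinite source to a discrete target converts that limit back into a colimit, yielding $\colim_{M'}\Hom(\Hom(M',\Q/\Z),M_2)\cong\Tor^\Z(M_1,M_2)$ by a chain of \emph{natural} isomorphisms---which also settles the naturality problem you leave open as ``the crux.'' Two smaller points: the Kummer sequence cannot produce $\mu_n\otimes\mu_m$ by right exactness, since $\mu_n$ enters it as a kernel and a right exact functor gives no information about kernels (over $\bar k$ in characteristic $0$ one should instead note $\mu_n\cong\underline{\smash{\Z/n\Z}}$ is constant and compute bilinear maps on points); and the compatibility of $\otimes$ with base change $k\to\bar k$ and the descent back to $k$ are asserted but never proved, whereas the paper avoids this entirely by working with Galois modules over $k$ throughout.
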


Note that this tensor product does not have a unit since $(\Z,0) \times (M,V) = 0$ for all $(M,V)$ (the pair $(\Z,0)$ corresponds to the multiplicative group $\mathbb G_m$). 

If $k$ is a perfect field of characteristic $p>0$, Dieudonné theory gives an equivalence of the category of affine groups over $k$ with modules over the ring
\[
\mathcal R = W(k)\langle F,V\rangle/(FV-p,VF-p,\phi(a)F-Fa,aV-V\phi(a)) \quad (a \in W(k)),
\]
where $W(k)$ is the ring of $p$-typical Witt vectors of $k$, $\langle F,V \rangle$ denotes the free noncommutative algebra generated by two indeterminates $F$ and $V$ (called Frobenius and Verschiebung), and $\phi\colon W(k) \to W(k)$ is the Witt vector Frobenius, a lift to $W(k)$ of the $p$th power map on $k$. We denote the subring generated by $W(k)$ and $F$ by $\mathcal F$, and the subring generated by $W(k)$ and $V$ by $\mathcal V$.

Although not intrinsically necessary, it will be convenient for the formulation of our results to restrict attention to \emph{$p$-adic affine groups} over $k$, that is, affine groups $G$ that are isomorphic to $\lim_n G/p^n$. 

The following theorem is essentially due to \cite{demazure-gabriel:groupes-algebriques-1}:
\begin{thm}
Let $\DMod{k}^p$ denote the full subcategory of left $\mathcal R$-modules consisting of those $M$ such that every $x \in M$ is contained in a $\mathcal V$-submodule of finite length. Then there is an anti-equivalence of categories
\[
D\colon \{\text{$p$-adic affine groups over $k$}\} \to  \DMod{k}^p.
\]
\end{thm}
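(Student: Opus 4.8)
The plan is to build the functor $D$ out of the classical Dieudonné correspondence for finite group schemes and to extend it to all $p$-adic affine groups by a limit argument, organising everything along the multiplicative/unipotent splitting. I would start from the contravariant equivalence $D_0$ between finite commutative $k$-group schemes of $p$-power order and $\mathcal R$-modules of finite $W(k)$-length (\cite{demazure-gabriel:groupes-algebriques-1}), under which the Frobenius and Verschiebung of a group scheme act as the operators $F$ and $V$. Since $k$ is perfect, the source splits as $\AbSch{k} \cong \AbSchm{k} \times \AbSchu{k}$, and correspondingly every $M \in \DMod{k}^p$ splits $\mathcal R$-linearly as $M = M_{\mathrm{nil}} \oplus M_{\mathrm{bij}}$ according to whether $V$ acts locally nilpotently or bijectively: this is the Fitting decomposition on each finite-length $\mathcal V$-submodule, and it is stable under $F$ because $V^{m}F = pV^{m-1}$, so that $V^{n}x = 0$ forces $V^{n+1}(Fx) = 0$. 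It therefore suffices to match unipotent $p$-adic groups with the $V$-nilpotent objects of $\DMod{k}^p$ and multiplicative $p$-adic groups with the $V$-bijective ones.

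For the unipotent half I would take $D(G) = \Hom_{\AbSch{k}}(G, CW)$, where $CW$ is the ind-group scheme of Witt covectors with its commuting actions of $F$, $V$ and $W(k)$; on finite $G$ this recovers $D_0(G)$. As $CW$ is ind-finite, $\Hom(-,CW)$ carries cofiltered limits of affine groups to filtered colimits of modules, so for a pro-finite unipotent group $G = \lim_\alpha G_\alpha$ it exhibits $D(G)$ as the filtered colimit of the finite-length modules $D_0(G_\alpha)$; in particular every element lies in a finite-length $\mathcal V$-submodule, so $D(G) \in \DMod{k}^p$, and essential surjectivity and full faithfulness follow from $D_0$ by passing to the (co)limit. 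For the multiplicative half I would transport Theorem~\ref{thm:multiplicativegroupclassification}: a multiplicative $p$-adic group corresponds to a $p$-power-torsion discrete $\Gamma$-module, and I would identify these with the $V$-bijective locally finite $\mathcal R$-modules, compatibly with $D_0$ on finite groups such as $\mu_{p^n}$ (where $V$ is an isomorphism).

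The main obstacle is the passage to infinite groups that are \emph{not} pro-finite, the cleanest instance being $\mathbb{G}_a$: it is $p$-adic yet has no nontrivial finite quotients, and $D(\mathbb{G}_a) \cong \mathcal R/\mathcal R V \cong k[F]$, so $V = 0$, $F$ acts injectively, and the only finite-length $\mathcal R$-submodule is $0$. For such groups the presentation of $D(G)$ is genuinely a union of $\mathcal V$-submodules that are not $\mathcal R$-submodules, so the pro-finite reduction of the previous paragraph does not apply; instead one must check directly that $\Hom(-,CW)$ is exact on the groups involved and, above all, that the condition $G \cong \lim_n G/p^n$ matches $\mathcal V$-local finiteness of $D(G)$ \emph{exactly}. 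Pinning down this last equivalence, and verifying that the unipotent/multiplicative splitting is respected on both sides, is where the real work lies; the finite-length input is then quoted verbatim from \cite{demazure-gabriel:groupes-algebriques-1}.
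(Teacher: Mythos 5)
Your proposal contains a genuine gap, and it is exactly the one you flag at the end: the strategy of bootstrapping from the finite-group correspondence $D_0$ by (co)limits can only ever reach \emph{pro-finite} groups, while the theorem is about all $p$-adic affine groups. Since every unipotent affine group is $p$-adic (because $p=VF$ acts locally nilpotently), the theorem includes $\mathbb{G}_a$, the Witt groups $W_n$, and every other nontrivial smooth connected unipotent group; as you correctly observe, these have no nontrivial finite quotients, and their Dieudonn\'e modules (e.g.\ $D(\mathbb{G}_a)\cong k[F]$ with $V=0$) contain no nonzero finite-length $\mathcal R$-submodules, so they are invisible to any filtered-colimit extension of $D_0$. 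Your final paragraph does not close this hole; it only names it (``where the real work lies''). What you have is therefore a reduction of the theorem to its hardest case --- full faithfulness and essential surjectivity of $D$ on non-pro-finite unipotent groups --- not a proof of it.

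The paper avoids this problem by choosing a stronger classical input. It does not start from the finite correspondence at all: on the unipotent side it quotes Demazure--Gabriel's anti-equivalence $D(G)=\colim_n\Hom_{\AbSch{k}}(G,W^n_k)$ between \emph{all} unipotent affine groups and $\DModV{k}$ (Thm.~\ref{thm:dieudonneforaffinegroups}, \cite[V.1.4, Th\'eor\`eme 4.3]{demazure-gabriel:groupes-algebriques-1}; see also \cite[Ch.~III, \S 6]{demazure:pdivisiblegroups}), so no extension argument is needed there. The only step the paper actually proves is on the multiplicative side: a $p$-adic multiplicative affine group is an inverse limit of \emph{finite} multiplicative groups of $p$-power order, because a finitely generated $p$-power-torsion character module is finite; after that, the finite multiplicative case of \cite{demazure:pdivisiblegroups} applies. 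Your handling of the multiplicative half via Thm.~\ref{thm:multiplicativegroupclassification} amounts to this same reduction and is fine. To repair your argument, replace $D_0$ on the unipotent side by the full Demazure--Gabriel theorem, at which point the pro-finite limit machinery of your second paragraph becomes unnecessary.
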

Under this equivalence, multiplicative groups correspond to $\mathcal R$-modules where $V$ acts as an isomorphism, while unipotent groups correspond to those modules where $V$ acts nilpotently.
\begin{defn}
Let $K,\; L\in \DMod{k}$, and write $K * L$ for the $\mathcal F$-module $\Tor^{W(k)}_1(K,L)$ with the diagonal $F$-action.

Define a symmetric monoidal structure $\boxast$ on $\DMod{k}$ by
\begin{align*}
K \boxast L & \subseteq \Hom_{\mathcal F}(\mathcal R,K * L);\\
K \boxast L & = \Biggl\{f\colon \mathcal R \to K * L \Biggm| \begin{array}{c}(1 * F)f(Vr) = (V * 1)f(r)\\ (F * 1)f(Vr) = (1 * V)f(r) \end{array}\Biggr\}.
\end{align*}
We let $ K \boxast^u L \subset K \boxast L$ be the maximal unipotent submodule of $K \boxast L,$ i.e. the submodule consisting of those $x \in K \boxast L$ such that $V^nx = 0$ for some $n>0.$ It is clear that $\boxast^u$ also defines a symmetric monoidal structure on $\DMod{k}.$ 
\end{defn}
We will not state the full formula for the tensor product of two affine group schemes in all its intricate glory in this introduction (Cor.~\ref{cor:tensorcharp} for the impatient reader). We will content ourselves with giving some special cases of the formula for $G_1 \otimes G_2.$ 

Recall that if $G$ is a finite type group scheme, then $\pi_0(G)$ is the group scheme such that $\Reg{\pi_0(G)}$ is the maximal étale subalgebra of $\Reg{G}.$ When $G$ is not of finite type, it is still the filtered limit of its finite type quotient groups $G'$, and we define $\hat\pi_0(G) = \lim \pi_0(G')$ to be the corresponding pro-\'etale group. 

\begin{prop}
Let $k$ be perfect of arbitrary characteristic, and let $G_1$, $G_2$ be two affine groups over $k$ with $G_2$ of multiplicative type, i.~e. $G_2 \cong \Spec \bar k[M]^\Gamma$ for some $M \in \Mod_\Gamma$. Then $G_1 \otimes G_2$ is multiplicative, and
\[
\Reg{G_1} \boxast \Reg{G_2} \cong \bar k[\Hom^c(\hat\pi_0(G_1)(\bar{k}),M)]^{\Gamma}
\]
where $\Hom^c$ denotes continuous homomorphisms of abelian groups into the discrete module $M$, and $\Gamma$ acts by conjugation on $\Hom^c(\hat\pi_0(G_1)(\bar{k}),M).$ 
\end{prop}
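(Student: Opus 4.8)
The plan is to run the defining universal property through the internal $\Hom$, turning bilinear maps into ordinary homomorphisms, and to carry out the computation over the separable closure $\bar k$ before descending by Galois invariance. Throughout I identify a multiplicative group over $\bar k$ with its character group, so that $G_2 \otimes \bar k$ becomes the diagonalizable group $\Spec \bar k[M]$, and recall the anti-equivalence $N \mapsto \Spec \bar k[N]$ between abelian groups and diagonalizable groups, under which $\Hom(\Spec \bar k[P], \Spec \bar k[N]) \cong \Hom(N,P)$.

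First I would show that $G_1 \otimes G_2$ is multiplicative. Since $k$ is perfect, it suffices to kill the unipotent part, i.e.\ to prove $\Bil(G_1,G_2;U) = 0$ for every unipotent $U$. Any bilinear map $G_1 \times G_2 \to U$ induces, for each $R$ and each point of $G_1(R)$, a homomorphism $G_{2,R} \to U_R$ from a group of multiplicative type to a unipotent group; such homomorphisms vanish, even after base change, because multiplicative and unipotent types are preserved by base change over a perfect field and remain orthogonal. Hence the internal hom $\IHom(G_2,U)$ is trivial, $\Bil(G_1,G_2;U)\cong\Hom(G_1,\IHom(G_2,U))=0$, and $G_1 \otimes G_2$ is multiplicative. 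It is therefore determined by the functor $B \mapsto \Hom(G_1 \otimes G_2, B) = \Bil(G_1,G_2;B)$ on multiplicative test objects $B$.

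Next, writing $B \otimes \bar k = \Spec \bar k[N]$, I would compute over $\bar k$ via $\Hom(G_1 \otimes G_2, B) \cong \Hom(G_1, \IHom(G_2, B))$. The key geometric input is that the internal hom $\IHom(\Spec \bar k[M], \Spec \bar k[N])$ of two diagonalizable groups is the constant, hence étale, group scheme $\underline{\Hom(N,M)}$, since homomorphisms of diagonalizable groups over a connected base are locally constant. Because the target is étale, every homomorphism out of $G_1$ factors through the maximal pro-étale quotient $\hat\pi_0(G_1)$, giving
\[
\Hom(G_1, \IHom(G_2,B)) \cong \Hom^c\bigl(\hat\pi_0(G_1)(\bar k),\, \Hom(N,M)\bigr),
\]
continuous homomorphisms from the profinite group $\hat\pi_0(G_1)(\bar k)$ into discrete $\Hom(N,M)$. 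The tensor–hom adjunction for profinite/discrete abelian groups then yields
\[
\Hom^c\bigl(\hat\pi_0(G_1)(\bar k),\, \Hom(N,M)\bigr) \cong \Hom\bigl(N,\, \Hom^c(\hat\pi_0(G_1)(\bar k), M)\bigr),
\]
both sides being the continuous-in-$\hat\pi_0(G_1)(\bar k)$ biadditive maps $\hat\pi_0(G_1)(\bar k) \times N \to M$. Comparing with $\Hom(\Spec \bar k[P], \Spec \bar k[N]) = \Hom(N,P)$ and invoking Yoneda on multiplicative groups, $G_1 \otimes G_2$ has character group $P = \Hom^c(\hat\pi_0(G_1)(\bar k), M)$ over $\bar k$, i.e.\ coordinate ring $\bar k[\Hom^c(\hat\pi_0(G_1)(\bar k), M)]$.

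To descend to $k$, I would carry out the above $\Gamma$-equivariantly: $M$ is a discrete $\Gamma$-module, $\hat\pi_0(G_1)(\bar k)$ carries its natural $\Gamma$-action, and $\Gamma$ acts on $\Hom^c(\hat\pi_0(G_1)(\bar k), M)$ by conjugation, so that the coordinate ring over $k$ is the invariant ring $\bar k[\Hom^c(\hat\pi_0(G_1)(\bar k), M)]^{\Gamma}$ under the equivalence $\Hopfm{k} \simeq \Mod_\Gamma$. The main obstacle is the internal-hom computation for groups that need not be of finite type: one must verify that $\IHom(G_2, B)$ is representable by the constant group scheme $\underline{\Hom(N,M)}$ and, more delicately, that homomorphisms from a general pro-finite-type $G_1$ into it are exactly the continuous homomorphisms of the profinite group $\hat\pi_0(G_1)(\bar k)$, so that the limit structure defining $\hat\pi_0$ and its profinite topology are correctly matched against the discrete topology on $M$ in the adjunction above.
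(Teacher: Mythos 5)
Your route is the same as the paper's (orthogonality with unipotent groups, then $\Hom(G_1\otimes G_2,B)\cong\Hom(G_1,\IHom(G_2,B))$, internal homs of diagonalizable groups, factorization through $\hat\pi_0(G_1)$, an adjunction, Yoneda, Galois equivariance), but two of your intermediate claims are false in the generality in which you assert them, namely for multiplicative test objects $B=\Spec\bar k[N]$ that are \emph{not of finite type}. First, for $N$ not finitely generated, $\IHom(\Spec\bar k[M],\Spec\bar k[N])$ is not the constant group scheme $\underline{\Hom(N,M)}$: writing $N=\bigoplus_i\Z$, so that $\Spec\bar k[N]=\prod_i\mathbb G_m$, the internal hom is $\prod_i\IHom(\Spec\bar k[M],\mathbb G_m)\cong\prod_i\underline{M}$, and an infinite product of constant group schemes is strictly bigger than the constant scheme on the product group (they agree on connected test rings, which is why "locally constant over a connected base" does not identify the sheaf; they differ already over $\Spec$ of the ring of continuous functions $\Z_p\to\bar k$). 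Second, the adjunction $\Hom^c\bigl(P,\Hom(N,M)\bigr)\cong\Hom\bigl(N,\Hom^c(P,M)\bigr)$ with $P=\hat\pi_0(G_1)(\bar k)$ also fails for such $N$: an element of the left side is a biadditive map $P\times N\to M$ killed by $U\times N$ for a \emph{single} open subgroup $U\le P$ (a homomorphism into a discrete group is continuous iff its kernel is open), while the right side only requires each $b(-,n)$ to be continuous separately. Concretely, for $P=\Z_p$, $M=\Q_p/\Z_p$, $N=\bigoplus_{i\geq 1}\Z$, the left side is $\bigcup_m\prod_i p^{-m}\Z_p/\Z_p$ while the right side is $\prod_i\Q_p/\Z_p$. (The two errors happen to compensate: maps from $G_1$ into the correct internal hom $\prod_i\underline M$ are exactly $\Hom(N,\Hom^c(P,M))$; but both of your displayed identifications are wrong as stated, so the proof as written breaks at these steps rather than at the points you flag in your final paragraph.)

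The missing idea is exactly the reduction the paper makes at the start of its computation: "Let $K$ be a multiplicative group, without loss of generality of finite type." For finitely generated $N$ both of your claims are correct: $\underline{\Hom(N,M)}$ is the kernel of a map between \emph{finite} powers of $\underline M$, hence genuinely a constant (ind-finite étale) group scheme, and for a biadditive map $b$ the intersection of the finitely many open subgroups $\ker b(-,n_j)$, $n_j$ running over a generating set, is open, which gives the adjunction. The restriction is harmless because every $\Gamma$-module is a filtered colimit of finitely generated ones, so a multiplicative affine group is determined by the restriction of its Yoneda functor to finite-type multiplicative test objects; your concluding appeal to "Yoneda on multiplicative groups" must accordingly be run over this subcategory. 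With these repairs your argument coincides with the paper's proof.
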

The formula for $G_1 \otimes G_2$ when both $G_1$ and $G_2$ are unipotent is quite involved. The tensor product of two unipotent group schemes does \emph{not} need to be unipotent. The following Theorem gives a formula for the unipotent part of $G_1 \otimes G_2$ (for the full formula, we again refer to Cor.~\ref{cor:tensorcharp}) 
\begin{thm}
Let $G_1,G_2$ be unipotent groups over a perfect field $k$ of positive characteristic. Then the unipotent part of $D(G_1 \otimes G_2)$ is isomorphic to $$D(G_1) \boxast^u D(G_2).$$ 
\end{thm}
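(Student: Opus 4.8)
The plan is to prove the statement by combining the universal property of $\otimes$ with the Dieudonné anti-equivalence $D$, thereby reducing everything to a computation of bilinear maps. Since $D$ is a contravariant equivalence, for any affine group $B$ we have $\Hom_{\AbSch{k}}(G_1\otimes G_2,B)\cong\Hom_{\DMod{k}}(D(B),D(G_1\otimes G_2))$, and by the defining universal property of the tensor product the left-hand side is naturally identified with $\Bil(G_1,G_2;B)$. When $B$ is unipotent, $D(B)$ is a module on which $V$ acts nilpotently, so for any $\RR$-linear $f$ out of $D(B)$ and any $x$ with $V^nx=0$ one has $V^nf(x)=f(V^nx)=0$; hence the image of $f$ lands in the maximal unipotent submodule, and
\[
\Bil(G_1,G_2;B)\cong\Hom_{\DMod{k}}\bigl(D(B),D(G_1\otimes G_2)^u\bigr)
\]
for all unipotent $B$. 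It therefore suffices to produce a natural isomorphism $\Bil(G_1,G_2;B)\cong\Hom_{\DMod{k}}(D(B),D(G_1)\boxast^u D(G_2))$ for unipotent $B$: both $D(G_1\otimes G_2)^u$ and $D(G_1)\boxast^u D(G_2)$ are unipotent and then represent the same functor on the full subcategory of unipotent Dieudonné modules, so they are isomorphic by Yoneda.

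To compute the left-hand side I would use the standard presentation $D(G)\cong\colim_n\Hom_{\AbSch{k}}(G,W_n)$ of the Dieudonné module as a colimit of homomorphisms into the length-$n$ Witt vector groups $W_n$, which is precisely what supplies the $\RR$-module structure through the operators $F$ and $V$ on Witt vectors. Post-composing a bilinear map $G_1\times G_2\to B$ with homomorphisms $B\to W_n$ reduces the problem to classifying bilinear maps into the $W_n$, i.e.\ to understanding the universal biadditive target. Here the ring multiplication $W\times W\to W$ furnishes the universal bilinear pairing, and its interaction with Frobenius and Verschiebung is governed by the Witt identities $F(xy)=Fx\,Fy$ together with the projection formulas $V(x)\cdot y=V(x\cdot Fy)$ and $x\cdot V(y)=V(Fx\cdot y)$. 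These are precisely the shadow, on Dieudonné modules, of the two defining relations of $\boxast$, namely $(1*F)f(Vr)=(V*1)f(r)$ and $(F*1)f(Vr)=(1*V)f(r)$.

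The appearance of $\Tor^{W(k)}_1$ in $K*L$ is forced by the relation $FV=VF=p$: a pairing that is $F$-equivariant for the diagonal Frobenius and simultaneously compatible with $V$ cannot be read off the naive tensor product $K\otimes_{W(k)}L$, because Frobenius-semilinearity together with $p$-divisibility obstructs biadditivity exactly in degree one, and the derived term $\Tor^{W(k)}_1(K,L)$ with its diagonal $F$-action --- the $\mathcal F$-module $K*L$ --- is what carries the honest pairing. I would then recognise $\Hom_{\mathcal F}(\RR,K*L)$ as the coinduction of this $\mathcal F$-module to an $\RR$-module along the inclusion $\mathcal F\hookrightarrow\RR$ (the right adjoint to restriction, using the $(\mathcal F,\RR)$-bimodule structure on $\RR$), which is the algebraic incarnation of extending a Frobenius-linear pairing to a datum recording the full $V$-structure; cutting this down by the two projection-formula relations yields exactly $K\boxast L$, and passing to maximal unipotent submodules yields $K\boxast^u L$ and matches the unipotent target $B$. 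Technically I would first establish the isomorphism for finite unipotent $G_1,G_2$, where Cartier duality and the finite-group-scheme theory of \cite{demazure-gabriel:groupes-algebriques-1} are available, and then pass to general unipotent groups by writing $G_i=\varprojlim G_i^{(\alpha)}$ as filtered limits of finite unipotent quotients, checking that both $\otimes$ and $\boxast^u$ convert these into the corresponding colimits of Dieudonné modules.

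The main obstacle I anticipate is the core computation of the second paragraph: proving that bilinear maps into Witt vectors are classified precisely by $\mathcal F$-linear maps $\RR\to D(G_1)*D(G_2)$ subject to the two $V$-compatibilities, including a clean explanation of why $\Tor^{W(k)}_1$ rather than $\otimes_{W(k)}$ intervenes and a verification that the Witt projection formulas translate verbatim into the relations defining $\boxast$. Alongside this, keeping the limit/colimit bookkeeping honest --- tracking the finiteness condition defining $\DMod{k}^p$ and the continuity of the relevant $\Hom$-groups through the reduction to finite groups --- is where the genuine work lies; once the biadditive classification is in place, the identification of the unipotent parts and the final Yoneda argument are essentially formal.
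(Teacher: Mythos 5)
Your formal skeleton is correct, and it is genuinely different from the paper's route: you work contravariantly on the affine side, reducing via the universal property of $\otimes$ and the anti-equivalence $D$ to the statement that $\colim_n \Bil(G_1,G_2;W^n_k)$, with its $\mathcal{R}$-action by post-composition, is isomorphic to $D(G_1)\boxast^u D(G_2)$, after which Yoneda on $\DModV{k}$ finishes the argument. The problem is that this one statement \emph{is} the theorem --- all of its mathematical content sits there --- and you do not prove it; you explicitly defer it as ``the main obstacle,'' offering only heuristics (the Witt projection formulas, the coinduction reading of $\Hom_{\mathcal F}(\mathcal R,-)$, and the claim that ``Frobenius-semilinearity together with $p$-divisibility obstructs biadditivity exactly in degree one,'' which is not an argument for why $\Tor_1^{W(k)}$ replaces $\otimes_{W(k)}$). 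Note that the paper never proves such a classification directly on the affine side at all: it deduces the theorem by Cartier duality from the monoidality of the connected formal Dieudonn\'e functor (Prop.~\ref{prop:Dieudonnemonoidal}, via Thm.~\ref{thm:DieudonneCovariant}), which itself rests on the Verschiebung-lift machinery over $W(k)$ (Thms.~\ref{thm:indecomposableequiv} and \ref{thm:indecomposablesdieudonne}), on the universal pairing $\iota$ with its ghost-component formula and convergence (Prop.~\ref{prop:HLiota}, Lemma~\ref{lemma:polyconverge}), and finally on the compatibility of Cartier and Matlis duality (Prop.~\ref{prop:DieudonneCommuteCartierUni}) together with the monoidality of Matlis duality (Prop.~\ref{prop:matlismonoidal}) --- the last item being exactly where $\Tor_1$ enters, as the Matlis dual of the completed tensor product. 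A direct affine-side proof along your lines would have to rebuild substitutes for essentially all of this (in particular a Witt-multiplication analogue of $\iota$ and a convergence argument for it); none of that is supplied.

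There is also a concrete step that fails: your passage from finite to general unipotent groups. A unipotent affine group over $k$ is a filtered limit of \emph{finite-type} unipotent quotients, but not in general of \emph{finite} ones: $\mathbb{G}_a$ is the basic counterexample, since $k[x]$ is a domain and hence its only finite-dimensional sub-Hopf algebra is $k$, so $\mathbb{G}_a$ is not an inverse limit of finite group schemes (equivalently, $D(\mathbb{G}_a)\cong k[F]$ with $V=0$ contains no nonzero finite-length $\mathcal{R}$-submodule). So writing $G_i=\varprojlim G_i^{(\alpha)}$ with finite unipotent $G_i^{(\alpha)}$ is impossible in general, and your induction never reaches cases like $G_1=G_2=\mathbb{G}_a$. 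Repairing this by using finite-type quotients removes the crutch you wanted (the finite-group-scheme duality theory of Demazure--Gabriel), and in either version the commutation of both $\otimes$ and $\boxast^u$ with the relevant filtered limits and colimits is a statement needing proof --- the affine analogue of the paper's Lemma~\ref{lemma:bilinearfactors} and Cor.~\ref{cor:étalefactors} --- which you flag in passing but do not establish.
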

In \cite{Hedayatzadeh:Exterior}, Hedayatzadeh studies tensor products of finite $p$-torsion group schemes over a perfect field $k$ of characteristic $p>0.$ He shows that the tensor product $G_1 \otimes G_2$ of two finite $p$-torsion group schemes $G_1, G_2$ exists and is a group scheme that is an inverse limit of finite $p$-torsion group schemes. A monoidal structure $\boxtimes$ is then defined on the category of Dieudonné modules, which is the same as the monoidal structure defined by Goerss in \cite{goerss:hopf-rings}. Let $D^*$ be the covariant Dieudonné functor, which takes a finite $p$-torsion group scheme $G$ to $D(G^*),$ where $G^*$ is the Cartier dual. Hedayatzadeh then shows that if $G_1$ and $G_2$ are two finite $p$-torsion group schemes, then $D^*(G_1 \otimes G_2)$ is naturally isomorphic to $ D^*(G_1) \boxtimes D^*(G_2),$ that is, $D^*$ is monoidal. Our results generalize his results on tensor products of finite $p$-torsion group schemes to all affine group schemes over a perfect field $k.$ The methods we use are different from the ones of Hedayatzadeh's, we work more in the spirit of \cite{goerss:hopf-rings}.
\section{Tensor and cotensor products} \label{sec:tensor-cotensor}

In this section, we will show that tensor products exist in the category of affine groups and in the category of abelian Hopf algebras over perfect fields $k$. 

We can apply Theorem~\ref{thm:goerss} to show the first part of Theorem~\ref{thm:existence}:

\begin{thm}\label{thm:tensorproductsofformalgroupsexist}
Let $k$ be a field. Then tensor products exist in the categories $\Hopf{k}$ of abelian Hopf algebras and $\FGps{k}$ of formal groups.
\end{thm}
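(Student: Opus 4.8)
The plan is to verify the three hypotheses of Goerss's criterion (Theorem~\ref{thm:goerss}) for the categories $\Hopf{k}$ and $\FGps{k}$, after which the conclusion is immediate. Since these two categories are anti-equivalent to each other and to $\AbSch{k}$ and $\FHopf$ via the diagram \eqref{eq:antiequivalences}, it suffices to check the hypotheses on whichever side is most convenient; I would work primarily with Hopf algebras. The three things to establish are: that the relevant ambient category $\C$ has finite products, that $\Ab(\C)$ has coequalizers, and that the forgetful functor $\Ab(\C)\to\C$ admits a left adjoint (a free functor).

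First I would set up the correct $\C$ so that $\Ab(\C)$ really is $\Hopf{k}$ (respectively $\FGps{k}$). For Hopf algebras, the natural choice is to take $\C$ to be the category of cocommutative coalgebras over $k$ (equivalently, affine schemes in the formal-dual picture), whose abelian group objects are exactly the bicommutative Hopf algebras. The finite-product hypothesis on $\C$ then amounts to the existence of finite products of coalgebras, which is just the tensor product of coalgebras $C\otimes_k D$ serving as the categorical product; this is standard and I would cite it rather than reprove it. The analogous statement on the formal side uses that $\FSch{k}$ has finite products, which follows from the ind-representable structure and the completed tensor product $\hat\otimes_k$.

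The second hypothesis, that $\Ab(\C)$ has coequalizers, I would reduce to cocompleteness facts about Hopf algebras: $\Hopf{k}$ is a category of bicommutative Hopf algebras and, being describable as algebras for an appropriate monad (or as models of a Lawvere-type structure in coalgebras), it inherits colimits, in particular coequalizers. Concretely, the coequalizer of two Hopf-algebra maps can be built as the coequalizer of the underlying coalgebra maps equipped with the induced Hopf structure; one checks this quotient remains a bicommutative Hopf algebra. I expect this step to require the most care, since colimits of Hopf algebras (unlike limits) are subtle, and the formal-group side involves completions that do not always commute with colimits — so I would either argue directly in $\Hopf{k}$ and transport the result across the anti-equivalence, or invoke a general monadicity/accessibility result guaranteeing cocompleteness.

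The third hypothesis, existence of a left adjoint to the forgetful functor $\Ab(\C)\to\C$, is the construction of the \emph{free bicommutative Hopf algebra} on a coalgebra. Here I would use the symmetric-algebra-type construction: the free commutative-and-cocommutative Hopf algebra on a coalgebra $C$ is obtained by taking the free commutative algebra (symmetric algebra) on the underlying vector space carrying the comultiplication, or more precisely by a cofree/free adjunction built levelwise and then group-completing to obtain an antipode. The left adjoint property follows formally once the construction is in place. With all three hypotheses verified, Theorem~\ref{thm:goerss} yields tensor products in $\Ab(\C)=\Hopf{k}$, and the same argument (or transport along the anti-equivalence of \eqref{eq:antiequivalences}) gives them in $\FGps{k}$. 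The main obstacle, as noted, is the coequalizer/cocompleteness step; the finite-products and free-functor steps are comparatively routine.
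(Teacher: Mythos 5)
Your architecture is the same as the paper's: apply Theorem~\ref{thm:goerss} with $\C=\Coalg{k}$ (finite products being $\otimes_k$), checking that $\Hopf{k}$ has coequalizers and that the forgetful functor $\Hopf{k}\to\Coalg{k}$ has a left adjoint, and then handle $\FGps{k}$ by relating $\FSch{k}$ to $\Coalg{k}$. However, two of your justifications would fail as written. First, your ``concrete'' construction of coequalizers is false: colimits of cocommutative coalgebras are computed on underlying vector spaces, so the coequalizer in $\Coalg{k}$ of two Hopf maps $f,g\colon H_1\to H_2$ is $H_2/\operatorname{span}\{f(x)-g(x)\}$, and this subspace is a coideal but in general \emph{not} an ideal, so no Hopf structure descends to it. For example, take $f\colon k[x]\hookrightarrow k[x,y]$ the inclusion of primitively generated Hopf algebras and $g$ the trivial map $\eta\circ\epsilon$: the relevant subspace is $\operatorname{span}\{x^n : n\geq 1\}$, which does not contain $xy$, so multiplication is not well defined on the coalgebra quotient; the coequalizer in $\Hopf{k}$ is instead the quotient by the \emph{ideal} $(x)$, namely $k[y]$. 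What is true, and what the paper invokes, is simply that $\Hopf{k}$ is cocomplete -- e.g.\ because bicommutative Hopf algebras over a field form an abelian category (Takeuchi), or because $\Hopf{k}$ is the category of abelian group objects in the locally presentable category $\Coalg{k}$ and is therefore itself locally presentable. So your fallback via a general accessibility argument is the viable route; the levelwise recipe is not, and ``one checks this quotient remains a Hopf algebra'' is precisely the step that breaks.

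Second, your parenthetical proposal to obtain tensor products in $\FGps{k}$ by ``transport along the anti-equivalence of \eqref{eq:antiequivalences}'' does not work: under an anti-equivalence, an object classifying bilinear maps is carried to an object classifying \emph{cobilinear} maps (a cotensor product), not a tensor product. This distinction is not pedantic in this paper -- it is exactly why the affine case $\AbSch{k}$ (which is anti-equivalent to $\Hopf{k}$) requires a separate argument with a \emph{cofree} Hopf algebra functor and the hypothesis that $k$ be perfect. What makes the formal-group case immediate is the \emph{covariant} equivalence $\FSch{k}\simeq\Coalg{k}$ given by the fundamental theorem of coalgebras ($C\mapsto\Spf{}$ of the pro-finite-dimensional dual), under which $\FGps{k}\simeq\Hopf{k}$ and the two existence statements become literally the same instance of Goerss's theorem. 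Your alternative phrasing ``the same argument on the formal side'' can be made to work, but it should be routed through this covariant equivalence rather than any anti-equivalence.
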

\begin{proof}
The category $\Hopf{k}$ has all colimits, in particular coequalizers, and the forgetful functor $\Hopf{k} \to \Coalg{k}$ from abelian Hopf algebras to coalgebras has a left adjoint, the free abelian Hopf algebra functor. By Theorem~\ref{thm:goerss}, the tensor product exists. 

The category $\FSch{k}$ of affine formal schemes is equivalent with the category $\Coalg{k}$ of cocommutative $k$-coalgebras by the fundamental theorem on coalgebras over a field \cite{sweedler:hopf-algebras}: any coalgebra $C$ is the union of the directed set of its finite-dimensional sub-coalgebras $C_i$, and the functor $C \mapsto \Spf{\Hom(C_i,k)_i}$ gives the desired equivalence. Since abelian group objects in $\FSch{k}$ are precisely formal groups, tensor products also exist in $\FGps{k}$.
\end{proof}

\begin{remark}\label{rem:formalgroupunit}
The unit object in $\Hopf{k}$ is the free Hopf algebra on the coalgebra $k$, which is the group ring $k[\Z]$. Hence in formal groups, the unit is the constant formal group $\underline{\Z} = \Spf{k^\Z}$.
\end{remark}

Dealing with the case of affine groups is not quite as straightforward. Although the abelian group objects in $\Sch{k}$ and in $\Coalg{k}$ are in both cases the abelian Hopf algebras, the tensor products are not the same; rather, they are dual to each other. The tensor product of affine groups can be thought of as classifying \emph{cobilinear} maps of Hopf algebras.

To construct tensor products, we need to show that for a perfect field $k,$ there exists a free affine group functor, i.e a left adjoint to the forgetful functor $\AbSch{k} \rightarrow \Sch{k}$ to the category of affine schemes. To construct this functor, recall (e.g. from  \cite[Theorem 14.83]{GortzWedhorn:AG}):
\begin{prop}\label{prop:galoisdescent}
For any field $k$ with absolute Galois group $\Gamma$, extension of scalars from the category $\Alg_k$ to the category $\Alg_{\bar k,\Gamma}$ of $\bar k$-algebras with a continuous semilinear $\Gamma$-action, is an equivalence. The inverse is given by $\Gamma$-fixed points. \qed
\end{prop}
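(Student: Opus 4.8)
The plan is to establish the equivalence in two stages: first for a finite Galois extension, where it is classical faithfully flat descent, and then to bootstrap up to the separable closure, using the continuity hypothesis to reduce every statement to a finite level. Throughout, write $E(B) = B \otimes_k \bar k$ for extension of scalars (with $\Gamma$ acting through the right tensor factor) and $F(A) = A^\Gamma$ for the fixed-point functor. These are adjoint, with $E$ left adjoint to $F$: a $k$-algebra map $B \to A^\Gamma$ extends uniquely to a $\Gamma$-equivariant $\bar k$-algebra map $B \otimes_k \bar k \to A$, and conversely any such equivariant map restricts on $B \otimes 1$ (which is $\Gamma$-fixed) to a map landing in $A^\Gamma$. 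Hence it suffices to show that the unit $B \to (B \otimes_k \bar k)^\Gamma$ and the counit $A^\Gamma \otimes_k \bar k \to A$ are isomorphisms.

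First I would treat a finite Galois extension $k'/k$ with group $G = \Gal(k'/k)$, replacing $\bar k$ by $k'$ and $\Gamma$ by $G$; here the $G$-action is automatically continuous. The key algebraic input is the isomorphism of $k'$-algebras $k' \otimes_k k' \xrightarrow{\sim} \prod_{g \in G} k'$ sending $a \otimes b$ to $(a\, g(b))_{g \in G}$, which is exactly the assertion that $k'/k$ is Galois. Since $k \to k'$ is faithfully flat, faithfully flat descent identifies $k$-modules with $k'$-modules equipped with descent data along $k' \otimes_k k'$, and under the displayed isomorphism a descent datum is precisely a semilinear $G$-action; this upgrades to algebras since base change is symmetric monoidal. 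Concretely, the counit $A^G \otimes_k k' \to A$ is an isomorphism by the vanishing of $H^1(G, \operatorname{GL}_n(k'))$ (a form of Hilbert's Theorem 90), while the unit $B \to (B \otimes_k k')^G$ is an isomorphism because $B$ is flat and $(k')^G = k$, so $(B \otimes_k k')^G = B \otimes_k (k')^G = B$.

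Next I would pass to the separable closure by writing $\bar k = \colim_{k'} k'$ as the filtered colimit of its finite Galois subextensions, with $\Gamma = \lim_{k'} \Gal(k'/k)$. The continuity hypothesis is exactly what makes the reduction work: for a $\bar k$-algebra $A$ with continuous semilinear $\Gamma$-action, every element has open stabilizer, so $A = \bigcup_N A^N = \colim_N A^N$ over the open normal subgroups $N$ of $\Gamma$, where each $A^N$ is a $\bar k^N$-algebra carrying a semilinear action of the finite group $\Gamma/N = \Gal(\bar k^N/k)$. Applying the finite case to each level, and using $A^\Gamma = (A^N)^{\Gamma/N}$ together with the fact that extension of scalars commutes with filtered colimits, the counit $A^\Gamma \otimes_k \bar k \to A$ is the colimit of the finite-level counits, hence an isomorphism. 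Dually $B \otimes_k \bar k = \colim_{k'}(B \otimes_k k')$, and since $\Gamma$-invariants commute with this filtered colimit, the unit $B \to (B \otimes_k \bar k)^\Gamma$ is the colimit of the finite-level units, hence an isomorphism as well.

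The hard part will be the finite Galois descent step, and within it the counit isomorphism $A^G \otimes_k k' \xrightarrow{\sim} A$: this is the genuinely nontrivial descent statement, resting on the triviality of $H^1(G,\operatorname{GL}_n(k'))$, whereas the unit is a formal flatness computation. By contrast, everything in the passage to $\bar k$ is bookkeeping, provided one is careful that the continuity condition is invoked precisely to express $A$ as the union of its finite-level invariants and to commute fixed points past the relevant filtered colimits.
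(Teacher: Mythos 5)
Your proof is correct, but there is nothing in the paper to compare it against: the authors do not prove Prop.~\ref{prop:galoisdescent} at all, they quote it from \cite[Theorem~14.83]{GortzWedhorn:AG} and record it as known. Your two-stage argument --- set up the adjunction and reduce to unit/counit being isomorphisms; handle a finite Galois extension $k'/k$ by classical descent, using $k' \otimes_k k' \cong \prod_{g \in G} k'$ and Hilbert~90; then bootstrap to $\bar k$ by using continuity to write every object as a filtered union of finite levels --- is the standard proof and is essentially what the cited reference does, so it is a legitimate substitute for the citation, with the virtue that the role of the continuity hypothesis is made completely explicit (it is exactly what lets you exhaust $A$ by the subalgebras $A^N$ and commute fixed points past the colimits). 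Two small points should be tightened. First, the appeal to $H^1(G,\mathrm{GL}_n(k'))=1$ literally covers only $A$ finite-dimensional over $k'$; since $G$ is finite, every element of $A$ lies in the finite-dimensional $G$-stable $k'$-subspace spanned by its $G$-orbit, so $A$ is a filtered union of finite-dimensional semilinear representations and the counit isomorphism $A^G \otimes_k k' \to A$ follows in general by passing to the colimit (alternatively, Speiser's theorem via linear independence of characters gives it in arbitrary dimension at once). Second, the commutation of $\Gamma$-invariants with your filtered colimits is not formal (invariants are a limit); it holds here because the transition maps are injective, so the colimits are honest unions, and because on each stage ($B \otimes_k k'$, respectively $A^N$) the $\Gamma$-action factors through the finite quotient $\Gal(k'/k)$, respectively $\Gamma/N$, so $\Gamma$-fixed means fixed at a finite level. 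With those two remarks inserted, your argument is complete.
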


We can now show the existence of tensor products of affine groups: 
\begin{proof}[Proof of Thm.~\ref{thm:existence}]
The category $\AbSch{k}$ of affine groups has coequalizers since the category $\Hopf{k}$ has equalizers, and the existence of tensor products of affine groups follows from Theorem ~\ref{thm:goerss} if we can show that there is a cofree abelian Hopf algebra functor on commutative $k$-algebras. By \eqref{eq:antiequivalenceswithsplitting}, it is enough to construct a cofree multiplicative Hopf algebra functor and a cofree unipotent Hopf algebra functor separately.

Given a $k$-algebra $A$, the absolute Galois group $\Gamma$ acts on the group ring $\bar{k}[(A \otimes \bar{k})^*]$, and the cofree multiplicative Hopf algebra on $A$ is given by the fixed points of this action. Indeed, the category of multiplicative groups over $k$ is equivalent with the category of modules with a continuous action of $\Gamma$. This equivalence together with ~\ref{prop:galoisdescent} easily implies our statement.

The cofree unipotent Hopf algebra on a $k$-algebra $A$ was first constructed by Takeuchi \cite[Prop.~1.5.7]{takeuchi:tangent-coalgebras}. This is the maximal cocommutative sub-Hopf algebra of the cofree non-cocommutative unipotent Hopf algebra, which was later constructed in \cite{newman-radford:cofree}. The latter is, as a vector space, the tensor algebra $C(A)=\bigoplus_{n\geq0} A^{\otimes_k n}$, which obtains a comultiplication by splitting up tensors in all possible ways, and inherits a multiplication from $A$. The \emph{cocommutative} unipotent Hopf algebra, then, is the sub-Hopf algebra of symmetric tensors $\bigoplus_{n\geq 0} (A^{\otimes_k n})^{\Sigma_n}$.
\end{proof}

\begin{example}\label{ex:cofreeonk}
The cofree cocommutative Hopf algebra on the algebra $k$ is given as follows. Its multiplicative part is $\bar{k}[\bar{k}^\ast]^{\Gamma}$. In characteristic~$0$, its unipotent part is the primitively generated Hopf algebra $k[x]$, while in characteristic~$p$, it is the unique Hopf algebras structure on $k[b_{(0)},b_{(1)},\dots]/(b_{(i)}^p-b_{(i)})$ with $b_{(0)}$ primitive and the Verschiebung acting by $V(b_{(i+1)})=b_{(i)}$ for $i \geq 0$. Thus, as opposed to the case of formal schemes (Rem.~\ref{rem:formalgroupunit}), this basic free object is neither connected nor \`etale.
\end{example}

\section{Hopf algebras and formal groups in characteristic zero} \label{sec:char0}

Throughout this section, let $k$ be a field of characteristic $0$ with algebraic closure $\bar k$ and absolute Galois group $\Gamma$. The aim of this section is to describe the tensor products of (formal) group schemes over $k$ explicitly and constructively.

By \eqref{eq:antiequivalenceswithsplitting}, any Hopf algebra over $k$ splits as a product $H^m \otimes H^u$ of a Hopf algebra of multiplicative type and a unipotent Hopf algebra. The functor of primitives $P\colon \Hopfu{k} \to \Vect_k$ is an equivalence with inverse the symmetric algebra functor $\Sym$. It follows immediately that the tensor product $H_1 \boxtimes H_2$ of \emph{unipotent} Hopf algebras is isomorphic to $\Sym(PH_1 \otimes PH_2)$.

The situation for Hopf algebras of multiplicative type is a bit muddier. Let us first assume that $k$ is algebraically closed. Then by Thm.~\ref{thm:multiplicativegroupclassification}, every Hopf algebra $H$ of multiplicative type is the group ring of its grouplike elements: $H=k[\Gr(H)]$. Thus the tensor product of multiplicative Hopf algebras is given by $H_1 \boxtimes H_2 \cong k[\Gr(H_1) \otimes \Gr(H_2)]$.

If $H_1 \cong k[A]$ is of multiplicative type and $H_2 = \Sym(V)$ is unipotent then there is an isomorphism of Hopf algebras
\[
k[A] \boxtimes \Sym(V) \to \Sym(A \otimes V); \quad [a] \otimes v \mapsto a \otimes v \text{ for } a \in A,\; v \in V.
\]
This proves Thm.~\ref{thm:Hopftensorchar0} when $k=\bar k$. For a general $k$, the equivalence
\[
\Gr \times P \colon \Hopf{\bar k} \to \Ab \times \Vect_{\bar k}
\]
is $\Gamma$-equivariant and thus gives an equivalence
\[
\Hopf{\bar k,\Gamma} \to \Mod_{\Gamma} \times \Vect_{\bar k,\Gamma} \simeq_{\text{Prop.~\ref{prop:galoisdescent}}} \Mod_{\Gamma} \times \Vect_k
\]
and Thm.~\ref{thm:Hopftensorchar0} follows.

\begin{example}
Let $H_1 = \Q[x]$ be the unipotent Hopf algebra primitively generated by a variable $x$, and $H_2 = \Q[x,y]/(x^2+y^2-1)$ the Hopf algebra with comultiplication given by $x \mapsto x \otimes x - y \otimes y$ and $y \mapsto y \otimes x - x \otimes y$. Then $H_2$ is multiplicative; in fact, the map
\[
H_2 \to \Q[i][t,t^{-1}], \quad x \mapsto \frac12 (t+t^{-1}), \quad y \mapsto \frac12 (it-it^{-1})
\]
is an isomorphism of Hopf algebras between $H_2$ and $\Q[i][t^{\pm 1}]^{C_2} = \bar \Q[t^{\pm 1}]^{\Gamma_\Q}$, where $C_2$ acts by sending a Laurent polynomial $p(t)$ to $\overline{p(t^{-1})}$. This shows that the Galois module corresponding to $H_2$ is $\Z^\sigma$, the abelian group $\Z$ with $\Gamma_\Q$ acting nontrivially through its quotient $C_2$.

The above results imply:
\begin{itemize}
\item $H_1 \boxtimes H_1 \cong \Sym(k\langle x \rangle) \boxtimes \Sym(k \langle x \rangle) = \Sym(k \langle x \otimes x \rangle) \cong H_1$;
\item $H_2 \boxtimes H_2$ corresponds to the Galois module $\Z^\sigma \otimes \Z^\sigma \cong \Z$ and thus $H_2 \boxtimes H_2 \cong k[\Z]$;
\item $H_1 \boxtimes H_2$ is the unipotent Hopf algebra associated with the module $(\Z^{\sigma} \otimes \Q[i])^{C_2} = \Q[i]^{C_2} = \Q\langle i \rangle$ since $C_2$ acts by negated complex conjugation. Thus $H_1 \boxtimes H_2 \cong H_1$.
\end{itemize}
\end{example}

By the equivalence of the categories of coalgebras and formal schemes, we can rephrase Theorem ~\ref{thm:Hopftensorchar0} as follows:
\begin{corollary}\label{cor:fschtensorchar0}
There is an equivalence of symmetric monoidal categories
\[
\FGps{k} \to \Mod_{\Gamma} \times \Vect_k
\]
given by $G \mapsto \left(\Hom_{\FGps{k}}(\underline \Z,G_{\bar k}), \Hom_{\FGps{k}}(\hat{\mathbb G}_a,G)\right)$, with the monoidal structure on the target category as in Thm~\ref{thm:Hopftensorchar0}.
\end{corollary}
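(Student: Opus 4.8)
The plan is to deduce the corollary from Thm.~\ref{thm:Hopftensorchar0} by transporting it along the equivalence $\FGps{k} \simeq \Hopf{k}$. Recall from the proof of Thm.~\ref{thm:tensorproductsofformalgroupsexist} that the fundamental theorem of coalgebras yields an equivalence $\FSch{k} \simeq \Coalg{k}$ of categories with finite products. Passing to abelian group objects gives a covariant equivalence $\FGps{k} = \Ab(\FSch{k}) \simeq \Ab(\Coalg{k}) = \Hopf{k}$. Since the tensor product on each side is defined purely in terms of (co)bilinear maps, a construction that depends only on the underlying finite-product category, this equivalence is automatically symmetric monoidal. It thus remains to identify the two component functors appearing in the statement with $\overline\Gr$ and $P$ under this equivalence.

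First I would corepresent $\Gr$ and $P$ in $\Hopf{k}$. A Hopf algebra map $k[\Z] \to H$ is precisely a grouplike element of $H$, so $\Gr(H) = \Hom_{\Hopf{k}}(k[\Z], H)$; since $k[\Z]$ corresponds to the unit $\underline\Z$ (Rem.~\ref{rem:formalgroupunit}), the functor $\Gr$ transports to $\Hom_{\FGps{k}}(\underline\Z, -)$. Likewise, a Hopf algebra map out of the primitively generated polynomial Hopf algebra $k[x]$ is the same as a primitive element, so $P(H) = \Hom_{\Hopf{k}}(k[x], H)$, and I would check that $k[x]$ corresponds to $\hat{\mathbb G}_a$ under $\Coalg{k} \simeq \FSch{k}$: the finite-dimensional subcoalgebras of $k[x]$ dualize to the truncations of the divided-power (equivalently, in characteristic $0$, polynomial) algebra, whose formal spectrum is $\hat{\mathbb G}_a$. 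Hence $P$ transports to $\Hom_{\FGps{k}}(\hat{\mathbb G}_a, -)$, the $k$-vector space structure coming from the identification $\Hom_{\FGps{k}}(\hat{\mathbb G}_a, \hat{\mathbb G}_a) = k$.

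To recover $\overline\Gr$ rather than $\Gr$, I would fold in the base change: $\overline\Gr(H) = \Gr(H \otimes_k \bar k) = \Hom_{\Hopf{\bar k}}(\bar k[\Z], H \otimes_k \bar k)$, which under the equivalence becomes $\Hom_{\FGps{k}}(\underline\Z, G_{\bar k})$ equipped with the natural $\Gamma$-action induced by the Galois action on the factor $\bar k$; this is exactly the Galois-module component of the statement. Combining these identifications with the monoidality established above, the symmetric monoidal structure on $\Mod_\Gamma \times \Vect_k$ of Thm.~\ref{thm:Hopftensorchar0} is precisely the one transported along $\FGps{k} \simeq \Hopf{k}$, which proves the corollary.

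The main obstacle is the explicit identification of the corepresenting formal groups, especially verifying that the primitively generated Hopf algebra $k[x]$ corresponds to $\hat{\mathbb G}_a$ under the coalgebra/formal-scheme duality and that the $\Gamma$-action coming from base change agrees with the one on $\overline\Gr$. Although conceptually clean, the monoidality of the equivalence also warrants a careful check that the products in $\Coalg{k}$ (tensor products of coalgebras) correspond to products of formal schemes, so that (co)bilinear maps match on the two sides.
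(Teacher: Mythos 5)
Your proposal is correct and takes essentially the same route as the paper, whose entire proof is the one-line remark that Thm.~\ref{thm:Hopftensorchar0} can be transported along the equivalence of $\Coalg{k}$ with $\FSch{k}$ (hence of $\Hopf{k}$ with $\FGps{k}$), exactly your strategy. Your additional verifications---that $\Gr$ and $P$ are corepresented by $k[\Z]$ and the primitively generated $k[x]$, which correspond to $\underline\Z$ and $\hat{\mathbb G}_a$ under the equivalence, and that the base-change Galois action recovers $\overline\Gr$---are correct and merely supply details the paper leaves implicit.
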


\section{Tensor products of multiplicative affine groups}

Given two affine groups $G_1$, $G_2$ over a field $k$, denote by $\boxast$ the operation on $k$-Hopf algebras which satisfies
\[
\Reg{G_1 \otimes G_2} \cong \Reg{G_1} \boxast \Reg{G_2}.
\]
We will now describe this operation explicitly for tensor products with at least one factor of multiplicative type, we make use of the following:

\begin{defn} \label{def:pi0}
Let $G$ be an affine group over any perfect field $k$. The \emph{pro-\'etale group of connected components} $\hat\pi_0(G)$ of $G$ is the profinite group $\lim_{G'} \pi_0(G')$, where $G'$ ranges through all quotients of $G$ of finite type, and $\pi_0(G')$ denotes the group of connected components of the algebraic group $G'$ \cite[\textsection 5.i]{milne:algebraic-groups}, i.~e. the initial \'etale group under $G'$.
\end{defn}
Note that $\hat\pi_0(G)(\bar{k}) = \lim_{i \in I } \pi_0(G_i)(\bar{k})$ is a profinite $\Gamma$-module, which is finite if $G$ is of finite type.

\begin{prop} \label{prop:multiplicativetensor}
Let $k$ be perfect of arbitrary characteristic, and let $G_1$, $G_2$ be two affine groups over $k$ with $G_2$ of multiplicative type, i.~e. $G_2 \cong \Spec \bar k[M]^\Gamma$ for some $M \in \Mod_\Gamma$. Then $G_1 \otimes G_2$ is multiplicative, and
\[
\Reg{G_1} \boxast \Reg{G_2} \cong \bar k[\Hom^c(\hat\pi_0(G_1)(\bar{k}),M)]^{\Gamma}
\]
where $\Hom^c$ denotes continuous homomorphisms of abelian groups into the discrete module $M$, and $\Gamma$ acts by conjugation on $\Hom^c(\pi_0(G_1)(\bar{k}),M).$ 
\end{prop}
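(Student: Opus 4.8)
The plan is to compute everything through the universal property of the tensor product and reduce to the theory of diagonalizable groups. Since $G_1 \otimes G_2$ is again an affine group, it is determined by the functor $\Hom_{\AbSch{k}}(G_1 \otimes G_2, -) \cong \Bil(G_1,G_2;-)$, so to identify it as a multiplicative group I only need to evaluate this functor on $\mathbb G_m$ (to read off the character group) and on unipotent targets (to see that there is no unipotent part). I would first reduce to $k = \bar k$: by Prop.~\ref{prop:galoisdescent} all the constructions involved ($\otimes$, $\hat\pi_0$, $\Hom^c$, and passage to grouplikes) commute with the base change $\bar k / k$ and carry compatible semilinear $\Gamma$-actions, so it suffices to prove the formula over $\bar k$ and then take $\Gamma$-fixed points.

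Over $\bar k$, where $G_2 = \Spec \bar k[M]$ is diagonalizable, I would use that a bilinear map $G_1 \times G_2 \to B$ is the same as a homomorphism $G_1 \to \IHom(G_2,B)$ into the internal hom sheaf, which for diagonalizable $G_2$ is explicitly understood. Two standard facts about groups of multiplicative type then drive the computation. First, rigidity: $\IHom(G_2,U) = 0$ for every unipotent $U$, since there are no nonzero homomorphisms from a group of multiplicative type into a unipotent group. Splitting an arbitrary target $B = B^m \times B^u$ shows $\Bil(G_1,G_2;B)$ depends only on $B^m$; taking $B = U$ gives $\Hom(G_1 \otimes G_2, U) = 0$, so $G_1 \otimes G_2$ is multiplicative. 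Second, the Cartier dual $\IHom(G_2,\mathbb G_m) = G_2^*$ of $\Spec \bar k[M]$ is the étale formal group attached by Thm.~\ref{thm:multiplicativegroupclassification} (with $\Gamma$ trivial over $\bar k$) to the module $M$, that is, the (pro-)étale constant group $\underline M$ with $M$ as its group of $\bar k$-points. Hence the character group is $\Gr(\Reg{G_1 \otimes G_2}) = \Hom(G_1 \otimes G_2, \mathbb G_m) = \Bil(G_1,G_2;\mathbb G_m) = \Hom(G_1, \underline M)$.

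The main work is to identify $\Hom(G_1,\underline M)$ with $\Hom^c(\hat\pi_0(G_1)(\bar k),M)$. The key point is that $\underline M$ is pro-étale, so any homomorphism from the affine group $G_1$ factors uniquely through its maximal pro-étale quotient $\hat\pi_0(G_1)$: for a finite-type quotient $G_1'$ a map to a finite étale group factors through the universal étale quotient $\pi_0(G_1')$, and one passes to the limit over the finite-type quotients $G_1'$ and the filtered colimit $M = \colim M'$ over finite subgroups, the continuity in $\Hom^c$ being exactly what records these limits. I expect this limit bookkeeping — commuting $\Hom$ past the cofiltered limit defining $\hat\pi_0(G_1)$ and the colimit presenting $M$, while controlling the profinite topology — to be the main obstacle, together with checking that the identification is $\Gamma$-equivariant for the diagonal (conjugation) action on $\Hom^c(\hat\pi_0(G_1)(\bar k),M)$. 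Once this is in place, multiplicativity lets me reconstruct the Hopf algebra over $\bar k$ from its grouplikes as $\bar k[\Hom^c(\hat\pi_0(G_1)(\bar k),M)]$, and Galois descent by $\Gamma$-fixed points yields $\Reg{G_1} \boxast \Reg{G_2} \cong \bar k[\Hom^c(\hat\pi_0(G_1)(\bar k),M)]^{\Gamma}$ over $k$.
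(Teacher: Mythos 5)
Your route is genuinely different from the paper's, and most of it is sound, but it contains one real gap: the opening reduction to $k=\bar k$. You assert, citing Prop.~\ref{prop:galoisdescent}, that $\otimes$ commutes with the base change $\bar k/k$. That proposition only says that extension of scalars is an equivalence onto $\bar k$-algebras with continuous semilinear $\Gamma$-action; it says nothing about compatibility of the tensor product with this equivalence, and this is not formal. The tensor product is produced by Thm.~\ref{thm:goerss} from coequalizers and a \emph{cofree} (right adjoint) functor, and right adjoints have no reason to commute with base change. What your reduction needs is that the canonical comparison map $G_{1,\bar k}\otimes_{\bar k}G_{2,\bar k}\to (G_1\otimes_k G_2)_{\bar k}$ is an isomorphism — equivalently, that the natural semilinear $\Gamma$-action on $G_{1,\bar k}\otimes_{\bar k}G_{2,\bar k}$ is \emph{continuous}, so that this object lies in the equivariant category and inherits the equivariant universal property. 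This is true, but it requires an actual argument: for a finite subextension $k'/k$ one can use that base change of affine groups admits a right adjoint (Weil restriction, which exists for affine groups with no finite-type hypothesis) to identify bilinear maps and get $(G_1\otimes_k G_2)_{k'}\cong G_{1,k'}\otimes_{k'}G_{2,k'}$, and then pass to $\bar k$ by writing any target as a filtered limit of finite-type groups, each defined over a finite subextension, and spreading out. As written the step is unjustified, and dangerously close to circular, since the cheapest verification of base-change compatibility is the very formula being proved. This is exactly the difficulty the paper's proof is engineered to avoid by never base-changing $G_1\otimes G_2$ at all.

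Apart from this, your argument works and diverges instructively from the paper's. The paper stays over $k$ throughout: it probes $G_1\otimes G_2$ against \emph{all} finite-type multiplicative groups $K$, shows $\underline\Hom(G_2,K)$ is an étale scheme over $k$ by fpqc descent from the constant group over $\bar k$, factors maps from $G_1$ through $\hat\pi_0(G_1)$, and then swaps Hom's via the adjunction $\Hom^c_{\Mod_\Gamma}(\hat\pi_0(G_1)(\bar k),\Hom_{\Ab}(\overline{\Gr}(\Reg{K}),M))\cong \Hom_{\Mod_\Gamma}(\overline{\Gr}(\Reg{K}),\Hom^c_{\Ab}(\hat\pi_0(G_1)(\bar k),M))$, concluding by the classification of multiplicative groups and Yoneda. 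Your shortcut — probing only with $\mathbb G_m$ and reconstructing the Hopf algebra from its grouplikes — is legitimate once you are over $\bar k$, where multiplicative groups are anti-equivalent to abelian groups; what it buys is that you only need Cartier duality $\IHom(\Spec\bar k[M],\mathbb G_m)\cong\underline M$ rather than the full SGA3 statement for all $K$, at the cost of the descent problem above. Your identification $\Hom(G_1,\underline M)\cong\Hom^c(\hat\pi_0(G_1)(\bar k),M)$ is the same factorization-through-$\hat\pi_0$ argument as the paper's, and the limit bookkeeping does close up: $G_1$ is quasi-compact, so any homomorphism to $\underline M$ lands in $\underline{M''}$ for a \emph{finite} subgroup $M''\le M$, and a map to a finite étale group out of $G_1=\lim G_1'$ factors through some finite-type quotient $G_1'$ and then through $\pi_0(G_1')$. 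Your multiplicativity argument (rigidity into unipotent targets plus the splitting $B=B^m\times B^u$) is also fine and parallels the paper's, which needs rigidity only into $\mathbb G_a$ thanks to the Demazure--Gabriel criterion.
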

\begin{proof}
Let show first that $G_1 \otimes G_2$ is multiplicative, so that there can be no unipotent part. To show this, it is enough by \cite[IV \textsection 1 n\textsuperscript{o} 2, Théorème 2.2.]{demazure-gabriel:groupes-algebriques-1} that $0 = \Hom_{\AbSch{k}}(G_1 \otimes G_2,\mathbb{G}_a) = \Hom(G_1,\underline\Hom(G_2,\mathbb{G}_a))$, where the outer Hom is of abelian-group valued functors. By \cite[Exposé XII, Lemme 4.4.1]{Gille:SGA3}, for any $k$-algebra $A,$ any group homomorphism from a multiplicative group over $\Spec A$ into $\mathbb{G}_{a,A}$ must be trivial. Thus $\underline\Hom(G_2,\mathbb{G}_a) = 0$.

What is left is to determine the multiplicative part. Let $K$ be a multiplicative group, without loss of generality of finite type. Then by \cite[Exposé VIII, Corollaire 1.5]{Gille:SGA3}, the group-valued functor $\underline{\Hom}(G_2,K)$ is isomorphic to the constant group associated to the abelian group 
\[
\Hom_{\Ab}(\overline{\Gr}(\Reg{K}),\overline{\Gr}(\Reg{G_2})) = \Hom_{\Ab}(\overline{\Gr}(\Reg{K}),M)
\]
after base change to $\bar k$. Since $\underline{\Hom}(G_2,K)$ is a fpqc sheaf, \cite[IV, \textsection 1, n.\kern-0.5bp\textsuperscript{o} 3, Lemme 3.1]{demazure-gabriel:groupes-algebriques-1} implies by descent that $\underline{\Hom}(G_2,K)$ is an \'etale scheme over $k$. Thus, 
\begin{align}
\Hom_{\AbSch{k}}(G_1 \otimes G_2,K) \cong &\Hom_{\AbSch{k}}(G_1,\underline{\Hom}(G_2,K)) \notag \\
\cong & \Hom_{\AbSch{k}}^c(\hat\pi_0(G_1),\underline\Hom(G_2,K)) \label{eq:etalehomadjunction}
\end{align}
 since the identity component of $G_1$ must be in the kernel of any morphism to an étale group. Furthermore, the category of pro-étale groups over $k$ is equivalent to the category of profinite $\Gamma$-modules by the functor taking an \'etale group to its $\bar{k}$-rational points. Thus, \eqref{eq:etalehomadjunction} is isomorphic to
 \[
 \Hom^c_{\Mod_{\Gamma}}(\hat \pi_0(G_1)(\bar{k}), \Hom_{\Ab}(\overline\Gr(\Reg{K}),M)),
 \]
 where the inner Hom carries the discrete topology. By adjunction, this is in its turn isomorphic to
 \[
 \Hom_{\Mod_{\Gamma}}(\overline\Gr(\Reg{K}), \Hom_{\Ab}^c(\hat\pi_0(G_1)(\bar{k}),M)).
 \]
By Thm.~\ref{thm:multiplicativegroupclassification}, this is just  
\[
\Hom_{\AbSch{k}}(\Spec \bar k[\Hom_{\Ab}^c(\hat \pi_0(G_{1})( \bar{k}),M)]^{\Gamma}, K),
\]
concluding the proof.
\end{proof}

\begin{example} \label{ex:tensormup}
Let $\mu_n$ be the affine group taking a $k$-algebra to its $n$th roots of unity, and assume for simplicity that $k$ is algebraically closed. Then $$\mu_n \otimes \mu_n \cong \Spec k[\Hom^c(\hat \pi_0(\mu_n)(k), \Z/n\Z)].$$ If the characteristic of $k$ does not divide $n,$ then $\mu_n \cong \Z/n\Z,$ so that $\hat \pi_0(\mu_n)(k) \cong \underline{\smash{\Z/n\Z}},$ which gives $\mu_n \otimes \mu_n \cong \underline{\smash{\Z/n\Z}}.$ However, if the characteristic of $k$ does divide $n,$ this is not true. For example, if $n = \operatorname{char}(k),$ then $$\mu_n \otimes \mu_n = 0.$$ 
\end{example}

\begin{example} \label{ex:tensorGm}
For the group $\mathbb G_m$ over a perfect field $k$, $\hat\pi_0(\mathbb G_m)(\bar k)=0$, and hence
\[
\mathbb{G}_m \otimes G \cong \Spec \bar k[\Hom^c_{\Ab}(\hat \pi_0(\mathbb{G}_m)(\bar k),\overline \Gr(\Reg{G}))]^\Gamma =0
\]
for all affine groups $G$. This shows that the tensor product of affine groups cannot have a unit. 
\end{example}

\begin{example}\label{ex:tensormpzp}
Let $k$ be an algebraically closed field of characteristic $p>0$. The constant group $\underline{\smash{\Z/p\Z}}$ is unipotent, and Prop.~\ref{prop:multiplicativetensor} gives that $$\underline{\smash{\Z/p\Z}} \otimes \mu_p \cong \Spec k[\Hom_{\Ab}(\Z/p\Z, \Z/p\Z)] \cong \mu_p.$$ Thus the tensor product of a unipotent group and a multiplicative group does not need to be trivial. 
\end{example}

\begin{lemma}\label{lemma:pi0ofmultiplicative}
Let $k$ be perfect of arbitrary characteristic and $G = \bar k[M]^\Gamma$ the multiplicative group corresponding to a $\Gamma$-module $M$. Then
\[
\hat\pi_0(G) \cong \varprojlim_{M' \subset M}  \Spec \bar k[M']^\Gamma,
\]
where $M' < M$ runs through the finitely generated submodules of torsion prime to $p$ if $\operatorname{char}(k)=p$ and through all finitely generated torsion submodules if $\operatorname{char}(k)=0$.

In particular,
\[
\pi_0(G)(\bar k) \cong \lim \Hom(M',\bar k^\times)
\]
\end{lemma}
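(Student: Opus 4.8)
The plan is to transport the whole computation into the category $\Mod_\Gamma$ via the classification of multiplicative groups (Thm.~\ref{thm:multiplicativegroupclassification}), where both the finite-type quotients of $G$ and the functor $\pi_0$ become transparent.

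First I would identify the finite-type quotients. Under the covariant equivalence $\overline\Gr\colon\Hopfm{k}\xrightarrow{\sim}\Mod_\Gamma$, a quotient group $G\twoheadrightarrow G'$ corresponds to a sub-Hopf algebra $\Reg{G'}\hookrightarrow\Reg{G}=\bar k[M]^\Gamma$, hence to a sub-$\Gamma$-module $M'\subseteq M$ with $\Reg{G'}=\bar k[M']^\Gamma$. Base-changing to $\bar k$ (Prop.~\ref{prop:galoisdescent}) gives $\Reg{G'}\otimes_k\bar k=\bar k[M']$, and since the group ring $\bar k[M']$ is a finitely generated $\bar k$-algebra exactly when $M'$ is finitely generated as an abelian group, the finite-type quotients $G'$ are precisely those attached to the finitely generated sub-$\Gamma$-modules $M'\subseteq M$, ordered by inclusion. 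Hence $\hat\pi_0(G)\cong\varprojlim_{M'}\pi_0(\Spec \bar k[M']^\Gamma)$, the limit running over all finitely generated $M'\subseteq M$.

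The main step—and where I expect the real content to lie—is the computation of $\pi_0$ of a finite-type multiplicative group $\Spec \bar k[M']^\Gamma$. By Def.~\ref{def:pi0} the algebra $\Reg{\pi_0(G')}$ is the maximal étale subalgebra of $\Reg{G'}$, so through $\overline\Gr$ the group $\pi_0(G')$ is the maximal étale quotient of $G'$ and corresponds to the largest $\Gamma$-submodule $N\subseteq M'$ for which $\Spec \bar k[N]^\Gamma$ is étale. It therefore remains to decide which multiplicative groups are étale, which may be checked after base change to $\bar k$. Decomposing a finitely generated $M'$ into its free, prime-to-$p$ torsion, and $p$-primary torsion parts, the free part contributes a torus and, in characteristic $p$, the $p$-primary part contributes infinitesimal groups such as $\mu_{p^k}=\Spec \bar k[t]/((t-1)^{p^k})$, both connected; whereas a factor $\mu_n$ with $p\nmid n$ yields $\bar k[t]/(t^n-1)\cong\prod\bar k$, which is étale. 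Thus $\Spec \bar k[N]^\Gamma$ is étale precisely when $N$ is torsion of order prime to $p$ (resp. torsion, if $\operatorname{char}(k)=0$), and the maximal such $N$ is the (characteristic, hence $\Gamma$-stable) prime-to-$p$ torsion submodule $M'_{\mathrm{ét}}\subseteq M'$. Consequently $\pi_0(\Spec \bar k[M']^\Gamma)\cong\Spec \bar k[M'_{\mathrm{ét}}]^\Gamma$.

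Finally I would simplify the limit. By the previous step the diagram is $M'\mapsto\Spec \bar k[M'_{\mathrm{ét}}]^\Gamma$, and on the subposet $Q$ of finitely generated prime-to-$p$ torsion (resp. torsion) submodules (where $M'_{\mathrm{ét}}=M'$) it restricts to $N\mapsto\Spec \bar k[N]^\Gamma$. The inclusion $Q\hookrightarrow P$ into the poset $P$ of all finitely generated submodules is initial—for each $M'\in P$ the submodules $N\in Q$ with $N\subseteq M'$ form a directed set with top element $M'_{\mathrm{ét}}$, hence a connected comma category—so the inverse limit is unchanged and
\[
\hat\pi_0(G)\cong\varprojlim_{M'}\Spec \bar k[M']^\Gamma,
\]
with $M'$ now ranging over $Q$, as claimed. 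The ``in particular'' statement then follows by taking $\bar k$-points and passing them through the limit: for each such $M'$ one has $(\Spec \bar k[M']^\Gamma)(\bar k)=\Hom_{k\text{-alg}}(\bar k[M']^\Gamma,\bar k)\cong\Hom_{\bar k\text{-alg}}(\bar k[M'],\bar k)\cong\Hom_{\Ab}(M',\bar k^\times)$, whence $\hat\pi_0(G)(\bar k)\cong\varprojlim_{M'}\Hom(M',\bar k^\times)$.
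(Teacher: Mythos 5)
Your proposal is correct and follows essentially the same route as the paper: identify the finite-type quotients of $G$ with the finitely generated submodules $M' \subseteq M$, show that $\pi_0(\Spec \bar k[M']^\Gamma)$ is the multiplicative group attached to the prime-to-$p$ torsion submodule of $M'$ (the paper does this by reducing to $k=\bar k$ and, via the structure theorem, to the cyclic cases $\Z$, $\Z/p^k\Z$, and $\Z/n\Z$ with $p \nmid n$ --- precisely your torus/infinitesimal/constant trichotomy), and then pass to the limit, ending with the same computation of $\bar k$-points. The one step whose stated justification does not hold up is the cofinality argument. For an inverse system indexed by the directed poset $P$, replacing $P$ by a subposet $Q$ is legitimate for \emph{arbitrary} diagrams only if every $M' \in P$ is contained in some $N \in Q$; your comma categories $\{N \in Q : N \subseteq M'\}$ have the variance appropriate to diagrams covariant on $P$, not to inverse systems, and genuine cofinality fails here --- already for $M=\Z$ the inverse limit over $P$ of the groups $\Spec \bar k[M']^\Gamma$ themselves is $\mathbb G_m$, while over $Q$ it is trivial, so no variance-blind cofinality principle can apply. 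What makes your conclusion correct is the structure you established just before invoking it: the diagram takes the value $\Spec \bar k[M'_{\mathrm{et}}]^\Gamma$ at $M'$ (writing $M'_{\mathrm{et}}$ for the prime-to-$p$ torsion submodule), and the transition map attached to the inclusion $M'_{\mathrm{et}} \subseteq M'$ is the identity; hence restriction of compatible families from $P$ to $Q$ is a bijection, with inverse sending $(y_N)_{N \in Q}$ to the family $x_{M'} = y_{M'_{\mathrm{et}}}$. Equivalently, the $\pi_0$-diagram is the right Kan extension of its restriction to $Q$, and right Kan extension never changes the limit; phrased either way the step becomes airtight (the paper itself glosses over this point entirely).
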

\begin{proof}
By its definition, $\hat \pi_0(G) =\varprojlim_{M' \subset M}  \Spec \bar k[M']^\Gamma$, where $M'$ runs through all finitely generated submodules of $M$. Thus it suffices to show that
\[
\pi_0(G) \cong \bar k[M']^\Gamma,
\]
where $G$ is of finite type (i.e. $M$ is finitely generated) and $M'$ is its torsion submodule (torsion prime to $p$ if $\operatorname{char}(k)=p>0$.)
The inclusion $M' \hookrightarrow M$ induces a $\Gamma$-equivariant map $\bar k[M'] \to \bar k[M]$. One sees that it is enough to prove the theorem when $k = \bar{k}.$ By the structure of finitely generated abelian groups, it thus suffices to show that $\pi_0(\Spec k[M]) = 0$ for $M = \Z$ (and $M=\Z/n\Z$ where $n = p^k$  if $\operatorname{char}(k)=p>0$) and that $\pi_0(\Spec k[ \Z/n\Z]) = \Spec k[\Z/n\Z]$ if $ n \nmid p.$ Indeed, since $\Spec k[\Z] \cong \mathbb{G}_m,$  we have that $\pi_0(\Spec k[\Z]) \cong \pi_0(\mathbb G_m)=0$ and analogously, since $\Spec k[\Z/p^n \Z] \cong \mu_{p^n}$ we have that  $\pi_0(\Spec k[\Z/p^n\Z]) =0$ if $0 \neq \operatorname{char}(k) .$ If $n \nmid p,$ then $\Spec k[\Z/n\Z] \cong \underline{\Z/n\Z},$ the constant group scheme on $\Z/n\Z$ and it is obvious that $\pi_0(\underline{\Z/n\Z}) \cong \underline{\Z/n\Z} .$ The second part follows by noting, that if $G \cong \Spec \bar{k}[M]^\Gamma$ for $M$ an abelian group, then $$G(\bar{k}) \cong \Hom_{\Alg_{\bar{k}}}(\bar{k}[M],\bar{k}) \cong \Hom(M,\bar{k}^*).$$ 
\end{proof}

For $\Gamma$-modules $M_1$, $M_2$, denote by $M_1 * M_2$ the $\Gamma$-module $\Tor_1^\Z(M_1,M_2)$ with the ``'diagonal'' $\Gamma$-action defined as follows: if the $\Gamma$-actions on $M_i$ are given by pro-maps $\Gamma \to \End(M_i)$, where $\End(M_i) = \{\Hom(M',M_i)\}_{M' <M_i \text{ f.g.}}$, then the diagonal map
\[
\Gamma \to \End(M_1) \times \End(M_2) \xrightarrow{*} \End(M_1 * M_2)
\]
induces a continuous action.

\begin{corollary}\label{cor:tensorofmultiplicative}
Let $k$ be perfect of arbitrary characteristic, and let $G_i = \Spec \bar k[M_i]^\Gamma$ be multiplicative groups associated to $\Gamma$-modules $M_i$ ($i=1,2$). Then
\[
G_1 \otimes G_2 \cong \begin{cases} \Spec \bar k[M_1*M_2]^\Gamma; & \operatorname{char}(k)=0\\
\Spec \bar k[\Z[1/p] \otimes M_1*M_2]^\Gamma; & \operatorname{char}(k)=p>0.
\end{cases}
\]
\end{corollary}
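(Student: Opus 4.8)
The plan is to reduce the statement to Proposition~\ref{prop:multiplicativetensor} and Lemma~\ref{lemma:pi0ofmultiplicative} and then to carry out a double-duality computation for finite abelian groups. Since $G_2 = \Spec \bar{k}[M_2]^\Gamma$ is multiplicative, Proposition~\ref{prop:multiplicativetensor} already guarantees that $G_1 \otimes G_2$ is multiplicative with
\[
\Reg{G_1} \boxast \Reg{G_2} \cong \bar{k}[\Hom^c(\hat\pi_0(G_1)(\bar{k}), M_2)]^\Gamma,
\]
so by the classification of multiplicative groups (Thm.~\ref{thm:multiplicativegroupclassification}) it remains to identify the $\Gamma$-module $\Hom^c(\hat\pi_0(G_1)(\bar{k}), M_2)$ with $M_1 * M_2$ in characteristic $0$, and with $\Z[1/p] \otimes (M_1 * M_2)$ in characteristic $p$.

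First I would unwind $\hat\pi_0(G_1)(\bar{k})$ using Lemma~\ref{lemma:pi0ofmultiplicative}, which gives $\hat\pi_0(G_1)(\bar{k}) \cong \varprojlim_{M'} \Hom(M', \bar{k}^\times)$, where $M'$ runs over the finitely generated torsion submodules of $M_1$ if $\operatorname{char}(k)=0$, and over the finitely generated submodules of torsion prime to $p$ if $\operatorname{char}(k)=p$. Each $\Hom(M', \bar{k}^\times)$ is then a finite group, so the limit is genuinely profinite; since $M_2$ is discrete, every continuous homomorphism out of it factors through a finite quotient, and I would conclude
\[
\Hom^c(\hat\pi_0(G_1)(\bar{k}), M_2) \cong \varinjlim_{M'} \Hom(\Hom(M', \bar{k}^\times), M_2),
\]
a filtered colimit over the same directed system of submodules $M'$.

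The crucial input is a natural isomorphism of $\Gamma$-modules $\Hom(\Hom(M', \bar{k}^\times), M_2) \cong \Tor_1^\Z(M', M_2)$ for each finite $M'$. At the level of abelian groups this is the standard double-duality identity: for $M'=\Z/n$ one has $\Hom(\Hom(\Z/n,\bar{k}^\times), M_2) = \Hom(\mu_n, M_2) \cong M_2[n] = \Tor_1^\Z(\Z/n, M_2)$, and the general case follows by additivity. I expect this to be the main obstacle, and the only delicate point of the proof: one must track the $\Gamma$-action through both dualizations and verify that the conjugation action of Proposition~\ref{prop:multiplicativetensor} is carried precisely to the diagonal action on $\Tor_1^\Z$. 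The care here lies in bookkeeping the cyclotomic character, which enters because $\hat\pi_0(G_1)(\bar{k})$ is the Cartier dual $\Hom(-,\bar{k}^\times)$ of the torsion of $M_1$; one checks that the functoriality of $\Tor_1^\Z$ in both variables reproduces exactly the ``diagonal'' action defined before the corollary.

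Finally I would pass to the colimit. Since $\Tor_1^\Z$ commutes with filtered colimits and depends only on the torsion subgroups of its arguments, in characteristic $0$ the torsion submodules of $M_1$ are cofinal and $\varinjlim_{M'} \Tor_1^\Z(M', M_2) \cong \Tor_1^\Z(M_1, M_2) = M_1 * M_2$. In characteristic $p$ the colimit runs only over the prime-to-$p$ torsion submodules, so it computes $\Tor_1^\Z(T_{p'}M_1, M_2)$, where $T_{p'}M_1$ denotes the prime-to-$p$ torsion of $M_1$; a routine primary-decomposition argument, using that $\Tor_1^\Z(M_1,M_2)$ is a torsion group whose $\ell$-primary part for $\ell \neq p$ depends only on the $\ell$-primary torsion of the two factors, identifies this with the prime-to-$p$ part of $\Tor_1^\Z(M_1,M_2)$, i.e. with $\Z[1/p] \otimes \Tor_1^\Z(M_1,M_2) = \Z[1/p]\otimes (M_1 * M_2)$. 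Feeding the resulting $\Gamma$-modules back through Thm.~\ref{thm:multiplicativegroupclassification} then yields the stated formula in both characteristics.
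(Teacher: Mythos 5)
Your proposal follows the paper's own proof step for step: the same two inputs (Prop.~\ref{prop:multiplicativetensor} and Lemma~\ref{lemma:pi0ofmultiplicative}), the same conversion of $\Hom^c(\hat\pi_0(G_1)(\bar k),M_2)$ into the filtered colimit $\colim_{M'}\Hom(\Hom(M',\bar k^\times),M_2)$, the same term-by-term double duality identifying each term with $\Tor_1^\Z(M',M_2)$, and the same passage to the colimit producing $M_1*M_2$, resp.\ $\Z[1/p]\otimes(M_1*M_2)$. The only real difference is that the paper identifies the torsion of $\bar k^\times$ with $\Q/\Z$ and quotes double duality, whereas you work with $\bar k^\times$ directly and explicitly flag the $\Gamma$-equivariance of the identification as the one point needing care.

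That point, however, is exactly where the argument breaks, and the check you assert (``one checks that the functoriality of $\Tor_1^\Z$ reproduces exactly the diagonal action'') comes out the other way. Take $M'=\Z/n$ (of order prime to the characteristic, which is automatic for the $M'$ that occur) with $\Gamma$ acting through a character $\rho$. The inner Hom $\Hom(M',\bar k^\times)=\mu_n(\bar k)$ carries the conjugation action $\chi\rho^{-1}$, where $\chi$ is the mod-$n$ cyclotomic character; the outer conjugation action on $\Hom(\mu_n(\bar k),M_2)\cong M_2[n]$ is therefore given by letting $\gamma$ act as $\chi(\gamma)^{-1}\rho(\gamma)$ times the given action of $\gamma$ on $M_2$, i.e.\ it is the diagonal action on $\Tor_1^\Z(M',M_2)$ twisted by $\chi^{-1}$. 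This inverse cyclotomic twist survives the colimit, so what the argument actually proves is $G_1\otimes G_2\cong\Spec \bar k[(M_1*M_2)(\chi^{-1})]^\Gamma$, where $(\chi^{-1})$ means the diagonal action twisted by the inverse cyclotomic character (and correspondingly with $\Z[1/p]\otimes$ in characteristic $p$). The twist is not removable: for $k=\Q$ and $G_1=G_2=\mu_n$ with $n$ an odd prime, the corollary as stated would give $\mu_n\otimes\mu_n\cong\mu_n$ and hence a nonzero homomorphism $\mu_n\otimes\mu_n\to\mathbb G_m$ over $\Q$ (the inclusion); but every bilinear map $b\colon\mu_n\times\mu_n\to\mathbb G_m$ over $\Q$ has the form $b(\zeta^a,\zeta^c)=\eta^{ac}$ over $\bar\Q$, and $\Gamma$-equivariance forces $\eta^{\chi(\gamma)(\chi(\gamma)-1)}=1$ for all $\gamma$, hence $\eta=1$ since $\chi$ surjects onto $(\Z/n)^\times$; so $\Bil(\mu_n,\mu_n;\mathbb G_m)=0$, a contradiction. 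To be clear, this is not a defect of your write-up relative to the paper: the paper's proof has the same lacuna, because its identification of the torsion of $\bar k^\times$ with $\Q/\Z$ is not $\Gamma$-equivariant (the former is $\Q/\Z$ with the cyclotomic action). You correctly isolated the delicate point, but its resolution is that the conjugation and diagonal actions genuinely differ by a Tate twist; the stated formula is correct on underlying abelian groups, and as group schemes only after inserting the twist --- in particular there is no discrepancy precisely when the relevant cyclotomic action is trivial, e.g.\ when $k=\bar k$.
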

\begin{proof}
By Proposition ~\ref{prop:multiplicativetensor} and Lemma \ref{lemma:pi0ofmultiplicative}, the first component is given by
\[
\hat \Gr(\Reg{G_1} \boxast \Reg{G_2}) \cong \Hom^c(\hat \pi_0(\Spec \bar k[M_1]^\Gamma)(\bar k),M_2) = \colim_{M'} \Hom(\Hom(M',\bar k^\times), M_2),
\]
where $M'$ runs through the finitely generated torsion submodules of $M_1$ (torsion prime to $p$ if $\operatorname{char}(k)=p$).
In characteristic $0$, the largest torsion submodule of $\bar k^\times$ is isomorphic to $\Q/\Z,$ since $\mathbb{Q} \subset k$ and any torsion point of $\bar{k}^\times$ must be a root of unity. Hence the above equals $\colim_{M'} \Hom(\Hom(M',\Q/\Z),M_2) \cong M_1*M_2$.
In characteristic $p$, $\bar{\F}_p \subset \bar{k},$ and we see that any homomorphism $M' \rightarrow \bar{k}^\times$ must factor through $\bar{\F}^\times_p.$ This latter group is isomorphic to the prime-to-$p$ torsion in $\Q/\Z.$ We thus see that in characteristic $p,$ we have that
\[
\colim_{M'} \Hom(\Hom(M',\bar{k}^\times),M_2) \cong \colim_{M'} \Hom(\Hom(M',\Q/\Z), M_2) \cong \colim_{M'} M'*M_2.
\]
The statement now follows since $\Z[1/p] \otimes (M_1*M_2) \cong \colim_{M'} (M'*M_2).$

\end{proof}

\begin{proof}[{Proof of Thm.~\ref{thm:affinetensorchar0}}]
In characteristic $0$, any unipotent \'etale group is trivial, and hence $\hat \pi_0(G)\cong \hat \pi_0(G^m)$, where $G^m$ is the multiplicative part of an arbitrary affine group $G$. Furthermore, every unipotent Hopf algebras $H$ is isomorphic to $\Sym(P(H))$. In particular, for $H=\Cof(A)$ the cofree unipotent Hopf algebra on an algebra $A$, 
\[
PH \cong \Hom_{\Hopf{k}}(\Reg{\mathbb{G}_a},\Cof(A)) \cong \Hom_{\Alg_k}(k[x],A) \cong A
\]
and hence $\Cof(A) \cong \Sym(A)$. This shows that for unipotent groups $G_1$, $G_2$,
\begin{equation}\label{eq:unipotentgrouptensorchar0}
\Reg{G_1} \boxast \Reg{G_2} \cong \Sym(P(\Reg{G_1}) \otimes P(\Reg{G_2})).
\end{equation}
The theorem now follows from Cor.~\ref{cor:tensorofmultiplicative}
.\end{proof}

\section{Smooth formal groups over a perfect field of positive characteristic} \label{sec:smoothFormal}

From now on, let $k$ be a perfect field of characteristic $p>0$ with algebraic closure $\bar k$ and absolute Galois group $\Gamma$. We will denote by $W(k)$ the ring of ($p$-typical) Witt vectors of $k$, equipped with the Verschiebung operator $V$ and the Frobenius operator $F$. As in \cite{goerss:hopf-rings}, studying the category of formal groups over fields $k$ is simplified by studying universal objects, which actually are mod-$p$ reductions of smooth formal groups defined over $W(k)$. Here the topology of $W(k)$ is captured by its structure of a pseudocompact ring.

\subsection{Pseudocompact rings and formal schemes over them}

We give a brief overview over the notion of pseudocompact rings and modules in the commutative setting. The reader may want to consult \cite[Exposé VII\textsubscript{B}]{Gille:SGA3} or \cite[Ch.1, \textsection 3]{fontaine:groupes-divisibles} for more details.

\begin{defn}
A (unital, commutative) linearly topologized ring $A$ is called \emph{pseudocompact} if its topology is complete Hausdorff and has a basis of neighborhoods of $0$ consisting of ideals $I$ such that $A/I$ has finite length as an $A$-module.

Similarly, if $A$ is a pseudocompact ring, then a \emph{pseudocompact $A$-module} $M$ is a complete Hausdorff topological $A$-module which admits a basis of neighborhoods of $0$ consisting of submodules  $M' \subset M$ such that $M/M'$ has finite length as an $A$-module.

Morphisms of pseudocompact rings and modules are by definition continuous ring and module homomorphisms, respectively.
\end{defn}

Note that any Artinian ring is trivially pseudocompact, as is any complete local Noetherian ring, such as $W(k)$.
Given two pseudocompact $A$-modules $M$ and $N$, we can form the completed tensor product $M \hat{\otimes}_{A} N$, which is the inverse limit of the tensor products $M/M' \otimes_A N/N'$ where $M' \subset M$ and $N' \subset N$ range through the open submodules of $M$ and $N$, respectively. A pseudocompact $A$-module $M$ is \emph{topologically flat} (or equivalently, projective) if $- \hat{\otimes}_A M$ is an exact functor. If it is a fortiori isomorphic to a direct product of copies of $A$, we call it \emph{topologically free}. Then $M$ is projective if and only if it is locally topologically free, in the sense that the base change $M \hat{\otimes}_A A_{\mathfrak m}$ is topologically free for every maximal open ideal $\mathfrak m \triangleleft A.$

If $A$ is a pseudocompact ring, we say that a commutative $A$-algebra $B$ is a \emph{profinite $A$-algebra} if the underlying $A$-module is pseudocompact. Denoting the category of finite length $A$-algebras by $\FAlg{A}$, a profinite $A$-algebra $B$ represents a functor $\Spf B\colon \FAlg{A} \to \Set$,
\[
\Spf B(R) = \Hom^{c}_A(B,R) \quad \text{(continuous homomorphisms)}.
\]
A functor $\FAlg{A} \to \Set$ is a \emph{formal scheme} if it is representable in this way. A formal scheme $\Spf B$ is said to be \emph{connected} if $B$ is a local $A$-algebra. The category of formal schemes has all limits. A \emph{formal group} $G$ over $A$ is an abelian group object in the category of formal schemes. We call $G$ \emph{smooth} if for any finite $A$-algebra $B$ and any square-zero ideal $I \subset B,$ the canonical map $G(B) \rightarrow G(B/I)$ is surjective. Any étale formal group is smooth, and a connected formal group is smooth if and only if its representing profinite $A$-algebra is a power series algebra \cite[\textsection I.9.6]{fontaine:groupes-divisibles}.

\subsection{Smooth formal groups with a Verschiebung lift and their indecomposables}\label{subsec:fgwithVlift}

Let $G$ be a formal group over $A=W(k)$ with representing formal Hopf algebra $\Reg{G}$. An endomorphism $V_G\colon G \to G$ is called a \emph{lift of the Verschiebung} if its base change to $k$ is the Verschiebung on $G_k.$  Note that unless $k = \mathbb{F}_p,$ the map $V_G$ is not one of formal groups over $W(k).$ Instead, denoting by $F_{W(k)}$ the Frobenius on $W(k),$ we have that $V_G(ax) = F^{-1}_{W(k)}(a) V_G(x)$ for $a \in W(k), x \in \Reg{G}.$ We say that $V_G$ is $F^{-1}_{W(k)}$-linear.

\begin{defn}
The category $\FgpsV$ is the category whose objects are pairs $(G,V_G)$ where $G$ is a connected, smooth formal group over $W(k)$ and $V_G$ is a lift of the Verschiebung. A morphism $(G_1,V_{G_1}) \rightarrow (G_2,V_{G_2})$ in $\FgpsV$ is given by a morphism of formal groups $f\colon G_1 \rightarrow G_2$ such that $fV_{G_1} = V_{G_2} f.$ 

Denote by $\mathcal{H}_V$ the full subcategory of complete $W(k)$-Hopf algebras representing objects in $\FgpsV$. 
\end{defn}

\begin{defn} \label{def:MV}
The category $\mathcal{M}_V$ is the category whose objects are topologically free $W(k)$-modules $M$ together with a continuous $F^{-1}_{W(k)}$-linear endomorphism $V_M.$  A morphism 
\[
(M_1,V_{M_1}) \rightarrow (M_2,V_{M_2})
\]
 in $\mathcal{M}_V$ is a morphism $f\colon M_1 \rightarrow M_2$ of pseudocompact modules such that $fV_{M_1} = V_{M_2}f.$
\end{defn}
Denoting by $I_G$ the augmentation ideal of $\Reg{G} \in \mathcal H_V$, the (contravariant) functor of indecomposables is defined by
\[
Q\colon \FgpsV \rightarrow \mathcal{M}_V, \quad G \mapsto I_G/\overline{I_G}^2,
\]
where $\overline{I_G}^2$ is the closure of $I_G^2$ in $\Reg{G}.$

 The following theorem is the main theorem of this section.
\begin{thm}\label{thm:indecomposableequiv}
The contravariant functor 
\[
Q : \FgpsV \rightarrow \mathcal{M}_V
\]
 is an anti-equivalence of categories.
\end{thm}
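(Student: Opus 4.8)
The plan is to construct an explicit inverse functor $L\colon \mathcal{M}_V \to \FgpsV$ and to show that $Q \circ L$ and $L \circ Q$ are naturally isomorphic to the respective identities. Since $\FgpsV$ consists of \emph{smooth} connected formal groups over $W(k)$, the representing Hopf algebra $\Reg{G}$ is a power series algebra by the cited result \cite[\textsection I.9.6]{fontaine:groupes-divisibles}, so the indecomposables $Q(G) = I_G/\overline{I_G}^2$ form a topologically free $W(k)$-module whose rank equals the number of power-series generators. The first step is therefore to establish that $Q$ indeed lands in $\mathcal{M}_V$: the lifted Verschiebung $V_G$ preserves the augmentation ideal $I_G$ and is $F^{-1}_{W(k)}$-linear, hence descends to an $F^{-1}_{W(k)}$-linear endomorphism $V_Q$ of $I_G/\overline{I_G}^2$, and one checks this is continuous for the pseudocompact topology.

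Next I would build the inverse. Given $(M,V_M) \in \mathcal{M}_V$ with $M$ topologically free, the idea is that a smooth connected formal group over $W(k)$ is determined up to isomorphism by its cotangent data together with the comultiplication; the key input is that for smooth formal groups the functor $Q$ of indecomposables is dual to the functor of primitives on the Cartier-dual side, and that primitives of a smooth formal group over $W(k)$ assemble freely. Concretely, I expect $L(M,V_M)$ to be realized as $\Spf$ of the completed symmetric-type Hopf algebra cofreely cogenerated by $M$ in the category $\mathcal{H}_V$, where the $F^{-1}_{W(k)}$-linear operator $V_M$ dictates the Verschiebung lift. The crucial point is a \emph{lifting/rigidity} statement: a smooth formal group over the pseudocompact ring $W(k)$ is rigid in the sense that its isomorphism type, and the compatible action of a Verschiebung lift, is faithfully and fully captured by the finite-length quotients, so that one may reconstruct $\Reg{G}$ as an inverse limit of its truncations. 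This reduces the construction to the Artinian (finite-length) case, where deformation theory of smooth group schemes along square-zero extensions—guaranteed by the smoothness surjectivity condition $G(B) \twoheadrightarrow G(B/I)$—lets one lift generators and relations uniquely.

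I would then verify the two natural isomorphisms. The isomorphism $Q \circ L \cong \mathrm{id}_{\mathcal{M}_V}$ should be essentially formal once $L$ is set up as a cofree/adjoint construction: the indecomposables of the cofreely cogenerated object recover $M$ together with its operator $V_M$. The more delicate isomorphism is $L \circ Q \cong \mathrm{id}_{\FgpsV}$, i.e. that every smooth connected $(G,V_G)$ is reconstructed from its indecomposables. Here I would argue by reduction modulo powers of $p$ and along the filtration by the augmentation ideal: using smoothness to lift the evident isomorphism on associated graded pieces (where everything is governed by the free module $Q(G)$) to an isomorphism of the completed Hopf algebras, with the Verschiebung compatibility handled by the $F^{-1}_{W(k)}$-linearity built into $V_Q$.

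The main obstacle, I expect, is the full faithfulness of $Q$ over the non-field base $W(k)$: showing that a morphism $f\colon Q(G_2) \to Q(G_1)$ of $\mathcal{M}_V$-objects lifts uniquely to a morphism of smooth formal groups commuting with the Verschiebung lifts. Over a field one could appeal directly to the primitives/indecomposables duality and freeness, but over $W(k)$ one must control the obstruction to lifting along the infinitesimal thickenings $W(k)/p^{n+1} \twoheadrightarrow W(k)/p^n$. The smoothness hypothesis is exactly what should kill these obstructions—each lifting problem along a square-zero ideal admits a (unique, once normalized on indecomposables) solution—but assembling these into a single well-defined continuous Hopf-algebra map, and checking $F^{-1}_{W(k)}$-linear compatibility with $V$ at every stage, is where the real work lies.
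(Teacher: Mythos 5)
Your high-level skeleton (construct a functor $\mathcal{M}_V \to \FgpsV$ adjoint/inverse to $Q$, then check unit and counit) is the same as the paper's, but both places where you say "the real work lies" are left unfilled, and the mechanisms you propose for them do not work. The central gap is the construction of $L$ itself. The object you describe — "the completed symmetric-type Hopf algebra cofreely cogenerated by $M$" — does not exist as a symmetric algebra plus a decoration: when $V_M \neq 0$, the formal group with indecomposables $(M,V_M)$ is \emph{not} primitively generated, and nothing in your sketch explains how the semilinear operator $V_M$ is to produce a comultiplication. This is exactly what the paper's construction accomplishes, and it cannot be done without some Witt-vector (or Cartier-theoretic) device: in Prop.~\ref{prop:indecomposableadjoint} the module $(M,V_M)$ is resolved by modules $JM \cong \prod_\alpha Q(W^{\fin}_{W(k)})$, the differential $d$ is lifted to an honest map of formal groups $f\colon \bigoplus_\alpha W^{\fin}_{W(k)} \to \bigoplus_\alpha W^{\fin}_{W(k)}$ using Lemma~\ref{lemma:wittindecomposables} (whose proof requires the $\Reg{W(k)}$-module duality of Thm.~\ref{thm:antiequivflat}, the Artin--Hasse exponential, and Dwork's lemma, Lemma~\ref{lemma:dworkoalgebras} and Lemma~\ref{lemma:mapsfromWitt}), and $\FrM(M)$ is the resulting pushout, whose existence, flatness and smoothness come from SGA3. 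The group law on $\FrM(M)$ is inherited from Witt-vector addition; that is the missing mechanism. Relatedly, your claim that $Q \circ L \cong \mathrm{id}$ is "essentially formal once $L$ is a cofree/adjoint construction" is false: the counit of an adjunction being an isomorphism is \emph{equivalent} to the adjoint being fully faithful, which is a substantive part of the theorem — the paper proves it by base change to $k$, exactness of $Q$ on smooth groups, and Nakayama.

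The second gap is your treatment of full faithfulness by deformation theory. Lifting \emph{homomorphisms} of smooth formal groups along square-zero extensions is genuinely obstructed; smoothness gives lifting of points, not of group maps. For instance, the Frobenius endomorphism of $\hat{\mathbb{G}}_a$ over $k$ (which commutes with the Verschiebung $V=0$, so is a legitimate morphism on the special fiber) lifts to no additive power series over $W_2(k)$: an additive $f$ over $W_2(k)$ has the form $c_1x + p(\cdots)$, so $f \equiv c_1 x \not\equiv x^p \pmod p$. So "smoothness kills the obstructions" is not a viable route, and the uniqueness you want ("normalized on indecomposables") presupposes faithfulness, which is the very point at issue. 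The paper's argument is different in kind: faithfulness (Lemma~\ref{lemma:indecomposablesfaithful}) is proved by base-changing to the characteristic-zero fraction field $\Frac(W(k))$, where Fontaine's classification of connected formal groups over a characteristic-zero field makes $Q$ a bijection on Hom sets; smoothness is used only to see that the two base-change maps are injective (the representing algebras are torsion-free power series rings). Fullness is then never attacked directly: it falls out of the adjunction once the unit $G \to \FrM(Q(G))$ and counit are shown to be isomorphisms, the unit case again using only that a map of smooth connected groups inducing an isomorphism on $Q$ is an isomorphism.
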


Two objects in $\FgpsV$ will figure in the proof, namely, the co-Witt vector and the finitely supported Witt vector functors, which will be introduced in Subsection~\ref{subsec:backgroundWitt}. Furthermore, the proof requires dualization of the category $\FgpsV$, and the dual category will be described explicitly in Subsection~\ref{subsec:OWkalgebras}. 

\subsection{Witt vectors and co-Witt vectors}  \label{subsec:backgroundWitt}

While our main interest is in the Witt vector and co-Witt vector constructions for $k$-algebras, we will need them briefly also for $W(k)$-algebras. Thus we give a brief review of these constructions for a general (commutative) ring in this subsection.

For a ring $A$, the ring of ($p$-typical) Witt vectors $W(A)$ has two operators $F$ and $V$, where $V(a_0,a_1,\dots) = (0,a_0,a_1,a_2,\dots)$ is the Verschiebung (shift) operator, and $F$ is the Frobenius. If $\operatorname{char}(A)=p$ then $F(a_0,a_1,\dots) = (a_0^p,a_1^p,\dots)$ and $VF=FV=p$, but in the general case only the identity $FV=p$ holds. The Frobenius $F$ is a ring map, while the Verschiebung $V$ satisfies Frobenius reciprocity: $aV(b) = V(F(a)b$. For $k=A=\F_p$, the Verschiebung is multiplication by $p$ on $W(\F_p)$, while the Frobenius is the identity. 

The truncated, or finite-length, Witt vectors $W_n(A)$ are defined similarily as sequences of length $n+1$, and the Verschiebung naturally lives as a map $V\colon W_n(A) \to W_{n+1}(A)$

Modelled by the short exact sequence $0 \to \Z_p \to \Q_p \to \Q_p/\Z_p \to 0$ for the case $A=\F_p$, we define for any ring $A$ a natural short exact sequence of abelian groups
\begin{equation}\label{eq:unipotentwittcowittsequence}
0 \to W(A) \to QW^u(A) \to CW^u(A) \to 0,
\end{equation}
where
\[
QW^u(A) = \colim (W(A) \xrightarrow{V} W(A) \xrightarrow{V} \cdots) = \{(a_i) \in A^\Z \mid a_i=0 \text{ for } i\ll 0\}
\]
is the group of \emph{unipotent bi-Witt vectors}, and
\[
CW^u(A) = QW(A)/W(A) = \colim (W_0(A) \xrightarrow{V} W_1(A) \xrightarrow{V} \cdots).
\]
is the group of unipotent co-Witt vectors. In \cite{fontaine:groupes-divisibles}, a non-``unipotent'' version $CW(A)$ of $CW^u(A)$ is constructed with
\[
CW(A) = \{ (a_i) \in A^{\Z_{\leq 0}} \mid a_i \text{ nilpotent for }i \ll 0\},
\]
and in a similar way, the group-valued functor $QW(A)$ of non-unipotent bi-Witt vectors exists as an extension of $QW^u(A)$.

Note that it is generally not true that $QW^u(A)$ or $QW(A)$ are rings or a $W(A)$-modules since $V$ is not a ring homomorphism. However, we see that if $A'$ is a perfect subalgebra (for instance $A'=k)$ then the Frobenius homomorphism on $W(A')$ is an isomorphism, and
\[
W(A') \otimes W_n(A) \to W_n(A); \quad x \otimes a \mapsto (F^{-n}(x)a)
\]
is a $W(A')$-module structure on $W_n(A)$ compatible with the $V$-colimits in the definition of $QW^u(A)$ and $CW^u(A)$, and \eqref{eq:unipotentwittcowittsequence} is a short exact sequence of $W(A')$-modules.

Similarily, $QW(A)$ and $CW(A)$ become $W(A')$-modules, and we have a natural short exact sequence of $W(A')$-modules
\begin{equation}\label{eq:Wittexactsequence}
0 \to W(A) \to QW(A) \to CW(A) \to 0,
\end{equation}
and $QW^u(A) = QW(A) \times_{CW(A)} CW^u(A)$.

For $A=k$, we have that $QW(k) \cong QW^u(k) \cong \Frac(W(k))$ is the field of fractions of $W(k)$, and correspondingly $CW(k) \cong CW^u(k) \cong \Frac(W(k))/W(k)$ is the injective hull of $k$ in the category of $W(k)$-modules.

The subfunctors $CW^c \subset CW$ and $QW^c \subset QW$ defined by
\[
CW^c(A) = \{ (\dots,a_{-1},a_0) \mid a_i \text{ nilpotent}  \},
\]
and similarily for $QW$, are called the groups of \emph{connected} co-Witt vectors and bi-Witt vectors, respectively. We denote by $CW^{u,c} = CW^c \cap CW^u, $ the intersection taken in $CW$. 

If $A$ is a finite $k$-algebra or, more generally, a pseudocompact $W(k)$-algebra, then the co- and bi-Witt groups $CW(A)$ and $QW(A)$ and their connected relatives carry a natural, linear topology, in which they are complete Hausdorff. An open neighborhood basis of $0$ in $CW(A)$ is given by
\[
U_{n,I} = \{ (a_i) \in CW(I) \mid a_i = 0 \text{ for } i>-n\} \quad (n \geq 0, I \triangleleft A \text{ with } A/I \text{ finite length.})
\]

We need a finitary version of Witt vectors that is covariant in the length:
\begin{defn}
Let $A$ be a ring and $\nil(A)$ its nilradical. Write $\nil^{(k)}(A)$ for $\nil(A)^{p^k}$, with $\nil^{(k)}A=\nil(A)$ for $k<0$. For $n \in \Z$, define the subgroup
\[
W^n(A) = \{(a_0,a_1,\dots) \in W(A) \mid a_i \in \nil^{(i-n)}(A)\} \subseteq W(A).
\]
One verifies easily from the definition of the Witt vectors that this is a subgroup. There are induced maps
\begin{eqnarray*}
i\colon& W^n(A) \to W^{n+1}(A) & \text{(inclusion),}\\
F\colon& W^n(A) \to W^{n-1}(A) & \text{(Frobenius, cf. \cite[Lemma 1.4]{davis-kedlaya:witt}), and}\\
V\colon& W^n(A) \to W^{n+1}(A) & \text{(Verschiebung).}
\end{eqnarray*}
Let $W^{\fin}(A)$ denote the union $\bigcup_n W^n(A).$ We call this functor from rings to abelian groups the functor of finitary Witt vectors. The reason for this name is that if $A$ is a finite ring then $W^{\fin}(A)$ consists of finitely supported Witt vectors.
\end{defn}

The functors $W^{\fin},$ $CW$, $CW^u$, $CW^{u,c},$ $CW^c$, $QW$, and $QW^c$ restrict to the category of finite $W(k)$-algebras (that is, algebras whose underlying modules are of finite length), and thus restricted are ind-representable (for the case of $CW$ and its relatives, see e.g. \cite[Ch.2, \textsection 4]{fontaine:groupes-divisibles}) and thus formal groups. 

We will sometimes for clarity use a subscript to denote which base ring they are defined over. So, for example, $CW_k$ denotes the functor $CW$ restricted to on finite $k$-algebras, while $CW_{W(k)}$ is the restriction to finite $W(k)$-algebras. Obviously, by further restriction $CW_{W(k)}$ agrees with $CW_k$ on finite $k$-algebras.

The formal schemes over $W(k)$ underlying $CW_{W(k)}$, $CW^u_{W(k)}$, $CW^c_{W(k)}, W^{\fin}_{W(k)}$ are represented by pseudo-compact $W(k)$-algebras $\Reg{CW_{W(k)}}, \Reg{CW_{W(k)}^c},\Reg{CW_{W(k)}^u}$. These rings are described in \cite[Ch.2, \textsection 3]{fontaine:groupes-divisibles}, and we recall their description here. Let $R=W(k)[x_0,x_{-1},x_{-2}, \ldots]$ be the polynomial ring in infinitely many variables and for $s \geq 0$, let $\mathfrak{J}_s = (x_{-s},x_{-s-1},\dots)$. Then the underlying pro-algebras of $\Reg{CW_{W(k)}},\Reg{CW^c_{W(k)}}$  and $\Reg{CW^u_{W(k)}}$ are given by the $W(k)$-profinite completions of the rings
\[
R/(\mathfrak{J}_r^s)_{r,s\geq0}, \quad R/(\mathfrak{J}^s_0)_{s \geq 0}, \quad \text{and} \quad R/(\mathfrak{J}_r)_{r \geq 0}.
\]
respectively. The pro-algebra of $\Reg{W_{W(k)}^{\fin}}$ is the completion of the infinite polynomial ring $W(k)[x_0,x_1, \ldots ]$ with respect to the ideals $J_0^s+J_r, r,s \geq 0,$ where $J_r = (x_r,x_{r+1},x_{r+2}, \ldots ).$

By base change to $k,$ we obtain the defining pro-algebras for $\Reg{CW_{k}},\Reg{CW^c_{k}}, \Reg{CW^u_k}, \Reg{CW^c_k}$ and $\Reg{W_k^{\fin}}.$ 

\subsection{$\Reg{W(k)}$-algebras with a lift of the Frobenius} \label{subsec:OWkalgebras}
The opposite category of topologically flat $W(k)$-modules (and -coalgebras) can be described as in \cite[Exposé VII\textsubscript{B}]{Gille:SGA3} as a subcategory of the category of $\Reg{W(k)}$-modules (resp. -algebras). We now recall these definitions.

Let $\Reg{W(k)}\colon \FAlg{W(k)} \rightarrow \FAlg{W(k)}$ be the identity functor. 
\begin{defn}
An $\Reg{W(k)}$-module ($\Reg{W(k)}$-algebra) $\underline{M}$ is a functor which to any $A \in \FAlg{W(k)}$ associates an $A$-module ($A$-algebra) $\underline{M}(A)$ and which to any morphism $\varphi\colon A \rightarrow B$ gives a morphism $$\underline{M}(\varphi)\colon B \otimes_A \underline{M}(A) \rightarrow \underline{M}(B)$$ of $B$-modules ($B$-algebras), satisfying the obvious identity and composition axioms.
We follow the regrettable choice of \cite{Gille:SGA3} to call $\underline{M}$ \emph{admissible} if $\underline M(\varphi)$ is an isomorphism for all $\varphi\colon A \rightarrow B$. We will furthermore call $\underline{M}$ \emph{flat} if it is admissible and if for any $A \in \FAlg{W(k)},$ $\underline{M}(A)$ is a flat $A$-module.
\end{defn}

Given a $\Reg{W(k)}$-algebra $\underline{C},$ we will call a morphism $\underline{a}:\Reg{W(k)} \rightarrow \underline{C}$ of $\Reg{W(k)}$-modules an \emph{element of $\underline{C}$} and will write $\underline{a} \in \underline{C}.$ When $C$ is admissible, giving an element $\underline{a} \in \underline{C}$ is the same as giving an element of $\lim_n \underline{C}(W_n(k)).$ Given an element $\underline{a} \in \underline{C}$ and a finite $W(k)$-algebra $A$, we denote by $\underline{a}|_{A} \in \underline C(A)$ the evaluation of $\underline a(A)$ at $1 \in A$.



\begin{thm}[{\cite[Proposition ~1.2.3.E.]{Gille:SGA3}}] \label{thm:antiequivflat}
The functor $\underline{I}\colon \Mod_{W(k)} \to \Mod_{\Reg{W(k)}}$ defined for $N \in \Mod_{W(k)}$ and $A \in \FAlg{W(k)}$ by
\[
\underline{I}(N)(A) = \Hom_{\Mod_{W(k)}}^c(N,A) \cong \Hom_{\Mod_{A}}^c(A \widehat{\otimes}_{W(k)} N,A)
\]
restricts to a strongly monoidal anti-equivalence between the category of topologically flat $W(k)$-modules with the completed tensor product $\hat\otimes_{W(k)}$ and that of flat $\Reg{W(k)}$-modules with the objectwise tensor product. \qed
\end{thm}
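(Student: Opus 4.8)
The plan is to write down an explicit quasi-inverse $\Phi$ and verify both composites on objects, after which strong monoidality will be almost formal. Throughout I would exploit that $W(k)$ is a \emph{local} pseudocompact ring, so by the structure theory recalled above (topologically flat $=$ projective $=$ locally topologically free) every topologically flat $W(k)$-module is in fact topologically free, i.e. isomorphic to some product $\prod_S W(k)$. First I would pin down the forward functor on such modules: for $N=\prod_S W(k)$ and a finite-length $A \in \FAlg{W(k)}$, a continuous $W(k)$-linear map into the discrete module $A$ vanishes on an open submodule and hence factors through a finite sub-product, yielding a natural isomorphism $\underline I(N)(A) \cong \bigoplus_S A$. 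In particular $\underline I(N)(A)$ is free over $A$ and the base-change maps $B \otimes_A \bigoplus_S A \to \bigoplus_S B$ are isomorphisms, so $\underline I(N)$ is genuinely a flat $\Reg{W(k)}$-module, functorially in $N$. The computation $\underline I(W(k))(A) \cong A$ also shows that $\underline I$ sends the monoidal unit $W(k)$ to the unit functor $\Reg{W(k)}$.

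Next I would build the quasi-inverse. Since $W(k)$ is a complete discrete valuation ring with uniformizer $p$, each truncation $W_n(k)=W(k)/p^n$ is a finite local Gorenstein ring, hence self-injective, so $D_n := \Hom_{W_n(k)}(-,W_n(k))$ is an exact, involutive duality on free $W_n(k)$-modules of arbitrary rank. For a flat $\Reg{W(k)}$-module $\underline M$, admissibility forces each $\underline M(W_n(k))$ to be free of a rank independent of $n$, and I would set
\[
\Phi(\underline M) = \varprojlim_n \Hom_{W_n(k)}\bigl(\underline M(W_n(k)),W_n(k)\bigr),
\]
with transition maps the $D_n$-duals of the base-change isomorphisms of $\underline M$ along $W_{n+1}(k) \twoheadrightarrow W_n(k)$. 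A rank count gives $\Phi(\underline M) \cong \prod_S W(k)$; in particular the limit is pseudocompact and topologically free.

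I would then verify the two composites. Evaluating on $N=\prod_S W(k)$ gives $\underline I(N)(W_n(k)) = \bigoplus_S W_n(k)$, whose $W_n(k)$-dual is $\prod_S W_n(k)$, and $\varprojlim_n \prod_S W_n(k) \cong \prod_S W(k) = N$, so $\Phi \circ \underline I \cong \mathrm{id}$. Conversely, testing $\underline I(\Phi(\underline M))$ on the objects $W_m(k)$ and using the double-duality $D_m \circ D_m \cong \mathrm{id}$ recovers $\underline M(W_m(k))$; since these identifications are the duals of the evaluation pairings they are natural in $m$ and compatible with base change, and hence glue to a natural isomorphism $\underline I(\Phi(\underline M)) \cong \underline M$ of $\Reg{W(k)}$-modules. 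For strong monoidality I would note that, for $N_1=\prod_S W(k)$ and $N_2=\prod_T W(k)$, one has $N_1 \,\widehat\otimes_{W(k)}\, N_2 \cong \prod_{S\times T} W(k)$, so that, for the objectwise tensor product on the target,
\[
\underline I\bigl(N_1 \widehat\otimes_{W(k)} N_2\bigr)(A) \cong \bigoplus_{S\times T} A \cong \Bigl(\bigoplus_S A\Bigr)\otimes_A\Bigl(\bigoplus_T A\Bigr) \cong \bigl(\underline I(N_1)\otimes\underline I(N_2)\bigr)(A)
\]
naturally in $A$; together with the unit identification and the inherited associativity and symmetry constraints this produces the strong monoidal structure.

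The hard part will be the pseudocompact bookkeeping rather than any single conceptual step: carefully justifying that continuous maps out of $\prod_S W(k)$ factor through finite sub-products, that the limit defining $\Phi(\underline M)$ is again pseudocompact and topologically flat, and---most delicately---that the self-dualities $D_n$ are compatible with the admissibility isomorphisms of $\underline M$, so that they really assemble into the asserted natural isomorphisms. Once the object-level identification $\underline I(\prod_S W(k))(A) \cong \bigoplus_S A$ and its naturality are nailed down, both the anti-equivalence and the monoidality fall out.
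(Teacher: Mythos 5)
The paper itself gives no proof of this statement---it is quoted from SGA3 (Exposé VII\textsubscript{B}) with a \qed, where it is established for a general pseudocompact base ring---so your proposal has to stand as a self-contained substitute, and its overall route is both viable and genuinely more elementary than the general theory. You exploit two special features of $W(k)$: it is local, so topologically flat $=$ topologically free, and its finite-length quotients admit the cofinal chain $W_n(k)=W(k)/p^n$, so admissible $\Reg{W(k)}$-modules are determined by their values on this chain together with base change. The key computations are correct: $\Hom^c_{\Mod_{W(k)}}\bigl(\prod_S W(k),A\bigr)\cong\bigoplus_S A$ for finite-length $A$ (continuous maps factor through finite sub-products and finite truncations), and $\prod_S W(k)\,\hat\otimes_{W(k)}\,\prod_T W(k)\cong\prod_{S\times T}W(k)$; these do give flatness and admissibility of $\underline I(N)$, the unit identification, and strong monoidality essentially as you describe. (One small attribution slip: that each $\underline M(W_n(k))$ is \emph{free} uses flatness over the Artinian local ring $W_n(k)$---e.g.\ lift a basis of $\underline M(k)$, apply Nakayama for the nilpotent ideal $(p)$ and $\Tor$-vanishing---not admissibility alone; admissibility gives the surjectivity of transition maps and constancy of rank.)

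There is, however, one step that is false as written: $D_n=\Hom_{W_n(k)}(-,W_n(k))$ is \emph{not} an involutive duality on free $W_n(k)$-modules of arbitrary rank. Self-injectivity of $W_n(k)$ gives exactness, and $D_n\circ D_n\cong\mathrm{id}$ holds for \emph{finitely generated} free modules, but for infinite $S$ one has $D_n\bigl(\bigoplus_S W_n(k)\bigr)=\prod_S W_n(k)$, and the algebraic double dual $\Hom_{W_n(k)}\bigl(\prod_S W_n(k),W_n(k)\bigr)$ is strictly larger than $\bigoplus_S W_n(k)$ (already for $n=1$ this is the double dual of an infinite-dimensional vector space). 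Since the values $\underline M(W_n(k))$ will generally have infinite rank, your verification of $\underline I(\Phi(\underline M))\cong\underline M$ via ``$D_m\circ D_m\cong\mathrm{id}$'' does not go through as stated. The repair is already contained in your first paragraph: $\underline I$ applied to $\Phi(\underline M)$ takes the \emph{continuous} dual of $\Phi(\underline M)=\varprojlim_n\Hom_{W_n(k)}\bigl(\underline M(W_n(k)),W_n(k)\bigr)\cong\prod_S W(k)$, endowed with its product (pseudocompact) topology, and your factorization argument gives $\Hom^c_{\Mod_{W(k)}}\bigl(\prod_S W(k),W_m(k)\bigr)\cong\bigoplus_S W_m(k)\cong\underline M(W_m(k))$. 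So the correct mechanism is ``algebraic dual of a free module, then continuous dual of the resulting product, recovers the free module,'' not algebraic biduality; with that substitution, and with the topology on $\Phi(\underline M)$ made explicit so that the dualized transition maps are continuous, your argument closes.
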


In particular, this anti-equivalence extends to one between flat $\Reg{W(k)}$-coalgebras (respectively flat $\Reg{W(k)}$-algebras) and topologically flat $W(k)$-algebras (topologically flat $W(k)$-coalgebras).

\begin{defn}
A \emph{$\mathcal O_{W(k)}$-algebras with a lift of the Frobenius} is a pair $(\underline{C},F),$ where $\underline{C}$ is a flat $\Reg{W(k)}$-algebra and $F$ is an algebra endomorphism of $\underline{C}$ such that $F(k)\colon\underline{C}(k) \rightarrow \underline{C}(k)$ is the $p$th power map on $\underline{C}(k).$ We denote by $\underline{\Alg}_{W(k)}^F$ the category consisting of these, with the obvious morphisms.
\end{defn}
Using Theorem~\ref{thm:antiequivflat}, it is easy to see that this category is dual to the category of topologically flat $W(k)$-coalgebras with a lift of the Verschiebung.

\medskip
Let $\Reg{W(k)}[\mathbf x] = \Reg{W(k)}[x_0,x_1,\dots]$ denote the flat $\Reg{W(k)}$-algebra which to any $A \in \FAlg{W(k)}$ assigns the polynomial ring $A[\mathbf x]$. For each $n \geq 0$, the element $\underline{w}_n \in \Reg{W(k)}[\mathbf x]$ is defined on $A$ by the $n$th ghost polynomial 
\[
\underline{w}_n|_A = x^{p^n}_0+px^{p^{n-1}}_1+ \cdots p^n x_n.
\]
Clearly, whenever we have a sequence of elements $\underline{\mathbf a} = (\underline{a}_0 , \underline{a}_1 , \ldots \underline{a}_n)$ of a flat $\Reg{W(k)}$-algebra $\underline{C}$, we can evaluate $\underline{w}_n$ at these elements to get an element 
\[
\underline{w}_n(\underline{\mathbf a}) \in \underline{C}.
\]

\begin{lemma}[Dwork's lemma for flat $\Reg{W(k)}$-algebras] \label{lemma:dworkoalgebras}
Let $(\underline{C},F) \in \underline{\Alg}_{W(k)}^F.$ Let $\underline{\mathbf g} = (\underline{g}_0, \underline g_1,\ldots)$ be a sequence of elements in $\underline{C}$ such that for all $n \geq 1$,
\[
{\underline{g}_{n+1}}|_{W_n(k)} = F_{W_n(k)}(\underline g_{n}|_{W_n()}) \in \underline{C}(W_n(k)).
\]
Then there are unique elements $\underline{\mathbf q}$ such that $\underline{w}_n(\underline{\mathbf q}) = \underline{g}_n$ for all $n$.
\end{lemma}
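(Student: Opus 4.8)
The plan is to transport the classical proof of Dwork's lemma into the commutative ring
\[
\hat C \;:=\; \varprojlim_n \underline C(W_n(k)),
\]
whose elements are precisely the elements $\underline a \in \underline C$ in the sense recalled before the statement, and to solve $\underline w_n(\underline{\mathbf q}) = \underline g_n$ recursively in $n$. Writing $C_n = \underline C(W_n(k))$ and using that $W_n(k) = W(k)/p^{n+1}$ because $k$ is perfect, I would first isolate the two consequences of flatness that play the role of the classical hypothesis ``the ambient ring is $p$-torsion free'': (i) $\hat C$ is $p$-torsion free, and (ii) the restriction map $\hat C \to C_n$ is surjective with kernel $p^{n+1}\hat C$, so that reduction modulo $p^{n+1}$ is identified with restriction to $W_n(k)$.

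Both facts come from admissibility and flatness of $\underline C$. Admissibility gives $C_n = C_m/p^{n+1}C_m$ for $m \ge n$, and flatness of $C_m$ over $W_m(k) = W(k)/p^{m+1}$ identifies $\operatorname{Ann}_{C_m}(p^{\,n+1})$ with $p^{\,m-n}C_m$; taking $n=0$ and passing to the limit yields (i), while surjectivity of the transition maps and the same annihilator computation yield (ii). Cleanly, the anti-equivalence of Thm.~\ref{thm:antiequivflat} presents $\hat C$ as $\Hom^c_{W(k)}(N,W(k))$ for a topologically flat $W(k)$-module $N$, which is visibly $p$-torsion free and satisfies $\hat C/p^{n+1}\hat C \cong \Hom^c_{W(k)}(N,W_n(k)) = C_n$. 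Using (ii) I would then rewrite the hypotheses as congruences in $\hat C$: the relation ${\underline g_{n+1}}|_{W_n(k)} = F(\underline g_n|_{W_n(k)})$ becomes, after reindexing (and including the evident base instance over $W_0(k)=k$),
\[
\underline g_n \equiv F(\underline g_{n-1}) \pmod{p^{\,n}\hat C}, \qquad n \ge 1,
\]
while the Frobenius-lift condition that $F(k)$ is the $p$-th power map on $\underline C(k) = \hat C/p\hat C$ becomes $F(a) \equiv a^p \pmod{p\hat C}$ for every $a \in \hat C$, since restriction commutes with $F$ and with $p$-th powers.

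With these in hand the recursion is the classical one. From $\underline w_n(\underline{\mathbf q}) = \underline w_{n-1}(\underline q_0^{\,p},\dots,\underline q_{n-1}^{\,p}) + p^{\,n}\underline q_n$ I set $\underline q_0 = \underline g_0$ and, inductively,
\[
p^{\,n}\underline q_n \;=\; \underline g_n - \underline w_{n-1}(\underline q_0^{\,p},\dots,\underline q_{n-1}^{\,p}) \;=:\; R_n,
\]
which determines $\underline q_n$ uniquely by (i) as soon as $R_n \in p^{\,n}\hat C$. To verify the divisibility, suppose $\underline w_j(\underline{\mathbf q}) = \underline g_j$ for $j<n$. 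Since $F$ is a ring endomorphism fixing $p$ and $\underline w_{n-1}$ has integer coefficients,
\[
F(\underline g_{n-1}) = F\bigl(\underline w_{n-1}(\underline q_0,\dots,\underline q_{n-1})\bigr) = \underline w_{n-1}(F\underline q_0,\dots,F\underline q_{n-1}) \equiv \underline w_{n-1}(\underline q_0^{\,p},\dots,\underline q_{n-1}^{\,p}) \pmod{p^{\,n}},
\]
the last step being the standard Witt-polynomial estimate $x_i \equiv y_i \pmod p \Rightarrow \underline w_{n-1}(\mathbf x) \equiv \underline w_{n-1}(\mathbf y)\pmod{p^{\,n}}$ applied to $F\underline q_i \equiv \underline q_i^{\,p}$. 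Combined with $\underline g_n \equiv F(\underline g_{n-1})\pmod{p^{\,n}}$ this gives $R_n \equiv 0 \pmod{p^{\,n}}$, closing the induction; uniqueness of the entire sequence $\underline{\mathbf q}$ is immediate from (i), as each $\underline q_n$ is forced.

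I expect the only real obstacle to be the reduction step (i)--(ii): everything afterwards is the Dwork recursion verbatim. The subtlety is that the individual modules $C_n$ are very far from $p$-torsion free (each is annihilated by $p^{n+1}$), so $p$-torsion freeness is a genuinely global feature of the limit $\hat C$ that must be extracted from flatness and admissibility, and the clean identification $\hat C/p^{n+1}\hat C \cong C_n$ is what converts the functorial restriction-to-$W_n(k)$ hypotheses into honest $p$-adic congruences.
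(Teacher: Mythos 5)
Your proof is correct and shares its skeleton with the paper's: both pass to the limit ring $\hat C = \varprojlim_n \underline C(W_n(k))$, show that it is a $p$-torsion-free $W(k)$-algebra with a Frobenius lift whose reduction modulo $p^{n+1}$ recovers $C_n := \underline C(W_n(k))$, and then conclude by Dwork's lemma. The genuine difference is how that reduction statement is proved. The paper argues homologically: it realizes $\hat C$ via the exact sequence $0 \to \hat C \to \prod_i C_i \to \prod_i C_i \to 0$ (the $\varprojlim^1$ term vanishing by Mittag--Leffler), uses that $- \otimes_{W(k)} W_n(k)$ commutes with infinite products since $W_n(k)$ is finitely presented, and verifies surjectivity on $\Tor^1_{W(k)}$ by showing that the system $\{C_i[p^n]\}_i$ has eventually zero transition maps. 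You instead extract your facts (i) and (ii) by elementary flatness bookkeeping ($C_n = C_m/p^{n+1}C_m$ by admissibility, $\operatorname{Ann}_{C_m}(p^{n+1}) = p^{m-n}C_m$ by flatness), or alternatively from the duality of Thm.~\ref{thm:antiequivflat}; this is more elementary and perfectly valid, though the one step you compress in (ii) --- that the $p^{n+1}$-fold divisions chosen at each stage of the tower can be made compatible, precisely because the ambiguity $p^{m-n}C_m$ at stage $m$ dies in $C_{m-n-1}$ --- is exactly what would need writing out. The remaining differences are cosmetic: the paper cites the classical Dwork lemma applied to $\hat C$ where you inline its recursion, and in doing so you correctly restore the base congruence $\underline g_1|_{k} \equiv (\underline g_0|_{k})^p \pmod{p\hat C}$, which the lemma's hypothesis, read literally with $n \geq 1$, omits but which is needed to start the induction.
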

\begin{proof}
Define the algebra 
\[
\underline{C}(W(k)) = \lim_n \underline{C}(W_n(k)),
\]
where the limit is taken over the canonical projection maps 
\[
p_n\colon\underline{C}(W_n(k)) \rightarrow \underline{C}(W_{n-1}(k)).
\]
Since $\underline{C}$ is admissible, we have that 
\[
\underline{C}(W_{n-1}(k)) \cong \underline{C}(W_n(k)) \otimes_{W_n(k)} W_{n-1}(k)
\]
so that the maps $p_n$ are surjective. Furthermore, since each $\underline{C}(W_n(k))$ is flat as a $W_n(k)$-module, $\underline{C}(W(k))$ is torsion free over $W(k).$ The endomorphism $F$ yields an endomorphism $\underline{C}(W(k)) \rightarrow \underline{C}(W(k)),$ and the elements $\underline{g}_n$ yield honest elements $g_n \in \underline{C}(W(k)).$ We now claim that 
\[
\underline{C}(W(k)) \otimes_{W(k)} W_n(k)\cong \underline{C}(W_n(k)).
\]
 Indeed, let 
\[
0 \rightarrow \underline{C}(W(k)) \rightarrow \prod_{i=1}^{\infty} \underline{C}(W_i(k)) \rightarrow \prod_{i=1}^\infty \underline{C}(W_i(k)) \rightarrow 0
\]
 be the exact sequence defining the limit, where $\lim^1$ vanishes because the system is Mittag-Leffler. Note that since $W_n(k)$ is a finitely presented $W(k)$-module, tensoring with $W_n(k)$ commutes with infinite products. Since 
\[
\underline{C}(W_i(k)) \otimes_{W(k)} W_n(k) \cong \underline{C}(W_n(k))
\]
 for $i \geq n,$ the limit of the system $\{\underline{C}(W_i(k)) \otimes_{W(k)} W_n(k)\}_i$ is $\underline{C}(W_n(k))$. Thus to show that $\underline{C}(W(k)) \otimes_{W(k)} W_n(k)$ coincides with this limit, it is enough to show that the induced map 
\[
\Tor^1_{W(k)}\Bigl(\prod_{i=1}^\infty \underline{C}(W_i(k)),W_n(k)\Bigr) \rightarrow \Tor^1_{W(k)} \Bigl(\prod_{i=1}^\infty \underline{C}(W_i(k)),W_n(k)\Bigr)
\]
 is surjective. But this follows since the inverse system $\{\underline{C}(W_i(k))[p^n]\}_i$ is Mittag-Leffler. Indeed, by flatness we have that 
\[
\underline{C}(W_i(k))[p^n] = p^{i-n} \underline{C}(W_i(k)),
\]
 where it is understood that $p^{i-n}=1$ if $i \leq n.$ But given $j,$ this implies that the map 
\[
\underline{C}(W_{j+n}(k))[p^n] = p^j \underline{C}(W_{j+n}(k)) \to p^{j-n} \underline{C}(W_j(k)) = \underline{C}(W_j(k))[p^n]
\]
 is zero, since $p^j=0$ in the latter algebra. In conclusion, $\underline{C}(W(k))$ is a torsion-free $W(k)$-algebra  with a lift of the Frobenius. The usual Dwork lemma, applied to $\underline{C}(W(k))$, gives a sequence $\mathbf q \in \underline{C}(W(k))$ such that $w_n(\mathbf q) = g_n.$ This sequence gives elements $\underline{\mathbf q}$ satisfying the requirements of the lemma. Conversely, we see that any sequence of elements $\underline{\mathbf q}$ arises in this manner. 
\end{proof}

Let $\WtH,$ be the $\Reg{W(k)}$-Hopf algebra which to any $A \in \FAlg{W(k)}$ associates the Hopf algebra $A[x_0, x_1, x_2,\ldots]$ representing the functor taking a $A$-algebra to its ring of $p$-typical Witt vectors. It comes with a Frobenius lift $F_{\WtH}.$ We denote by $\underline{\operatorname{Hopf}}_{W(k)}^F$ the category of pairs $(\underline{H},F_{\underline H}),$ where $\underline{H}$ is a $\Reg{W(k)}$-Hopf algebra and $F_{\underline H}$ is a lift of the Frobenius, with the obvious morphisms. Given a $\Reg{W(k)}$-Hopf algebra $\underline{H},$ we say that an element $\underline{x} \in \underline{H}$ is \emph{primitive} if $\underline{x}|_A$ is a primitive element of $\underline{H}(A)$ for each $A \in \FAlg{W(k)}.$ We denote by $P \underline{H}$ the primitives of a $\Reg{W(k)}$-Hopf algebra.

\begin{lemma} \label{lemma:mapsfromWitt}
Let $\underline{H} \in \underline{\operatorname{Hopf}}_{W(k)}^F$. Then there is a natural isomorphism 
\[
e\colon \Hom_{\underline{\operatorname{Hopf}}_{W(k)}^F}(\WtH,\underline{H}) \xrightarrow{\cong} P \underline{H}
\]
 given by $e(f)=f(\underline{x_0}).$ 
\end{lemma}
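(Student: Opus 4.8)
The plan is to identify morphisms $\WtH \to \underline{H}$ with primitive elements of $\underline{H}$ by tracking where the universal element $\underline{x_0}$ must go, and then use Dwork's lemma (Lemma~\ref{lemma:dworkoalgebras}) to reconstruct a morphism from the data of a single primitive. First I would observe that the map $e$ is well defined: if $f\colon \WtH \to \underline{H}$ is a morphism of $\Reg{W(k)}$-Hopf algebras commuting with the Frobenius lifts, then $f(\underline{x_0})$ is primitive because $\underline{x_0} \in \WtH$ is primitive (the first Witt component is additive, i.e. the coaddition of Witt vectors sends $x_0 \mapsto x_0 \otimes 1 + 1 \otimes x_0$), and a Hopf algebra morphism carries primitives to primitives. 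Naturality in $\underline{H}$ is immediate from functoriality.

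The core of the argument is constructing the inverse. Given a primitive $\underline{p} \in P\underline{H}$, I would produce a morphism $f_{\underline{p}}$ as follows. Since $\WtH$ is freely generated as an $\Reg{W(k)}$-algebra by the Witt coordinates $\underline{x_0}, \underline{x_1}, \dots$, specifying a $\Reg{W(k)}$-algebra map out of it amounts to choosing the images $f(\underline{x_n})$; but these images are \emph{not} free, as they must be compatible with the Witt Hopf-algebra comultiplication and, crucially, with the Frobenius lift. The standard way to package this is via the ghost components: the elements $\underline{w_n} \in \WtH$ are primitive (each ghost polynomial is additive for Witt coaddition), and the Frobenius lift $F_{\WtH}$ acts on ghost components by $\underline{w_n} \mapsto \underline{w_{n+1}}$ up to the congruence governing $F$. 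I would therefore set $\underline{g}_n = F_{\underline H}^n(\underline{p}) \in \underline{H}$ and check that this sequence satisfies the hypothesis of Lemma~\ref{lemma:dworkoalgebras}, namely the congruence ${\underline{g}_{n+1}}|_{W_n(k)} = F_{W_n(k)}({\underline{g}_n}|_{W_n(k)})$, which holds tautologically since $\underline{g}_{n+1} = F_{\underline H}(\underline{g}_n)$ and $F_{\underline H}(k)$ is the $p$th power map. Dwork's lemma then yields unique elements $\underline{\mathbf q}$ with $\underline{w_n}(\underline{\mathbf q}) = \underline{g}_n$, and I would define $f_{\underline{p}}(\underline{x_n}) = \underline{q}_n$. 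This determines a $\Reg{W(k)}$-algebra map, and one verifies it is a Hopf-algebra map commuting with $F$ by checking the defining identities on the ghost elements $\underline{w_n}$, where everything becomes a statement about the primitive $\underline{p}$ and its Frobenius iterates.

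Finally I would check that $e$ and $\underline{p} \mapsto f_{\underline{p}}$ are mutually inverse. One direction is $e(f_{\underline{p}}) = f_{\underline{p}}(\underline{x_0}) = \underline{q}_0 = \underline{g}_0 = \underline{p}$ (using $\underline{w_0} = \underline{x_0}$). For the other direction, given $f$, I would show $f = f_{e(f)}$ by the uniqueness clause of Dwork's lemma: both $f$ and $f_{e(f)}$ send $\underline{w_n}$ to $F_{\underline H}^n(f(\underline{x_0}))$ (since $f$ commutes with $F$ and sends the primitive ghost $\underline{w_n}$ appropriately), so their values on the $\underline{x_n}$ must agree by the uniqueness of the $\underline{\mathbf q}$.

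I expect the main obstacle to be the Hopf-compatibility and Frobenius-compatibility verifications for the constructed $f_{\underline{p}}$: translating the condition ``$f_{\underline p}$ respects comultiplication'' into a statement at the level of ghost coordinates and confirming that the primitivity of $\underline{p}$ is exactly what is needed. The ghost components linearize the Witt comultiplication, so primitivity of $\underline{p}$ should directly give primitivity of each $\underline{g}_n = F_{\underline H}^n(\underline{p})$, and Dwork's lemma guarantees the transported structure on the $\underline{q}_n$ matches that of the universal Witt vector; but making this precise while staying at the level of $\Reg{W(k)}$-algebra functors (where one must argue objectwise over all $A \in \FAlg{W(k)}$, or equivalently over $\lim_n \underline{C}(W_n(k))$) is the delicate bookkeeping step.
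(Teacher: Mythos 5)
Your proposal is correct and takes essentially the same approach as the paper's proof (which itself follows Goerss's Proposition~1.9): the map is pinned down by the uniqueness clause of Dwork's lemma (Lemma~\ref{lemma:dworkoalgebras}) via $f(\underline{w}_n) = F_{\underline H}^n(f(\underline{x_0}))$, and the inverse is built by applying Dwork's lemma to the Frobenius iterates of a given primitive. The remaining Hopf- and Frobenius-compatibility checks for the constructed map are, exactly as you anticipate, direct applications of Dwork uniqueness on the ghost elements.
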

\begin{proof}
This proof is the same as the proof of \cite[Proposition ~1.9]{goerss:hopf-rings}; we include a sketch for the reader's convenience. The map $e$ is injective by Lemma \ref{lemma:dworkoalgebras} and the fact that $\underline{F}^n_H(f(\underline{x_0})) = f(\underline{w}_n).$ For the surjectivity, one notes that given a primitive $y \in P \underline{H},$ all the elements $F_{\underline H}^n(y)$ are primitive as well. Using Dwork's lemma again, we get a map $f\colon \WtH \rightarrow \underline{H}$ such that $f(\underline{w}_n) = F_{\underline H}^n(y).$ It remains to show is that $f$ is a morphism of $\Reg{W(k)}$-Hopf algebras and that $f$ commutes with the Frobenius. Both of these are direct applications of the uniqueness of Lemma \ref{lemma:dworkoalgebras}.
\end{proof}

\subsection{Proof of Theorem~\ref{thm:indecomposableequiv}}

There are two objects in $\FgpsV$ which will play a key role in the proof of \ref{thm:indecomposableequiv}, introduced to the reader in \ref{subsec:backgroundWitt}. The first is the functor 
\[
CW^c_{W(k)}\colon \FAlg{W(k)} \rightarrow \Ab,
\]
 the functor of connected co-Witt vectors, with the Verschiebung lift $V_{CW^c_{W(k)}},$ where 
 \[
 V_{CW^c_{W(k)}}(\dots,a_{-2},a_{-1},a_0) = (\dots,a_{-2},a_{-1}).
 \]
The other object is the functor $W^{\fin}_{W(k)}$ with the Verschiebung lift $V_{W^{\fin}_{W(k)}}.$ For $A \in \FAlg{W(k)},$ the Verschiebung acts on an element $(a_0,a_1,a_2, \ldots)$ of $W^{\fin}_{W(k)}(A)$ by shifting to the right. 
We have that 
\[
Q(W^{\fin}_{W(k)}) \cong \prod_{i=0}^\infty W(k)
\]
where
\[
V(x_0, x_1,x_2, \ldots ) = (F^{-1}_{W(k)}(x_1), F^{-1}_{W(k)}(x_2) , \dots) \in \prod_{i=0}^\infty W(k).
\]
On the other hand, $Q(CW_{W(k)}^c) \cong \widehat{\bigoplus}_{i=0}^\infty W(k)$, where $\widehat{\bigoplus}$ denotes the profinite completion of the direct sum. In this situation, $V$ acts by shifting to the right and taking $F^{-1}_{W(k)}$ of the components.

\begin{lemma}\label{lemma:homsofindecomposables}
For $M \in \mathcal{M}_V$, there are natural isomorphisms 
\[
\Hom_{\mathcal{M}_V}(M,Q(W^{\fin}_{W(k)})) \cong \Hom_{W(k)}^c(M,W(k)) \cong M^*
\]
and
\[
\Hom_{\mathcal{M}_V}(Q(CW^c_{W(k)}),M) \cong \Hom^c_{W(k)}(W(k),M) \cong M.
\]
\end{lemma}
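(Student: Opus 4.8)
The plan is to prove both isomorphisms by exhibiting $Q(W^{\fin}_{W(k)})$ and $Q(CW^c_{W(k)})$ as, respectively, a cofree and a free object of $\mathcal{M}_V$ on the rank-one module $W(k)$, and then to read off the two $\Hom$-sets from the corresponding universal properties. In both cases the essential mechanism is the same: $V$-equivariance forces every ``coordinate'' of a morphism to be determined by a single one, and the $F^{-1}_{W(k)}$-linearity of $V_M$ exactly absorbs the Frobenius twists appearing in the shift operators describing $V$ on these two objects.

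For the first isomorphism I would write $P=Q(W^{\fin}_{W(k)})=\prod_{i\geq0}W(k)$ with coordinate projections $\pi_n$, and send $f\in\Hom_{\mathcal{M}_V}(M,P)$ to $\pi_0 f\in M^*=\Hom^c_{W(k)}(M,W(k))$. Since the $V$ on $P$ is the left shift with a Frobenius twist, one has $\pi_i V_P=F^{-1}_{W(k)}\pi_{i+1}$, so the relation $fV_M=V_P f$ gives the recursion $\pi_{n+1}f=F_{W(k)}(\pi_n f\circ V_M)$ and hence $\pi_n f=F^n_{W(k)}(\phi\circ V_M^n)$ with $\phi=\pi_0 f$. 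This yields injectivity at once. For surjectivity, given $\phi\in M^*$ I would simply \emph{define} $f$ by these formulas: the $F^{-1}_{W(k)}$-linearity of $V_M$ makes each $F^n_{W(k)}(\phi\circ V_M^n)$ genuinely $W(k)$-linear and continuous, and because a map into a product is continuous as soon as all of its components are, $f$ is a well-defined morphism with $\pi_0 f=\phi$; the equality $fV_M=V_P f$ is then checked component-wise. Crucially, no convergence condition intervenes here, precisely because the target is a product.

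For the second isomorphism I would write $N=Q(CW^c_{W(k)})=\widehat{\bigoplus}_{i\geq0}W(k)$ with $Ve_i=e_{i+1}$, so that $e_i=V^ie_0$ and the finite $W(k)$-combinations of the $e_i$ are dense in $N$. Sending $g\in\Hom_{\mathcal{M}_V}(N,M)$ to $g(e_0)\in M=\Hom^c_{W(k)}(W(k),M)$, the relation $gV=V_M g$ forces $g(e_i)=V_M^i(g(e_0))$, so $g$ is determined on a dense submodule by the single value $g(e_0)$, which together with continuity gives injectivity. Surjectivity amounts to showing that for every $m\in M$ the assignment $e_i\mapsto V_M^i m$, extended $W(k)$-linearly over $\bigoplus_i W(k)e_i$, extends to a continuous morphism $N\to M$.

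I expect this continuous extension to be the main obstacle. Whereas in the first isomorphism the target was a product and continuity came for free, here one must use the explicit profinite topology on $N=Q(CW^c_{W(k)})$ inherited from the pro-algebra presentation of $\Reg{CW^c_{W(k)}}$ recalled in Subsection~\ref{subsec:backgroundWitt}, together with the continuity of $V_M$ and the topological freeness of $M$, to verify that the prescribed map on the dense submodule is uniformly continuous and therefore extends uniquely to the completion, with values in the complete module $M$. This is the transpose of the automatic well-definedness in the product case, reflecting that $N$ and $P$ are dual topologically free modules under the pseudocompact duality of Thm.~\ref{thm:antiequivflat}, with the right shift on $N$ dual to the left shift on $P$. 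Once the extension is established, $V$-equivariance of the extended map follows by density from its validity on $\bigoplus_i W(k)e_i$, and naturality of both isomorphisms in $M$ is immediate from the constructions.
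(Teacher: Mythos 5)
Your proposal is correct and follows essentially the same route as the paper: the first isomorphism is proved exactly as there, via the recursion $f_n = F^n_{W(k)}\circ f_0\circ V_M^n$ forced by $V$-equivariance, and the second by extending the map $e_i \mapsto V_M^i m$ from the dense submodule $\bigoplus_{i=0}^\infty W(k)e_i$ to the completion. The only difference is one of emphasis: the step you single out as the main obstacle is automatic rather than delicate, because $Q(CW^c_{W(k)})$ is the \emph{profinite} completion of $\bigoplus_{i=0}^\infty W(k)$ (every finite-colength submodule is open), so any abstract $W(k)$-homomorphism into a pseudocompact module $M$ is already continuous and extends uniquely -- this is the ``freeness'' the paper invokes, and it uses neither the continuity of $V_M$ nor the topological freeness of $M$.
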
 
\begin{proof}
Let $f\colon M \rightarrow Q(W^{\fin}_{W(k)}) \cong \prod_{i=0}^\infty W(k)$ be a map in $\mathcal M_V.$ Writing 
\[
f(a) = (f_0(a),f_1(a),\ldots),
\]
 we see that 
\[
f(V^i a ) = (F^{-i}_{W(k)}(f_i(a)),F^{-i}_{W(k)}( f_{i+1}(a)), \ldots),
\]
 that is, $f_0 \circ V^i = F^{-i}_{W(k)} \circ f_i$. Thus $f \mapsto f_0$ gives the first natural isomorphism.

For the second claim, let $f\colon Q(CW^c_{W(k)}) \rightarrow M$ be a map in $\mathcal M_V$.  Since $Q(CW^c_{W(k)}) \cong \widehat{\bigoplus}_{i=0}^\infty W(k)$ is free in the sense that any (not necessarily continuous) homomorphism $\bigoplus_{i=0}^\infty W(k) \rightarrow M $ extends to a continuous homomorphism from $Q(CW^c_{W(k)}),$ giving a $V$-linear homomorphism $f\colon \bigoplus_{i=0}^\infty W(k) \rightarrow M$ is the same as giving a map $W(k) \rightarrow M$.
\end{proof}

\begin{lemma} \label{lemma:indecomposablesfaithful}
For $G,H \in \FgpsV,$ the natural map 
\[
\Hom_{\FgpsV}(G,H) \rightarrow \Hom_{\mathcal{M}_V}(Q(H),Q(G))
\]
 is an injection. 
\end{lemma}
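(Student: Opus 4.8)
The plan is to translate the statement to the Hopf-algebra side and prove that a morphism of the representing complete $W(k)$-Hopf algebras is determined by its effect on indecomposables, via an induction along the augmentation-ideal filtration. Under the anti-equivalence $G \mapsto \Reg{G}$, a pair of morphisms $f,g\colon G \to H$ in $\FgpsV$ with $Q(f)=Q(g)$ corresponds to a pair of Hopf-algebra maps $f,g\colon \Reg{H}\to\Reg{G}$ inducing the same map $I_H/\overline{I_H}^2 \to I_G/\overline{I_G}^2$, and I must show $f=g$. Set $\theta=f-g\colon \Reg{H}\to\Reg{G}$; since both maps preserve the augmentation, $\theta$ carries $I_H$ into $I_G$, and the hypothesis says exactly that $\theta(I_H)\subseteq\overline{I_G}^2$. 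Note that the Verschiebung lifts play no role in the injectivity: the claim is really that the underlying map of pseudocompact modules on indecomposables already pins down $f$.

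The core is an induction showing $\theta(I_H)\subseteq\overline{I_G}^{n}$ for every $n\geq 2$. Assume $\theta(I_H)\subseteq\overline{I_G}^n$ with $n\geq 2$. Because $f$ and $g$ are algebra maps, the identity $f(a)f(b)-g(a)g(b)=\theta(a)f(b)+g(a)\theta(b)$ shows $\theta(I_H^2)\subseteq\overline{I_G}^{n+1}$, so $\theta$ descends to a map $I_H/\overline{I_H}^2 \to \overline{I_G}^n/\overline{I_G}^{n+1}$, i.e. to a degree-$n$ element $\xi=\theta(a)\bmod\overline{I_G}^{n+1}$ of the associated graded $\operatorname{gr}\Reg{G}$. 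Using that $f,g$ are also coalgebra maps, a direct computation with the reduced coproduct $\bar\psi(a)=\psi(a)-a\otimes1-1\otimes a$ gives $\bar\psi_G(\theta(a))=\sum \theta(a')\otimes f(a'')+g(a')\otimes\theta(a'')$, where $\bar\psi_H(a)=\sum a'\otimes a''\in I_H\hat\otimes I_H$. By the inductive hypothesis each summand lies in $\overline{I_G}^{n+1}\hat\otimes\Reg{G}+\Reg{G}\hat\otimes\overline{I_G}^{n+1}$, so its image in the degree-$n$ part of $\operatorname{gr}\Reg{G}\otimes\operatorname{gr}\Reg{G}$ vanishes; hence $\xi$ is primitive in $\operatorname{gr}\Reg{G}$.

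Since $G$ is connected and smooth, $\Reg{G}$ is a power series algebra over $W(k)$, so $\operatorname{gr}\Reg{G}$ is the symmetric algebra $\operatorname{Sym}_{W(k)}(Q(\Reg{G}))$ with $Q(\Reg{G})$ placed in degree $1$ and primitive. This algebra is torsion-free over $W(k)$, and after inverting $p$ it is a polynomial Hopf algebra over the characteristic-zero field $\Frac(W(k))$ with primitive generators, whose primitives are concentrated in degree $1$. Therefore the primitives of $\operatorname{gr}\Reg{G}$ in degree $n\geq 2$ vanish, so $\xi=0$ and $\theta(I_H)\subseteq\overline{I_G}^{n+1}$, completing the induction. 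Consequently $\theta(I_H)\subseteq\bigcap_n\overline{I_G}^n=0$ by the Hausdorff completeness of the pseudocompact algebra $\Reg{G}$, whence $f=g$.

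The main obstacle is the inductive step: identifying $\xi$ as a primitive of the associated graded through the reduced-coproduct computation, and then invoking torsion-freeness together with the characteristic-zero structure of $\Frac(W(k))$ to kill all higher primitives. The bookkeeping that the augmentation filtration is a Hopf-algebra filtration, so that $\bar\psi$ respects $\sum_{i+j=n}\overline{I_G}^i\hat\otimes\overline{I_G}^j$, should be recorded carefully but is routine for power series Hopf algebras.
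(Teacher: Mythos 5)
Your proof is correct in substance, but it takes a genuinely different route from the paper's. The paper proves injectivity by a base-change square: since $G$ and $H$ are smooth, $\Hom_{\FgpsV}(G,H)$ injects into the homomorphisms of the base changes to the fraction field $\Frac(W(k))$ (the representing algebras are torsion-free power series rings), the analogous map on indecomposables is also injective, and over the characteristic-zero field $\Frac(W(k))$ the passage from homomorphisms of connected formal groups to homomorphisms of indecomposables is a \emph{bijection} by a cited result of Fontaine (Proposition II.10.6); commutativity of the square then forces the top map to be injective. You instead run a self-contained induction along the augmentation filtration, showing the linear difference $\theta=f-g$ satisfies $\theta(I_H)\subseteq\overline{I_G}^{\,n}$ for every $n$, the obstruction at each stage being a degree-$n$ primitive of $\operatorname{gr}\Reg{G}$, which you kill using torsion-freeness together with the characteristic-zero structure after inverting $p$. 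Both arguments exploit the same underlying principle --- torsion-freeness of smooth formal groups over $W(k)$ reduces the question to characteristic zero, where indecomposables detect everything --- but the paper outsources the characteristic-zero input to Fontaine, whereas you reprove it in graded form; yours is longer but self-contained and makes explicit exactly where inverting $p$ is used. Three small repairs for your write-up: (i) the summands $\theta(a')\otimes f(a'')$ and $g(a')\otimes\theta(a'')$ lie in $\overline{I_G}^{\,n}\hat\otimes I_G + I_G\hat\otimes\overline{I_G}^{\,n}$, \emph{not} in $\overline{I_G}^{\,n+1}\hat\otimes\Reg{G}+\Reg{G}\hat\otimes\overline{I_G}^{\,n+1}$ as written; what you need, and what is true, is that they have total degree $\geq n+1$ for the tensor-product filtration, which is what makes their image in the degree-$n$ part of $\operatorname{gr}\otimes\operatorname{gr}$ vanish. (ii) Since $\Reg{G}$ may be a power series ring in infinitely many variables, $\operatorname{gr}\Reg{G}$ is a \emph{completed} symmetric algebra on the pseudocompact module $Q(\Reg{G})$; the vanishing of primitives in degree $n\geq 2$ still holds (for instance, the degree-$n$ component of the iterated coproduct followed by multiplication equals $n!\cdot\mathrm{id}$ by continuity from the dense subalgebra generated in degree one, and $n!$ is injective on a torsion-free $W(k)$-module), but this needs to be said. (iii) The separatedness $\bigcap_n\overline{I_G}^{\,n}=0$ is not a consequence of Hausdorffness of the pseudocompact topology --- the ideals $\overline{I_G}^{\,n}$ need not be open --- but follows from the explicit power-series presentation of $\Reg{G}$, available because $G$ is connected and smooth.
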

\begin{proof}
Arguing as in {\cite[Proposition ~2.10]{goerss:hopf-rings}}, we denote by $\FGpsc{QW(k)}$ the category of connected formal groups over the fraction field $QW(k)$ of $W(k)$ and by $\mathcal{M}_{QW(k)}$ the category of topologically free vector spaces over $QW(k)$.  For a formal group $G$ over $W(k),$ we let $G_{QW(k)}$ be the base change to $QW(k)$ and will use the same notation for the base change of an element of $\mathcal{M}_V$ to $\mathcal{M}_{QW(k)}.$  We then have a diagram
\[
\begin{tikzcd}[column sep = small]  \Hom_{\FgpsV} (G,H) \arrow[d] \arrow[r] & \Hom_{\mathcal{M}_V} (Q(H),Q(G)) \arrow[d] \\ \Hom_{\FGpsc{QW(k)}}(G_{QW(k)}, H_{QW(k)}) \arrow[r] & \Hom_{\mathcal{M}_{QW(k)}} (Q(H)_{QW(k)}, Q(G)_{QW(k)}). \end{tikzcd}
\]
 The left hand vertical map is an injection since $G$ and $H$ are smooth, and so is the right hand vertical map since $\Reg{G}$ and $\Reg{H}$ are power series rings. The lower horizontal map is a bijection by \cite[Chapitre II, Proposition 10.6]{fontaine:groupes-divisibles}, thus the upper horizontal map is injective. 
\end{proof}

\begin{lemma} \label{lemma:wittindecomposables}
If $H \in \FgpsV,$ then 
\[
Q\colon\Hom_{\FgpsV}(W^{\fin}_{W(k)},G) \rightarrow \Hom_{\mathcal{M}_V}(QG,Q(W^{\fin}_{W(k)})) \cong Q(H)^* 
\]
 is a bijection.
\end{lemma}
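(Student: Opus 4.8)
The plan is to deduce bijectivity by combining the injectivity already supplied by Lemma~\ref{lemma:indecomposablesfaithful} with a surjectivity argument obtained by transporting the entire Hom-set to the dual category $\underline{\operatorname{Hopf}}_{W(k)}^F$, where Lemma~\ref{lemma:mapsfromWitt} computes precisely the relevant object. First I would read the statement with a single object $G$ (so that the target reads $Q(G)^*$). By Lemma~\ref{lemma:indecomposablesfaithful} the map $Q$ is injective, and by the first isomorphism of Lemma~\ref{lemma:homsofindecomposables} its target is naturally $\Hom_{\mathcal{M}_V}(Q(G),Q(W^{\fin}_{W(k)})) \cong Q(G)^*$. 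Hence it remains only to produce, for every element of $Q(G)^*$, a morphism $W^{\fin}_{W(k)} \to G$ in $\FgpsV$ realizing it, and to verify that $Q$ sends this morphism back to the given element.

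For surjectivity I would pass through the two contravariant equivalences $\Reg{(-)}\colon \FgpsV \to \mathcal H_V$ and $\underline I\colon \mathcal H_V \to \underline{\operatorname{Hopf}}_{W(k)}^F$ of Theorem~\ref{thm:antiequivflat}; their composite $\Phi = \underline I \circ \Reg{(-)}$ is a fully faithful covariant functor carrying the Verschiebung lift to the Frobenius lift. The key geometric input is the identification $\Phi(W^{\fin}_{W(k)}) \cong \WtH$, matching the finitely supported Witt vector formal group with the Witt Hopf algebra; it follows from the explicit descriptions of $\Reg{W^{\fin}_{W(k)}}$ and of $\WtH$ on the variables $x_0,x_1,\dots$. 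Granting this, full faithfulness of $\Phi$ gives
\[
\Hom_{\FgpsV}(W^{\fin}_{W(k)},G) \cong \Hom_{\underline{\operatorname{Hopf}}_{W(k)}^F}(\WtH,\Phi(G)) \cong P(\Phi(G)),
\]
the last isomorphism being exactly Lemma~\ref{lemma:mapsfromWitt}. Finally, since $\underline I$ is a strong monoidal anti-equivalence interchanging the algebra and coalgebra structures, it sends the cokernel defining $Q(G)=I_G/\overline{I_G}^2$ to the kernel defining primitives, so that $P(\Phi(G)) = P(\underline I(\Reg{G})) \cong Q(G)^*$; composing the displayed isomorphisms produces a bijection $\Hom_{\FgpsV}(W^{\fin}_{W(k)},G) \cong Q(G)^*$.

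What remains, and what I expect to be the main obstacle, is twofold. First, one must genuinely establish $\Phi(W^{\fin}_{W(k)}) \cong \WtH$ as objects of $\underline{\operatorname{Hopf}}_{W(k)}^F$, i.e. not merely as $\Reg{W(k)}$-Hopf algebras but compatibly with the Frobenius lift $F_{\WtH}$ and with the Verschiebung lift on $W^{\fin}_{W(k)}$; this is where the explicit ghost-component descriptions and Dwork's lemma (Lemma~\ref{lemma:dworkoalgebras}) enter. Second, and more delicate, one must check that the abstract bijection just constructed actually coincides with the map $Q$ of the statement. This is a diagram chase tracking the primitive $\underline{x_0} \in P(\WtH)$, which under the map $e$ of Lemma~\ref{lemma:mapsfromWitt} and the identification $\Phi(W^{\fin}_{W(k)}) \cong \WtH$ corresponds to the generator of $Q(W^{\fin}_{W(k)}) \cong \prod_{i \ge 0} W(k)$ in the $0$th coordinate. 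Matching $f \mapsto \Phi(f)(\underline{x_0})$ with $f \mapsto f_0 = \pi_0 \circ Q(f)$ (the first-coordinate projection appearing in the proof of Lemma~\ref{lemma:homsofindecomposables}) identifies the two maps and forces $Q$ itself to be surjective, completing the proof.
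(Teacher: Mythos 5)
Your proposal reproduces the paper's own strategy: injectivity is quoted from Lemma~\ref{lemma:indecomposablesfaithful}, and surjectivity is obtained by transporting the Hom-set through the duality of Thm.~\ref{thm:antiequivflat} into $\underline{\operatorname{Hopf}}_{W(k)}^F$, identifying $W^{\fin}_{W(k)}$ with $\WtH$ there, and invoking Lemma~\ref{lemma:mapsfromWitt} together with the identification of $P(\underline{I}(\Reg{G}))$ with $Q(G)^*$. Your covariant phrasing of $\Phi=\underline{I}\circ\Reg{(-)}$ is, if anything, cleaner than the paper's wording (which has the arrow $\underline{I}(G)\to\WtH$ backwards and mixes up $H$ and $G$), and your second ``remaining obstacle'' --- checking that the abstract bijection agrees with the map $Q$ by tracking the distinguished primitive $\underline{x_0}$ --- is handled in the paper at exactly the level of detail you propose; that part is not a real gap.

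The genuine gap is the first obstacle you flag and then leave open: the identification $\Phi(W^{\fin}_{W(k)})\cong\WtH$ compatibly with the Frobenius/Verschiebung lifts. Contrary to your initial formulation, this does not ``follow from the explicit descriptions \dots on the variables $x_0,x_1,\dots$'': the object $\Phi(W^{\fin}_{W(k)})(A)=\Hom^c_{W(k)}(\Reg{W^{\fin}_{W(k)}},A)$ is the continuous linear dual of a completed polynomial algebra, with algebra structure dual to Witt-vector addition and coalgebra structure dual to the multiplication of $\Reg{W^{\fin}_{W(k)}}$; identifying this with the polynomial Hopf algebra $\WtH(A)=A[x_0,x_1,\dots]$ is precisely a Cartier-type duality statement for Witt vectors, not a change of variables. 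Nor can ghost components plus Dwork's lemma (Lemma~\ref{lemma:dworkoalgebras}) do this alone: Dwork's lemma manufactures elements with prescribed ghost components (that is how Lemma~\ref{lemma:mapsfromWitt} is proved), but it gives no criterion for a comparison map to be an isomorphism. The ingredient the paper uses, and which your plan is missing, is the Artin--Hasse exponential: it furnishes a natural map $\WtH(A)\to\underline{I}(W^{\fin}_{W(k)})(A)$ for every finite $W(k)$-algebra $A$; by \cite[V, \textsection 4, n\textsuperscript{o} 4, Cor. 4.6]{demazure-gabriel:groupes-algebriques-1} this map is an isomorphism modulo $p$; and one then concludes over $W(k)$ by two applications of Nakayama's lemma, the first giving surjectivity and the second giving injectivity via the projectivity (topological flatness) of $\underline{I}(W^{\fin}_{W(k)})(A)$. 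Until this (or an equivalent) argument is supplied, the surjectivity half of the lemma remains unproved.
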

\begin{proof}
By Lemma~\ref{lemma:indecomposablesfaithful}, this map is an injection. To prove surjectivity, we first note that the functor $\underline I$ of Thm.~\ref{thm:antiequivflat} extends to an anti-equivalence between the category $\FgpsV$ and the category of $\Reg{W(k)}$-Hopf algebras with a lift of the Frobenius \cite[Exposé VII\textsubscript{B}, 2.2.1]{Gille:SGA3}. Under this duality, $W^{\fin}_{W(k)}$ with its Verschiebung lift corresponds to the $\Reg{W(k)}$-Hopf algebra $\WtH$ of Lemma~\ref{lemma:mapsfromWitt}. To see this, note that for any finite $W(k)$-algebra $A,$ the Artin-Hasse exponential gives a map $$\WtH(A) \rightarrow \underline{I}(W^{\fin}_{W(k)})(A).$$ By \cite[V, \textsection 4, n.\kern-0.5bp\textsuperscript{o} 4, Cor. 4.6]{demazure-gabriel:groupes-algebriques-1}, this map is an isomorphism modulo $p.$ Thus, the original map $\WtH(A) \rightarrow \underline{I}(W^{\fin}_{W(k)})(A)$ is an isomorphism by two applications of Nakayama's lemma, the first application showing that it is surjective, and then using the projectivity of $\underline{I}(W^{\fin}_{W(k)})(A)$ to show that it is injective. By duality, an element
\[
x \in Q(G)^* = \Hom^c_{W(k)}(Q(G),W(k))
\]
gives a coherent family of primitives in $\underline{I}(G).$ By Lemma~\ref{lemma:mapsfromWitt}, this coherent family of primitives gives a map $\underline{I}(G) \rightarrow \WtH  \cong \underline{I}(W^{\fin}_{W(k)})$, and upon dualization we get a map $f:W^{\fin}_{W(k)} \rightarrow H$ of formal groups such that $Qf = x.$ 
\end{proof}

In preparation of the next proposition, note that the forgetful functor $\mathcal{M}_V \rightarrow \mathcal{M}$ to the category of topologically free modules over $W(k)$ has a right adjoint $J,$ taking $M \in \mathcal{M}$ to $JM = \prod_{i=0}^\infty M$ with $V$ acting by
\[
V(m_0, m_1, m_2, \ldots ) = (F^{-1}_{W(k)}(m_1),F^{-1}_{W(k)}(m_2), \ldots),
\]
where by abuse of notation, we write $F_{W(k)}$ for the automorphism of $M$ that corresponds to the Frobenius on each component under the isomorphism $M \cong \prod_i W(k)$. We will denote by $\mathbf{PC}_{W(k)}$ the category of pseudocompact modules over $W(k).$ We have a forgetful functor $\mathcal{M}_V \rightarrow \mathbf{PC}_{W(k)}.$ 

\begin{prop} \label{prop:indecomposableadjoint}
The  functor $Q\colon \left(\FgpsV\right)^\op \rightarrow \mathcal{M}_V$ has a left adjoint $\FrM$. For $M \in \mathcal{M}_V$, the counit map $Q \FrM(M) \rightarrow M$  is an isomorphism.
\end{prop}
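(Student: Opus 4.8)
The plan is to construct the left adjoint $\FrM$ explicitly and verify the adjunction together with the counit claim in one stroke. Since $Q\colon (\FgpsV)^\op \to \mathcal{M}_V$ is known from Lemma~\ref{lemma:wittindecomposables} to send the generating object $W^{\fin}_{W(k)}$ to $Q(W^{\fin}_{W(k)}) \cong \prod_{i\geq 0} W(k)$ and to induce the bijection $\Hom_{\FgpsV}(W^{\fin}_{W(k)},G) \cong Q(G)^*$, the natural candidate is to build $\FrM(M)$ out of copies of $W^{\fin}_{W(k)}$ indexed by a presentation of $M$. First I would treat the free case: if $M = JN$ is cofree on a topologically free module $N \in \mathcal{M}$ (using the right adjoint $J$ to the forgetful functor $\mathcal{M}_V \to \mathcal{M}$ described before the proposition), then I expect $\FrM(JN)$ to be a product of copies of $W^{\fin}_{W(k)}$ encoded by $N$, and the adjunction isomorphism should follow by combining Lemma~\ref{lemma:wittindecomposables} with the free/cofree adjunction $(\text{forget} \dashv J)$, since $\Hom_{\mathcal{M}_V}(Q(G),JN) \cong \Hom_{\mathcal{M}}(Q(G),N) \cong \Hom^c_{W(k)}(Q(G),N)$, matching $\Hom_{\FgpsV}$ into a product of copies of $W^{\fin}_{W(k)}$.

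Next I would handle a general $M \in \mathcal{M}_V$ by choosing a presentation in $\mathcal{M}_V$ by cofree objects. Concretely, the unit $M \to J(M)$ of the forgetful/cofree adjunction is a monomorphism of $V$-modules, and by choosing a $V$-equivariant copresentation
\[
0 \to M \to J(N_0) \to J(N_1)
\]
(or the appropriate cofree resolution) one obtains $M$ as a kernel, hence as a limit. Applying the candidate $\FrM$ to the cofree pieces and taking the corresponding colimit in $(\FgpsV)^\op$ (i.e.\ a limit of formal groups, which exists since formal schemes have all limits) produces $\FrM(M)$. The adjunction $\Hom_{\FgpsV}(\FrM(M),G) \cong \Hom_{\mathcal{M}_V}(Q(G),M)$ then follows because $Q$ sends this colimit to the limit defining $M$ — this is where I must check that $Q$ is continuous enough, i.e.\ that it carries the relevant colimits of formal groups to the limits computing $M$ from its cofree copresentation. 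The injectivity statement of Lemma~\ref{lemma:indecomposablesfaithful} and the smoothness/power-series structure underpinning it are the tools guaranteeing that these limits behave well and that no information is lost.

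For the counit being an isomorphism, $Q\FrM(M) \to M$, I would verify it first on the cofree generators, where by construction $Q\FrM(JN) \cong JN$ via Lemma~\ref{lemma:homsofindecomposables} and Lemma~\ref{lemma:wittindecomposables} (the computation $Q(W^{\fin}_{W(k)}) \cong \prod W(k)$ is exactly the cofree object on $W(k)$). Then I would propagate this along the copresentation: since $Q$ is built to turn the defining limit of $M$ into a limit of the $Q\FrM(JN_i)$, the counit on $M$ is obtained as a limit of isomorphisms and hence an isomorphism, provided $Q$ preserves the relevant limits/kernels.

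The main obstacle I anticipate is precisely this exactness/continuity of $Q$: I need $Q$ to convert a suitable colimit of formal groups in $\FgpsV$ (representing $\FrM$ of a cofree copresentation) into the kernel/limit expression for $M$, and $Q$ is only a functor of indecomposables $I_G/\overline{I_G^2}$, whose interaction with limits of formal groups is delicate because it involves closures of square-ideals. I would control this by working on the dual side through the anti-equivalence $\underline{I}$ of Thm.~\ref{thm:antiequivflat}, where $Q$ corresponds to the functor of primitives $P$ of $\Reg{W(k)}$-Hopf algebras with a Frobenius lift, and $\FrM$ corresponds to the free-Hopf-algebra construction on a primitively generated piece built from copies of $\WtH$ (Lemma~\ref{lemma:mapsfromWitt}). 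On that side the computation of primitives of a free Hopf algebra is standard and compatible with the required colimits, so transporting the argument through $\underline I$ should resolve the continuity issue and simultaneously yield the counit isomorphism.
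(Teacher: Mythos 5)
Your construction has the variance backwards at the decisive step, and this is a genuine gap rather than a cosmetic one. Starting from a cofree copresentation $0 \to M \to J(N_0) \to J(N_1)$ (the paper uses the explicit functorial \emph{short exact} resolution $0 \to M \xrightarrow{\eta} JM \xrightarrow{d} JM \to 0$, with $d$ surjective --- a point your generic copresentation does not guarantee and which the bookkeeping uses), the object $\FrM(M)$ with $Q\FrM(M)\cong M$ must be the \emph{cokernel} in $\FgpsV$ of a map $f\colon \bigoplus_\alpha W^{\fin}_{W(k)} \to \bigoplus_\alpha W^{\fin}_{W(k)}$ with $Qf = d$ supplied by Lemma~\ref{lemma:wittindecomposables}: since $Q$ is contravariant and carries quotients of formal groups to kernels of modules, a kernel presentation of $M$ forces a quotient construction of $\FrM(M)$. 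Likewise $\FrM$ of a cofree object must be a \emph{coproduct} $\bigoplus_\alpha W^{\fin}_{W(k)}$ (whose indecomposables are the product $\prod_\alpha Q(W^{\fin}_{W(k)})$), not a product of formal groups. So your appeal to ``formal schemes have all limits'' justifies the wrong thing: what is needed is a colimit --- a quotient of formal groups --- and its existence is exactly the delicate point. The paper handles it by first showing $f$ is injective (the resolution stays exact after forgetting to pseudocompact modules, and $\ker f$ is a connected group with $Q(\ker f)=0$, hence trivial) and then invoking \cite[2.4, Théorème]{Gille:SGA3} to conclude that the pushout exists, is topologically flat, and that $g\colon \bigoplus_\alpha W^{\fin}_{W(k)} \to \FrM(M)$ is faithfully flat, whence $\FrM(M)$ is smooth. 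Smoothness and connectedness are needed just for $\FrM(M)$ to be an object of $\FgpsV$ at all; had you really taken a limit (kernel) of smooth formal groups instead, smoothness would generally fail --- the kernel of Frobenius on $\hat{\mathbb{G}}_a$ is $\alpha_p$ --- so the construction would leave the category.

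Your anticipated fix for the exactness of $Q$ --- transporting through $\underline{I}$ of Thm.~\ref{thm:antiequivflat} and declaring that primitives of free Hopf algebras are ``standard and compatible with the required colimits'' --- does not close this gap, because $\FrM(M)$ is not (co)free: only the pieces of the presentation are, and whether $Q$ (dually, primitives) is exact on the quotient sequence is precisely what must be proved. The paper's actual argument is concrete: base change $0 \to \bigoplus_\alpha W^{\fin}_{W(k)} \xrightarrow{f} \bigoplus_\alpha W^{\fin}_{W(k)} \to \FrM(M) \to 0$ to $k$, use that $Q$ is exact on smooth formal groups over a field, exploit topological flatness of $Q(\bigoplus_\alpha W^{\fin}_{W(k)})$, and apply Nakayama to get $\ker Q(g)=0$; then $Q\FrM(M)$ and $M$ are both identified with $\ker(Qf)=\ker(d)$, which is the counit isomorphism. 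The adjunction itself is then extracted from a ladder of left-exact Hom-sequences whose middle horizontal maps are the isomorphisms of Lemma~\ref{lemma:wittindecomposables}; your ``limit of isomorphisms'' propagation presupposes the very exactness that this diagram chase, resting on the flatness and smoothness established above, is designed to deliver. In short: the cofree-generator computation in your first paragraph is sound and matches the paper, but the existence, flatness, and smoothness of the quotient, and the Nakayama/base-change argument for exactness of $Q$, are missing ideas, not technicalities.
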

\begin{proof}
We start by defining the functor $\FrM$. Given $M \in \mathcal{M}_V$, we have a functorial resolution
\begin{equation}\label{eq:funcresolution}
0 \rightarrow M \xrightarrow{\eta} JM \xrightarrow{d} JM \rightarrow 0
\end{equation}
where
\begin{align*}
\eta(m) &= (m,F_{W(k)}(Vm),F^2_{W(k)}(V^2m), \ldots),\\
d(m_0,m_1,m_2, \ldots) & = (m_1-F_{W(k)}(Vm_0),m_2-F_{W(k)}(Vm_1), \ldots).
\end{align*}
Choosing an isomorphism of $M \cong \prod_\alpha W(k)$, we get an isomorphism $JM \cong \prod_\alpha Q(W^{\fin}_{W(k)})$. By Lemma \ref{lemma:wittindecomposables}, there is a map
\begin{equation}\label{eq:fmap}
f\colon \bigoplus_\alpha W^{\fin}_{W(k)} \rightarrow \bigoplus_\alpha W^{\fin}_{W(k)}
\end{equation}
such that $Qf = d.$ We define $\FrM(M)$ to be the pushout 
\[
\begin{tikzcd} \bigoplus_\alpha W^{\fin}_{W(k)}  \arrow[r,"f"] \arrow[d] & \bigoplus_\alpha W^{\fin}_{W(k)} \arrow[d,"g"] \\ \Spf{W(k)} \arrow[r] & \FrM(M) \end{tikzcd}
\]
in the category $\FgpsV$. An argument is required to see that this pushout exists. The resolution \eqref{eq:funcresolution} stays exact after applying the forgetful functor $\mathcal{M}_V \rightarrow \mathbf{PC}_{W(k)}.$  This implies that the map \eqref{eq:fmap} is injective in the category of (not necessarily topologically flat) formal groups over $W(k),$ since the kernel $\ker f$ is a connected group satisfying $Q(\ker f) = 0.$
By \cite[2.4, Théorème]{Gille:SGA3} the pushout $\FrM(M)$ exists, is topologically flat, and the map $g$ is faithfully flat, which implies that $\FrM(M)$ is smooth. Extending this construction in the obvious way to morphisms between objects in $\mathcal{M}_V,$ we thus get a functor $\FrM:\mathcal{M}_V \rightarrow \FgpsV.$ 

Let us now show that $\FrM$ is adjoint to $Q$ and that the counit $M \rightarrow Q \FrM(M)$ is an isomorphism. We start by showing that $Q(g)\colon Q\FrM(M) \rightarrow Q(\bigoplus_\alpha W^{\fin}_{W(k)})$ is an injection. Consider the base change of the exact sequence $$0 \rightarrow \bigoplus_\alpha W^{\fin}_{W(k)} \xrightarrow{f} \bigoplus_\alpha W^{\fin}_{W(k)} \rightarrow \FrM(M) \rightarrow 0$$ in the category of formal groups over $W(k)$ to $k.$  We obtain an exact sequence of formal groups over $k,$ $$0 \rightarrow \bigoplus_\alpha W^{\fin}_k \xrightarrow{f_k} \bigoplus_\alpha W^{\fin}_k \rightarrow \FrM(M)_k \rightarrow 0.$$ 
Since all the groups involved are smooth, $Q$ is exact, and we get that
\[
Q(\FrM(M)_k) \cong Q(\FrM(M)) \hat{\otimes} k \rightarrow Q(\bigoplus_\alpha W^{\fin}_k) \cong Q(\bigoplus_\alpha W^{\fin}_{W(k)}) \hat{\otimes} k
\]
is injective. Since $Q(\bigoplus_\alpha W^{\fin}_{W(k)})$ is topologically flat, the map $g\colon Q(\FrM(M)) \rightarrow Q(\bigoplus_\alpha W^{\fin}_{W(k)})$ is such that $\ker g \hat{\otimes} k \rightarrow Q(\FrM(M)) \hat{\otimes} k$ is injective and hence $\ker(g) \hat{\otimes} k = 0$.

 By Nakayama's lemma  we then have that $\ker g = 0.$ Note that this implies that $Q(\Fr(M)) \cong M,$ since both are kernels of the map $Qf=d.$  For an object $G \in \FgpsV$ we get a diagram $$\begin{tikzcd} 0 \arrow[d]  & 0 \arrow[d] \\ \Hom_{\FgpsV} (\FrM(M),G) \arrow[d]   \arrow[r] & \Hom_{\mathcal{M}_V}(Q(G),Q(\FrM(M))) \arrow[d] \\ \Hom_{\FgpsV}(\bigoplus_\alpha W^{\fin}_{W(k)}, G) \arrow[d]  \arrow[r]  & \Hom_{\mathcal{M}_V} (Q(G),Q(\bigoplus_\alpha W^{\fin}_{W(k)})) \arrow[d] \\ \Hom_{\FgpsV} (\bigoplus_\alpha W^{\fin}_{W(k)},G)   \arrow[r]  & \Hom_{\mathcal{M}_V} (Q(G),Q(\bigoplus_\alpha W^{\fin}_{W(k)})) \end{tikzcd}$$ 
where the middle two horizontal maps are isomorphisms. This yields that $$\Hom_{\FgpsV} (\FrM(M),G) \cong \Hom_{\mathcal{M}_V}(Q(G),Q(\FrM(M))).$$ But as previously noted, $ M \cong Q \FrM(M),$ so we have our adjunction. 
\end{proof}
\begin{proof}[Proof of Thm.~\ref{thm:indecomposableequiv}]
From Prop.~\ref{prop:indecomposableadjoint}, the counit map $Q \FrM(M) \rightarrow M$ is an isomorphism. To show that the unit map 
\[
G \rightarrow \FrM(Q(G))
\]
 is an isomorphism, we note that the map 
\[
Q \FrM(Q(G)) \rightarrow Q(G)
\]
 is an isomorphism. But this implies that $G \rightarrow \FrM(Q(G))$ is an isomorphism, since both groups are smooth and connected. 
\end{proof}

\section{Dieudonné theory} \label{sec:dieudonne}
Let $k$ be a perfect field of characteristic $p>0$. In this section, we will recall the Dieudonné theory needed for our purposes (cf. \cite{fontaine:groupes-divisibles,demazure-gabriel:groupes-algebriques-1,demazure:pdivisiblegroups}). Let $\phi\colon W(k) \to W(k)$ denote the Witt vector Frobenius map, which is a lift of the $p$th power map on $k$ to $W(k)$. By the perfectness of $k$, this is an isomorphism. A similar isomorphism exists on $CW(k)$ and all other related versions of Witt vectors.

\begin{defn}
Let $\mathcal R$ be the noncommutative ring generated over $W(k)$ by two indeterminates $F$, $V$ modulo the relations
\[
FV=VF=p, \; Fa = \phi(a) F,\; aV = V\phi(a) \quad \text{for $a \in W(k)$}.
\]
The category of \emph{Dieudonn\'e modules} $\DMod{k}$ is the category of left $\mathcal R$-modules. 
Denote by $\mathcal F$ (resp. $\mathcal V$) the subrings of $\mathcal R$ generated by $W(k)$ and $F$ (resp. $W(k)$ and $V$)
\end{defn}

Given a Dieudonné module $M$ we define the dual $I(M)$ to be $\Hom_{W(k)}(M,CW(k))$. This becomes a Dieudonné module by
\[
F(\alpha)(m) = \phi (\alpha(Vm)) \quad \text{and} \quad V(\alpha)(m) = \phi^{-1}(\alpha(Fm))
\]
for $\alpha \in \Hom_{W(k)}(M,CW(k))$ and $m \in M$.

For a module $M \in \DMod{k},$ let $M[V^n]$ be the kernel of multiplication by $V^n.$ Let $ \DModV{k}$ be the full subcategory of $\DMod{k}$ consisting of those $M$ such that $M \cong \colim_n M[V^n].$

\begin{thm}\label{thm:dieudonneforaffinegroups}\cite[V.1.4, Théorème 4.3]{demazure-gabriel:groupes-algebriques-1}
There is an anti-equivalence of categories
\[
D\colon \{\text{unipotent groups over $k$}\} \to  \DModV{k}
\] given by taking the unipotent group $G$ to \[  D(G)= \colim_n \Hom_{\AbSch{k}}(G, W^n_k). \]
Furthermore, $D(G)$ is finitely generated if and only if $G$ is of finite type, and $D(G)$ is of finite length as a $W(k)$-module if and only if $G$ is a finite affine group. \qed
\end{thm}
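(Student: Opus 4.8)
Since this is the cited theorem of Demazure--Gabriel, I will only outline the route I would take, arranging it to reuse the machinery of Section~\ref{sec:smoothFormal}.

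\emph{The functor and its target.} First I would check that $D$ is well defined and lands in $\DModV{k}$. The $W^n_k$ assemble along Verschiebung into the ind-object $W^{\fin}_k$, so $D(G)=\colim_n\Hom_{\AbSch{k}}(G,W^n_k)$ is corepresented by this universal unipotent group, the mod-$p$ reduction of $W^{\fin}_{W(k)}\in\FgpsV$. The left $\mathcal{R}$-module structure is induced by the Frobenius, the Verschiebung, and Witt multiplication on the $W^n_k$; perfectness of $k$ is precisely what makes the Witt Frobenius $\phi$ invertible, producing the semilinear relations $Fa=\phi(a)F$ and $aV=V\phi(a)$, while $FV=VF=p$ descends from the same identity on Witt vectors in characteristic $p$. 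The essential point is that unipotence of $G$ (equivalently, conilpotence of $\Reg{G}$) forces $V$ to act locally nilpotently on $D(G)$, so that $D(G)\cong\colim_n D(G)[V^n]$ and $D$ indeed lands in $\DModV{k}$.

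\emph{Full faithfulness.} Here I would argue in the spirit of Section~\ref{sec:smoothFormal} rather than directly. By Cartier duality \eqref{eq:antiequivalenceswithsplitting} a unipotent group over $k$ corresponds to a connected formal group, which lifts to (or is resolved by) smooth formal groups over $W(k)$ with a Verschiebung lift, i.e.\ objects of $\FgpsV$. On that side, Theorem~\ref{thm:indecomposableequiv} turns morphisms into $\mathcal{M}_V$-morphisms of indecomposables, and Lemmas~\ref{lemma:homsofindecomposables} and~\ref{lemma:wittindecomposables} already compute the relevant $\Hom$-groups out of the Witt universal objects. What remains is to match $\mathcal{M}_V$ --- topologically free $W(k)$-modules with an $F^{-1}_{W(k)}$-linear $V$ --- with $\DModV{k}$: the continuous $W(k)$-linear dual converts the $F^{-1}_{W(k)}$-linear operator into an $F$-linear one, and the relations $FV=VF=p$ then pin down a compatible $V$, producing a left $\mathcal{R}$-module, the local nilpotence of the first step corresponding to topological freeness on the dual side. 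Composing these (anti-)equivalences would give that $\Hom_{\AbSch{k}}(G,H)\to\Hom_{\mathcal{R}}(D(H),D(G))$ is bijective on finite-type pieces, and the general case follows by writing $G$ as a cofiltered limit of its finite-type quotients and using that $D$ sends these limits to filtered colimits.

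\emph{Essential surjectivity, finiteness, and the main difficulty.} For essential surjectivity I would take $M\in\DModV{k}$, choose a presentation $P_1\to P_0\to M\to0$ by (topological) projective generators of the form $D(W^n_k)$, realize it geometrically as the cokernel of a map of products of the Witt group schemes $W^n_k$, and conclude $D(G)\cong M$ from the exactness of $D$, which one extracts from smoothness and topological flatness exactly as in Prop.~\ref{prop:indecomposableadjoint}. The finiteness statements are then bookkeeping: a finite-type $G$ is assembled from finitely many Witt coordinates, whence $D(G)$ is finitely generated over $\mathcal{R}$ and conversely; and $G$ is finite precisely when $F$ and $V$ both act nilpotently and $D(G)$ has finite $W(k)$-length. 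I expect the main obstacle to be the second step: making the chain ``unipotent group $\rightsquigarrow$ smooth $W(k)$-lift with a $V$-lift $\rightsquigarrow\mathcal{M}_V\rightsquigarrow\DModV{k}$'' functorial and precise, and in particular checking that the $\mathcal{M}_V$--$\DModV{k}$ duality is compatible with the limit and colimit procedures needed to pass from finite-type to arbitrary unipotent groups --- the exactness of $D$ being what drives both the full-faithfulness limit argument and the essential-surjectivity resolution.
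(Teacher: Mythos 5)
The paper itself gives no argument here: the theorem is quoted from Demazure--Gabriel with a \qed, and their proof is direct --- in outline, a d\'evissage of unipotent algebraic groups along the $V$-filtration (with subquotients inside $\mathbb{G}_a$), embeddings into finite products of the $W^n_k$, computation of $\colim_n \Hom(W^m_k,W^n_k)$, and passage to pro-objects. Measured against any correct proof, your outline contains a step that fails outright: the proposed matching of $\mathcal{M}_V$ with $\DModV{k}$ via the continuous $W(k)$-linear dual. Every $M \in \DModV{k}$ is $p$-power torsion --- if $V^n x = 0$ then $p^n x = F^n V^n x = 0$ --- while $\Hom^c_{W(k)}(N,W(k))$ of a topologically free $N$ is torsion-free, so this duality lands in the wrong category; the dualization has to go into $CW(k)$, i.e.\ Matlis duality as set up in Section~\ref{sec:duality}. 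Even then, $\mathcal{M}_V$ is too small to see all of $\DModV{k}$: by Thm.~\ref{thm:indecomposableequiv} it is dual to \emph{smooth} connected formal groups with a Verschiebung lift, on whose Dieudonn\'e modules $F$ acts injectively (this injectivity is used in the proof of Thm.~\ref{thm:indecomposablesdieudonne}), whereas Cartier duals of general unipotent groups include non-smooth connected formal groups such as $\alpha_p$ and $\mu_p = (\underline{\smash{\Z/p\Z}})^*$. Resolutions are therefore not an optional refinement but forced, and once you resolve, ``composing (anti-)equivalences'' no longer yields full faithfulness: you need exactness of $D$ and a generators argument, which is exactly the content of the cited Demazure--Gabriel proof rather than something Section~\ref{sec:smoothFormal} hands you.

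Two further points. Your finiteness criterion is wrong: the constant group $\underline{\smash{\Z/p\Z}}$ is finite unipotent with $D = k$, $V = 0$ and $F$ bijective, so ``finite precisely when $F$ and $V$ both act nilpotently and $D(G)$ has finite length'' fails; finite $W(k)$-length alone is the criterion, $F$-nilpotence singling out only the infinitesimal ones. And the route is circular if read as a proof from scratch: Thm.~\ref{thm:indecomposablesdieudonne} and the $\Hom$-computations you invoke are formulated through $\DieuFor$, i.e.\ they presuppose the Dieudonn\'e equivalence for formal groups (Thm.~\ref{thm:dieudonneforformalgroups}), which is the Cartier-dual form of the very statement being proved. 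Repaired, your plan amounts to deducing Thm.~\ref{thm:dieudonneforaffinegroups} from that dual statement together with the Cartier--Matlis compatibility of Thm.~\ref{thm:CartierMatlis} and Prop.~\ref{prop:DieudonneCommuteCartierUni}; that is a legitimate derivation, but the compatibility step is precisely where your naive $W(k)$-dual breaks down, and it is where the paper invests the $I^c_{F^n}$ bookkeeping of Lemma~\ref{lemma:Dieudonnerep}.
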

Since the category of affine groups over a perfect field splits into the product of the full subcategories of unipotent groups and of multiplicative groups, Theorems~\ref{thm:multiplicativegroupclassification} and \ref{thm:dieudonneforaffinegroups} give an anti-equivalence between the category of affine groups and the product of the categories $\DModV{k}$ and $\Mod_\Gamma$.

Let us call an affine group \emph{$p$-adic} if $G$ is the inverse limit of the affine groups $\coker( G \xrightarrow{p^n} G).$ We note that any unipotent group is $p$-adic since $VF=p.$ Let $\DMod{k}^p$ denote the full subcategory of \emph{locally $V$-finite} modules in $\Mod_{\mathcal R}$, that is, those $M$ such that every $x \in M$ is contained in a $\mathcal V$-submodule of finite length.

\begin{thm}\label{thm:dieudonneforpadic} 
There is an anti-equivalence of categories
\[
D\colon \{\text{$p$-adic affine groups over $k$}\} \to \DMod{k}^p
\]
given by 
\[
G \rightarrow  D(G) = \colim_n \Hom_{\AbSch{k}}(G, W^n_k) \times I(\colim_{n'} \Hom_{\AbSch{k}}((G^m)^*,W^{n'}_k))
\]
where $(G^m)^\ast$ is the Cartier dual of the multiplicative part of $G,$ and $I$ is the dual Dieudonné module.  
\end{thm}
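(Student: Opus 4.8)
The plan is to reduce the statement to the product decomposition of affine groups and of Dieudonné modules, treating the two factors of the displayed formula separately. By perfectness of $k$, the category of affine groups splits as $\AbSchu{k} \times \AbSchm{k}$, and since every unipotent group is $p$-adic, the $p$-adic affine groups split as (unipotent) $\times$ ($p$-adic multiplicative). Correspondingly, I would first show that the target $\DMod{k}^p$ splits as a product of the full subcategory $\DMod{k}^{p,\nil}$ of modules on which $V$ acts locally nilpotently and the full subcategory $\DMod{k}^{p,\mathrm{iso}}$ on which $V$ acts bijectively. Since $FV=VF=p$, the operators $F$ and $V$ commute, and the relation $V(ax)=\phi^{-1}(a)Vx$ shows that the Fitting decomposition of $V$ on any finite-length $\mathcal V$-submodule is $\mathcal R$-linear; taking the union over such submodules of a locally $V$-finite $M$ yields a canonical functorial splitting. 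A morphism from a $V$-nilpotent to a $V$-bijective module vanishes (apply $V^n$), so the two subcategories are orthogonal and the product is genuine.

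For the unipotent factor I would argue that the first term $\colim_n \Hom_{\AbSch{k}}(G,W^n_k)$ sees only $G^u$: the group $W^n_k$ is unipotent, and there are no nonzero homomorphisms from a multiplicative group into a unipotent one. Thus this term is precisely the anti-equivalence $G^u\mapsto D(G^u)$ onto $\DModV{k}$ of Thm.~\ref{thm:dieudonneforaffinegroups}. I would then identify $\DModV{k}$ with $\DMod{k}^{p,\nil}$: if $V^nx=0$ then $p^nx=F^nV^nx=0$, so the $\mathcal V$-submodule generated by $x$ is finitely generated over the Artinian ring $W(k)/p^n$ and hence of finite length, giving $\DModV{k}\subseteq \DMod{k}^p$; conversely local $V$-finiteness together with local $V$-nilpotence forces $M=\colim_n M[V^n]$.

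For the multiplicative factor the second term depends only on $G^m$, and the crucial point is that for a $p$-adic group $G^m$ corresponds under Thm.~\ref{thm:multiplicativegroupclassification} to a $p$-primary torsion $\Gamma$-module. Hence $G^m$ is pro-(multiplicative-connected), and its Cartier dual $(G^m)^*$ is pro-(unipotent-\'etale), i.e. an honest unipotent affine group, so that $\colim_{n'}\Hom_{\AbSch{k}}((G^m)^*,W^{n'}_k)=D((G^m)^*)$ lies in $\DModV{k}$ with $F$ acting bijectively (\'etale) and $V$ locally nilpotently. Applying the Dieudonn\'e dual $I$, whose formulas $F(\alpha)(m)=\phi(\alpha(Vm))$ and $V(\alpha)(m)=\phi^{-1}(\alpha(Fm))$ interchange $F$ and $V$, converts this into a module on which $V$ is bijective, i.e. an object of $\DMod{k}^{p,\mathrm{iso}}$. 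The composite $G^m\mapsto I(D((G^m)^*))$ is a threefold composite of anti-equivalences (Cartier duality, $D$, and $I$), hence an anti-equivalence of $p$-adic multiplicative groups onto $\DMod{k}^{p,\mathrm{iso}}$, with essential surjectivity following because every $V$-bijective locally $V$-finite module is $I$ of a $V$-nilpotent module with bijective $F$.

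Combining the two factors with the product decompositions of source and target yields the asserted anti-equivalence. The main obstacle I anticipate is the multiplicative factor: one must check carefully that Cartier duality really carries the $p$-adic multiplicative groups into genuine unipotent pro-\'etale affine groups, which requires pinning down the $p$-adic condition on the $\Gamma$-module side and its compatibility with the pro-structure, and that $I$ is an exact involution restricting to an anti-equivalence between $\DMod{k}^{p,\nil}$-with-bijective-$F$ and $\DMod{k}^{p,\mathrm{iso}}$. The finite-length and continuity bookkeeping needed to ensure that $I$ actually lands in $\DMod{k}^p$, rather than in some larger category of $W(k)$-linear duals into $CW(k)$, is the most delicate point.
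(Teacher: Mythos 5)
Your splitting of both sides---$p$-adic affine groups into unipotent times $p$-adic multiplicative, and $\DMod{k}^p$ into its locally $V$-nilpotent and $V$-bijective parts via the Fitting decomposition---is correct, and your treatment of the unipotent factor, including the identification $\DModV{k}=\DMod{k}^{p,\nil}$, is fine; this matches what the paper uses implicitly. The gap is in the multiplicative factor, exactly at the point you flagged as delicate: the assertion that $(G^m)^*$ is pro-(unipotent-\'etale), ``an honest unipotent affine group'', has the variance of Cartier duality backwards. A $p$-adic multiplicative group is \emph{pro}-finite (an inverse limit of finite multiplicative groups of $p$-power order), so its Cartier dual is \emph{ind}-finite, i.e.\ an \'etale formal group, and it is not affine unless $G^m$ is finite. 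Concretely, for $G^m=\lim_n\mu_{p^n}=\Spec\bar k[\Q_p/\Z_p]$ one has $(G^m)^*=\colim_n\underline{\Z/p^n\Z}$, the ind-constant group on $\Q_p/\Z_p$. You therefore cannot apply the anti-equivalence $D$ of Thm.~\ref{thm:dieudonneforaffinegroups} to $(G^m)^*$, and the Hom term read naively out of this ind-object vanishes: $\Hom_{\Ab}(\Q_p/\Z_p,W_{n'}(k))=0$, because $\Q_p/\Z_p$ is divisible while $W_{n'}(k)$ is killed by $p^{n'}$. So your composite functor would send the nonzero group $\lim_n\mu_{p^n}$ to $0$ and cannot be an anti-equivalence.

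The same confusion undermines your essential-surjectivity step: done correctly, the Dieudonn\'e modules of the finite Cartier duals assemble into a \emph{pseudocompact} (pro-finite-length) module---$W(k)=\lim_i D(\underline{\Z/p^i\Z})$ in the example above---and the relevant dual is the continuous dual $I^c$ of the Matlis machinery of Section~\ref{sec:duality}, not $I$ applied to an object of $\DModV{k}$. Indeed, $I$ of an infinite discrete torsion module is pseudocompact and in general not locally $V$-finite (for instance $I$ of $\colim_i W_i(k)$ with bijective $F$ is $W(k)$ with $V$ acting as a semilinear bijection, whose $\mathcal V$-submodule generated by $1$ is all of $W(k)$), so ``every $V$-bijective locally $V$-finite module is $I$ of a $V$-nilpotent module with bijective $F$'' is false as literally stated. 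The paper's proof avoids all of this with one finiteness observation that is absent from your proposal: a finitely generated multiplicative group that is $p$-adic corresponds over $\bar k$ to a finitely generated abelian group $N$ with $p^nN=0$, hence is \emph{finite}; thus every $p$-adic multiplicative group is a pro-limit of finite multiplicative groups of $p$-power order, Cartier duality stays inside finite affine groups where Demazure's classical theory applies, and the theorem follows by passing to (co)limits, with the displayed formula interpreted by applying the finite-level functors first and then taking the limit. To repair your direct approach you would either have to make this reduction, or prove the pro/ind duality statement you actually need, which is essentially Thm.~\ref{thm:CartierMatlis}.
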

\begin{proof}
This theorem is essentially proved in \cite[Chapter III, 6]{demazure:pdivisiblegroups}, but only for unipotent groups and for \emph{finite} multiplicative groups of $p$-power order. We claim that a $p$-adic affine group $G$ of multiplicative type is an inverse limit of finite multiplicative groups of $p$-power order, from which the theorem in the stated form follows. Indeed, since $G$ is the inverse limit of finitely generated multiplicative groups, we may  assume that $G$ is finitely generated multipliative. We also assume that $k$ is algebraically closed and thus that $G \cong k[N]$ for some finitely generated abelian group $N$ with $p^nN=0$ for some $n \geq 0$. Thus $N$ is finite and $G$ is a finite affine group.
\end{proof}
Thm.~\ref{thm:dieudonneforpadic} is also proved in \cite{hopkins-lurie:ambidexterity}, using slightly different methods from the ones in \cite{demazure:pdivisiblegroups}.

\medskip
We will now briefly discuss the dual picture of formal groups. A module $M \in  \Mod_{\mathcal R}$ is called \emph{$\mathcal F$-profinite} if $M$ is profinite as a $W(k)$-module and has a fundamental system of neighborhoods consisting of $\mathcal F$-modules. We call $M$ \emph{connected} if $F$ is topologically nilpotent on $M$, and \emph{étale} if $F$ is bijective on $M$. We denote the category of $\mathcal F$-profinite $\mathcal{R}$-modules by $\DDModF{k}$ and the full subcategories of connected (resp. étale) $\mathcal{R}$-modules by $\DDModFc{k}$ (resp. $\DDModFet{k}$).

Essentially by a Fitting decomposition, any $\mathcal F$-profinite module $M$ splits as a direct sum $M^{\operatorname{et}} \oplus M^c$ where the first summand is étale and the second is connected. Indeed, any $\mathcal F$-module of finite length splits as a direct sum of two modules where $F$ acts bijectively on one component and nilpotently on the other. Taking the limit, we get the claimed decomposition. 

A \emph{formal $p$-group} is a commutative formal group $G$ such that the canonical map $\colim_n G[p^n] \rightarrow G$ is an isomorphism, where $G[p^n]$ denotes the kernel of multiplication by $p^n.$ Note that any connected formal group is a formal $p$-group since $FV = VF = p$. We let $\FGps{k}^p$ be the category of formal $p$-groups over $k.$ 

A dual version of Thm.~\ref{thm:dieudonneforpadic} is proved in \cite[Chapter III]{fontaine:groupes-divisibles}:
\begin{thm}\label{thm:dieudonneforformalgroups}
There is an equivalence of categories
\[
\DieuFor: \FGps{k}^p \to \DDModF{k}
\] 
where $$\DieuFor(G) = \Hom_{\FGps{k}}(G,CW_k).$$ 

Under this equivalence, the full subcategories of connected formal groups and étale formal $p$-groups corresponds to the full subcategories of connected and étale $\mathcal R \text{-modules}$, respectively.  \qed
\end{thm}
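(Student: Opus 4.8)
The plan is to deduce this result from the Dieudonné theorem for $p$-adic affine groups (Thm.~\ref{thm:dieudonneforpadic}) by dualizing twice: once geometrically, via Cartier duality, and once algebraically, via Matlis duality over $W(k)$. Concretely, I would exhibit $\DieuFor$ as a composite
\[
\FGps{k}^p \xrightarrow{(-)^*} \{\text{$p$-adic affine groups}\} \xrightarrow{D} \DMod{k}^p \xrightarrow{I} \DDModF{k}
\]
of (anti-)equivalences, and then identify it naturally with the Hom-functor $\Hom_{\FGps{k}}(-,CW_k)$.

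First I would verify that the Cartier duality of \eqref{eq:antiequivalences} restricts to an anti-equivalence between formal $p$-groups and $p$-adic affine groups. The point is that $(-)^*$ turns the colimit defining a formal $p$-group, $G=\colim_n G[p^n]$, into the limit $G^*=\lim_n \coker(G^*\xrightarrow{p^n}G^*)$ defining a $p$-adic group, since the Cartier dual of the kernel of $p^n$ is the cokernel of $p^n$. On the split pieces of \eqref{eq:antiequivalenceswithsplitting} this is transparent: connected formal $p$-groups dualize to unipotent affine groups, which are automatically $p$-adic because $FV=p$, and étale formal $p$-groups dualize to multiplicative affine groups of $p$-power type. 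Composing with $D$ then gives a covariant equivalence $\FGps{k}^p\to\DMod{k}^p$.

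Next I would promote the dual-module functor $I(M)=\Hom_{W(k)}(M,CW(k))$ to an anti-equivalence $\DMod{k}^p\to\DDModF{k}$. Since $k$ is perfect, $W(k)$ is a complete discrete valuation ring and $CW(k)\cong\Frac(W(k))/W(k)$ is its injective hull, so $I$ is Matlis duality: it is exact and sends a finite-length $\mathcal R$-module to its $W(k)$-dual of the same length, hence a filtered colimit (union) of finite-length $\mathcal V$-submodules to the inverse limit of their $\mathcal F$-duals. This is exactly the passage from a locally $V$-finite module to an $\mathcal F$-profinite one. Because $I$ interchanges the ($\phi$-twisted) actions of $F$ and $V$, it carries $V$-nilpotent modules to $F$-topologically-nilpotent ones and $V$-bijective modules to $F$-bijective ones; combined with the correspondences unipotent $\leftrightarrow$ connected and multiplicative $\leftrightarrow$ étale from \eqref{eq:antiequivalenceswithsplitting}, this yields precisely the asserted matching of the connected and étale subcategories.

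The remaining task, which I expect to be the main obstacle, is to identify the composite $I\circ D\circ(-)^*$ naturally with $\Hom_{\FGps{k}}(-,CW_k)$ and to check that the linear topologies agree throughout, so that the target is reproduced as $\DDModF{k}$ on the nose. This reduces to a compatibility of dualizing objects: under Cartier duality the co-Witt group $CW_k$ is adjoint to the finite Witt groups $W^n_k$ appearing in $D$, via the perfect $W(k)$-pairing underlying the exact sequence \eqref{eq:Wittexactsequence}, giving $\Hom_{\FGps{k}}(G,CW_k)\cong I(D(G^*))$ naturally in $G$; the variance and subcategory bookkeeping are then formal. As an alternative — the route actually taken by Fontaine — one argues directly that $CW_k$ is an injective cogenerator of $\FGps{k}^p$ whose endomorphism ring is a completion of $\mathcal R$, establishes full faithfulness by dévissage over the simple finite group schemes $\mu_p$, $\underline{\Z/p\Z}$, $\alpha_p$, and obtains essential surjectivity by realizing every $\mathcal F$-profinite module as an inverse limit of finite-length ones.
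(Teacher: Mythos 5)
The paper itself offers no proof of this theorem: it is quoted from \cite[Chapter III]{fontaine:groupes-divisibles}, as the sentence introducing it (and the qed-symbol attached to the statement) makes explicit. Your overall architecture --- transporting Thm.~\ref{thm:dieudonneforpadic} along Cartier duality and Matlis duality --- is sound as far as it goes, and your bookkeeping is correct: Cartier duality does restrict to an anti-equivalence between formal $p$-groups and $p$-adic affine groups, $I$ does exchange locally $V$-finite modules with $\mathcal F$-profinite ones, and it matches $V$-nilpotent with $F$-topologically nilpotent and $V$-bijective with $F$-bijective. This yields \emph{some} covariant equivalence $\FGps{k}^p \to \DDModF{k}$ respecting the connected and étale subcategories.

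The genuine gap is that the theorem concerns the \emph{specific} functor $G \mapsto \Hom_{\FGps{k}}(G,CW_k)$, and the step you dismiss as a ``compatibility of dualizing objects'' --- the natural isomorphism $\Hom_{\FGps{k}}(G,CW_k) \cong I(D(G^*))$ --- is not formal; it is the entire content of the theorem. Indeed, that isomorphism is exactly the paper's Thm.~\ref{thm:CartierMatlis}, which is proved \emph{after} and \emph{using} the present theorem: the proof of Prop.~\ref{prop:DieudonneCommuteCartierUni} invokes classical Dieudonné theory for formal groups in the form $\Hom_{\FGpsc{k}}(W^{\fin}_k[F^n],G^*) \cong \Hom_{\DDModF{k}}(\DieuFor(G^*),\DieuFor(W^{\fin}_k[F^n]))$, and Lemma~\ref{lemma:Dieudonnerep} rests on the computation of $\DieuFor(W^{\fin}_k)$ in Example~\ref{ex:dieudonneW}. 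So you cannot borrow that identification without circularity, and your proposed substitute does not hold up: \eqref{eq:Wittexactsequence} is an exact sequence of Witt-vector groups of a ring, not a duality pairing between the group schemes $W^n_k$ and $CW_k$. What is actually required is the identification of Cartier duals $(W^n_k)^* \cong W^{\fin}_k[F^n]$, the computation of $\Hom_{\FGps{k}}(W^{\fin}_k[F^n],CW_k)$, and a dévissage (using that $CW^c_k$ cogenerates connected formal groups, exactness of both sides, and passage to limits) showing the comparison map is an isomorphism for every $G$ --- which is precisely Fontaine's argument, i.e., the ``alternative'' you mention only in your final sentence. As it stands, your primary route relocates all the difficulty into a step you then declare formal; to turn the proposal into a proof you would have to carry out that direct argument, at which point the detour through Thm.~\ref{thm:dieudonneforpadic} is no longer doing any work.
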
 
We will show later (Thm.~\ref{thm:CartierMatlis}) in which sense the equivalences of Thms.~\ref{thm:dieudonneforpadic} and \ref{thm:dieudonneforformalgroups} are dual to each other.

\section{The Dieudonné module of a smooth connected formal group with a lift of the Verschiebung}

The goal of this section is to describe the Dieudonné module of those formal groups over $k$ that are mod-$p$ reductions of smooth connected formal groups over $W(k)$ with a Verschiebung lift. These constructions are very similar to the ones of \cite{goerss:hopf-rings}.
For $G \in \FgpsV,$ we define a map $$Q(G) \rightarrow \DieuFor(G_k)$$ as follows. By Thm.~\ref{thm:indecomposableequiv} and Lemma \ref{lemma:wittindecomposables} we have an isomorphism
\[
Q(G) \cong \Hom_{\FgpsV}(G,CW^c_{W(k)}).
\]
Composing this isomorphism with the mod-$p$ reduction
\[
\Hom_{\FgpsV}(G,CW^c_{W(k)}) \rightarrow \Hom_{\FGps{k}}(G_k,CW^c_k) =\DieuFor(G_k),
\]
we get a $\mathcal V$-linear map $\eta\colon Q(G) \rightarrow \DieuFor(G_k)$ in $\mathcal{M}_V.$ By extension of scalars, we get a homomorphism $$\epsilon_G\colon \mathcal{R} \otimes_{\mathcal V} Q(G) \to \DieuFor(G_k).$$ Let $$\alpha\colon \mathcal{F} \otimes_{W(k)} Q(G) \rightarrow \mathcal{R} \otimes_{\mathcal{V}} Q(G)$$ be the natural map. We let $\mathcal{R} \hat{\otimes}^F_{\mathcal V} Q(G)$ be the completion of $\mathcal{R} \otimes_{\mathcal V} Q(G)$ with respect to the image under $\alpha$ of the $\mathcal F$-submodules $$\langle F^n\rangle  \otimes_{W(k)} Q(G)  + \mathcal F \otimes_{W(k)} M \subset \mathcal F \otimes_{W(k)} Q(G)$$ where $M \subset Q(G)$ ranges over a basis of neighborhoods of the topologically free $W(k)$-module $Q(G).$ 
Note that $\epsilon_G$ is continuous and that $\mathcal{R} \hat{\otimes}^F_{\mathcal V} Q(G)$ is $\mathcal F$-profinite as a Dieudonné module.  
\begin{thm} \label{thm:indecomposablesdieudonne}
Let $G \in \FgpsV.$ Then the map  
\[
\epsilon_G: \mathcal{R} \hat{\otimes}^F_{\mathcal V} Q(G) \rightarrow \DieuFor(G_k)
\]
 is an isomorphism.  
\end{thm}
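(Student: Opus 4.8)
The plan is to prove that $\epsilon_G$ is an isomorphism first for the single generating object $W^{\fin}_{W(k)}$, and then to bootstrap to an arbitrary $G \in \FgpsV$ using the functorial resolution supplied by Prop.~\ref{prop:indecomposableadjoint}. Both sides of $\epsilon_G$ are contravariant functors from $\FgpsV$ to the category $\DDModF{k}$ of $\mathcal F$-profinite Dieudonné modules, and $\epsilon$ is a natural transformation between them; so it suffices to exhibit a generating class of objects on which $\epsilon$ is an isomorphism and a mechanism propagating this along exact sequences.

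Concretely, recall from the proof of Prop.~\ref{prop:indecomposableadjoint} that, since $G \cong \FrM(Q(G))$, it sits in a short exact sequence of smooth formal groups
\[
0 \to \bigoplus_\alpha W^{\fin}_{W(k)} \xrightarrow{f} \bigoplus_\alpha W^{\fin}_{W(k)} \to G \to 0.
\]
Applying the contravariant functor $Q$, which is exact on smooth groups as shown in Prop.~\ref{prop:indecomposableadjoint}, yields a short exact sequence $0 \to Q(G) \to \prod_\alpha Q(W^{\fin}_{W(k)}) \to \prod_\alpha Q(W^{\fin}_{W(k)}) \to 0$ in $\mathcal M_V$; reducing the group sequence mod $p$ (which stays exact, again as in Prop.~\ref{prop:indecomposableadjoint}) and applying the exact equivalence $\DieuFor$ of Thm.~\ref{thm:dieudonneforformalgroups} gives the corresponding short exact sequence of Dieudonné modules. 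I would then apply $\mathcal R \hat\otimes^F_{\mathcal V}(-)$ to the first sequence; granting that this preserves exactness, naturality of $\epsilon$ produces a commuting ladder of short exact sequences, and the five lemma reduces the theorem to $G = \bigoplus_\alpha W^{\fin}_{W(k)}$. As $Q$ turns the coproduct into a product and both functors respect the completed products in play, this further reduces to the single object $G = W^{\fin}_{W(k)}$.

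For the base case I would use the identification $Q(W^{\fin}_{W(k)}) \cong \prod_{i\ge 0} W(k) = J(W(k))$, the cofree $\mathcal M_V$-module on $W(k)$, with $V$ acting by the twisted shift $Ve_i = e_{i-1}$, $Ve_0 = 0$. A direct computation in $\mathcal R \otimes_{\mathcal V} J(W(k))$, using $FV = VF = p$ to rewrite $F^a \otimes Vm = p\,F^{a-1}\otimes m$, shows that this module is generated by $g_i = 1 \otimes e_i$ subject to $Vg_i = g_{i-1}$, $Vg_0 = 0$ and $Fg_i = p\,g_{i+1}$; after the $F$-adic completion defining $\mathcal R \hat\otimes^F_{\mathcal V}(-)$ one obtains precisely the $\mathcal F$-profinite Dieudonné module of $W^{\fin}_k$ read off from Fontaine's explicit description of $CW_k$. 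Under this identification the map $\eta$, and hence $\epsilon_{W^{\fin}_{W(k)}}$, sends the cofree generators to this basis and is therefore an isomorphism.

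The main obstacle is the exactness of $\mathcal R \hat\otimes^F_{\mathcal V}(-)$ on the sequence coming from the resolution. The uncompleted functor $\mathcal R \otimes_{\mathcal V}(-)$ is a quotient of the exact functor $\mathcal F \otimes_{W(k)}(-)$, but exactness can in principle be destroyed by passing to the $F$-adic completion. I expect to handle this exactly as in the proof of Lemma~\ref{lemma:dworkoalgebras}: the objects of $\mathcal M_V$ are topologically free over $W(k)$, so the relevant inverse systems of finite-length quotients are Mittag-Leffler, the attendant $\lim^1$ terms vanish, and completion therefore remains exact on this particular sequence. Verifying this Mittag-Leffler condition, together with pinning down the identification of $\DieuFor(W^{\fin}_k)$ with $\mathcal R \hat\otimes^F_{\mathcal V} J(W(k))$ in the base case, are the two points requiring genuine care.
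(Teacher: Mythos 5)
Your strategy is genuinely different from the paper's. The paper never resolves $G$: it notes that both $\mathcal{R} \hat{\otimes}^F_{\mathcal V} Q(G)$ and $\DieuFor(G_k)$ are $F$-adically complete, that $\epsilon_G$ is an isomorphism modulo $F$, and then concludes by showing $\coker\epsilon_G$ is $F$-divisible and complete (hence zero) and that $\ker\epsilon_G$ vanishes because $F$ acts injectively on $\DieuFor(G_k)$, so that reduction mod $F$ of the resulting short exact sequence stays exact. Your skeleton (resolution by $\bigoplus_\alpha W^{\fin}_{W(k)}$ from Prop.~\ref{prop:indecomposableadjoint}, five lemma, explicit base case) is a sensible alternative, and your base case is in fact correct: the presentation $Vg_0=0$, $Vg_i=g_{i-1}$, $Fg_i=p\,g_{i+1}$ does hold in $\mathcal R\otimes_{\mathcal V}J(W(k))$, and after completion it yields $\prod_i W_i(k)$, in agreement with Example~\ref{ex:dieudonneW}. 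But there is a genuine gap at the step you flag, and the repair you propose does not address it.

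The failure of exactness of $\mathcal{R}\hat{\otimes}^F_{\mathcal V}(-)$ is not, in the first instance, a $\varprojlim^1$/Mittag--Leffler issue of the kind handled in Lemma~\ref{lemma:dworkoalgebras}; it already arises before any completion, because $\mathcal R$ is \emph{not flat} as a right $\mathcal V$-module. Being a quotient of the exact functor $\mathcal F\otimes_{W(k)}(-)$ gives right exactness only, whereas the five lemma needs left exactness of the top row, i.e.\ injectivity of $\mathcal R\otimes_{\mathcal V}Q(G)\to\mathcal R\otimes_{\mathcal V}JQ(G)$; and one cannot argue the obstruction away by $V$-injectivity, since $V$ kills the bottom coordinate of $JQ(G)$. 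Concretely, writing $\mathcal R$ as $\bigoplus_{n\ge 0}F^n\mathcal V$ modulo the relation submodule generated (freely, as one checks) by the elements $pF^n-F^{n+1}V$, one finds
\[
\Tor_1^{\mathcal V}(\mathcal R,N)\;\cong\;\bigl\{(x_n)_{n\ge 0}\ \text{almost all }0 \;:\; px_0=0,\ px_{n+1}=Vx_n\bigr\},
\]
which is nonzero e.g.\ for $N=\mathcal V/(p,V)$ (it is $\bigoplus_{n\ge0}k$), so flatness genuinely fails; what saves your argument is that this group vanishes whenever $N$ is $p$-torsion-free, which applies to $N=JQ(G)$ and gives exactness of the \emph{uncompleted} row. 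This lemma is the missing idea, and nothing in the proposal supplies it. Even granting it, a second, separate argument is still needed: the completion defining $\hat{\otimes}^F$ mixes the $F$-adic filtration with the profinite topology of $Q(G)$, so exactness of the completed row requires an Artin--Rees type comparison between the intrinsic filtration on $\mathcal R\otimes_{\mathcal V}Q(G)$ and the one induced from $\mathcal R\otimes_{\mathcal V}JQ(G)$; likewise your reduction from $\bigoplus_\alpha W^{\fin}_{W(k)}$ to a single factor needs the completed functor to commute with the relevant products, which is false before completion. These are topology-comparison statements, not $\varprojlim^1$-vanishing statements, so the Mittag--Leffler argument you point to is not the tool that proves them; this accumulation of delicate steps is exactly what the paper's short completion argument avoids.
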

\begin{proof}[Proof of Thm.~\ref{thm:indecomposablesdieudonne}]
Denote the source of the map $\epsilon_G$ by $E(G).$ We note that both $E(G) = \varprojlim_n E(G) /F^nE(G)$ and $\DieuFor(G_k) = \varprojlim_n \DieuFor(G_k)/F^n \DieuFor(G_k).$ It is clear that the map $\epsilon_G$ becomes an isomorphism modulo $F.$ We claim that this implies that $\epsilon_G$ is an isomorphism. That $\coker \epsilon_G = 0$ follows from observing that $$\coker \epsilon_G = F^i  \coker \epsilon_G, \qquad i \geq 0$$ so that $$\coker \epsilon_G = \varprojlim_n \coker \epsilon_G /F^n \coker \epsilon_G =0,$$ as claimed. We thus have an exact sequence $$0 \rightarrow \ker \epsilon_G \rightarrow E(G) \rightarrow \DieuFor(G_k) \rightarrow 0,$$ and, noting that $F$ acts injectively on $\DieuFor(G_k),$  this sequence is exact after reducing mod $F,$ since the derived functor of taking the quotient modulo $F$ is the functor taking a module $M$ to the elements of $M$ annihilated by $F.$ This implies that $\ker \epsilon_G=0$ so that $\epsilon_G$ is an isomorphism. 
\end{proof}
\begin{example} \label{ex:dieudonneW}
Let $W^{\fin}_k$ be the connected formal group over $k$ taking a finite $k$-algebra $R$ to the finitely supported Witt vectors. Denote by $F_{W(k)}$ the canonical Frobenius lift on the ring of Witt vectors. Using the above theorem, 
\[
\DieuFor(W^{\fin}_k) \cong \prod_{i=0}^\infty W_i(k).
\]
The Dieudonné module structure on the right hand side is given by
\begin{align*}
V(a_0,a_1,\dots) &= (F^{-1}(a_1),  F^{-1}(a_2), \ldots)\\
\intertext{and}
F(a_0,a_1,\dots) &= (0, pF(a_0),pF(a_1), \ldots).
\end{align*}
Here, $F^{-1}\colon W_i(k) \to W_{i-1}(k)$ denotes the inverse of the Frobenius on $W_i(k)$ followed by projection to $W_{i-1}(k)$ and $pF(a_i)\colon W_i(k) \to W_{i+1}(k)$ denotes the Frobenius on $W_i(k)$ followed by multiplication by $p$ from $W_i(k)$ to $W_{i+1}(k)$.

This follows from the isomorphism of $\mathcal V$-modules
\[
Q(W^{\fin}_{W(k)}) \cong \prod_{i=0}^\infty W(k),
\]
by inducing up.
\end{example} 

\section{Cartier and Matlis duality} \label{sec:duality}

As studied in Section~\ref{sec:dieudonne}, Dieudonné theory gives us two functors
\begin{align*}
\DieuFor\colon \{ \text{formal $p$-groups} \} &\rightarrow \DDMod{k}^F
\shortintertext{and}
D\colon \{\text{$p$-adic affine groups over $k$}\} &\rightarrow  \DMod{k}^p.
\end{align*}
Since the category of formal $p$-groups is dual to the category of $p$-adic affine groups, one might ask how these two functors are related. For finite affine groups, this is treated in \cite{fontaine:groupes-divisibles}. To state more precisely what we will investigate in this section, let $M \in \DDMod{k}^F.$ Then one can endow $$\Hom^c_{W(k)}(M,CW(k))$$ with an action of $\mathcal{R}$ such that $\Hom^c_{W(k)}(M,CW(k)) $ is an object of $\DMod{k}^p.$ This yields a contravariant functor $$I^c: \DDMod{k}^F \rightarrow \DMod{k}^p.$$  Conversely, if $N \in  \DMod{k}^p,$ then there is a natural topology on $\Hom_{W(k)}(N,CW(k)),$ and there is an action of $\mathcal{R}$ on it that makes $\Hom_{W(k)}(N,CW(k))$ into an object of $\DDMod{k}^F.$ We obtain in this manner a functor $$I: \DMod{k}^p \rightarrow \DDMod{k}^F,$$ and using Matlis duality (Thm.~\ref{thm:matlisduality}), one shows that the functors $I,I^c$ give a duality between $\DDMod{k}^p$ and $\DMod{k}^F.$ On the other hand, taking a p-adic affine group $G$ to its Cartier dual $G^*$ we get a formal $p$-group, and conversely, if $H$ is a formal $p$-group, the Cartier dual $H^*$ is a p-adic affine group.  We will denote the functor taking a formal $p$-group to its Cartier dual by $(-)^*,$ and the same notation will be used for the functor taking a $p$-adic affine group to its Cartier dual. It is well-known (see for example  \cite[I,\textsection 5]{fontaine:groupes-divisibles}) that Cartier duality gives a duality between the category of formal $p$-groups and $p$-adic affine groups. A natural question to ask is how Cartier duality relates to the duality between $\DMod{k}^p$ and $\DDMod{k}^F$ we just gave. The answer is given by the following theorem, whose proof will occupy the rest of this section.
\begin{thm} \label{thm:CartierMatlis}
The following diagram is $2$-commutative $$\begin{tikzcd} \{\text{$p$-adic affine groups over } k\} \arrow[r,"D"] \arrow[d,"(-)^*"] & \DMod{k}^p \arrow[d,"I"] \\  \{ \text{formal $p$-groups over } k \} \ \arrow[r,"\DieuFor"] & \DDMod{k}^F .\end{tikzcd}$$ Equivalently, Cartier duality commutes with Matlis duality, in the sense that if $G$ is a $p$-adic affine group, then $ID(G)$ is naturally isomorphic to $\DieuFor(G^*).$ Conversely, if $G$ is a formal $p$-group, then $I^c(\DieuFor(G))$ is naturally isomorphic to $D(G^*).$    
\end{thm}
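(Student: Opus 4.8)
The plan is to establish the first assertion, that $ID(G) \cong \DieuFor(G^*)$ naturally for every $p$-adic affine group $G$; the second assertion then follows formally, since $I$ and $I^c$ are mutually inverse dualities between $\DMod{k}^p$ and $\DDModF{k}$ by Matlis duality (Thm.~\ref{thm:matlisduality}) and Cartier duality is an involution. Both composites $I\circ D$ and $\DieuFor\circ(-)^*$ are covariant (each is a composite of two contravariant functors), so it suffices to produce a comparison transformation and to check that it is an isomorphism. The crucial point that makes such a transformation plausible at all is that the interchange of operators $F\leftrightarrow V$ induced by Cartier duality is \emph{exactly} the rule built into the $\mathcal R$-action on $I(M)$, namely $F(\alpha)(m)=\phi(\alpha(Vm))$ and $V(\alpha)(m)=\phi^{-1}(\alpha(Fm))$; thus no separate identification of Frobenius and Verschiebung will be required once the underlying $W(k)$-module duality is in place.

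I would first reduce to the case of \emph{finite} group schemes. For finite $G$ every module in sight has finite length over $W(k)$ and carries the discrete topology, so that $\Hom^c=\Hom$ and $I$ is literally Matlis duality $\Hom_{W(k)}(-,CW(k))$ against the injective hull $CW(k)\cong\Frac(W(k))/W(k)$ of $k$. In this situation the statement is the classical compatibility of the Dieudonné functor with Cartier duality \cite[Ch.~III]{fontaine:groupes-divisibles}. To reconcile it with the two-factor formula of Thm.~\ref{thm:dieudonneforpadic}, I use that Cartier duality respects the splitting \eqref{eq:antiequivalenceswithsplitting}, interchanging multiplicative affine with étale formal, and unipotent affine with connected formal groups. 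On the unipotent summand $D$ is $\colim_n\Hom_{\AbSch{k}}(-,W^n_k)$ and the matching is direct; on the multiplicative summand $D$ is defined through the étale Cartier dual and the internal module duality $I$, and the compatibility of this internal $I$ with Matlis duality into $CW(k)$ is what pins down the identification with $\DieuFor(G^*)$.

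To pass to the general case I would use that Cartier duality interchanges the pro-finite presentation of a $p$-adic affine group with the ind-finite presentation of a formal $p$-group. Writing $G=\varprojlim_i G_i$ with $G_i$ its finite quotients, one has $G^*=\colim_i G_i^*$, hence
\[
\DieuFor(G^*)=\Hom_{\FGps{k}}\bigl(\colim_i G_i^*,CW_k\bigr)=\varprojlim_i \DieuFor(G_i^*).
\]
On the other side, because $D$ is an anti-equivalence it sends the limit $G=\varprojlim_i G_i$ to the filtered colimit $D(G)=\colim_i D(G_i)$; concretely, for the unipotent part this is the identity $\Hom_{\AbSch{k}}(\varprojlim_i G_i,W^n_k)=\colim_i\Hom_{\AbSch{k}}(G_i,W^n_k)$, valid because $\Reg{W^n_k}$ is a finitely generated algebra, so that every homomorphism out of the filtered colimit $\colim_i\Reg{G_i}$ factors through a finite stage. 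Applying $I$ and using that Matlis duality converts this filtered colimit of finite-length modules into the corresponding inverse limit, I obtain $ID(G)=\varprojlim_i ID(G_i)\cong\varprojlim_i\DieuFor(G_i^*)$ by the finite case, and this limit is compatible with the one displayed above, yielding the desired natural isomorphism.

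The step I expect to be the main obstacle is the topological and naturality bookkeeping in this last reduction. One must verify that $I$ genuinely transforms the filtered colimit computing $D(G)$ into the inverse limit computing $\DieuFor(G^*)$ as \emph{topological} $\mathcal R$-modules, so that the $\mathcal F$-profinite structure on $\DDModF{k}$ is reproduced correctly, and that the comparison isomorphisms are natural across the whole pro/ind system rather than merely stagewise. A secondary difficulty lies already in the finite case: one has to confirm that the Cartier pairing $G\times G^*\to\mathbb{G}_m$ induces precisely the Matlis pairing valued in $CW(k)$ under the Witt-vector and co-Witt-vector identifications of Subsection~\ref{subsec:backgroundWitt}, which is where the two a priori different dualities are glued together.
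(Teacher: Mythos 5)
Your reduction to finite group schemes fails, and it fails precisely where the theorem has its content. A $p$-adic affine group need not be pro-finite: every unipotent group is $p$-adic (since $VF=p$), and a unipotent group such as $\mathbb{G}_a$ has no nonzero finite quotient --- the quotient of $\mathbb{G}_a$ by any finite subgroup scheme is again $\mathbb{G}_a$ --- so the inverse limit of its finite quotients is $0$, not $\mathbb{G}_a$. Hence the presentation $G=\varprojlim_i G_i$ with $G_i$ finite (and dually $G^*=\colim_i G_i^*$), on which your entire passage from the finite case rests, does not exist in general. Such a presentation does exist for $p$-adic \emph{multiplicative} groups (this is exactly the claim made in the proof of Thm.~\ref{thm:dieudonneforpadic}), so your argument recovers only the multiplicative half of the statement; the unipotent/connected half, which is where the real work lies, is untouched. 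Nor can one repair the argument by approximating on the formal side instead: the Cartier dual of $\mathbb{G}_a$ is a connected formal group killed by $p$ which is not an ind-finite colimit of finite group schemes.

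The paper's proof is organized around exactly this obstruction: it splits $G$ into multiplicative and unipotent parts, reduces only the multiplicative part to finite groups of $p$-power order and Fontaine's classical finite duality, and treats the unipotent part directly in Prop.~\ref{prop:DieudonneCommuteCartierUni}. There, instead of approximating $G$ by finite quotients, one resolves the functors in question by the Cartier-dual pair $W^n_k$ and $W^{\fin}_k[F^n]\cong (W^n_k)^*$:
\[
D(G)\;\cong\;\colim_n\Hom_{\AbSch{k}}(G,W^n_k)\;\cong\;\colim_n\Hom_{\FGpsc{k}}(W^{\fin}_k[F^n],G^*)\;\cong\;\colim_n\Hom_{\DDModF{k}}(\DieuFor(G^*),\DieuFor(W^{\fin}_k[F^n])),
\]
and the key computation (Lemma~\ref{lemma:Dieudonnerep}) identifies $\Hom_{\DDModF{k}}(M,\DieuFor(W^{\fin}_k[F^n]))$ with $I^c_{F^n}(M)$, the continuous homomorphisms $M\to CW(k)$ vanishing on $F^nM$; the colimit over $n$ is then $I^c(\DieuFor(G^*))$, and a final verification matches the $\mathcal R$-module structures (your ``secondary difficulty,'' which the paper indeed has to address). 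Some device of this kind, valid for unipotent groups that are not pro-finite, is the missing idea in your proposal; as written, the proof is incomplete.
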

\subsection{Matlis duality and monoidal structures on torsion modules}
This subsection recalls the classical theory of Matlis duality and we introduce a monoidal structure on the category of torsion $W(k)$-modules which will help us describe the Dieudonné module of the tensor product of two $p$-adic affine groups. Recall that $CW$ denotes the functor taking a algebra to its co-Witt vectors.
\begin{prop}\label{prop:cowittmultiplication-perfectalg}
For any \emph{perfect} finite $k$-algebra $A$, there are natural morphisms
\[
CW(k) \to CW(A) \quad \text{and} \quad \Tor_{W(k)}(CW(A),CW(A)) \to CW(A)
\]
making $CW(A)$ a monoid in the category of artinian $W(k)$-modules with monoidal structure $\Tor_{W(k)}$ abd unit $CW(k))$.
\end{prop}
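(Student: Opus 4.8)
The plan is to reduce the whole statement to the structural identification $CW(A) \cong W(A) \otimes_{W(k)} CW(k)$, which transports the commutative $W(k)$-algebra structure of $W(A)$ into the desired monoid structure on $CW(A)$. First I would pin down the ambient monoidal category. Over the discrete valuation ring $W(k)$, the short exact sequence $0 \to W(k) \to \Frac(W(k)) \to CW(k) \to 0$ together with the flatness of $\Frac(W(k))$ shows that $\Tor_1^{W(k)}(CW(k),M) \cong M$ for every torsion module $M$; hence $CW(k)$ is the unit for the symmetric monoidal structure $\Tor_{W(k)} = \Tor_1^{W(k)}$ on the category of artinian $W(k)$-modules (which are in particular torsion), and one checks that $\Tor_1^{W(k)}$ of two artinian modules is again artinian.

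Next I would establish the isomorphism $CW(A) \cong W(A) \otimes_{W(k)} CW(k)$ for perfect finite $A$. Since the Frobenius on $A$ is injective, $A$ is reduced, and a reduced finite algebra over the perfect field $k$ splits as a finite product $A \cong \prod_i k_i$ of finite separable (hence perfect) field extensions of $k$. Both $W(-)$ and $CW(-)$ preserve finite products, so it suffices to treat $A = k'$ a finite perfect extension. There $W(k')$ is a finite free $W(k)$-module, whence $W(k') \otimes_{W(k)} CW(k) = W(k') \otimes_{W(k)} (\Frac(W(k))/W(k)) \cong \Frac(W(k'))/W(k') = CW(k')$, the last identification because $W(k')$ is a discrete valuation ring with uniformizer $p$. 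The delicate point is that the $W(k)$-module structure on $CW(k')$ is the Frobenius-twisted one of Subsection~\ref{subsec:backgroundWitt} (the action $x \otimes a \mapsto \phi^{-n}(x)a$ on $W_n(k')$); using that $k$ and $k'$ are perfect so that $\phi$ is invertible, together with the reciprocity $aV(b) = V(\phi(a)b)$, I would verify that this twisted structure agrees with the standard one under the above identification, and that the resulting isomorphism is natural in $A$.

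With the identification in hand, the monoid structure is read off from the ring $W(A)$. The unit $CW(k) = W(k) \otimes_{W(k)} CW(k) \to W(A) \otimes_{W(k)} CW(k) = CW(A)$ is induced by the algebra unit $W(k) \to W(A)$. For the multiplication, flatness of the finite free module $W(A)$ lets me pull both copies out of the Tor, giving a natural isomorphism
\[
\Tor_1^{W(k)}(CW(A),CW(A)) \cong W(A) \otimes_{W(k)} W(A) \otimes_{W(k)} \Tor_1^{W(k)}(CW(k),CW(k)) \cong W(A) \otimes_{W(k)} W(A) \otimes_{W(k)} CW(k),
\]
the second step by the unit isomorphism of the first paragraph; composing with the multiplication $W(A) \otimes_{W(k)} W(A) \to W(A)$ yields the map $\Tor_1^{W(k)}(CW(A),CW(A)) \to W(A) \otimes_{W(k)} CW(k) = CW(A)$. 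The associativity and unit axioms then follow formally from those of the commutative $W(k)$-algebra $W(A)$, the coherence of the symmetric monoidal structure $(\Tor_1^{W(k)},CW(k))$, and naturality of the flatness isomorphisms.

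The main obstacle I expect is the second step: matching the Frobenius-twisted $W(k)$-module structure on $CW(A)$ with the untwisted structure on $W(A) \otimes_{W(k)} CW(k)$ compatibly and naturally, rather than merely abstractly. Everything else — the unit computation for $\Tor_1$, the flatness manipulations, and the monoid axioms — is formal once this identification is fixed. Conceptually, the same content can be packaged through Matlis duality, identifying $CW(A)$ as the dual of $W(A)$ and transporting the algebra structure; I would use whichever formulation is most convenient alongside Thm.~\ref{thm:matlisduality}.
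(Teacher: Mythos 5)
Your proposal is correct, but it takes a genuinely different route from the paper. The paper's proof is a two-line intrinsic construction: tensoring the short exact sequence $0 \to W(A) \to QW(A) \to CW(A) \to 0$ (the sequence \eqref{eq:Wittexactsequence}, which is a sequence of $W(k)$-modules, indeed of $W(A)$-modules since $A$ is perfect) with $CW(A)$ yields a connecting homomorphism $CW(A) * CW(A) \to W(A) \otimes CW(A)$, which is then composed with the $W(A)$-module structure on $CW(A)$; no decomposition of $A$ and no explicit model of $CW(A)$ is needed. You instead use the structure theory ($A$ perfect $\Rightarrow$ reduced $\Rightarrow$ $A \cong \prod_i k_i$ with each $k_i$ a finite perfect extension) to establish $CW(A) \cong W(A) \otimes_{W(k)} CW(k)$ and transport the commutative ring structure of $W(A)$. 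What your route buys: the unit and associativity axioms become formal consequences of the ring axioms for $W(A)$ and the coherence of $(\Tor_1^{W(k)}, CW(k))$, whereas the paper leaves the monoid axioms implicit; you also get an explicit computation of $CW(A)$. What it costs: the Frobenius-twist bookkeeping you correctly flag as the crux --- and that step does go through: the maps $F^n\colon W_n(A) \to W(A)/p^{n+1}W(A)$ intertwine the twisted action $x \cdot a = F^{-n}(x)a$ with the standard one and convert the $V$-transition maps into multiplication by $p$, so $\colim_n(W_n(A),V) \cong W(A)[1/p]/W(A) \cong W(A) \otimes_{W(k)} CW(k)$, naturally in $A$ because algebra maps of perfect algebras commute with Frobenius (note $CW(A) = CW^u(A)$ here since $A$ is reduced). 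Finally, the two constructions produce the same multiplication: for perfect finite $A$ the sequence \eqref{eq:Wittexactsequence} is the base change of $0 \to W(k) \to QW(k) \to CW(k) \to 0$ along the flat map $W(k) \to W(A)$, so the paper's connecting homomorphism is exactly your flatness isomorphism followed by the identification of $\Tor_1(CW(k),-)$ with the identity on torsion modules.
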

\begin{proof}
For ease of notation, we will denote $\Tor_{W(k)}(M,N)$ by $M * N$. Tensoring the short exact sequence \eqref{eq:Wittexactsequence} with $CW(A)$, we obtain a connecting homomorphism
\[
CW(A) * CW(A) \to W(A) \otimes CW(A),
\]
which composes with the $W(A)$-module structure on $CW(A)$ to the desired multiplication.
\end{proof}

\begin{thm}[Matlis duality] \label{thm:matlisduality}
The functor $I = \Hom_{W(k)}(-,CW(k))$ is an anti-equivalence between the category of torsion $W(k)$-modules and the category of pseudocompact $W(k)$-modules.
\end{thm}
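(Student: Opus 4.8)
The plan is to reduce the statement to the classical Matlis duality for modules of finite length over the complete discrete valuation ring $W(k)$, and then to pass to the appropriate (co)limits. Recall that $CW(k) \cong \Frac(W(k))/W(k)$ is the injective hull $E$ of the residue field $k$; in particular $E$ is an injective $W(k)$-module whose socle $\Hom_{W(k)}(k,E)$ is $k$. Since $W(k)$ is a discrete valuation ring with uniformizer $p$, a finitely generated torsion module is the same as a finite-length module, so every torsion module $M$ is the filtered union $M = \colim_i M_i$ of its finite-length submodules, while by definition every pseudocompact module $N$ is the cofiltered limit $N = \lim_j N/N_j$ of its finite-length quotients.

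First I would treat the finite-length case. Because $E$ is injective, $I = \Hom_{W(k)}(-,E)$ is exact on finite-length modules, and an induction on length---starting from $\Hom_{W(k)}(k,E) \cong k$ and applying the snake lemma to a short exact sequence $0 \to M' \to M \to M'' \to 0$---shows that $I(M)$ again has finite length with $\operatorname{length} I(M) = \operatorname{length} M$. The natural transformation $\mathrm{id} \to I\circ I$ is an isomorphism on $k$, hence, again by the five lemma and induction on length, an isomorphism on all finite-length modules. Thus $I$ is an exact anti-equivalence of the category of finite-length $W(k)$-modules with itself.

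Next I would extend along the (co)limits. For a torsion module $M = \colim_i M_i$ one has $I(M) = \lim_i I(M_i)$, a cofiltered limit of finite-length modules; endowing it with the limit topology makes it pseudocompact, its finite-length quotients being cofinally the $I(M_i)$. Conversely I would take the quasi-inverse to be the continuous dual $J = \Hom^c_{W(k)}(-,E)$ into the discrete module $E$: since a continuous map out of $N$ has open kernel and hence factors through a finite-length quotient, $J(N) = \colim_j I(N/N_j)$ is torsion. Biduality then follows formally from the finite-length case together with the interchange of $\colim$ and $\lim$: $J I(M) = \colim_i \Hom_{W(k)}(I(M_i),E) = \colim_i M_i = M$, and dually $I J(N) = \lim_j I I(N/N_j) = \lim_j N/N_j = N$ as pseudocompact modules.

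The routine part is the finite-length d\'evissage; the step that demands genuine care is the topological bookkeeping. One must check that $\Hom_{W(k)}(M,E)$ carries the limit topology of the finite-length quotients $I(M_i)$ and that this is precisely a pseudocompact structure; that the quasi-inverse on the pseudocompact side is the \emph{continuous} dual rather than the naive one; and that the biduality isomorphism $N \cong IJ(N)$ is an isomorphism of topological modules, i.e. recovers the original topology and not merely the underlying $W(k)$-module. Verifying that the finite-length submodules of $J(N)$ correspond bijectively and cofinally to the finite-length quotients of $N$ is the crux of this last point.
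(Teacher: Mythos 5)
Your proof is correct and follows essentially the same route as the paper: both reduce to the duality on finite-length $W(k)$-modules and then extend to torsion and pseudocompact modules by passing to ind- and pro-categories, with the continuous dual $\Hom^c_{W(k)}(-,CW(k))$ as quasi-inverse. The only difference is one of detail: where the paper cites classical Matlis duality for the Noetherian/Artinian (hence finite-length) case and compresses the (co)limit passage into a single sentence, you reprove the finite-length duality by d\'evissage from the injective hull $CW(k) \cong \Frac(W(k))/W(k)$ and make the topological bookkeeping explicit.
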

\begin{proof}
Classical Matlis duality establishes this functor as a duality between Noetherian and Artinian modules. The ring $W(k)$ is a discrete valuation ring, hence a PID, and hence a module over it is Noetherian if and only if it is finitely generated, and modules which are both Noetherian and Artinian are precisely those of finite length. Since every torsion module is the colimit of its finite length submodules, the result follows by passing to ind- resp. pro-categories.
\end{proof}

The inverse functor $I^{c}$ from pseudocompact $W(k)$-modules to torsion $W(k)$-modules is given by
\[
I^c(K) = \Hom^c_{W(k)}(K,CW(k)) = \colim_j \Hom_{W(k)}(K_j,CW(k))
\]
if $K$ is represented by an inverse system $K\colon J \to \Mod_{W(k)}$.

In our setting, Matlis duality is monoidal with respect to the \emph{derived} tensor product. More explicitly, we have the following two symmetric monoidal structures:
\begin{itemize}
\item On the category of pseudocompact $W(k)$-modules: the (completed) tensor product $\otimes$ with unit the pseudocompact module $W(k)$. 
\item On the category of torsion $W(k)$-modules: the torsion product $M * N = \Tor^1_{W(k)}(M,N)$  with unit $CW(k)$. Indeed, note that if $M$ is torsion then $M \otimes QW(k)=0 = M * QW(k)$ and hence the connecting homomorphism $M * CW(k) \to M \otimes_{W(k)} W(k) \cong M$ is an isomorphism.
\end{itemize}
By construction, the tensor product on pseudocompact $W(k)$-modules commutes with cofiltered limits and is right exact, and $*$ on torsion $W(k)$-modules commutes with filtered colimits and is left exact.
We have:
\begin{prop}\label{prop:matlismonoidal}
Let $M$, $N$ be torsion $W(k)$-modules and $K$, $L$ pseudocompact $W(k)$-modules. Then there are the following natural isomorphisms:
\begin{enumerate}
\item $I(M) \otimes I(N) \cong I(M*N)$, \label{matlismon:tensortor}
\item $I^c (K) * I^c(L) \cong I^c (K \otimes L)$, \label{matlismon:tortensor}
\item $K \otimes I(M) \cong \Ext(M,K)$, \label{matlismon:tensorext}
\item $M * I^c( K) \cong \Hom^c(K,M)$. \label{matlismon:torhom}
\end{enumerate}
\end{prop}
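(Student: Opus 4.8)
The plan is to reduce all four isomorphisms to the case of finite-length modules, where both Matlis duality $I$ and its inverse $I^c$ coincide with the single exact contravariant functor $D=\Hom_{W(k)}(-,CW(k))$, and then to verify them by elementary homological algebra over the discrete valuation ring $W(k)$, using that $\Ext^1(M,W(k))\cong DM$ for torsion $M$.

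First I would carry out the reduction. Writing a torsion module $M$ as a filtered colimit $M=\colim_i M_i$ of finite-length submodules and a pseudocompact module $K$ as a cofiltered limit $K=\lim_j K_j$ of finite-length quotients, the proof of Thm.~\ref{thm:matlisduality} shows that $I$ converts such colimits into limits and $I^c$ limits into colimits, while $I(M)=\Hom(\colim_i M_i,CW(k))=\lim_i I(M_i)$ is automatic. Since $\otimes$ commutes with cofiltered limits and $*$ with filtered colimits (as noted above), statements \eqref{matlismon:tensortor} and \eqref{matlismon:tortensor} reduce at once to finite-length modules. For \eqref{matlismon:tensorext} I reduce only in $M$: one has $K\otimes I(M)=\lim_i K\otimes I(M_i)$ by continuity of $\otimes$, and $\Ext(\colim_i M_i,K)\cong\lim_i\Ext(M_i,K)$ because the $\lim^1$-term $\lim^1_i\Hom(M_i,K)$ vanishes (the groups $\Hom(M_i,K)$ have finite length, so the system is Mittag--Leffler). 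Dually, for \eqref{matlismon:torhom} I reduce in $K$, using $\Hom^c(\lim_j K_j,M)\cong\colim_j\Hom(K_j,M)$ together with the fact that $*$ commutes with filtered colimits. On finite-length modules $I=I^c=D$ is an exact involution with $D^2\cong\mathrm{id}$.

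Next I treat the finite-length case, taking \eqref{matlismon:tensorext} and \eqref{matlismon:torhom} as the core computations. Over the DVR $W(k)$ every finite-length module $M$ has a square presentation $0\to W(k)^n\xrightarrow{A}W(k)^n\to M\to 0$ with $A$ injective. For \eqref{matlismon:tensorext}, applying $\Hom(-,K)$ gives $\Ext^1(M,K)\cong\coker\bigl(A^{\mathsf T}\colon K^n\to K^n\bigr)$; from the sequence $0\to W(k)\to\Frac(W(k))\to CW(k)\to 0$ of \eqref{eq:Wittexactsequence} and the injectivity of $\Frac(W(k))$ one gets $DM\cong\Ext^1(M,W(k))\cong\coker(A^{\mathsf T}\colon W(k)^n\to W(k)^n)$, so right-exactness of $\otimes$ yields $K\otimes DM\cong\coker(A^{\mathsf T}\colon K^n\to K^n)\cong\Ext^1(M,K)$. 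Dually, for \eqref{matlismon:torhom} I take a square presentation $0\to W(k)^n\xrightarrow{B}W(k)^n\to K\to 0$ of the finite-length module $K$: applying $\Hom(-,M)$ identifies $\Hom(K,M)$ with $\ker(B^{\mathsf T}\colon M^n\to M^n)$, while $DK\cong\coker(B^{\mathsf T}\colon W(k)^n\to W(k)^n)$ furnishes the two-term free resolution $0\to W(k)^n\xrightarrow{B^{\mathsf T}}W(k)^n\to DK\to 0$, whose tensoring with $M$ computes $\Tor_1(M,DK)\cong\ker(B^{\mathsf T}\colon M^n\to M^n)$; the two agree. Then \eqref{matlismon:tensortor} follows from \eqref{matlismon:tensorext} and the standard identity $D\Tor_i(M,N)\cong\Ext^i(M,DN)$ (immediate from exactness of $\Hom(-,CW(k))$, which turns a free resolution of $N$ into an injective resolution of $DN$): indeed $I(M*N)=D\Tor_1(M,N)\cong\Ext^1(M,DN)\cong DN\otimes DM=I(M)\otimes I(N)$. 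Finally \eqref{matlismon:tortensor} follows from \eqref{matlismon:tensortor} by applying $D$ and using $D^2\cong\mathrm{id}$.

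The main obstacle I anticipate is not the finite-length linear algebra, which is routine, but the bookkeeping in the reduction: one must ensure that the isomorphisms constructed on finite-length modules are natural in the relevant pro- and ind-systems so that they assemble across the (co)limits, and that the $\lim^1$-vanishing and continuity arguments genuinely apply to $\Ext$ and $\Hom^c$ in this mixed torsion/pseudocompact setting. To make naturality manifest and the passage to the limit formal, I would phrase the finite-length isomorphisms through the canonical maps $K\otimes\Ext^1(M,W(k))\to\Ext^1(M,K)$ and $\Tor_1(M,DK)\to\Hom(K,M)$ rather than through chosen presentations, checking only that these natural transformations are isomorphisms on finite-length modules.
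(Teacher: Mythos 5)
Your proof is correct and follows the paper's overall skeleton --- reduce to finite-length modules, establish \eqref{matlismon:tensorext}/\eqref{matlismon:torhom} as the core, deduce \eqref{matlismon:tensortor} from \eqref{matlismon:tensorext} via an Ext--Tor interchange, and get \eqref{matlismon:tortensor} by dualization --- but it diverges at the key computational step. The paper proves \eqref{matlismon:torhom} by building a natural transformation $\phi_M\colon M * I^c(K) \to \Hom(K,M)$ (universal coefficients, evaluation, adjunction) and then resolving the finite-length torsion module $M$ by \emph{injectives}: $0 \to M \to J_0 \to J_1 \to 0$ with $J_0, J_1$ finite sums of $CW(k)$, so that a four-term exact ladder reduces everything to the trivially true case $M = CW(k)$. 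You instead resolve by \emph{projectives}: square presentations $0 \to W(k)^n \to W(k)^n \to M \to 0$ over the DVR, with explicit kernel/cokernel identifications, using $\Frac(W(k))$-injectivity and \eqref{eq:Wittexactsequence} to identify $DM \cong \Ext^1(M,W(k))$. Your route is more elementary and has a genuine advantage: your argument for \eqref{matlismon:tensorext} works for an \emph{arbitrary} pseudocompact $K$ (and your \eqref{matlismon:torhom} for arbitrary torsion $M$), so you only need a one-variable reduction, whereas the paper reduces all four variables to finite length and leaves the attendant limit-interchange issues entirely implicit; your closing remark about phrasing the isomorphisms through canonical natural maps rather than chosen presentations is exactly what is needed to make the assembly over the (co)limits legitimate, and it recovers the naturality that the paper's $\phi_M$ has built in from the start.

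One slip worth fixing: in your reduction for \eqref{matlismon:tensorext} you justify $\lim^1_i \Hom(M_i,K) = 0$ by claiming the groups $\Hom(M_i,K)$ have finite length. That is false for general pseudocompact $K$: for $K = \prod_{\mathbb N} W(k)/p$ and $M_i = W(k)/p$ one gets $\Hom(M_i,K) \cong K$, of infinite length. The conclusion still holds, but for a different reason: each $\Hom(M_i,K) = \lim_j \Hom(M_i,K_j)$ is itself a pseudocompact module with continuous transition maps, and cofiltered limits are exact on the category of pseudocompact modules, which kills the $\lim^1$. This is a repairable local error, not a gap in the strategy --- and it concerns a point the paper's own proof does not address at all.
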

\begin{proof}
First note that in \eqref{matlismon:tensortor}--\eqref{matlismon:torhom}, everything commutes with filtered colimits in $M$ and $N$ and with cofiltered limits in $K$ and $L$. Thus we can without loss of generality assume that $M$, $N$, $K$, and $L$ are finite length $W(k)$-modules. In this case, $I^c = I$ is an anti-equivalence.

The universal coefficient theorem gives a natural morphism
\[
(M * I^c (K)) \otimes K \to M * (I^c(K) \otimes K),
\]
which can be composed with the evaluation $D^c K \otimes K \to CW(k)$ to give a natural map
\[
(M * I^c (K)) \to M * CW(k) \cong M.
\]
By adjunction, we obtain a map $\phi_M\colon M * I^c( K) \to \Hom(K,M)$. Since $M$ is finitely generated, it injects into an injective $J_0 \cong CW(k)^N$ for some $N\geq 0$ , the quotient $J_1 = J_0/M$ being of the same form. We get a four-term exact ladder
\[
\begin{tikzpicture}
 	\matrix (m) [matrix of math nodes, row sep=2em, column sep=2em, text height=1.5ex, text depth=0.25ex]
 	{
 		0 & M * I^c (K) & J_0 * \mathbb I^c(K)  & J_1 * \mathbb I(K) & M \otimes I^c K & 0 \\
 		0 & \Hom(K,M) & \Hom(K,J_0) & \Hom(K,J_1) & \Ext(K,M) & 0\\
 	};
 	\path[->,font=\scriptsize]
 	(m-1-1)	edge (m-1-2)
 	(m-1-2) edge (m-1-3) edge (m-2-2)
 	(m-1-3) edge (m-1-4) edge (m-2-3)
 	(m-1-4) edge (m-1-5) edge (m-2-4)
 	(m-1-5) edge (m-1-6) edge (m-2-5)
 	(m-2-1)	edge (m-2-2)
	(m-2-2) edge (m-2-3)
	(m-2-3) edge (m-2-4)
	(m-2-4) edge (m-2-5)
	(m-2-5) edge (m-2-6);
\end{tikzpicture}
\]
The map $\phi_{CW(k)}$ is trivially an isomorphism, hence so are $\phi_{J_0}$ and $\phi_{J_1}$ since $J_0$ and $J_1$ are finite sums of $CW(k)$ and both $*$ and $\Hom(K,-)$ commute with finite sums. By exactness, \eqref{matlismon:tensorext} and \eqref{matlismon:torhom} follow.

For \eqref{matlismon:tensortor}, we compute
\begin{align*}
I (M * N) &= W(k) \otimes I(M * N) \underset{\eqref{matlismon:tensorext}}\cong \Ext(M * N,W(k)) \cong \Ext(M,\Ext(N,W(k)))\\
& \underset{\eqref{matlismon:tensorext}}\cong \Ext(M,I(N) \otimes W(k) ) \underset{\eqref{matlismon:tensorext}}\cong I( M) \otimes I(N) \otimes W(k) \cong I(M) \otimes I(N).
\end{align*}
Lastly, \eqref{matlismon:tortensor} follow from \eqref{matlismon:tensortor} by dualization.
\end{proof}

A Dieudonn\'e module $M \in \DMod{k}^p$ is in particular a torsion $W(k)$-module: $M \cong M^d \oplus M^c$ where on the connected part $M^c$, $V$ is nilpotent and thus so is $p=FV$; and $M^d \cong (W(\bar{k}) \otimes_{\mathbb{Z}} L)^{\bar{k}}$ for a torsion $p$-module $L.$ Thus $I(M)$ is defined as a pseudocompact $W(k)$-module, and it inherits the structure of an $\mathcal R$-module by defining, for $\phi\colon M \to CW(k) \in I(M)$, 
\[
F(\phi) = F_{CW(k)}(\phi \circ V) \quad \text{and} \quad V(\phi) = F^{-1}_{CW(k)}(\phi \circ F)
\]
where $F_{CW(k)}$ is the Frobenius on $CW(k).$ 
Thus $I(M)$ is an object of $\DDMod{k},$ and this shows that $I$ is an anti-equivalence of categories $I: \DMod{k} \rightarrow \DDMod{k}$ with inverse $I^c:\DDMod{k} \rightarrow \DMod{k}.$  
\subsection{Cartier duality on the level of Dieudonné modules}
We now move onto proving the main theorem of this section. We start by considering unipotent groups. 
\begin{prop}\label{prop:DieudonneCommuteCartierUni}
Let $G$ be a unipotent group and let $G^*$ be its Cartier dual. Then there is a natural isomorphism $$\DieuFor(G^*) \cong I(D(G))$$ and conversely, if $H$ is a connected group and $H^*$ its Cartier dual, then $$D(H^*) \cong I^c(\DieuFor(H)).$$ 
\end{prop}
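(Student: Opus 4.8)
The plan is to prove the two statements in tandem, exploiting the duality between unipotent and connected groups and the anti-equivalence of Matlis duality established in Thm.~\ref{thm:matlisduality}. Since $G$ is unipotent, its Cartier dual $G^*$ is a connected formal group, and conversely for the connected $H$ the dual $H^*$ is unipotent; thus the two assertions are Matlis-dual to each other, and it suffices to construct one natural isomorphism carefully and then dualize. I would begin with the first: for $G$ unipotent, $D(G) = \colim_n \Hom_{\AbSch{k}}(G,W^n_k)$ by Thm.~\ref{thm:dieudonneforaffinegroups}, while $\DieuFor(G^*) = \Hom_{\FGps{k}}(G^*,CW_k)$ by Thm.~\ref{thm:dieudonneforformalgroups}.

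The key step is to identify the pairing underlying Matlis duality with the Cartier duality pairing. Concretely, I would use that Cartier duality realizes $G^* = \IHom(G,\mathbb G_m)$, and that the finitely supported Witt vectors $W^{\fin}_k$ and the co-Witt vectors $CW_k$ are Cartier dual to each other as formal/affine groups (this is the Witt-vector incarnation of the sequence \eqref{eq:Wittexactsequence} and is implicit in Fontaine's description). This should produce a natural perfect pairing
\[
D(G) \times \DieuFor(G^*) \to CW(k),
\]
i.e.\ a natural map $\DieuFor(G^*) \to \Hom_{W(k)}(D(G),CW(k)) = I(D(G))$. First I would check that this map is $\mathcal R$-linear, using the definitions $F(\phi) = F_{CW(k)}(\phi\circ V)$ and $V(\phi)=F^{-1}_{CW(k)}(\phi\circ F)$ on $I(M)$ against the known $F,V$ action on $\DieuFor$; the Frobenius twists $\phi$ should match exactly the twist $F^{-1}_{W(k)}$ built into the co-Witt structure. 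Then I would verify it is an isomorphism by reducing to the finite case: every unipotent $G$ is a filtered limit of finite unipotent groups $G'$, every construction in sight commutes with the appropriate (co)limits (by Thm.~\ref{thm:matlisduality} and the fact that $D$ and $\DieuFor$ are (anti-)equivalences commuting with limits/colimits), and for finite $G'$ the statement is the classical Cartier--Dieudonné duality treated in \cite{fontaine:groupes-divisibles}.

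For the second statement I would simply apply Matlis duality $I^c$ to the first. Given a connected $H$, set $G = H^*$, which is unipotent, so $H = G^*$ and the first part gives $\DieuFor(H) = \DieuFor(G^*) \cong I(D(G)) = I(D(H^*))$. Applying the inverse functor $I^c$ and using that $I^c I \cong \mathrm{id}$ on $\DMod{k}$ (the last paragraph of Subsection~7.1, where $I,I^c$ are shown mutually inverse anti-equivalences between $\DMod{k}$ and $\DDMod{k}$), yields $I^c(\DieuFor(H)) \cong D(H^*)$, as desired. The main obstacle I expect is the bookkeeping of Frobenius/Verschiebung twists in verifying $\mathcal R$-linearity of the duality pairing: the semilinearity of $V$ (through $\phi^{-1}=F^{-1}_{W(k)}$) must be reconciled across the three structures—the $\mathcal R$-action on $D(G)$, the $\mathcal R$-action on $\DieuFor(G^*)$, and the induced action on $I(D(G))$—and getting the composition $F_{CW(k)}(\phi\circ V)$ to correspond to the Frobenius on $\DieuFor$ requires care with how $CW_k$ and $W^{\fin}_k$ pair under Cartier duality.
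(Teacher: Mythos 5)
Your global architecture matches the paper's: prove one of the two isomorphisms and deduce the other from the fact that $I$ and $I^c$ are mutually inverse anti-equivalences, and build the comparison map out of Cartier duality applied to Witt-vector groups. The step that fails is your verification that the map is an isomorphism. You reduce to finite group schemes via the claim that every unipotent group is a filtered limit of \emph{finite} unipotent groups. That claim is false: $\mathbb{G}_a$ is unipotent, but its Hopf algebra $k[x]$ is a domain, so every finite-dimensional subalgebra (in particular every finite-dimensional sub-Hopf algebra) is contained in $k$, and hence $k[x]$ is not a filtered union of finite-dimensional sub-Hopf algebras; equivalently, $\mathbb{G}_a$ is not a filtered limit of finite group schemes. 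Dually, the connected formal group $(\mathbb{G}_a)^* \cong W^{\fin}_k[F]$ is not a filtered colimit of finite subgroup schemes. What is true is that a unipotent group is a filtered limit of unipotent groups of finite type (the paper's own reduction is to $V^m_G=0$), and your (co)limit bookkeeping for $D$, $\DieuFor$, $I$, $I^c$ works down to that level --- but no further. So the classical finite-group-scheme duality in Fontaine, which you invoke as the base case, never reaches the groups that carry the actual content of the proposition; for finite groups the statement was already known, and the infinite case is exactly what is new here.

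The missing ingredient is what the paper proves in place of a finite reduction: the duality for the infinite building blocks $W^n_k$. Writing $D(G) = \colim_n \Hom_{\AbSch{k}}(G, W^n_k)$, the paper applies Cartier duality in the precise form $(W^n_k)^* \cong W^{\fin}_k[F^n]$ --- note that your version, ``$W^{\fin}_k$ and $CW_k$ are Cartier dual,'' does not typecheck in this paper's framework, since both are formal groups and Cartier duality pairs formal groups with affine ones --- then uses that $\DieuFor$ identifies morphisms of connected formal groups with morphisms of their Dieudonn\'e modules, and finally proves the computational Lemma~\ref{lemma:Dieudonnerep}: for $M \in \DDModF{k}$, morphisms $M \to \DieuFor(W^{\fin}_k[F^n])$ are naturally identified with $I^c_{F^n}(M)$, the continuous $W(k)$-linear maps $M \to CW(k)$ killed by $F^n$, compatibly with the transition maps as $n$ grows. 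The colimit over $n$ then assembles these identifications into $D(G) \cong I^c(\DieuFor(G^*))$, and the first isomorphism follows by applying $I$. Some such direct computation for the Witt groups, which are neither finite nor pro-finite, is unavoidable; it is precisely the work your reduction attempts to bypass, so the proposal as it stands has a genuine gap.
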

If $M \in \DDModF{k},$ we let $I^c_{F^n}(M)$ be the Dieudonné submodule of $$\Hom^c_{W(k)}(M,CW(k))$$ consisting of those $f: M \rightarrow CW(k)$ such that $f(F^n)=0.$ Let us consider the connected formal group group $W^{\fin}_k$ as in Example \ref{ex:dieudonneW}. Denote by $W^{\fin}_{k,p}$ the $k$-scheme we get after base change along the Frobenius map $F_k: \Spec k \rightarrow \Spec k.$ Then the relative Frobenius gives us a map $W^{\fin}_k \rightarrow W^{\fin}_{k,p}$ of formal groups over $k.$ However, since $$W^{\fin}_k \cong W^{\fin}_{\mathbb{F}_p} \times_{\mathbb{F}_p} \Spec k,$$ we see that $W^{\fin}_{k,p} \cong W^{\fin}_k$ as formal group over $k.$ Composing the Frobenius with this isomorphism, we get an endomorphism $F_{W^{\fin}_k}: W^{\fin}_k \rightarrow W^{\fin}_k.$ For $i \geq 0,$ write $W^{\fin}[F^i]$ for the kernel of the $i$th iterate of this Frobenius map. Note that for $i \geq 0,$ composition with $F_{W^{\fin}_k}$ gives a map $F_{W^{\fin}_k}:W^{\fin}[F^{i+1}] \rightarrow W^{\fin}[F^i].$
\begin{lemma}\label{lemma:Dieudonnerep}
Let $W^{\fin}_k[F^n] = \ker(F^n:W^{\fin}_k \rightarrow W^{\fin}_k)$ where $F^n, n \geq 1$ is the $n$th iterate of the Frobenius endomorphism, and let $\DieuFor(W^{\fin}_k[F^n])$ be its Dieudonné module. Then for $M \in \DDModF{k},$ there is a $W(k)$-linear isomorphism $$\pi_n: \Hom_{\DDModF{k}}(M,\DieuFor(W^{\fin}_k[F^n])) \rightarrow I^c_{F^n}(M).$$ Further, this isomorphism is compatible with the Frobenius in the sense that if $$f \in  \Hom_{\DDModF{k}}(M,\DieuFor(W^{\fin}_k[F^n]))$$ and $$\DieuFor(F_{W^{\fin}_k}):\DieuFor(W^{\fin}_k[F^n]) \rightarrow  \DieuFor(W^{\fin}_k[F^{n+1}]),$$ then $$\pi_{n+1}(\DieuFor(F_{W^{\fin}_k}) \circ f) = \pi_n(f).$$   
\end{lemma}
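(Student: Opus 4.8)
The plan is to first identify the Dieudonné module $\DieuFor(W^{\fin}_k[F^n])$ explicitly and then deduce $\pi_n$ formally from Matlis duality, reducing the whole statement to one cyclic-module identification. I would begin with the module computation. The iterated relative Frobenius $F^n$ on the smooth connected group $W^{\fin}_k$ is faithfully flat, so there is a short exact sequence of formal $p$-groups
\[
0 \to W^{\fin}_k[F^n] \to W^{\fin}_k \xrightarrow{F^n} W^{\fin}_k \to 0.
\]
Under the equivalence $\DieuFor$ of Thm~\ref{thm:dieudonneforformalgroups} (which reverses arrows, as the stated direction of $\DieuFor(F_{W^{\fin}_k})$ shows) the group Frobenius corresponds to the module operator $F$: post-composition with $F_{CW(k)}$ agrees with pre-composition with $F_{W^{\fin}_k}$ by naturality of the Frobenius. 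This operator is injective on $\DieuFor(W^{\fin}_k) \cong P := \prod_{i=0}^\infty W_i(k)$ by the formula of Example~\ref{ex:dieudonneW}, so the sequence dualizes to $0 \to P \xrightarrow{F^n} P \to \DieuFor(W^{\fin}_k[F^n]) \to 0$, giving $\DieuFor(W^{\fin}_k[F^n]) \cong P/F^nP$, a module on which $F^n$ acts as $0$.

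Next I would reduce the lemma to module theory via Matlis duality (Thm~\ref{thm:matlisduality}). On $I^c(M)=\Hom^c_{W(k)}(M,CW(k))$ the Verschiebung acts by $V(f)=F^{-1}_{CW(k)}(f\circ F)$, whence $V^nf=F^{-n}_{CW(k)}(f\circ F^n)$ and therefore $I^c_{F^n}(M)=\{f : f\circ F^n=0\}=I^c(M)[V^n]$. The functor $N\mapsto N[V^n]$ on left $\mathcal R$-modules is corepresented by $\mathcal R/\mathcal R V^n$, i.e. $\Hom_{\mathcal R}(\mathcal R/\mathcal R V^n,N)\cong N[V^n]$ naturally. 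Using that $I^c$ is an anti-equivalence $\DDMod{k}\to\DMod{k}$, I would chain the natural isomorphisms
\[
\Hom_{\DDModF{k}}(M,P/F^nP)\cong \Hom_{\mathcal R}\bigl(I^c(P/F^nP),I^c(M)\bigr)\cong \Hom_{\mathcal R}\bigl(\mathcal R/\mathcal R V^n,I^c(M)\bigr)= I^c_{F^n}(M),
\]
the middle isomorphism being the \emph{key identification} $I^c(P/F^nP)\cong \mathcal R/\mathcal R V^n$. The composite defines $\pi_n$.

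To prove the key identification I would exhibit the continuous dual $I^c(P/F^nP)\cong\bigoplus_i W_{\min(i,n-1)}(k)$ as a cyclic $\mathcal R$-module generated by the functional dual to the bottom coordinate $W_0(k)\hookrightarrow P/F^nP$: applying $F$ moves this generator up through the coordinates while $W(k)$-multiples fill each summand, and the relation $F^n=0$ on $P/F^nP$ dualizes, under the $F\leftrightarrow V$ swap effected by $I^c$, to $V^n=0$ on the generator, so that $\mathcal R V^n\subseteq\operatorname{Ann}$. The Frobenius compatibility is then formal: the transition $F_{W^{\fin}_k}\colon W^{\fin}_k[F^{n+1}]\to W^{\fin}_k[F^n]$ dualizes to $F\colon P/F^nP\to P/F^{n+1}P$, which under $I^c$ becomes the canonical surjection $\mathcal R/\mathcal R V^{n+1}\twoheadrightarrow\mathcal R/\mathcal R V^n$; precomposing $\Hom_{\mathcal R}(-,I^c(M))$ with this surjection is exactly the inclusion $I^c(M)[V^n]\hookrightarrow I^c(M)[V^{n+1}]$, yielding $\pi_{n+1}(\DieuFor(F_{W^{\fin}_k})\circ f)=\pi_n(f)$.

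The main obstacle I anticipate is the key identification $I^c(P/F^nP)\cong\mathcal R/\mathcal R V^n$: one must show the annihilator of the cyclic generator is \emph{exactly} $\mathcal R V^n$ and no larger, which requires careful length bookkeeping against the explicit module $\prod_i W_{\min(i,n-1)}(k)$ and control of the semilinear $F^{-1}_{W(k)}$-twists of Example~\ref{ex:dieudonneW}. Crucially, this identification must be carried out by hand, since invoking Thm~\ref{thm:CartierMatlis} would be circular — that theorem is precisely what the present lemma is designed to prove.
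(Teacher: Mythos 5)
Your global strategy is genuinely different from the paper's and its skeleton is sound: the paper proves the lemma by a direct component-wise computation (a morphism $f\colon M \to \DieuFor(W^{\fin}_k)/F^n\DieuFor(W^{\fin}_k)$ is a tuple $(f_0,f_1,\dots)$ with $f_i\colon M \to W_{\min\{i,n-1\}}(k)$, the Dieudonné relations show $f$ is determined by $f_{n-1}$, and one sets $\pi_n(f)=F^{-n+1}_{W(k)}\circ f_{n-1}$), whereas you reduce everything to Matlis duality plus the corepresentability of $N \mapsto N[V^n]$ by $\mathcal R/\mathcal R V^n$. Your identification $\DieuFor(W^{\fin}_k[F^n]) \cong P/F^nP$, the equality $I^c_{F^n}(M)=I^c(M)[V^n]$, and the observation that everything then hinges on the single isomorphism $I^c(P/F^nP)\cong \mathcal R/\mathcal R V^n$ are all correct; you also rightly avoid circularity by invoking only the module-level anti-equivalence $I^c$, which the paper establishes before the lemma.

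However, your proof of the key identification fails as proposed, because the cyclic generator you chose is the wrong one. Under the dual action $F(\phi)=F_{CW(k)}(\phi\circ V)$, $V(\phi)=F^{-1}_{CW(k)}(\phi\circ F)$, the functional $e_0^*$ dual to the bottom coordinate $W_0(k)$ satisfies $Ve_0^*=0$: the operator $F$ on $P/F^nP$ is $(a_0,a_1,\dots)\mapsto(0,pF(a_0),pF(a_1),\dots)$, which shifts coordinates upward, so $e_0^*\circ F=0$. Moreover $pe_0^*=0$, since $e_0^*$ takes values in $W_0(k)=k\subset CW(k)$. Hence $\mathcal R e_0^*\cong\mathcal R/\mathcal R V\cong\bigoplus_{j\geq 0}k$: while $F^je_0^*$ does move up to coordinate $j$ as you say, it only sees that coordinate modulo $p$, so $W(k)$-multiples fill only the $p$-torsion bottom of each summand $W_{\min\{j,n-1\}}(k)$, not the summand itself. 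Your length bookkeeping, carried out honestly, would therefore produce $\Hom_{\DDModF{k}}(M,\DieuFor(W^{\fin}_k[F^n]))\cong I^c(M)[V]$ for every $n$, which is false for $n\geq 2$. The correct generator is the ($F^{-(n-1)}_{W(k)}$-twisted) functional dual to the coordinate in position $n-1$, the first position where the ramp reaches $W_{n-1}(k)$: its annihilator is exactly $\mathcal R V^n$, applying $F^j$ fills the coordinates above $n-1$, and applying $V^j$ fills those below. This is precisely what the paper's formula $\pi_n(f)=F^{-n+1}_{W(k)}\circ f_{n-1}$ encodes (the morphism is determined by its $(n-1)$st component, not its $0$th). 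With this corrected generator, chosen compatibly in $n$, your Matlis-duality reduction and the Frobenius-compatibility argument do go through.
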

\begin{proof}
By Example \ref{ex:dieudonneW}, we see that a morphism  $$f: M \rightarrow \DieuFor(W_k^{\fin}[F^n]) \cong \DieuFor(W_k^{\fin})/F^n(\DieuFor(W_k^{\fin}))$$ can be written as $$(f_0,f_1,f_2, \ldots) $$  where $f_i$ is a continuous homomorphism of $W(k)$-modules from $M$ to $$W_{\min\{n-1,i\}}(k).$$ The requirement that $f$ is a morphism of Dieudonné modules gives that $f$ is in fact, determined by $f_{n-1}.$ Indeed, the requirement that $$f(Vm) = Vf(m)$$ gives that $$f_{n-1}(V^im) = F_{W(k)}^{-i}(f_{n-1+i}(m)), i \geq 0$$ and $f(Fm) = Ff(m)$ shows that $$f_{n-1}(F^i m)= p^i F^i_{W(k)}( f_{n-1-i}(m)), i \geq 0.$$ We see that a morphism $$f: M \rightarrow \DieuFor(W_k^{\fin}[F^n])$$ gives rise to a map $$f_{n-1}:M \rightarrow W_{n-1}(k) \subset CW(k)$$ such that $f_{n-1}(F^n)=0.$ We define $\pi_n(f) = F^{-n+1}_{W(k)} \circ f_{n-1}.$ Note that the action of $W(k)$ on $W_{n-1}(k)$ is, when we view  $W_{n-1}(k)$ as a submodule of $CW(k),$  given by for $x \in W(k)$ and $a \in W_{n-1}(k)$ by $F^{-n+1}(x) a,$ thus $\pi_n(f)$ is $W(k)$-linear. Conversely, given a $W(k)$-linear map $$h: M \rightarrow W_{n-1}(k) \subset CW(k)$$ such that $h(F^n)= 0,$ one easily defines a map $\tilde{h}: M \rightarrow \DieuFor(W^{\fin}_k[F^n])$ of Dieudonné modules. Lastly, to see that $$\pi_{n+1}(\DieuFor(F_{W_k^{\fin}}) \circ f) = \pi_n(f),$$ is a straightforward computation. 	
\end{proof}
\begin{proof}[Proof of Prop.~\ref{prop:DieudonneCommuteCartierUni}]
Let $G$ be as in the proposition. We can without loss of generality assume that $V^m_G=0.$  Consider $$D(G) \cong \colim_n \Hom(G,W^n_k) \cong \colim_n \Hom(W^{\fin}_k[F^n],G^*),$$ where the latter isomorphism is induced by Cartier duality. Note that the transition maps $\Hom(W^{\fin}_k[F^n],G^*) \rightarrow \Hom(W^{\fin}_k[F^{n+1}],G^*)$ come from precomposition with the Frobenius on $W_k^{\fin}.$ By classical Dieudonné theory, $$ \Hom_{\FGpsc{k}}(W_k^{\fin}[F^n],G^*) \cong \Hom_{\DDModF{k}}(\DieuFor(G^*),\DieuFor(W_k^{\fin}[F^n])).$$ By Lemma \ref{lemma:Dieudonnerep}, $\Hom_{\DDModF{k}}(\DieuFor(G^*),\DieuFor(W_k^{\fin}[F^n]))$ is isomorphic to $I^c_{F^n}(\DieuFor(G^*)).$  Taking the limit over all $n,$ using Lemma \ref{lemma:Dieudonnerep} to see that the diagram $$\begin{tikzcd} \Hom(\DieuFor(G^*), \DieuFor(W_k^{\fin}[F^n])) \arrow[r,"F_{\DieuFor(W_k^{\fin})}"] \arrow[d, "\pi_n"] & \Hom(\DieuFor(G^*),\DieuFor(W_k^{\fin}[F^{n+1}])) \arrow[d, "\pi_{n+1}"] \\ I^c_{F^n}(M) \arrow[r,"i_n"] & I^c_{F^{n+1}}(M) \end{tikzcd}$$ commutes, one gets that $$D(G) \cong I^c(\DieuFor(G^*))$$ as $W(k)$-modules. A verification, keeping track of the interchange between the Frobenius and Verschiebung when we dualize, shows that the isomorphism $$D(G) \cong I^c(\DieuFor(G^*))$$ actually is an isomorphism of Dieudonné modules.
\end{proof}
\begin{proof}[Proof of Thm.~\ref{thm:CartierMatlis}]
We have already proved this for unipotent groups by Prop.~\ref{prop:DieudonneCommuteCartierUni}, so what remains is the proof for $p$-adic multiplicative groups. If $G$ is a p-adic multiplicative group, then $G$ is an inverse limit of finite groups of $p$-power order. We can thus assume that $G$ is a finite multiplicative group of order $p^n.$ But then this duality is classical, see for example \cite[III,\textsection 5]{fontaine:groupes-divisibles}. 
\end{proof}

\section{Tensor products of formal Hopf algebras in positive characteristic} \label{sec:tensorprodHopf}

\subsection{Tensor products of formal Hopf algebras with a lift of the Verschiebung}

Let $\mathcal{C}_V$ be the category of topologically free coalgebras over $W(k)$ together with a lift of the Verschiebung. We let $\mathcal{H}_V \cong (\FgpsV)^{\op}$ be the category of complete Hopf algebras over $W(k)$ representing connected, smooth formal groups. The purpose of this section is to first show that there is a tensor product $\boxtimes$ in $\mathcal{H}_V,$ and then give a formula for the tensor product of two objects.

\begin{prop}
The category $\mathcal H_V$ has a tensor product $\boxtimes^c$.
\end{prop}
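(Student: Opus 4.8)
The plan is to obtain existence from Goerss's Theorem~\ref{thm:goerss}. The first observation is that $\mathcal{H}_V$ is exactly the category $\Ab(\mathcal{C}_V)$ of abelian group objects in $\mathcal{C}_V$: a complete $W(k)$-Hopf algebra representing an object of $\FgpsV$ is precisely an object of $\mathcal{C}_V$ together with a compatible (bicommutative) abelian group structure, and the Verschiebung lifts match up. I would therefore aim to verify the three hypotheses of Goerss's theorem. Finite products in $\mathcal{C}_V$ are given by the completed tensor product $C_1 \hat\otimes_{W(k)} C_2$ of coalgebras — which is the categorical product of cocommutative coalgebras — equipped with the diagonally induced Verschiebung lift; topological freeness is preserved by $\hat\otimes_{W(k)}$, and the terminal object is $W(k)$ with its canonical lift. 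For the left adjoint to the forgetful functor $\mathcal{H}_V \to \mathcal{C}_V$, I would adapt the free abelian Hopf algebra construction of Takeuchi and Newman--Radford used in the proof of Thm.~\ref{thm:existence} to topologically free $W(k)$-coalgebras, transporting the Verschiebung lift through the (completed) symmetric-tensor construction.

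The delicate hypothesis is the existence of coequalizers in $\Ab(\mathcal{C}_V)=\mathcal{H}_V$, and I expect this to be the main obstacle. Under the anti-equivalence $Q$ of Thm.~\ref{thm:indecomposableequiv}, coequalizers in $\mathcal{H}_V$ correspond to equalizers of smooth connected formal groups; but kernels of smooth group homomorphisms need not be smooth — equivalently, the quotient of a topologically free $W(k)$-module by a closed submodule need not be topologically free (for instance $W(k)\xrightarrow{p}W(k)$ has coequalizer $k$, which is not topologically free) — so $\mathcal{C}_V$ is not closed under the coequalizers that Goerss's construction produces, and the theorem does not apply to $\mathcal{C}_V$ directly. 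To circumvent this I would instead apply Goerss's theorem to the larger category $\widetilde{\mathcal{C}}_V$ of \emph{all} pseudocompact $W(k)$-coalgebras with a Verschiebung lift, dropping the connectedness, smoothness and topological-freeness conditions. There coequalizers exist (cokernels are formed by quotienting by the closure of the image), the product is again $\hat\otimes_{W(k)}$, and the free Hopf algebra functor still exists; Goerss's theorem then yields a tensor product on $\widetilde{\mathcal{H}}_V=\Ab(\widetilde{\mathcal{C}}_V)$.

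It then remains to show that this tensor product restricts to $\mathcal{H}_V$: namely, that for $G_1,G_2\in\FgpsV$ the object $G_1\boxtimes G_2$ is again connected, smooth and topologically free, so that it may be declared to be $G_1\boxtimes^c G_2$. Because $\mathcal{C}_V\subset\widetilde{\mathcal{C}}_V$ is full and closed under the product, the universal property against bilinear maps into objects of $\mathcal{H}_V$ is inherited from $\widetilde{\mathcal{H}}_V$ once this closure is known. I expect the closure statement to follow from the indecomposables description: one computes $Q(G_1\boxtimes G_2)$ to be a topologically free $W(k)$-module, built from the functorial free resolutions of $Q(G_i)$ supplied by Prop.~\ref{prop:indecomposableadjoint}, whereupon smoothness and connectedness follow exactly as in the proof of Thm.~\ref{thm:indecomposableequiv}. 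Establishing that the Goerss tensor product of two smooth connected $V$-lifted formal groups remains inside $\FgpsV$ is the crux; by contrast, the verification of finite products and of the free functor are routine adaptations of the field-theoretic constructions already employed in the proof of Thm.~\ref{thm:existence} and Thm.~\ref{thm:tensorproductsofformalgroupsexist}.
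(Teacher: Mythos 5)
There is a genuine gap, and it sits exactly where you yourself locate the crux. Your plan is to apply Theorem~\ref{thm:goerss} in an enlarged category $\widetilde{\mathcal C}_V$ of all pseudocompact $W(k)$-coalgebras with a $V$-lift and then prove that for $G_1,G_2 \in \FgpsV$ the resulting tensor product is still connected, smooth and topologically free. The connectedness half of this closure statement is false in general: once the universal property is tested against non-connected objects (which $\widetilde{\mathcal H}_V$ contains, e.g.\ constant groups $\underline{A}$ with the $V$-lift given by multiplication by $p$), the tensor product acquires a nontrivial \'etale part. This is precisely the phenomenon the paper records over $k$: by Lemma~\ref{lemma:connectedtensorconnectednotconnected} the unrestricted tensor product of two connected formal Hopf algebras splits as $H_1 \boxtimes^c H_2 \;\hat\otimes\; H_1 \boxtimes^e H_2$ with the \'etale factor nontrivial already for $H_1=H_2=\Reg{\alpha_p}$ (see also Example~\ref{ex:etaleofap}), and Remark~\ref{rmk:dieufornotmonoidal} exhibits a large \'etale part for $\Reg{CW^c_k} \boxtimes \Reg{CW^c_k}$; nothing about the Verschiebung lifts suppresses this. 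Worse, your proposed route to connectedness cannot work even in principle: the functor $Q$ vanishes on \'etale Hopf algebras (for an idempotent-generated augmentation ideal one has $I = \overline{I}^2$), so computing $Q(G_1 \boxtimes G_2)$, however cleanly via Prop.~\ref{prop:indecomposableadjoint}, is blind to exactly the summand you must rule out; ``connectedness follows as in the proof of Thm.~\ref{thm:indecomposableequiv}'' is not available, since that argument presupposes both sides connected. (A symptom of the same issue appears earlier in your write-up: $\Ab(\mathcal C_V)$ is already strictly larger than $\mathcal H_V$, as $W(k)^S$ is topologically free but not local, so connectedness is an imposed condition, never an automatic one.) Your detour also incurs a new debt you do not discharge: the free abelian Hopf algebra functor on $\widetilde{\mathcal C}_V$ is asserted, not constructed, and the natural candidate $W(k)\pow{J(C)}$ is only free relative to \emph{connected} targets — its antipode exists because the geometric series $\sum_{i\geq 0}(1-\id)^i$ converges in the complete local case.

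The premise that forces the detour is also mistaken. Theorem~\ref{thm:goerss} requires coequalizers in the category of abelian group objects as a category in its own right; it does not require them to be computed by naive quotients in $\mathcal C_V$ or preserved by the forgetful functor. Your observation that $\coker(W(k) \xrightarrow{p} W(k)) = k$ is not topologically free shows only that the naive quotient may leave the category. The paper repairs this \emph{inside} $\mathcal H_V$: the coequalizer of $f,g\colon H_1 \to H_2$ is the \emph{minimal smooth quotient} of $H_2/I$, where $I$ is the closure of the ideal generated by the elements $f(h)-g(h)$ — a reflection that restores smoothness while retaining the universal property among objects of $\mathcal H_V$ (connectedness survives because a quotient of a local pseudocompact algebra by a closed ideal is local). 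With coequalizers so constructed, and with the left adjoint $S_*(C) = W(k)\pow{J(C)}$ landing in $\mathcal H_V$ (local, and a power series algebra, hence smooth), Goerss's theorem applies directly to $\mathcal C_V$ and $\mathcal H_V$, which is exactly the paper's proof; the enlargement, and with it your unprovable closure step, is unnecessary.
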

\begin{proof}
We apply Theorem~\ref{thm:goerss} to $\mathcal{C}_V$ and $\mathcal{H}_V$. The categorical product of $C_1$ and $C_2 \in \mathcal{C}_V$ is given by the tensor product $C_1 \hat\otimes_{W(k)} C_2$ with the diagonal $V$-action.To show that $\mathcal{H}_V$ has coequalizers, one notes that if $f,g\colon H_1 \rightarrow H_2$ are a pair of maps in $\mathcal{H}_V,$ then $\coeq(f,g)$ is defined as the minimal smooth quotient (minimal in the sense that any other smooth quotient factors through it) of $H_2/I,$ where $I$ is the closure of the ideal $(f(h)-g(h);\; h \in H_1)$. 

Finally, the forgetful functor $\mathcal{H}_V \rightarrow \mathcal{C}_V$ has a left adjoint
\[
S_*\colon \mathcal{C}_V \rightarrow \mathcal{H}_V, \quad C \mapsto W(k)\pow{J(C)},
\]
where $JC$ is the augmentation ideal and the coalgebra structure is induced from $C$. To see that $S_*(C)$ has an antipode, consider the algebra $\End_{\mathcal M_V}(S_*(C))$ of linear maps commuting with $V$, where multiplication is given by $(fg) = \sum_{(x)} f(x')g(x'')$. The unity in this ring is $1=\eta\circ\epsilon$, and an antipode is an inverse for the identity map in this algebra. As in \cite[Lemma~9.2.3]{sweedler:hopf-algebras}, the antipode is given by
\[
\id^{-1} = \frac 1{1-(1-\id)} = \sum_{i \geq 0} (1-\id)^i,
\]
a pointwise (even uniformly) convergent power series in $\End_{\mathcal M_V}(S_*(C))$.
\end{proof}

By Thm.~\ref{thm:indecomposableequiv}, any $H \in \mathcal{H}_V$ is isomorphic to the representing object of some formal group $\FrM(M)$ with a Verschiebung lift for some $M \in \mathcal{M}_V.$  We will then write $H = S^*(M).$

\begin{lemma}[{\cite[Proposition ~6.1]{goerss:hopf-rings}}]  \label{lemma:goersslemma}
The functor $S^*\colon (\mathcal M_V,\hat\otimes_{W(k)}) \to (\mathcal H_V,\boxtimes^c)$ is strongly monoidal.
\end{lemma}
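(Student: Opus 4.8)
The plan is to transport the problem to the category $\FgpsV$ and there reduce to a single generating object. Since $S^*\cong\FrM$ is, by Prop.~\ref{prop:indecomposableadjoint}, left adjoint to the anti-equivalence $Q$, with the counit $Q\FrM\Rightarrow\mathrm{id}$ an isomorphism (Thm.~\ref{thm:indecomposableequiv}), showing that $S^*$ carries $\hat\otimes_{W(k)}$ to $\boxtimes^c$ is equivalent to exhibiting a natural isomorphism $Q(G_1\otimes G_2)\cong Q(G_1)\hat\otimes_{W(k)} Q(G_2)$, where $\otimes$ denotes the tensor product of formal groups dual to $\boxtimes^c$ and $G_i=\FrM(M_i)$. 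First I would construct a natural comparison transformation $Q(G_1)\hat\otimes_{W(k)} Q(G_2)\to Q(G_1\otimes G_2)$ out of the universal bilinear pairing $G_1\times G_2\to G_1\otimes G_2$ together with functoriality of $Q$, and then prove it is an isomorphism.

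For the reduction I would use that $Q$ is exact on smooth formal groups (as in the proof of Prop.~\ref{prop:indecomposableadjoint}), that the completed tensor product on $\mathcal M_V$ and the formal-group tensor product are each right exact in every variable (the latter because $-\otimes G'$ is a left adjoint), and that every $G\in\FgpsV$ admits a two-term resolution $0\to\bigoplus_\alpha W^{\fin}_{W(k)}\to\bigoplus_\alpha W^{\fin}_{W(k)}\to G\to 0$ coming from the functorial resolution \eqref{eq:funcresolution} applied to $Q(G)$. Resolving $G_1$ and $G_2$ in this way and comparing the two resulting right-exact sequences term by term with the five lemma reduces the claim to the single case $G_1=G_2=W^{\fin}_{W(k)}$, and, by commuting with the sums and products involved, to showing that the comparison map $Q(W^{\fin}_{W(k)})\hat\otimes_{W(k)} Q(W^{\fin}_{W(k)})\to Q(W^{\fin}_{W(k)}\otimes W^{\fin}_{W(k)})$ is an isomorphism.

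This base case is the heart of the argument. Recalling the identification $Q(H)\cong\Hom_{\FgpsV}(H,CW^c_{W(k)})$ from the beginning of this section, I would compute $Q(W^{\fin}_{W(k)}\otimes W^{\fin}_{W(k)})\cong\Bil(W^{\fin}_{W(k)},W^{\fin}_{W(k)};CW^c_{W(k)})$ via the universal property of $\otimes$. Unwinding bilinearity one slot at a time through Lemma~\ref{lemma:wittindecomposables}, which gives $\Hom_{\FgpsV}(W^{\fin}_{W(k)},G)\cong Q(G)^*$, together with the duality $W^{\fin}_{W(k)}\leftrightarrow\WtH$ of Section~\ref{subsec:OWkalgebras} (Thm.~\ref{thm:antiequivflat}) and the identification of maps out of $\WtH$ with primitives (Lemma~\ref{lemma:mapsfromWitt}), should identify this group of bilinear maps with the completed tensor square $Q(W^{\fin}_{W(k)})\hat\otimes_{W(k)} Q(W^{\fin}_{W(k)})$, compatibly with the Verschiebung lifts; matching this with the comparison map above gives the isomorphism on generators. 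The associativity and unit coherences are then inherited from the universal properties of the two tensor products (or transported along the equivalence $Q$) and require only formal verification. I expect the main obstacle to be precisely this base-case identification, in particular bookkeeping the $F^{-1}_{W(k)}$-semilinearity of the Verschiebung lifts and the diagonal $V$-action on $\hat\otimes_{W(k)}$, and ensuring the right-exactness and five-lemma steps remain valid in the completed, pseudocompact setting.
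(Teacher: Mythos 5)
Your outer reduction---transport along the anti-equivalence $Q \dashv \FrM$ of Prop.~\ref{prop:indecomposableadjoint} and Thm.~\ref{thm:indecomposableequiv}, resolve by $\bigoplus_\alpha W^{\fin}_{W(k)}$, compare exact sequences---is workable in outline (indeed it is the strategy the paper itself uses later, e.g.\ in Prop.~\ref{prop:Dieudonnemonoidal} and Prop.~\ref{prop:tgivesiso}), and your worry about exactness is manageable: since all objects of $\mathcal M_V$ are topologically free, $\hat\otimes_{W(k)}$ is exact on the relevant sequences, which is what your five-lemma chase actually requires. But the step you yourself identify as the heart, the base case, is not established by the ingredients you cite, and this is a genuine gap. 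Lemma~\ref{lemma:wittindecomposables} computes \emph{homomorphisms} $\Hom_{\FgpsV}(W^{\fin}_{W(k)},G)$; to ``unwind bilinearity one slot at a time'' through it you would need $\Bil(W^{\fin}_{W(k)},W^{\fin}_{W(k)};CW^c_{W(k)})$ to equal $\Hom_{\FgpsV}\bigl(W^{\fin}_{W(k)},\underline{\Hom}(W^{\fin}_{W(k)},CW^c_{W(k)})\bigr)$ for an internal hom object lying in $\FgpsV$, and no such object is constructed anywhere in the paper (nor is it plausibly connected, smooth and topologically flat, so the lemma would not apply to it even if it existed). Similarly, Lemma~\ref{lemma:mapsfromWitt} identifies maps out of $\WtH$ with a \emph{single} primitive via Dwork's lemma; under the duality of Thm.~\ref{thm:antiequivflat} a bilinear pairing corresponds instead to a compatible system of ghost components in a tensor square, and controlling those requires genuinely new input---this is exactly the content of Prop.~\ref{prop:HLiota} (Hopkins--Lurie) and the convergence analysis of Lemma~\ref{lemma:polyconverge}, machinery the paper only develops later and for different purposes. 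A second, smaller gap: you justify right-exactness of $\boxtimes^c$ and its distribution over infinite sums by asserting that $-\boxtimes^c H'$ is a left adjoint, but no internal hom in $\mathcal H_V$ is ever established, so this too needs a direct argument.

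The paper's own proof (transcribed from Goerss, Prop.~6.1) avoids all of this and is purely formal: from the adjoint diagrams relating $S_*$, $Q$, $J$ to $U$, $S^*$ (the right adjoint of $J$ being $US^*$), any bilinear map $\phi\colon S^*(M_1)\otimes S^*(M_2)\to S^*(K)$ has adjoint factoring as $J(S^*(M_1)\otimes S^*(M_2))\to JS^*(M_1)\otimes JS^*(M_2)\to M_1\otimes M_2\xrightarrow{Q\phi}K$; one defines $\eta\colon S^*(M_1)\otimes S^*(M_2)\to S^*(M_1\otimes M_2)$ as the adjoint of the first two maps, so that $S^*(M_1\otimes M_2)$ has the universal property of $\boxtimes^c$ once $\eta$ is known to be bilinear, and bilinearity of $\eta$ is checked by constructing the pairing naturally in a test coalgebra $C$ (via $\Delta_C$) and invoking Yoneda. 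No resolutions, no exactness properties of $\boxtimes^c$, and no base-case computation are needed. If you want to salvage your route, the honest version of your base case is essentially to import Prop.~\ref{prop:HLiota} and Lemma~\ref{lemma:polyconverge}, at which point the formal adjunction argument is both shorter and strictly less technology.
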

\begin{proof}
This proof is identical to the proof by Goerss, but we include it for the the reader's convenience. If $U\colon \mathcal H_V \to \mathcal C_V$ denotes the forgetful functor and $J\colon\mathcal{C}_V \rightarrow \mathcal{M}_V$ the coaugmentation ideal, we have a pair of adjoint diagrams of functors
\[
\begin{tikzcd}
\mathcal H_V \arrow[dr,swap,"Q"] & \mathcal C_V \arrow[l,swap,"S_*"] \arrow[d,"J"] \\ & \mathcal M_V.
\end{tikzcd}
\quad \text{left adjoint to} \quad
\begin{tikzcd}
\mathcal H_V \arrow[r,"U"] & \mathcal C_V\\ & \mathcal M_V \ar[lu,"S^*"] \ar[u]
\end{tikzcd}
\]
Since the right hand diagram commutes, the right adjoint of $J$ is $US^*$. We will suppress the forgetful functor $U$ from notation in what follows.

Now let $\phi\colon S^*(M_1) \otimes S^*(M_2) \rightarrow S^*(K)$ be a bilinear map in $\mathcal C_V$; explicitly and in Sweedler notation,
\begin{align*}
\phi(xy,z) &= \sum_{(z)} \phi(x,z^{(1)}) \phi(y,z^{(2)})\\
\phi(x,zw) &= \sum_{(x)} \phi(x^{(1)},z) \phi(x^{(2)},w).
\end{align*}
It has a left adjoint in $\mathcal M_V$,
\[
J(S^*(M_1) \otimes S^*(M_2)) \rightarrow K
\]
which factors as
\[
J(S^*(M_1) \otimes S^*(M_2)) \rightarrow J(S^*(M_1)) \otimes J(S^*(M_2)) \rightarrow M_1 \otimes M_2 \xrightarrow{Q \phi} K.
\]
Let $\eta\colon S^*(M_1) \otimes S^*(M_2) \rightarrow S^*(M_1 \otimes M_2)$ be the adjoint of the composite of the first two maps. If $\eta$ is bilinear, then $S^*(M_1 \otimes M_2)$ has the required universal property, and the theorem is proven.

To show this, take $C \in \mathcal C_V$ and consider the bilinear map 
\begin{align*}
& \Hom_{\mathcal{M}_V}(JC,M_1) \times \Hom_{\mathcal{M}_V}(JC,M_2) \to \Hom_{\mathcal M_V}(JC \otimes JC,M_1 \otimes M_2)\\
& \to \Hom_{\mathcal M_V}(J(C \otimes C),M_1 \otimes M_2) \xrightarrow{\Delta_C^*} \Hom_{\mathcal{M}_V}(JC,M_1 \otimes M_2).
\end{align*}
By naturality and adjunction, we get a bilinear pairing
\[
\Hom_{\mathcal{H}_V}(-, S^*(M_1)) \times \Hom_{\mathcal{H}_V}(-,S^*(M_2)) \rightarrow \Hom_{\mathcal{H}_V}(-,S^*(M_1 \otimes M_2))
\]
and thus by Yoneda a bilinear map 
\[
S^*(M_1) \otimes S^*(M_2) \rightarrow S^*(M_1 \otimes M_2).
\]
This map agrees with $\eta$  since they both are adjoint to $$J(S^*(M_1) \otimes S^*(M_2)) \rightarrow JS^*(M_1) \otimes JS^*(M_2) \rightarrow M \otimes N.$$ This implies that $\eta$ is bilinear. 
\end{proof}
\begin{corollary}[{\cite[Cor.~6.2]{goerss:hopf-rings}}] \label{cor:goersscorollary}
Let $H_i = S^*(M_i)\in \mathcal{H}_V$ ($i=1,2$). Then the universal bilinear map $\eta\colon H_1 \otimes H_2 \rightarrow H_1 \boxtimes^c H_2$ induces an isomorphism in $\mathcal M_V$,
\[
Q\eta: M_1 \otimes M_2 \rightarrow Q(H_1 \boxtimes^c H_2).
\]
\end{corollary}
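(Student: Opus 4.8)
The plan is to deduce the corollary directly from the strong monoidality of $S^*$ (Lemma~\ref{lemma:goersslemma}) together with the fact that $Q$ and $S^*$ are mutually inverse equivalences. First I would recall from Thm.~\ref{thm:indecomposableequiv} and Prop.~\ref{prop:indecomposableadjoint} that $Q\colon \mathcal{H}_V \to \mathcal{M}_V$ and $S^* = \Reg{\FrM(-)}$ are mutually inverse equivalences; in particular the counit $Q S^*(M) \to M$ of Prop.~\ref{prop:indecomposableadjoint} is a natural isomorphism for every $M \in \mathcal{M}_V$.

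By Lemma~\ref{lemma:goersslemma} the universal bilinear map $\eta$ exhibits $H_1 \boxtimes^c H_2 = S^*(M_1) \boxtimes^c S^*(M_2)$ as $S^*(M_1 \hat\otimes M_2)$. Applying $Q$ and invoking $Q S^* \cong \mathrm{id}$ then yields
\[
Q(H_1 \boxtimes^c H_2) \;\cong\; Q S^*(M_1 \hat\otimes M_2) \;\cong\; M_1 \hat\otimes M_2 ,
\]
which already shows that the indecomposables of the tensor product are abstractly isomorphic to $M_1 \otimes M_2$.

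What then remains — and the one point requiring care — is to check that this composite isomorphism is exactly the named map $Q\eta$, and not merely some abstract isomorphism. For this I would unwind the construction of $\eta$ in the proof of Lemma~\ref{lemma:goersslemma}: there $\eta$ is defined as the adjoint of the composite $J(S^*(M_1)\otimes S^*(M_2)) \to J S^*(M_1) \otimes J S^*(M_2) \to M_1 \otimes M_2$, and the induced map on indecomposables of a bilinear map $\phi$ was identified there with $Q\phi$, a map out of $M_1 \otimes M_2 = QS^*(M_1) \otimes QS^*(M_2)$. Tracing these identifications shows that $Q\eta$ coincides with the canonical isomorphism $M_1 \hat\otimes M_2 = QS^*(M_1)\hat\otimes QS^*(M_2) \xrightarrow{\cong} QS^*(M_1 \hat\otimes M_2) = Q(H_1 \boxtimes^c H_2)$ inverse to the counit, and is therefore an isomorphism.

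I expect the only genuine obstacle to be the bookkeeping in this last identification — matching the isomorphism supplied abstractly by the monoidal equivalence with the concretely named map $Q\eta$ induced by the universal bilinear pairing. This is a naturality check carried through the adjunctions set up in the proof of Lemma~\ref{lemma:goersslemma}, rather than any new computation.
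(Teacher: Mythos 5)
Your proposal is correct and follows essentially the same route as the paper: both identify $H_1 \boxtimes^c H_2$ with $S^*(M_1 \hat\otimes M_2)$ via Lemma~\ref{lemma:goersslemma}, invoke the counit isomorphism $QS^*(M_1 \hat\otimes M_2) \to M_1 \hat\otimes M_2$ from Prop.~\ref{prop:indecomposableadjoint}, and conclude by matching $Q\eta$ against this counit (the paper phrases this as: the counit and the composite counit~$\circ\, Q\eta$ are both isomorphisms, hence so is $Q\eta$). The bookkeeping step you flag — tracing $\eta$ back through the adjunction in the proof of Lemma~\ref{lemma:goersslemma} — is exactly the content the paper compresses into its assertion that the composite is an isomorphism.
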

\begin{proof}
By bilinearity of $\eta$, applying $Q$ gives the map $Q \eta : M_1 \otimes M_2 \rightarrow Q(H_1 \boxtimes^c H_2) \cong Q(S^*(M_1 \otimes M_2)).$  We have that both the counit map $Q(H_1 \boxtimes^c H_2) \cong Q(S^*(M_1 \otimes M_2)) \rightarrow M_1 \otimes M_2$ and the composition $$M_1 \otimes M_2 \xrightarrow{Q \eta} Q(H_1 \boxtimes^c H_2) \rightarrow M_1 \otimes M_2$$ are isomorphisms, thus $Q \eta$ is an isomorphism as well. 
\end{proof}
\begin{example}\label{ex:CWtensor}
Let $H=\Reg{CW^{u,c}_{W(k)}} \in \mathcal H_V$ corepresent the functor $CW^{u,c}_{W(k)}$ from in Section \ref{sec:smoothFormal}. Then $H \boxtimes^c H$ corepresents the functor 
\[
\bigoplus_{i=-\infty}^{\infty} CW^{u,c}_{W(k)}
\]
with the obvious lift of the Verschiebung. Indeed, the formal Hopf algebra $H \boxtimes^c H$ has indecomposables
\[
QH \hat{\otimes}_{W(k)} QH \cong \Bigl(\prod_{i=0}^\infty W(k)\Bigr) \hat{\otimes}_{W(k)} \Bigl(\prod_{i=0}^\infty W(k)\Bigr) \cong \prod_{i,j=0}^\infty W(k),
\] where $(Vx)_{i,j} = F^{-1}_{W(k)}x_{i+1,j+1}$. The claim follows by Lemma~\ref{lemma:goersslemma} and Thm.~\ref{thm:indecomposableequiv}. 
\end{example}

\subsection{Tensor products of general formal Hopf algebras}

Let $\FHopf$ be the category of formal Hopf algebras over a perfect field $k$ of characteristic $p>0$. We denote by $\FCoalg{k}$ the category of formal coalgebras over $k.$ By \eqref{eq:antiequivalences}, $\FHopf$ is equivalent to the category of affine groups, which has tensor product by Thm.~\ref{thm:existence}. Denote the induced tensor product on $\FHopf$ by $\boxtimes$. Thus there is a bijective correspondence between maps $H_1 \boxtimes H_2 \rightarrow K$ of formal Hopf algebras and bilinear morphisms  $H_1 \hat{\otimes}_k H_2 \rightarrow K$ of formal coalgebras. 

\begin{remark} \label{rmk:structureoftensor}
For an explicit description, let $H_1, H_2 \in \FHopf$ and $\hat{S}_*(H_1 \hat{\otimes}_k H_2)$ be the completed symmetric algebra (see  \cite[Exposé VII, 1.2.5]{Gille:SGA3}) on $H_1 \hat{\otimes}_k H_2$. Let $j\colon H_1 \hat{\otimes}_k H_2 \rightarrow \hat{S}_*(H_1 \hat{\otimes}_k H_2)$ be the inclusion.
Then $H_1 \boxtimes H_2$ is the quotient of $\hat{S}_*(H_1 \hat{\otimes}_k H_2)$ by the closure of the ideal generated by the elements
\begin{align*}
j(xy \otimes z)&-\sum_{(z)} j(x,z^{(1)}) j(y,z^{(2)}),\\
j(x \otimes zw)&- \sum_{(x)} j(x^{(1)},z)j(x^{(2)},w),\\
j(x \otimes 1) &- \epsilon_{H_1}(x),\\
j(1 \otimes y) &- \epsilon_{H_2}(y),
\end{align*}
where we used Sweedler's notation. The coaddition on $H_1 \boxtimes H_2$ is the one making the map $j: H_1 \hat{\otimes}_k H_2 \rightarrow H_1 \boxtimes H_2$ a morphism of formal coalgebras. The connected part of $H_1 \boxtimes H_2,$ is a formal power series ring in $JH_1^c \hat{\otimes}_k JH^c_2,$ where $JH_i^c$ is the augmentation ideal of the connected part, modulo the closure of the ideal generated by the elements above.
\end{remark}

With respect to the splitting \eqref{eq:antiequivalenceswithsplitting}, the tensor product behaves as follows:
\begin{lemma}\label{lemma:connectedtensorconnectednotconnected}
The tensor product of an étale formal Hopf algebra with any formal Hopf algebra is étale. The tensor product of two connected formal Hopf algebras $H_1$, $H_2$ splits naturally into a connected and an étale part,
\[
H_1 \boxtimes H_2 \cong H_1 \boxtimes^c H_2 \;\; \hat\otimes\;\; H_1 \boxtimes^e H_2,
\]
both of which are nontrivial in general.
\end{lemma}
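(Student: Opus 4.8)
The plan is to transport the whole statement through the equivalence $\FHopf \simeq \AbSch{k}$ of \eqref{eq:antiequivalences}, under which $\boxtimes$ is by definition the image of the tensor product $\otimes$ of affine groups and which, by \eqref{eq:antiequivalenceswithsplitting}, carries the product decomposition $\FHopf \cong \FHopfe \times \FHopfc$ to $\AbSch{k} \cong \AbSchm{k} \times \AbSchu{k}$, matching étale formal Hopf algebras with multiplicative affine groups and connected ones with unipotent affine groups. Under this dictionary the first assertion says exactly that $G_1 \otimes G_2$ is multiplicative as soon as one factor is, which is Prop.~\ref{prop:multiplicativetensor} together with the symmetry of $\otimes$; so the first sentence needs no further argument.

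For the splitting I would first record the general fact that every formal Hopf algebra $H$ decomposes naturally as $H \cong H^e \hat\otimes H^c$ with $H^e$ étale and $H^c$ connected: if $\mathcal G$ is the formal group with $\Reg{\mathcal G} = H$, then $\mathcal G \cong \mathcal G^e \times \mathcal G^c$ by \eqref{eq:antiequivalenceswithsplitting}, and applying the anti-equivalence $\Reg{}$ turns this product of formal schemes into the completed tensor product $\Reg{\mathcal G^e} \hat\otimes \Reg{\mathcal G^c}$ of coordinate rings. Applying this to $H = H_1 \boxtimes H_2$ and naming the two factors $H_1 \boxtimes^e H_2$ and $H_1 \boxtimes^c H_2$ yields the claimed isomorphism; naturality follows from the naturality of the decomposition and the bifunctoriality of $\boxtimes$. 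Translating back, $H_1 \boxtimes^c H_2$ is the formal Hopf algebra of the unipotent part of $G_1 \otimes G_2$ and $H_1 \boxtimes^e H_2$ that of its multiplicative part, where $G_1, G_2$ are the unipotent affine groups attached to the connected $H_1, H_2$.

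It remains to show that neither factor vanishes in general, and this is the only genuinely computational point. For the connected factor take $G_1 = G_2 = \mathbb{G}_a$: the multiplication $\mathbb{G}_a \times \mathbb{G}_a \to \mathbb{G}_a$ is a nonzero bilinear map, so $\Hom_{\AbSch{k}}(\mathbb{G}_a \otimes \mathbb{G}_a, \mathbb{G}_a) \neq 0$; since there are no nonzero homomorphisms from a multiplicative group into $\mathbb{G}_a$, the unipotent part $(\mathbb{G}_a \otimes \mathbb{G}_a)^u$ must be nonzero, and hence so is the corresponding connected factor. For the étale factor take $G_1 = G_2 = \alpha_p$, which is unipotent, and use the adjunction of Prop.~\ref{prop:multiplicativetensor} together with the self-duality $\underline{\Hom}(\alpha_p,\mathbb{G}_m) = \alpha_p^* \cong \alpha_p$ to compute
\[
\Hom_{\AbSch{k}}(\alpha_p \otimes \alpha_p, \mathbb{G}_m) \cong \Hom_{\AbSch{k}}(\alpha_p, \underline{\Hom}(\alpha_p, \mathbb{G}_m)) \cong \Hom_{\AbSch{k}}(\alpha_p, \alpha_p) \neq 0.
\]
Because a unipotent group admits no nonzero homomorphism to $\mathbb{G}_m$, any such nonzero map is nontrivial on the multiplicative part, so $(\alpha_p \otimes \alpha_p)^m \neq 0$ and the étale factor is nonzero.

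The substantive point, and the main obstacle, is precisely this last nontriviality: one must witness that the tensor product of two connected (equivalently, unipotent) objects can leave the connected world. The mechanism is the orthogonality $\Hom(\text{unipotent},\mathbb{G}_m)=0$ combined with the self-Cartier-duality of $\alpha_p$, which manufactures a multiplicative summand out of two infinitesimal unipotent groups; everything else is bookkeeping under \eqref{eq:antiequivalences} and an appeal to the already-established Prop.~\ref{prop:multiplicativetensor}.
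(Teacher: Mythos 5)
Your proof is correct and follows essentially the same route as the paper: the first assertion is reduced to Prop.~\ref{prop:multiplicativetensor} through the dictionary between étale formal Hopf algebras and multiplicative affine groups, and the nontriviality of the étale factor is witnessed by exactly the paper's example, $\alpha_p \otimes \alpha_p$, via the self-Cartier-duality of $\alpha_p$ and the adjunction $\Hom(\alpha_p \otimes \alpha_p,\mathbb{G}_m) \cong \Hom(\alpha_p,\underline{\Hom}(\alpha_p,\mathbb{G}_m))$. The only place you go beyond the paper's proof is the explicit $\mathbb{G}_a \otimes \mathbb{G}_a$ argument for the nontriviality of the connected factor, a point the paper's proof does not address directly and which is only settled later by the computation $\Reg{\alpha_p} \boxtimes^c \Reg{\alpha_p} \cong \Reg{\mathbb{G}^c_a}$ in Example~\ref{ex:apboxap}.
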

\begin{proof}
The first part is equivalent to Proposition ~\ref{prop:multiplicativetensor}. To illustrate that the tensor product of two connected formal Hopf algebras need not be connected, let $H_1 = H_2$ be the primitively generated (finite, hence formal) Hopf algebra $k[x]/(x^p)$ representing $\alpha_p,$ the formal group taking a finite $k$-algebra $A$ to its kernel of the $p$th power map. 
We then claim that $H_1 \boxtimes^e H_2$ is nontrivial, i.e that there is a non-zero morphism $\underline{\mathbb{Z}} \to \Spf {(H_1 \boxtimes H_2)}$ from the constant formal group $\underline{\Z}$.

By Cartier duality ($\alpha_p$ is self-dual), this is equivalent to having a non-trivial map $\alpha_p \otimes \alpha_p \rightarrow \mathbb{G}_{m},$ or by adjunction, a non-trivial map
\[
\alpha_p \rightarrow \underline{\Hom}(\alpha_p,\mathbb{G}_{m}) \cong \alpha_p \quad \text{(Cartier duality)}.
\]
The identity is such a map.
\end{proof}

In the following two subsections, we will first describe $H_1 \boxtimes^c H_2$ (heavily inspired by \cite{goerss:hopf-rings}), and then $H_1 \boxtimes^e H_2$.

\subsection{The connected part of $\boxtimes$ for two connected formal Hopf algebras}

By abuse of notation, we will denote the connected covariant Dieudonné functor $\FHopfc \rightarrow \DDModF{k}$,
\[
H \mapsto \DieuFor(\Spf H) = \Hom_{\FHopfc}(\Reg{CW^c_k}, H), \quad \text{(cf. Thm.~\ref{thm:dieudonneforformalgroups})}
\]
by $\DieuFor$ as well.

We start by describing $\DieuFor(H_1 \boxtimes^c H_2)$ for $H_1,H_2 \in \FHopfc$ in the situation where both $H_1$ and $H_2$ are reductions mod $p$ of formal Hopf algebras with a Verschiebung lift over $W(k)$.
\begin{lemma}[{\cite[Lemma 7.1]{goerss:hopf-rings}}] \label{lemma:boxtimescommuteswithbasechange} 
For $H_1,H_2 \in \mathcal{H}_V$, the canonical map $$(k \otimes H_1) \boxtimes^c (k \otimes H_2) \rightarrow k \otimes (H_1 \boxtimes^c H_2)$$ is an isomorphism.
\end{lemma}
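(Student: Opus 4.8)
The plan is to reduce the statement to a computation of indecomposables and then invoke a conservativity principle. First I would observe that all objects in sight are connected smooth formal Hopf algebras over $k$, i.e.\ formal power series Hopf algebras: the right-hand side is the base change of $H_1\boxtimes^c H_2\in\mathcal H_V$, which is connected and smooth by construction, and the left-hand side is connected and smooth by the explicit description of the connected part of $\boxtimes$ in Remark~\ref{rmk:structureoftensor}. For such Hopf algebras the indecomposables functor $Q=I/\overline{I}^2$ computes the (topological) cotangent space, and a continuous homomorphism between two power series Hopf algebras inducing an isomorphism on indecomposables is itself an isomorphism (complete Nakayama: it is surjective, and the induced map on the associated graded $\operatorname{Sym}$ of the cotangent space is then an isomorphism, hence so is the map on completions). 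Thus it suffices to show that the canonical map becomes an isomorphism after applying $Q$.

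Next I would compute $Q$ of the right-hand side. Writing $H_i\cong S^*(M_i)$ for unique $M_i\in\mathcal M_V$ (Thm~\ref{thm:indecomposableequiv}), Corollary~\ref{cor:goersscorollary} gives $Q(H_1\boxtimes^c H_2)\cong M_1\hat\otimes_{W(k)}M_2$. Because all the groups involved are smooth, the indecomposables functor commutes with reduction mod $p$ — exactly as in the proof of Prop.~\ref{prop:indecomposableadjoint}, where $Q(\FrM(M)_k)\cong Q(\FrM(M))\hat\otimes_{W(k)}k$ — so that
\[
Q\bigl(k\otimes(H_1\boxtimes^c H_2)\bigr)\;\cong\;(M_1\hat\otimes_{W(k)}M_2)\otimes_{W(k)}k .
\]

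For the left-hand side I would use the $k$-linear analogues of Lemma~\ref{lemma:goersslemma} and Corollary~\ref{cor:goersscorollary}: the same formal adjunction-and-Yoneda argument, carried out for connected formal Hopf algebras over $k$ in place of $\mathcal H_V$, shows that $Q$ is strong monoidal for the connected tensor product, whence $Q\bigl((k\otimes H_1)\boxtimes^c(k\otimes H_2)\bigr)\cong Q(k\otimes H_1)\hat\otimes_k Q(k\otimes H_2)$. Combining this with $Q(k\otimes H_i)\cong M_i\otimes_{W(k)}k$ (again base change commuting with $Q$) and the identity $(M_1\otimes_{W(k)}k)\hat\otimes_k(M_2\otimes_{W(k)}k)\cong(M_1\hat\otimes_{W(k)}M_2)\otimes_{W(k)}k$, I obtain the same module as for the right-hand side. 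A check on the defining universal bilinear maps shows that under these identifications the canonical comparison map is the identity; hence it is an isomorphism on $Q$, and the lemma follows.

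The main obstacle is the last step's input: establishing the $k$-linear version of Corollary~\ref{cor:goersscorollary} (equivalently, that indecomposables are monoidal for $\boxtimes^c$ over $k$). Goerss's argument for Corollary~\ref{cor:goersscorollary} is formal, relying only on the adjunctions $S_*\dashv U$ and $J\dashv S^*$ and on the counit isomorphism $Q\circ S^*\cong\mathrm{id}$; the first two have evident $k$-linear counterparts, and the counit isomorphism over $k$ follows from the one over $W(k)$ (Prop.~\ref{prop:indecomposableadjoint}) together with the fact that both $S^*$ and $Q$ commute with base change. The one genuinely delicate point, which I would isolate and verify first, is precisely that $Q$ commutes with reduction mod $p$: this rests on the smoothness (power-series form) of all the Hopf algebras involved, which guarantees that the non-exact functor $-\otimes_{W(k)}k$ nevertheless preserves the relevant quotients $I/\overline{I}^2$.
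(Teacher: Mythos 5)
Your reduction to indecomposables is indeed the paper's strategy, but the step where you compute $Q$ of the left-hand side contains a gap that cannot be repaired along the lines you suggest. You propose a ``$k$-linear analogue of Lemma~\ref{lemma:goersslemma} and Corollary~\ref{cor:goersscorollary}'', asserting that Goerss's adjunction-and-Yoneda argument carries over verbatim to connected formal Hopf algebras over $k$. It does not: that argument is formal only given the equivalence of Theorem~\ref{thm:indecomposableequiv}, i.e.\ only because over $W(k)$ every object of $\mathcal{H}_V$ is $S^*(M)$ and maps (and bilinear maps) into any object $K$ are classified by $\mathcal{M}_V$-maps into $Q(K)$. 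Over $k$ this classification fails structurally: connected --- even smooth connected --- formal Hopf algebras over $k$ are not determined by their indecomposables with the Verschiebung action. For example, $\Reg{\mathbb{G}^c_a}$ and $\Reg{\hat E}$, where $\hat E$ is the formal group of a supersingular elliptic curve, both have $Q \cong k$ with $V = 0$ (the Verschiebung of a supersingular curve is inseparable, hence zero on the cotangent space), yet they are not isomorphic; so there is no equivalence, and no adjoint $S^*_k$ with $Q_k S^*_k \cong \operatorname{id}$, over $k$. What is true over $k$ is that bilinear maps out of $(k\otimes H_1) \hat\otimes_k (k \otimes H_2)$ into an arbitrary connected $K$ are classified by $\mathcal V$-linear maps $Q(H_1) \hat\otimes_{W(k)} Q(H_2) \to \DieuFor(K)$, not by maps into $Q(K)$ --- this is the content of Theorem~\ref{thm:indecomposablesdieudonne} and Proposition~\ref{prop:Dieudonnemonoidal}, and it is exactly why the paper develops the Dieudonné-pairing machinery instead of a naive monoidality of $Q$ over $k$. (Invoking those results here would moreover be circular, since the paper's route to them passes through the present lemma via Corollary~\ref{cor:dieuboxforverschiebunglift}; and your fallback --- deducing the needed adjunction by base change from $W(k)$ --- fails because base change $\mathcal{H}_V \to \FHopfc$ is far from full: there are many more $k$-morphisms and $k$-bilinear maps than those commuting with the chosen Verschiebung lifts.) Your diagnosis of the delicate point is also misplaced: $Q$ commuting with reduction mod $p$ is unproblematic for smooth objects (it is already used in the proof of Proposition~\ref{prop:indecomposableadjoint}); the real obstruction is the failure of the equivalence over $k$.

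There is a second, smaller gap at the start: Remark~\ref{rmk:structureoftensor} presents the connected part of the tensor product as a power series ring \emph{modulo} the closure of an ideal of relations, i.e.\ as a quotient of a power series ring; it does not give smoothness of $(k\otimes H_1)\boxtimes^c(k \otimes H_2)$, and indeed that smoothness is part of the conclusion of the lemma, so assuming it is circular. Your conservativity principle, which needs both sides to be power series Hopf algebras, therefore cannot be applied as stated. The paper's proof is arranged to avoid both problems simultaneously: it uses smoothness only of the target $k \otimes (H_1 \boxtimes^c H_2)$ (for a map of connected formal Hopf algebras with smooth target, an isomorphism on $Q$ implies an isomorphism); it extracts from Remark~\ref{rmk:structureoftensor} only the genuinely $k$-level fact that the elements $x_i \boxtimes y_j$ topologically generate $Q\bigl((k\otimes H_1)\boxtimes^c(k\otimes H_2)\bigr)$, a statement about the presentation which requires no monoidality of $Q$ over $k$; and it invokes Corollary~\ref{cor:goersscorollary} only over $W(k)$, where Theorem~\ref{thm:indecomposableequiv} is available, to see that the same elements form a topological basis of $Q\bigl(k \otimes (H_1 \boxtimes^c H_2)\bigr)$. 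The canonical map carries generators to a topological basis, hence is an isomorphism on indecomposables, and smoothness of the target finishes the argument.
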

\begin{proof}
Both the source and the target of the map represent connected formal groups. Since the target is a smooth formal group, it suffices that it induces an isomorphism on indecomposables. By the explicit construction of $H_1 \boxtimes^c H_2$ as detailed in Remark \ref{rmk:structureoftensor}, we see that if $\{x_i\}_{i \in I} \in Q(H_1)$ and $\{y_j\}_{j \in J} \in Q(H_2)$ are topological bases, then $\{x_i \boxtimes y_j\}_{i \in I, j \in J}$ topologically generates $Q((k \otimes H_1) \boxtimes^c (k \otimes H_2))$. By Cor.~\ref{cor:goersscorollary}, the elements $\{x_i \boxtimes y_j\}_{i \in I,j \in J}$ constitute a topological basis of $Q(k \otimes (H_1 \boxtimes^c H_2)).$ 
\end{proof}
\begin{corollary} \label{cor:dieuboxforverschiebunglift}
Let $H_1,H_2 \in \mathcal{H}_V.$ Then $$\DieuFor((k \otimes H_1) \boxtimes^c (k \otimes H_2)) \cong \mathcal{R} \hat{\otimes}^F_{\mathcal V} (QH_1 \hat{\otimes}_{W(k)}QH_2).$$ 
\end{corollary}
\begin{proof}
This follows directly from Thm.~\ref{thm:indecomposablesdieudonne} and Lemma \ref{lemma:boxtimescommuteswithbasechange}.
\end{proof}
Given a bilinear map $H_1 \otimes H_2 \rightarrow K$ in $\FHopfc,$  there is a natural function $$\DieuFor(H_1) \otimes_{W(k)} \DieuFor(H_2) \rightarrow \DieuFor(K)$$ satisfying the following axioms:
\begin{defn}
Let $M_1,M_2,N \in \DDModF{k}.$ We say that a function $\varphi: M_1 \times M_2 \rightarrow N$ is a \emph{Dieudonné pairing} if  $\varphi$ is $W(k)$-bilinear, continuous, and satisfies \begin{enumerate} 
\item $\varphi(Vx,Vy) = V \varphi(x,y) $
\item $\varphi(Fx,y) = F \varphi(x,Vy)$ 
\item $\varphi(x,Fy) = F \varphi(Vx,y)$
for all $x \in M_1,y \in M_2.$ 
\end{enumerate}
We denote by $\DieuP(M_1,M_2;N)$ the group of Dieudonné pairings $M_1 \times M_2 \rightarrow N.$
\end{defn}

Let $CW^c_k$ be the connected co-Witt vector functor of Section~\ref{subsec:backgroundWitt} and $C_k=\Reg{CW^c_k}$ its corepresenting formal Hopf algebra. Then $C_k \cong k \otimes C_{W(k)}$ for $C_{W(k)}=\Reg{CW^c_{W(k)}} \in \mathcal H_V$. Recall that $C_{W(k)}$ is a suitable completion of the polynomial ring $W(k)[x_0,x_{-1},\dots]$ in countable many indeterminates. Let $\eta \in Q(C_k)$ be the equivalence class of $x_0 \in C_k$ and $\iota\colon C_k \to C_k \boxtimes^c C_k$ be the map corresponding to the element
\[
1 \otimes \eta \otimes \eta \in \mathcal{R} \hat{\otimes}^F_{\mathcal V} (QC_k \hat{\otimes}_{W(k)}QC_k) \underset{\text{Cor.~\ref{cor:dieuboxforverschiebunglift}}}\cong \DieuFor(C_k \boxtimes^c C_k).
\]
Any bilinear map $f\colon H_1 \otimes H_2 \rightarrow K$ of formal connected Hopf algebras induces a homomorphism $\tilde f\colon H_1 \boxtimes^c H_2 \rightarrow K$. We define the function $\mu_f\colon \DieuFor(H_1) \times \DieuFor(H_2) \rightarrow \DieuFor(K)$ as the composite
\begin{align}
\mu_f\colon & \DieuFor(H_1) \times \DieuFor(H_2) =  \Hom_{\FHopfc}(C_k,H_1) \times \Hom_{\FHopfc}(C_k,H_2) \label{eq:muf}\\
& \xrightarrow{\boxtimes^c}  \Hom_{\FHopfc} (C_k \boxtimes^c C_k, H_1 \boxtimes^c H_2) \xrightarrow{\iota^*}  \Hom_{\FHopfc} (C_k, H_1 \boxtimes^c H_2) \notag \\
& \xrightarrow{\tilde{f}_*}  \Hom_{\FHopfc}(C_k,K) = \DieuFor(K).\notag
\end{align}

We now want to prove that $\mu_f$ is a Dieudonné pairing.

If $H$ is a formal connected Hopf algebra, $\DieuFor(H)$ is a subspace of $CW^c_k(H)$ and thus inherits a natural topology. For any finite $k$-algebra $A,$ we now want to equip $(\Spf{C_k \boxtimes^c C_k})(A) = \Hom^c_{\Alg_{k}}(C_k \boxtimes^c C_k,A)$ with a natural topology.
For a $k$-module $M$, denote by $M^\infty \subset M^{\N} \twoheadleftarrow M^{(\infty)}$ its infinite direct sum, infinite product, and the profinite completion of its infinite direct sum, respectively. 

By Lemma \ref{lemma:boxtimescommuteswithbasechange}, we know that
\[
C_k \boxtimes^c C_k \cong k\pow{k^{(\infty)} \hat{\otimes}_{k} k^{(\infty)}}.
\]
 Thus $Q(C_k \boxtimes^c C_k) \cong k^{(\infty)} \hat{\otimes}_{k} k^{(\infty)}$. This is \emph{not} isomorphic to the profinite completion of $k^\infty \otimes k^\infty,$ but to the completion of the latter with respect to  the submodules $M \otimes k^\infty + k^\infty \otimes N$, where $M,\; N \subset k^\infty$ are submodules such that the quotients $k^\infty/M$ and $k^\infty/N$ are finite length $k$-modules.

 For a finite $k$-algebra $A,$ we have an inclusion
 \[
 \Hom^c_{\Alg_{k}}(C_k \boxtimes^c C_k,A)  \subset \Nil(A)^{\N \times \N}.
 \]
 We use this inclusion to topologize $\Hom^c_{\Alg_{k}}(C_k \boxtimes^c C_k,A)$ as a subspace of $\Nil(A)^{\N \times \N}$. This definition extends to a natural topology on both $\Hom^c_{\Alg_{k}}(C_k \boxtimes^c C_k,A)$ for any profinite $k$-algebra $A,$ and $\Hom_{\FHopfc}(C_k \boxtimes^c C_k, H)$ for any connected formal Hopf algebra~$H.$ 
\begin{lemma} \label{lemma:iotacont}
For any pseudocompact $k$-algebra $A,$ the morphism $$\iota^*: \Spf{\Reg{CW^c_k} \boxtimes^c \Reg{CW^c_k}}(A) \rightarrow CW^c_k(A)$$ is continuous. 
\end{lemma}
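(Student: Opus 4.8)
The plan is to reduce to a finite base and then control $\iota^*$ through the element of the Dieudonn\'e module that defines it. For a general pseudocompact $A$, both $\Spf{(\Reg{CW^c_k}\boxtimes^c\Reg{CW^c_k})}(A)$ and $CW^c_k(A)$ are cofiltered limits of their values on the finite-length quotients $A/I$, and by construction the topologies on either side are the initial topologies for these projections; hence it suffices to prove continuity when $A$ is a finite $k$-algebra, so that $\Nil(A)$ is a finite-dimensional, discrete $k$-module. In this case $\iota^*$ is nothing but the morphism of formal groups $\Spf\iota$ obtained by applying $\Spf$ to the continuous Hopf-algebra map $\iota\colon\Reg{CW^c_k}\to\Reg{CW^c_k}\boxtimes^c\Reg{CW^c_k}$ classified, via Cor.~\ref{cor:dieuboxforverschiebunglift}, by the element $1\otimes\eta\otimes\eta$. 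Thus abstractly $\iota^*$ is precomposition $g\mapsto g\circ\iota$, and the real content of the lemma is that the explicitly chosen topology on the source---the subspace topology induced from $\Nil(A)^{\N\times\N}$---is fine enough for this precomposition to be continuous into the $U_{n,I}$-topology on $CW^c_k(A)$.

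To see this I would argue in coordinates. A point $g$ of the source is a family $(a_{ij})\in\Nil(A)^{\N\times\N}$, read off from the topological basis $\{x_{ij}\}$ of $Q(\Reg{CW^c_k}\boxtimes^c\Reg{CW^c_k})\cong k^{(\infty)}\hat\otimes_k k^{(\infty)}$ (Lemma~\ref{lemma:boxtimescommuteswithbasechange}), while its image is the co-Witt vector $\iota^*(g)=(\dots,c_{-1},c_0)$ with $c_{-m}=g\bigl(\iota(x_{-m})\bigr)$, where $x_0,x_{-1},\dots$ are the co-Witt coordinates on $\Reg{CW^c_k}$. Writing out $\iota(x_{-m})$ by combining the identification of $\iota$ with $1\otimes\eta\otimes\eta$ (Thm.~\ref{thm:indecomposablesdieudonne}) with the bilinearity relations of Rem.~\ref{rmk:structureoftensor} and the co-Witt multiplication governed by Prop.~\ref{prop:cowittmultiplication-perfectalg}, each $c_{-m}$ becomes a universal polynomial in the $a_{ij}$. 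The decisive point is a finiteness-and-support estimate: because the Verschiebung shifts indices on both tensor factors, the coordinate $\iota(x_{-m})$ lies, modulo the completion ideal, in the subalgebra generated by the finitely many $x_{ij}$ whose indices are bounded in terms of $m$. Granting this, fix a basic neighborhood $U_{n,I}$ of $0$; since $g$ is continuous and $A$ is discrete, $g$ annihilates the completion ideal, so the conditions defining $U_{n,I}$ pull back along $\iota^*$ to constraints on only the finitely many coordinates $a_{ij}$ of bounded index, together with an $I$-condition on the remaining, higher-index, coordinates. Matching these against the description of the subspace topology that $\Nil(A)^{\N\times\N}$ induces on the source (using the completed-tensor presentation of the neighborhood basis), one sees that $(\iota^*)^{-1}(U_{n,I})$ is open, which is what we want.

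The main obstacle is precisely the finiteness-and-support estimate of the second paragraph, together with the delicate matching of topologies it feeds into: turning the abstract identification $\iota\leftrightarrow 1\otimes\eta\otimes\eta$ into an explicit polynomial description of the pulled-back co-Witt coordinates and bounding which $a_{ij}$ can contribute to the entry in position $-m$. This requires propagating the $V$- and $F$-shifts through the connecting homomorphism underlying the co-Witt multiplication and through the relations cutting out $\Reg{CW^c_k}\boxtimes^c\Reg{CW^c_k}$ inside the completed symmetric algebra, and then reconciling the resulting index bounds with the fact that the natural topology on $Q(\Reg{CW^c_k}\boxtimes^c\Reg{CW^c_k})$ is the completed tensor topology rather than the naive product topology. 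It is exactly this index-boundedness that upgrades $\iota^*$ from a homomorphism of abstract abelian groups to a continuous map; the remaining ingredients---the reduction to finite $A$ and the passage from the coordinate estimate to openness of preimages---are formal.
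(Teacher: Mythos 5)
Your outline follows the same skeleton as the paper's proof---reduce to a finite $k$-algebra $A$, view $\iota^*$ in the coordinates $x_{ij}$, and invoke a uniform finiteness estimate---but the proposal stops exactly where the real work begins: you write ``granting this'' at the finiteness-and-support estimate and then concede in your final paragraph that this estimate is the main obstacle. That estimate \emph{is} the content of the lemma, and the paper obtains it only through a substantial detour that your proposal does not supply: $\iota$ is re-constructed over $W(k)$ as the limit of explicit maps $\iota_n$ determined on ghost components (Prop.~\ref{prop:HLiota}); each $\iota_n$ produces a bihomogeneous polynomial $P_n$; and Lemma~\ref{lemma:polyconverge}, proved by a Fontaine-style $p$-adic counting argument on the exponents, shows that $P_{n+1}-P_n$ lies in $J_{1,r}^{s}+J_{r,1}^{s}$ once $n$ is large in terms of $r$ and $s$.

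Moreover, the estimate as you formulate it---that $\iota(x_{-m})$ lies, \emph{modulo the completion ideal}, in the subalgebra generated by boundedly many $x_{ij}$---is either vacuous or insufficient. Modulo \emph{any} open ideal, every element of the completed algebra is a polynomial in finitely many variables; that is just what completion means. The problem is that the open ideal annihilated by a point $g$ depends on $g$, so this observation gives no neighborhood of $0$ in $\Nil(A)^{\N\times\N}$ working for all $g$ at once, hence no continuity. What is actually needed, and what Lemma~\ref{lemma:polyconverge} delivers, is a \emph{coupling} of index bounds with degree bounds: variables of large index occur only in terms of polynomial degree at least $s$. Continuity then follows because $\Nil(A)^m=0$ for some $m$ depending only on $A$, so all such terms vanish at every point of $\Nil(A)^{\N\times\N}$ simultaneously; each co-Witt coordinate of $\iota^*(g)$ is therefore computed by one fixed polynomial $P_{N}$ in one fixed finite set of coordinates, uniformly in $g$, and openness of preimages is immediate. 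Obtaining this coupling requires the explicit ghost-polynomial description of $\iota$ over $W(k)$; the abstract characterization of $\iota$ as the element $1\otimes\eta\otimes\eta$ of the Dieudonn\'e module, which is all your argument uses, does not by itself produce it.
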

To prove this lemma, we need another definition of $\iota.$ The category of (cocommutative) torsion-free Hopf algebras over $W(k)$ with a Verschiebung lift has a tensor product, studied in the graded case by Goerss \cite{goerss:hopf-rings}.  We denote this tensor product by $\boxtimes_a$ in order not to confuse it with our tensor product. If we let $\Reg{W^n_{W(k)}}$ be the Hopf algebra representing the functor taking an algebra over $W(k)$ to its length $n$ Witt vectors, then $\Reg{W^n_{W(k)}} \cong W(k)[x_0, \ldots,x_{n-1}],$ where the coaddition is given by the addition for Witt vectors. Give this ring the standard grading with $|x_i|=p^i$. We have an isomorphism of bigraded Hopf algebras with a Verschiebung lift,
\[
\Reg{W^n_{W(k)}} \boxtimes_a \Reg{W^n_{W(k)}}\cong  W(k)[ (x_{ij})_{0 \leq i,j \leq n}],
\]
where $V$ acts diagonally and $x_{ij}=x_i \boxtimes x_j$ in bidegree $(p^i,p^j)$. 
\begin{prop}[{\cite[Corollary 1.2.21]{hopkins-lurie:ambidexterity}}] \label{prop:HLiota} 
There exists a unique Hopf algebra map $$\iota_n: \Reg{W^n_{W(k)}} \rightarrow \Reg{W^n_{W(k)}} \boxtimes_a \Reg{W^n_{W(k)}}$$ which takes the $w_n,$ the $n$th ghost polynomial, to $\dfrac{w_n \boxtimes w_n}{p^n}$. Furthermore, for each $n,$ this map is compatible with the Verschiebung maps in the obvious sense. 
\end{prop}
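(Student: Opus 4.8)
The plan is to reduce everything to ghost coordinates and then to separate the question into a formal part (over $\FracW$) and an integrality part. Write $R=\Reg{W^n_{W(k)}}\boxtimes_a\Reg{W^n_{W(k)}}$, which is a $p$-torsion-free $W(k)$-Hopf algebra carrying a Verschiebung lift and the bigrading $|x_{ij}|=(p^i,p^j)$. Since both $\Reg{W^n_{W(k)}}$ and $R$ are $p$-torsion-free, any $W(k)$-algebra homomorphism out of $\Reg{W^n_{W(k)}}$ is determined by its image in $R\otimes_{W(k)}\FracW$; there the source is the polynomial algebra on the primitive ghost polynomials $w_0,\dots,w_n$, and a Hopf algebra map must carry primitives to primitives. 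Thus such a map is pinned down by the images of the $w_m$. First I would observe that, among maps respecting the Verschiebung lifts, these images are all forced by the single prescription $\iota_n(w_n)=p^{-n}(w_n\boxtimes w_n)$: the identity $Vw_m=p\,w_{m-1}$ on ghost polynomials lets one read off $\iota_n(w_{m-1})$ from $\iota_n(w_m)$ by applying $V$ and cancelling $p$, which gives uniqueness.

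For existence, I would construct $\iota_n$ first over $\FracW$, where the assignment $w_m\mapsto p^{-m}(w_m\boxtimes w_m)$ manifestly sends primitive polynomial generators to primitive elements and hence defines a Hopf algebra map. The real content is then \emph{integrality}, i.e.\ that this rational map carries $\Reg{W^n_{W(k)}}$ into $R$. Here the natural tool is Dwork's lemma for flat $\Reg{W(k)}$-algebras (Lemma~\ref{lemma:dworkoalgebras}): the elements $g_m=p^{-m}(w_m\boxtimes w_m)$ are precisely the ghost components of the Witt-vector-valued point one wants to produce, and Lemma~\ref{lemma:dworkoalgebras}, applied to $R$ with its Frobenius lift, yields the unique integral lift as soon as the $g_m$ are shown to satisfy the required Frobenius congruences. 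To make sense of $g_m$, to identify the primitive $w_m\boxtimes w_m$, and to compute its class in indecomposables, I would use the strong monoidality of $Q$ from Cor.~\ref{cor:goersscorollary}, which gives $Q(R)\cong Q\Reg{W^n_{W(k)}}\hat\otimes_{W(k)}Q\Reg{W^n_{W(k)}}$; alternatively the whole construction can be routed through the universal property of the Witt Hopf algebra $\WtH$ in Lemma~\ref{lemma:mapsfromWitt}.

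Verschiebung-compatibility I expect to be a short direct computation: because $V$ acts diagonally on $R$ and $\boxtimes_a$ is bilinear on primitives, one gets $V(w_m\boxtimes w_m)=(Vw_m)\boxtimes(Vw_m)=p^2\,(w_{m-1}\boxtimes w_{m-1})$, so that $V(g_m)=p\,g_{m-1}$, matching $Vw_m=p\,w_{m-1}$ on the source; this is also exactly what singles out the normalization by $p^m$ rather than any other power, and it yields the coherence of the $\iota_n$ as $n$ varies. The hard part will be the integrality step: showing that the normalized classes $p^{-m}(w_m\boxtimes w_m)$ genuinely lie in $R$ and verifying the Frobenius congruences needed for Dwork's lemma. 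This requires tracking how the Frobenius and Verschiebung lifts interact with the bigrading on $\Reg{W^n_{W(k)}}\boxtimes_a\Reg{W^n_{W(k)}}$, and is the technical core of \cite[Cor.~1.2.21]{hopkins-lurie:ambidexterity}, whose argument I would follow.
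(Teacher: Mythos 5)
First, a point of calibration: the paper contains no proof of this proposition at all --- it is imported verbatim from \cite[Corollary 1.2.21]{hopkins-lurie:ambidexterity}, and every later use (Lemma~\ref{lemma:polyconverge}, the construction of $\iota'$) treats it as a black box. So there is no internal argument to compare you against; the benchmark is Hopkins--Lurie's proof. Your skeleton is consistent with how such statements are proved, and the soft parts are correct: $p$-torsion-freeness reduces everything to ghost coordinates; over $\Frac(W(k))$ the assignment $w_m \mapsto p^{-m}(w_m \boxtimes w_m)$ sends primitives to primitives and hence is a rational Hopf map; and your computation $V(w_m \boxtimes w_m) = (Vw_m)\boxtimes(Vw_m) = p^2(w_{m-1}\boxtimes w_{m-1})$ correctly identifies $p^{-m}$ as the one normalization compatible with the Verschiebung, which is exactly the property Lemma~\ref{lemma:polyconverge} later needs. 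Your decision to prove uniqueness only among Verschiebung-compatible maps is also the right reading, not a weakness: prescribing only the top ghost image does \emph{not} pin down a Hopf algebra map (for $n=2$ one can replace $\iota(w_0)=x_{00}$ by $x_{00}+pv$ for any integral primitive $v$ and check, using Wilson's theorem, that integrality survives), so the uniqueness clause must be read either with the $V$-condition built in, as you do, or with all ghost components prescribed, as in Hopkins--Lurie. One minor slip: Cor.~\ref{cor:goersscorollary} is stated for the complete category $\mathcal H_V$ with $\boxtimes^c$, not for the polynomial Hopf algebras with $\boxtimes_a$ used here; what you want is Goerss's ungraded analogue \cite{goerss:hopf-rings}, or simply the presentation $\Reg{W^n_{W(k)}}\boxtimes_a\Reg{W^n_{W(k)}}\cong W(k)[(x_{ij})]$ quoted in the paper just before the proposition.

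The genuine gap is that the entire mathematical content of the proposition --- integrality --- is deferred rather than proved. Two facts are needed: (i) $w_m\boxtimes w_m$ is divisible by $p^m$ in $R=\Reg{W^n_{W(k)}}\boxtimes_a\Reg{W^n_{W(k)}}$, and (ii) the classes $g_m=p^{-m}(w_m\boxtimes w_m)$ satisfy the Frobenius congruences. Moreover, your plan has the logical order backwards: Lemma~\ref{lemma:dworkoalgebras} takes as input a sequence of elements \emph{of the algebra} satisfying the congruences, so the divisibility (i) must be established by hand before Dwork can even be invoked; Dwork then converts (i) and (ii) into the existence of the Witt components $q_i$, i.e.\ of $\iota_n$. (You do list both requirements at the end, but nothing in the outline engages with them.) Establishing (i) and (ii) is a genuine computation with the Hopf-ring relations of the box product: for instance, primitivity of $x_0$ forces $x_0\boxtimes x_0^k=0$ for $k\geq 2$, whence $x_0^p\boxtimes x_0^p=p!\,x_{00}^p$ and $x_0^p\boxtimes x_1=-(p-1)!\,x_{00}^p$, so that $w_1\boxtimes w_1=-p!\,x_{00}^p+p^2(x_1\boxtimes x_1)$ is divisible by $p$, and integrality of $\iota_2$ comes down to Wilson's theorem $p\mid (p-1)!+1$; the general case is an induction of this kind. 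Since you explicitly fall back on ``the argument of \cite[Corollary 1.2.21]{hopkins-lurie:ambidexterity}'' for precisely this step, your proposal, like the paper itself, is ultimately a citation with a correct frame around it rather than an independent proof.
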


The map $\iota_n$ maps elements of degree $p^i$ to elements of bidegree $(p^i,p^i).$ For each $n,$ we get a homogeneous polynomial $$P_n((x_{ij})_{0 \leq i,j \leq n-1}) := \iota_n(x_{n-1}) \in W(k)[(x_{ij})_{0 \leq i,j \leq n}]$$ of bidegree $(p^{n-1},p^{n-1}).$ We think of $P_n$ as a polynomial in the entries of an $(n+1)\times(n+1)$ square matrix, with $x_{00}$ in the top left entry and $x_{n0}$ in the bottom left entry.

From the formula from Prop.~\ref{prop:HLiota},\begin{equation}
P_n((x_{ij})_{0 \leq i,j \leq n-1}) = x_{n-1,n-1} + Q((x_{i,j})_{0 \leq i,j \leq n-2}) \label{eq:pn}
\end{equation}
for some bihomogeneous polynomial $Q$ of bidegree $(p^{n-1},p^{n-1}).$ The proof of the following lemma is almost verbatim the proof of \cite[Lemme~II.1.3]{fontaine:groupes-divisibles}. 

\begin{lemma} \label{lemma:polyconverge}
In the ring $W(k)[(x_{-i,-j})_{i,j \in \mathbb{N}}],$ define the ideal
\[
J_{r,s} = (x_{-i,-j} \mid i \geq r,\; j \geq s) \quad (r,s \geq 0).
\]
Then
\[
P_{n+1}((x_{i,j})_{-n \leq i,j \leq 0} ) = P_n( (x_{i,j})_{-n+1 \leq i,j \leq 0}) \pmod{J_{1,r}^s+J_{r,1}^s}
\]
for each integer \begin{equation*} n \geq \begin{cases}  r & \text{if } s < p \\ r+(s-p)/(p-1) & \text{if } s \geq p.  \end{cases} \end{equation*}
\end{lemma}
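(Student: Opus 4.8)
The plan is to reduce the statement to a single $J$-adic order estimate on one ``Witt layer'' and then run the induction of \cite[Lemme~II.1.3]{fontaine:groupes-divisibles} in the present bigraded setting. First I would fix the reindexing: relabel the $(n+1)\times(n+1)$ matrix of variables of $P_{n+1}$ (resp. the $n\times n$ matrix of $P_n$) by nonpositive indices, via the shift $-n$ (resp. $-(n-1)$), so that the distinguished linear variable of \eqref{eq:pn} — namely $x_{n,n}$ for $P_{n+1}$ and $x_{n-1,n-1}$ for $P_n$ — is sent to $x_{0,0}$. This is exactly the substitution appearing in the statement, and after it both $P_n$ and $P_{n+1}$ become polynomials in the $x_{-i,-j}$ with common leading term $x_{0,0}$, while by \eqref{eq:pn} all remaining monomials lie in the ideal generated by the genuinely deep variables (those with $i,j\ge 1$). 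The leading terms cancel, so it suffices to estimate the $J$-adic order of the successive difference $D_n:=P_{n+1}-P_n$ and to show $D_n\in J_{1,r}^s+J_{r,1}^s$ under the stated bounds.

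The engine is a recursion that exhibits $D_n$ as one additional Frobenius layer. Concretely I would combine the defining identity $\iota_n(w_n)=(w_n\boxtimes w_n)/p^n$ of Prop.~\ref{prop:HLiota} with the ghost recursion $w_{n+1}=w_n^{(p)}+p^{n+1}x_{n+1}$ (where $(p)$ raises each Witt coordinate to its $p$th power) and the integrality supplied by Dwork's lemma (Lemma~\ref{lemma:dworkoalgebras}); solving for the top coordinate $x_n$ and applying $\iota$ expresses $P_{n+1}$ as an index-shifted, once-more-Frobenius-twisted copy of $P_n$ together with a correction. The key quantitative point, exactly as in Fontaine, is that passing from level $n$ to level $n+1$ introduces one further $p$th power on the newly deepened variables: replacing a generator $x$ of $J_{1,r}$ (resp. $J_{r,1}$) by $x^p\in J_{1,r}^p$ (resp. $J_{r,1}^p$) raises the relevant $J$-adic order by exactly $p-1$. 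Thus the new layer $D_n$ gains $p-1$ in order for each unit increase of $n$ beyond the threshold $n=r$.

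Finally I would prove, by induction on $n\ge r$, the sharp invariant $D_n\in J_{1,r}^{\,p+(n-r)(p-1)}+J_{r,1}^{\,p+(n-r)(p-1)}$, from which both cases of the stated bound follow at once: for $s\le p$ any $n\ge r$ works since $p+(n-r)(p-1)\ge p\ge s$, and for $s>p$ one solves $p+(n-r)(p-1)\ge s$, i.e. $n\ge r+(s-p)/(p-1)$. The base case $n=r$ yields order $p$, reflecting that the first deep correction already carries one $p$th power of a variable in $J_{1,r}$ (resp. $J_{r,1}$); the inductive step feeds the order-$(p+(n-r)(p-1))$ estimate for $D_n$ into the recursion above, whose single extra Frobenius twist raises the order by exactly $p-1$, giving the estimate for $D_{n+1}$. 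The two summands $J_{1,r}^\bullet$ and $J_{r,1}^\bullet$ are treated symmetrically, corresponding to the two tensor factors of $\boxtimes_a$. I expect the main obstacle to be making the recursion of the middle paragraph fully explicit in the bigraded double-index variables and verifying that the correction terms genuinely land in the claimed power of the deep ideal with the sharp constant $p-1$; this is the content that must be transcribed, essentially verbatim but with the symmetric $(1,r)/(r,1)$ bookkeeping, from \cite[Lemme~II.1.3]{fontaine:groupes-divisibles}.
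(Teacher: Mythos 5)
Your setup is fine as far as it goes: the reindexing, the cancellation of the common leading term $x_{0,0}$ coming from \eqref{eq:pn}, and even your target invariant $P_{n+1}-P_n \in J_{1,r}^{\,p+(n-r)(p-1)}+J_{r,1}^{\,p+(n-r)(p-1)}$, which is exactly the estimate the paper's proof produces (it is the case $t=n-r$ of the bound obtained there). But the entire mathematical content of the lemma sits in your middle paragraph, and that paragraph is not a proof: neither the recursion nor its asserted quantitative behaviour is established, and the heuristic behind it is incorrect as stated. Replacing a single generator $x$ by $x^p$ raises its order by $p-1$, but a monomial of degree $a$ gains $a(p-1)$ under a Frobenius substitution; more importantly, the gain of $p-1$ per level in the true estimate does not come from substituting $x\mapsto x^p$ at all. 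If you actually unwind the ghost recursion you get an identity expressing $p^{n}P_{n+1}$ in terms of $(w_n\boxtimes w_n)/p^{n}$ \emph{and all lower levels} $P_{i+1}^{p^{n-i}}$, with division by $p^{n}$; controlling those denominators is precisely the difficulty, and your induction, including its base case $n=r$ (which already requires degree at least $p$ in the deepest row or column and is merely asserted), does not close.

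What is needed — and what both the paper and \cite[Lemme~II.1.3]{fontaine:groupes-divisibles} actually do, so your plan to "transcribe Fontaine verbatim" would replace your induction rather than complete it — is a direct monomial analysis with three ingredients, none of which appears in your proposal. First, the compatibility of the $\iota_n$ with the Verschiebung gives $P_{n+1}((x_{i-1,j-1})) = P_n((x_{i,j}))$ once variables with a negative index are set to zero; hence every monomial of the difference must involve a variable from the deepest row or the deepest column. Your first paragraph only yields membership in the ideal generated by all variables with $i,j\geq 1$, which is strictly weaker and can never reach $J_{r,1}$ or $J_{1,r}$ for $r>1$. Second, bihomogeneity for the weights $p^{\,n-i}$ gives, for each monomial $\prod_{i,j}(x_{-i,-j})^{u_{ij}}$ with row and column degree vectors $v_i=\sum_j u_{ij}$ and $w_j=\sum_i u_{ij}$, the equations $\sum_i p^{\,n-i}v_i = p^n = \sum_j p^{\,n-j}w_j$. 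Third, the $p$-adic divisibility lemma \cite[Lemme 1.2]{fontaine:groupes-divisibles} converts the divisibility of the partial sums $v_n+pv_{n-1}+\cdots+p^{t}v_{n-t}$ by $p^{t+1}$ into the degree bound $v_n+\cdots+v_{n-t}\geq t(p-1)+p$, which is what places the monomial in $J_{n-t,1}^{\,t(p-1)+p}$ (symmetrically for columns), and specializing $t=0$ and $t=n-r$ gives the two cases of the lemma. As written, your proposal restates the goal and defers its proof to an unconstructed mechanism, so it has a genuine gap at its core.
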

\begin{proof}
From the fact that the $\iota_n$ commute with the Verschiebung, we see that in $$\Reg{W^{n}_{W(k)}} \boxtimes_a \Reg{W^{n}_{W(k)}},$$  $P_{n+1}((x_{i-1,j-1})_{0 \leq i,j \leq {n}}) = P_n((x_{i,j})_{0 \leq i,j \leq n-1})$ with the understanding that $x_{i,j} = 0$ if $i<0$ or $j<0$. This, together with the homogeneity of the polynomials and Formula \ref{eq:pn}, implies that
\[
P_{n+1}((x_{i,j})_{-n \leq i,j \leq 0} ) - P_n( (x_{i,j})_{-n+1  \leq i,j \leq 0})
\]
is equal to a linear combination of terms of the form $$\prod_{i,j=1}^{n} (x_{-i,-j})^{u_{ij}},$$ for some exponents $u_{ij} \geq 0$. Writing $v_i = \sum_{j=1}^n u_{ij}$ and $w_j = \sum_{i=1}^n u_{ij},$ bihomogeneity implies that $\sum_i p^{n-i} v_i = p^n = \sum_k p^{n-j}w_j$. Either $v_n \neq 0$ or $w_n \neq 0.$ Suppose that $v_n \neq 0.$ We have $$v_n+pv_{n-1} + \cdots + p^{n-1}v_1 = p^n.$$ We see that for each $0 \leq t  < n,$  $v_n+pv_{n-1} + \cdots + p^{n-t}v_{n-t}$ is divisible by $p^{t+1}.$ By \cite[Lemme 1.2]{fontaine:groupes-divisibles} this implies that $v_n + v_{n-1} + \cdots + v_{n-t} \geq t(p-1)+p,$ and then the term lies in $J^{t(p-1)+p}_{n-t,1}.$ Similarily, if $w_n \neq 0$ we see that the term lies in $J^{t(p-1)+p}_{1,n-t},$ so $$P_{n+1}-P_n \in J^{t(p-1)+p}_{1,n-t} + J^{t(p-1)+p}_{n-t,1}.$$ If $t=0,$ the difference lies in $J_{n,1}^p+J_{1,n}^p,$ which gives the lemma if $s < p.$ If $ s \geq p,$ and if $ n \geq r+(s-p)/(p-1),$ by letting $t = n-r,$ $J_{r,1}^{t(p-1)+p}+J_{1,r}^{t(p-1)+p} \subset J_{r,1}^s+J_{1,r}^s$ since $t(p-1)+p \geq s.$
\end{proof}
We thus get a sequence of polynomials $$P_{n+1} ((x_{i,j})_{-n \leq i,j \leq 0}) \in \Reg{CW^c_{W(k)}} \boxtimes \Reg{CW^c_{W(k)}}$$ which by Lemma \ref{lemma:polyconverge} converges to a power series $P((x_{i,j})_{i,j\leq0})).$ We now define the map 
\[
\iota'\colon C_{W(k)} \rightarrow C_{W(k)} \boxtimes C_{W(k)}
\]
by letting $\iota'(x_{n}) = P((x_{i,j})_{i,j\leq n})$ for $n \leq 0$ and extending by continuity. Taking into consideration how the coadditions on $C_{W(k)}$ and $C_{W(k)} \boxtimes C_{W(k)}$ are defined, one sees that this is a map of formal Hopf algebras over $W(k)$ which commutes with the Verschiebung. Since $Q\iota'(x_0) = x_0 \boxtimes x_0,$ Thm.~\ref{thm:indecomposableequiv} shows that the base change $\iota'_k$ of $\iota'$ to $k$ coincides with $\iota.$ 

\begin{proof}[Proof of Lemma~\ref{lemma:iotacont}]
We may assume that $A$ is a finite $k$-algebra. Then the map $$\iota\colon C_k \rightarrow C_k \boxtimes C_k$$  takes $x_{n}$ (where $n \leq 0$) to the limit of the polynomials $P_n((x_{n+i,n+j})_{i,j \leq 0}).$  Lemma~\ref{lemma:polyconverge} implies that $\iota$ induces a continuous homomorphism on functors of points. Indeed, there is a basis of neighborhoods of zero in $\Reg{CW^{c}_k}(A) \cong \Nil(A)^{\N}$ given by 
\[
U_k = \{(a_i)_{i \leq 0} \in A^\N \mid a_i = 0 \text{ for } a\geq -k\}.
\]
Since $\Nil(A)^{m}=0$ for some $m>0,$ Lemma \ref{lemma:polyconverge} shows that one can find an open neighborhood of $\Nil(A)^{\mathbb{N} \times \mathbb{N}}$  such that $$\Nil(A)^{\mathbb{N} \times \mathbb{N}} \cap \Hom^c_{\Alg_k}(C_k \boxtimes C_k,A)$$ maps into $U_k$ under $\iota^*.$ 
\end{proof} 
\begin{corollary} \label{cor:contofcomp}
Let $f\colon H_1 \hat{\otimes}_k H_2 \rightarrow K$ be a bilinear map in $\FHopfc.$ Then the map $$\mu_f: \DieuFor(H_1) \times \DieuFor(H_2) \rightarrow \DieuFor(K)$$ is continuous. 
\end{corollary}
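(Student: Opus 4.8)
The plan is to read off from the definition \eqref{eq:muf} that $\mu_f$ is a composite of three maps, namely the $\boxtimes^c$-pairing $\DieuFor(H_1)\times\DieuFor(H_2)\to\Hom_{\FHopfc}(C_k\boxtimes^c C_k,H_1\boxtimes^c H_2)$, followed by precomposition $\iota^*$ with $\iota$ landing in $\DieuFor(H_1\boxtimes^c H_2)=\Hom_{\FHopfc}(C_k,H_1\boxtimes^c H_2)$, followed by postcomposition $\tilde f_*$ with $\tilde f$. Since a composite of continuous maps is continuous, it suffices to establish continuity of each of the three factors separately.

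The middle factor is where Lemma~\ref{lemma:iotacont} does the work. A morphism in $\Hom_{\FHopfc}(C_k\boxtimes^c C_k,H_1\boxtimes^c H_2)$ is, forgetting the Hopf structure, a point of $\Spf{C_k\boxtimes^c C_k}(A)$ with $A=H_1\boxtimes^c H_2$, and the topology placed on $\Hom_{\FHopfc}(C_k\boxtimes^c C_k,H_1\boxtimes^c H_2)$ is precisely the subspace topology for this inclusion; under it $\iota^*$ is the restriction of the map $\Spf{C_k\boxtimes^c C_k}(A)\to CW^c_k(A)$ of Lemma~\ref{lemma:iotacont} evaluated at the pseudocompact algebra $A=H_1\boxtimes^c H_2$, hence continuous. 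The last factor $\tilde f_*$ is the restriction to $\DieuFor(-)$ of the map $CW^c_k(\tilde f)\colon CW^c_k(H_1\boxtimes^c H_2)\to CW^c_k(K)$ induced by the continuous Hopf-algebra homomorphism $\tilde f$; since $CW^c_k$ carries continuous algebra maps to continuous homomorphisms of the co-Witt groups and $\DieuFor$ carries the subspace topology from $CW^c_k$, this factor is continuous as well.

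The remaining and most delicate point is continuity of the pairing $(g_1,g_2)\mapsto g_1\boxtimes^c g_2$. Writing $g_1$ and $g_2$ in coordinates as $(a_i)_{i\le 0}\in\Nil(H_1)^{\N}$ and $(b_j)_{j\le 0}\in\Nil(H_2)^{\N}$ under the inclusions $\DieuFor(H_\ell)\subset CW^c_k(H_\ell)$, the universal property of $\boxtimes^c$ identifies the $(i,j)$-coordinate of $g_1\boxtimes^c g_2\in\Spf{C_k\boxtimes^c C_k}(H_1\boxtimes^c H_2)\subset\Nil(H_1\boxtimes^c H_2)^{\N\times\N}$ with the image of $a_i\otimes b_j$ under the universal bilinear map $j\colon H_1\hat\otimes_k H_2\to H_1\boxtimes^c H_2$. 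Since $j$ is continuous and bilinear, and since (after reducing to finite quotient algebras as in the proof of Lemma~\ref{lemma:iotacont}) a basic neighborhood in $\Nil(\cdot)^{\N\times\N}$ constrains only finitely many coordinates at a time, each coordinate depends continuously on $(g_1,g_2)$, so the pairing is continuous. I expect the only genuine work to lie in this coordinatewise bookkeeping and in matching the subspace topology from $\Nil(\cdot)^{\N\times\N}$ with the one on $\DieuFor$ pulled back along the universal element; granting this, continuity of $\mu_f$ is immediate.
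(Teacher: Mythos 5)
Your proposal is correct and follows essentially the same route as the paper's proof: the identical three-factor decomposition of $\mu_f$ from \eqref{eq:muf}, with Lemma~\ref{lemma:iotacont} supplying the continuity of the middle factor $\iota^*$. The only difference is one of emphasis: the paper dismisses the continuity of the pairing $(g_1,g_2)\mapsto g_1\boxtimes^c g_2$ and of $\tilde f_*$ as immediate, whereas you spell out the coordinatewise verification (identifying the $(i,j)$-coordinate of $g_1\boxtimes^c g_2$ with the image of $a_i\otimes b_j$ under the universal bilinear map), which is a correct elaboration rather than a new idea.
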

\begin{proof}
We topologize $\DieuFor(H_i)$ as a closed subset of $CW^c(H_i)$ for $i=1,2$ and similarily, we topologize $\Hom_{\FHopfc}(C_k \boxtimes^c C_k, H_1 \boxtimes^c H_2)$ as a closed subset of $\Hom_{\Alg_k}(C_k \boxtimes^c C_k,H_1 \boxtimes^c H_2).$ We will show that $\mu_f$ is continuous by showing that all three maps in \eqref{eq:muf} are continuous. It is obvious that 
\[
\DieuFor(H_1) \times \DieuFor(H_2) \rightarrow \Hom_{\FHopfc}(C_k \boxtimes^c C_k, H_1 \boxtimes^c H_2)
\]
and
\[
f_*\colon\DieuFor(H_1 \boxtimes^c H_2) \rightarrow \DieuFor(K)
\]
is immediate, while the continuity of the map 
\[
\iota^*\colon \Hom_{\FHopfc}(C_k \boxtimes^c C_k, H_1 \boxtimes^c H_2) \rightarrow \DieuFor(H_1 \boxtimes^c H_2)
\]
follows from Lemma \ref{lemma:iotacont}. 
\end{proof}

\begin{lemma} \label{lemma:bilineargivespairing}
Given $H_1,H_2,K \in \FHopfc$ and a bilinear map $H_1 \otimes H_2 \rightarrow K,$ the continuous map $\mu_f: \DieuFor(H_1) \times \DieuFor(H_2) \rightarrow \DieuFor(K)$ is a Dieudonné pairing.
\end{lemma}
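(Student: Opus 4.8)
The plan is to exploit the factorization of $\mu_f$ through a universal pairing. Writing $\tilde f\colon H_1\boxtimes^c H_2\to K$ for the homomorphism induced by the bilinear $f$, the composite \eqref{eq:muf} factors as $\mu_f=\DieuFor(\tilde f)\circ\nu$, where $\nu\colon\DieuFor(H_1)\times\DieuFor(H_2)\to\DieuFor(H_1\boxtimes^c H_2)$ is the pairing $\nu(\alpha,\beta)=(\alpha\boxtimes^c\beta)\circ\iota$. Since $\DieuFor(\tilde f)$ is a morphism of Dieudonné modules, hence $\mathcal R$-linear and continuous, it transports every defining property of a Dieudonné pairing; thus it suffices to show that $\nu$ is a Dieudonné pairing. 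Continuity of $\nu$ is the content of Cor.~\ref{cor:contofcomp}, and $W(k)$-bilinearity follows from the additivity of the external $\boxtimes^c$-product in each variable together with the $W(k)$-linearity of $\iota^*$.

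I would next record the three structural facts that convert conditions (1)--(3) into identities of morphisms $C_k\to C_k\boxtimes^c C_k$. First, the $\mathcal R$-action on $\DieuFor(H)=\Hom_{\FHopfc}(C_k,H)$ is induced by the Frobenius and Verschiebung endomorphisms $F_{CW},V_{CW}$ of $CW^c_k$; dually, writing $f_C,v_C\colon C_k\to C_k$ for the corresponding Hopf-algebra endomorphisms, this means $Fx=x\circ f_C$ and $Vx=x\circ v_C$. Second, by Lemma~\ref{lemma:goersslemma} the functor $S^*$ is strongly monoidal, so external products compose: $(\alpha\circ g)\boxtimes^c(\beta\circ h)=(\alpha\boxtimes^c\beta)\circ(g\boxtimes^c h)$. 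Third, the diagonal Verschiebung $V_\boxtimes$ on $C_k\boxtimes^c C_k$ (cf. Ex.~\ref{ex:CWtensor}) equals $v_C\boxtimes^c v_C$; this is checked on indecomposables, where both induce $Q(v_C)\otimes Q(v_C)$, and then follows from Thm.~\ref{thm:indecomposableequiv}.

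With these in hand, condition (1) is immediate: using that $\iota$ commutes with the Verschiebung, i.e. $V_\boxtimes\circ\iota=\iota\circ v_C$ (which is exactly how $\iota'$ was constructed), I compute
\[
\nu(Vx,Vy)=(x\boxtimes^c y)\circ(v_C\boxtimes^c v_C)\circ\iota=(x\boxtimes^c y)\circ V_\boxtimes\circ\iota=(x\boxtimes^c y)\circ\iota\circ v_C=V\nu(x,y).
\]
For conditions (2) and (3) the same bookkeeping reduces them, respectively, to the two identities of morphisms $C_k\to C_k\boxtimes^c C_k$
\[
(f_C\boxtimes^c\mathrm{id})\circ\iota=(\mathrm{id}\boxtimes^c v_C)\circ\iota\circ f_C,\qquad (\mathrm{id}\boxtimes^c f_C)\circ\iota=(v_C\boxtimes^c\mathrm{id})\circ\iota\circ f_C,
\]
which express the compatibility of $\iota$ with the Frobenius. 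By the symmetry of $\boxtimes^c$ and of $\iota$ (which corresponds to the symmetric element $\eta\boxtimes\eta$), the second identity follows from the first.

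Establishing this Frobenius compatibility is the main obstacle: unlike the Verschiebung, the Frobenius does not interact with the monoidal structure by a clean diagonal rule but mixes the two tensor factors with a shift, so the identities cannot be read off from indecomposables alone. I would verify them on the torsion-free model over $W(k)$ from which $\iota$ was built, namely $\iota'(x_n)=\lim_m P_m$ with $P_m=\iota_m(x_{m-1})$ and $\iota_m$ characterized by $\iota_m(w_m)=(w_m\boxtimes w_m)/p^m$ (Prop.~\ref{prop:HLiota}). Since a map of torsion-free formal Hopf algebras over $W(k)$ is determined by its ghost components (Dwork's lemma, Lemma~\ref{lemma:dworkoalgebras}), it suffices to check the lifted identities after applying each ghost polynomial $w_n$, where the defining relation for $\iota_m$ together with the elementary behaviour of $w_n$ under $F$ and $V$ collapses both sides to the same expression in the $w_m\boxtimes w_m$; reducing mod $p$ then yields the identities over $k$. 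This is precisely the computation that the convergence estimate of Lemma~\ref{lemma:polyconverge} was set up to support, and it parallels Goerss's verification in the graded setting.
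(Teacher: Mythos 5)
Your proof is correct and, for most of its length, runs parallel to the paper's: both factor $\mu_f$ through the universal pairing $\nu(\alpha,\beta)=(\alpha\boxtimes^c\beta)\circ\iota$ via the induced map $\DieuFor(\tilde f)$, both quote Cor.~\ref{cor:contofcomp} for continuity, both settle the $V$-condition by identifying the Verschiebung of $C_k\boxtimes^c C_k$ with the diagonal one and using that $\iota$, as a Hopf algebra map, commutes with Verschiebungs, and your reduction of conditions (2)--(3) to the single identity $(f_C\boxtimes^c\mathrm{id})\circ\iota=(\mathrm{id}\boxtimes^c v_C)\circ\iota\circ f_C$ is exactly the content of the paper's final diagram. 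The genuine divergence is in how that identity is proved. The paper never leaves characteristic $p$: the image of the universal bilinear map $j\colon C_k\hat\otimes_k C_k\to C_k\boxtimes^c C_k$ topologically generates the target as an algebra, and since the Verschiebung of $C_k\boxtimes^c C_k$ is injective (Ex.~\ref{ex:CWtensor}), the relation $j(Fx,y)=Fj(x,Vy)$ may be checked after applying $V$, where it collapses to $j(px,Vy)=pj(x,Vy)$ using $VF=p$ and bilinearity of $j$. You instead lift to $W(k)$ and compute ghost components, Hopkins--Lurie style; this does work --- using that the lift of $\iota$ satisfies $\iota'(w_m)=(w_m\boxtimes w_m)/p^m$ (Prop.~\ref{prop:HLiota}) while the lifts of $f_C$ and $v_C$ act on ghost components by $w_m\mapsto w_{m+1}$ and $w_m\mapsto pw_{m-1}$, both sides of your identity evaluate to $(w_{m+1}\boxtimes w_m)/p^m$ --- but two points need more care than your sketch gives them. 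First, Dwork's lemma does not say that maps of torsion-free formal Hopf algebras over $W(k)$ are determined by ghost components: ghost polynomials live in the truncated Hopf algebras $\Reg{W^n_{W(k)}}$, not in the completed algebra $C_{W(k)}$, so the verification must be done at finite level, where the Frobenius lift shifts the truncation index, and then pushed through the $V$-colimit defining the co-Witt vectors. Second, Lemma~\ref{lemma:polyconverge} is what makes $\iota'$ converge and be continuous (Lemma~\ref{lemma:iotacont}); it is not itself the engine of the Frobenius compatibility, which is a separate finite-level computation. The trade-off between the two routes: the paper's argument is shorter and purely internal to $k$, needing only injectivity of the Verschiebung; yours makes the Frobenius relation transparent on ghost coordinates, at the cost of the $W(k)$-lifting bookkeeping.
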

\begin{proof}
This proof uses ideas from \cite{hopkins-lurie:ambidexterity} and \cite{goerss:hopf-rings}. We start by verifying that $\mu_f$ is $W(k)$-bilinear. Additivity in either variable of $\mu_f$ can be shown by direct calculation or by mimicking the proof of \cite[Lemma 7.4]{goerss:hopf-rings}.
Let 
\[
x \in \DieuFor(H_1) = \Hom_{\FHopfc}(C_k,H_1) \quad \text{and} \quad y \in \DieuFor(H_2) = \Hom_{\FHopfc}(C_k,H_2).
\]
Given $a \in W(k),$ we now prove that $\mu_f(ax,y) = a\mu_f(x,y)$; linearity in the other variable follows by symmetry. The claim translates to the following diagram being commutative:
\[
\begin{tikzcd} C_k \arrow[d,"a"] \arrow[r,"\iota"] & C_k \boxtimes^c C_k \arrow[r, "a \boxtimes 1"] \arrow[d,"a \boxtimes 1"]  &  C_k \boxtimes^c C_k  \arrow[r,"x \boxtimes y"] \arrow[d, "x \boxtimes y"] & H_1 \boxtimes^c H_2 \arrow[d,"f"]   \\  C_k \arrow[r,"\iota"]& C_k \boxtimes^c C_k \arrow[r,"x \boxtimes y"] & H_1 \boxtimes^c H_2 \arrow[r,"f"] & K \end{tikzcd}
\]
Since the middle and rightmost squares commute for trivial reasons, we are reduced to showing that $(a \boxtimes 1) \iota = \iota \circ a ,$ but this follows from the relation $$a(1 \otimes \eta \otimes \eta ) = 1 \otimes a \eta \otimes \eta$$ in $\DieuFor(C_k \boxtimes^c C_k).$  In the same way, to show that $\mu_f(Vx,Vy) = V \mu_f(x,y)$ one reduces to the universal case and sees that one needs to show that $$\begin{tikzcd} C_k \arrow[d,"V"] \arrow[r,"\iota"] & C_k \boxtimes^c C_k \arrow[d,"V \boxtimes V"] \\ C_k \arrow[r,"\iota"] & C_k \boxtimes^c C_k \end{tikzcd}$$ commutes. But this is immediate from the fact that $V \boxtimes V$ is the Verschiebung on $C_k \boxtimes^c C_k$ and $\iota$ is a morphism of formal Hopf algebras. To see that the Verschiebung on  $C_k \boxtimes^c C_k$ is $V \boxtimes V,$ one argues either as in \cite[Proposition 1.3.27]{hopkins-lurie:ambidexterity} or notes that it is a consequence of Lemma \ref{lemma:goersslemma} and Lemma \ref{lemma:boxtimescommuteswithbasechange}.

We now move on to proving that
\[
\mu_f(Fx,y) = F \mu_f(x,Vy) \quad \text{and} \quad \mu_f(x,Fy) = F \mu_f(Vx,y).
\]
By symmetry, it is enough to prove the first identity. Reducing to the universal caseand denoting by $F_{C_k \boxtimes^c C_k}$ the Frobenius on $C_k \boxtimes^c C_k,$ it is enough to show that 
\[
\begin{tikzcd} C_k \arrow[r,"\iota"] \arrow[d,"F"] & C_k \boxtimes^c C_k \arrow[r,"F \boxtimes 1"] \arrow[d,"F_{C_k \boxtimes^c C_k}"] & C_k \boxtimes^c C_k  \arrow[d,equals] \\ C_k \arrow[r,"\iota"] & C_k \boxtimes^c C_k \arrow[r,"1 \boxtimes V"] & C_k \boxtimes^c C_k \end{tikzcd}
\]
commutes. The first square commutes because $\iota$ is a homomorphism of Hopf algebras, and the second commutes by Remark \ref{rmk:structureoftensor}. Indeed, with the notation of Remark \ref{rmk:structureoftensor}, the image of the coalgebra map $j:  C_k \otimes  C_k \rightarrow  C_k \boxtimes^c  C_k$ topologically generates $ C_k \boxtimes^c  C_k$ as an algebra, and we then claim that $j(Fx,y) = Fj(x,Vy).$ This equality holds since Example \ref{ex:CWtensor} tells us that the Verschiebung of $C_k \boxtimes^c C_k$ is an injective endomorphism (the associated map of formal groups is surjective), which implies that is enough to check this equality after applying $V.$ Since any coalgebra map commutes with the Verschiebung, this gives us $$Vj(Fx,y) = j(px,Vy) = pj(x,Vy)=VFj(x,Vy),$$ so the claimed equality follows.
\end{proof}
In particular, Lemma \ref{lemma:bilineargivespairing} gives a pairing $$\DieuFor(H_1) \times \DieuFor(H_2) \rightarrow \DieuFor(H_1 \boxtimes^c H_2)$$ from the universal bilinear map $H_1 \times H_2 \rightarrow H_1 \boxtimes^c H_2.$ 

\begin{defn} \label{defn: tensorproductofdieudonne}
Let $M_1,M_2$ be $\mathcal F$-profinite modules over the Dieudonné ring $\mathcal R$. Let $\alpha: \mathcal{F} \otimes_{W(k)} (M_1 \hat{\otimes}_{W(k)} M_2) \rightarrow \mathcal{R} \otimes_{\mathcal V} (M_1 \hat{\otimes}_{W(k)} M_2)$ be the natural map. Denote by $\mathcal{R} \hat{\otimes}^{\mathbb{D}}_{\mathcal V} (M_1 \hat{\otimes}_{W(k)} M_2)$ the completion of $$\mathcal{R} \otimes_{\mathcal V} (M_1 \hat{\otimes}_{W(k)} M_2)$$ along the $\mathcal F$-submodules generated by \begin{equation}\label{eq:boxproductforFprofinites} \alpha(I \otimes_{W(k)} (M_1 \hat{\otimes}_{W(k)} M_2) +  \mathcal F \otimes_{W(k)} N) \subset \mathcal F \otimes_{W(k)} (M_1 \hat{\otimes}_{W(k)} M_2) \end{equation} where $N \subset M_1 \hat{\otimes}_{W(k)} M_2$ ranges over a basis of neighborhoods of $0$ defining the topology on $M_1 \hat{\otimes}_{W(k)} M_2,$ and $I \subset \mathcal F$ ranges over all $\mathcal F$-ideals such that $\mathcal F/I$ has finite length as a $W(k)$-module. 

Then
\[
M_1 \boxtimes M_2 = \mathcal{R} \hat{\otimes}^\mathbb{D}_{\mathcal V} (M_1 \hat{\otimes}_{W(k)} M_2)/\bar K,
\]
where $\bar K$ is the closure of the Dieudonné submodule $K$ generated by $$F \otimes x \otimes Vy- 1 \otimes Fx \otimes y, F \otimes Vx \otimes y - 1 \otimes x \otimes Fy, \quad x \in M_1, y \in M_2.$$ We define $M_1 \boxtimes^c M_2$ to be the connected part of $M_1 \boxtimes M_2.$ 
\end{defn} 
\begin{remark} \label{rmk:connectedofDieudonne} 
If we restrict $I$ in \eqref{eq:boxproductforFprofinites} to the ideals $(F^n)$ for $n \geq 0$ and denote the resulting $\mathcal R$-module by $\mathcal{R} \hat{\otimes}^F_{\mathcal V} (M_1 \hat{\otimes}_{W(k)} M_2)$, then
\[
M_1 \boxtimes^c M_2 \cong \mathcal{R} \hat{\otimes}^F_{\mathcal V} (M_1 \hat{\otimes}_{W(k)} M_2)/\bar K
\]
for the same $K$, with closure taken inside the new enclosure.
\end{remark}

We see that $M_1 \boxtimes M_2$ corepresents the functor taking $K \in \DDMod{k}$ to the set of Dieudonné pairings $M_1 \times M_2 \rightarrow K,$ and it is clear that $M_1 \boxtimes^c M_2$ corepresents its restriction to connected Dieudonné modules. Given two formal connected Hopf algebras $H_1,H_2,$ we thus have a a natural map $\DieuFor(H_1) \boxtimes^c \DieuFor(H_2) \rightarrow \DieuFor(H_1 \boxtimes^c H_2)$ in $\DDMod{k}$ induced from the Dieudonné pairing $$\DieuFor(H_1) \times \DieuFor(H_2) \rightarrow \DieuFor(H_1 \boxtimes^c H_2)$$ constructed in Lemma \ref{lemma:bilineargivespairing}. 
\begin{prop} \label{prop:Dieudonnemonoidal}
Let $H_1,H_2 \in \FHopfc.$ Then the natural map $$\gamma: \DieuFor(H_1) \boxtimes^c \DieuFor(H_2) \rightarrow \DieuFor(H_1 \boxtimes^c H_2)$$ is an isomorphism.
\end{prop}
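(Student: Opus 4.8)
The plan is to prove that $\gamma$ is an isomorphism by reducing, via corepresentability and naturality, to the smooth case treated by Cor.~\ref{cor:dieuboxforverschiebunglift}. By Def.~\ref{defn: tensorproductofdieudonne} the source $\DieuFor(H_1)\boxtimes^c\DieuFor(H_2)$ corepresents $N\mapsto\DieuP(\DieuFor(H_1),\DieuFor(H_2);N)$ on connected modules. Since $\DieuFor$ is a (covariant) equivalence and $\boxtimes^c$ on $\FHopfc$ corepresents bilinear maps, the target satisfies
\[
\Hom_{\DDModFc{k}}(\DieuFor(H_1\boxtimes^c H_2),\DieuFor(H'))\cong\Hom_{\FHopfc}(H_1\boxtimes^c H_2,H')\cong\Bil(H_1,H_2;H').
\]
Under these identifications $\gamma$ is induced by the assignment $f\mapsto\mu_f$ of Lemma~\ref{lemma:bilineargivespairing}. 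Thus it suffices to show that $f\mapsto\mu_f$ is a bijection from bilinear maps $H_1\otimes H_2\to H'$ to Dieudonné pairings $\DieuFor(H_1)\times\DieuFor(H_2)\to\DieuFor(H')$, naturally in $H'$.

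First I would treat the smooth case $H_i=k\otimes\tilde H_i$ with $\tilde H_i\in\mathcal H_V$, so that $\DieuFor(H_i)\cong\mathcal R\hat{\otimes}^F_{\mathcal V}Q\tilde H_i=:E(Q\tilde H_i)$ by Thm.~\ref{thm:indecomposablesdieudonne}. For $M\in\mathcal M_V$ the induced module $E(M)$ is topologically $\mathcal F$-generated by the image of $1\otimes M$, and restriction to this submodule identifies $\Hom_{\DDModFc{k}}(E(M),N)$ with the continuous $\mathcal V$-linear maps $M\to N$. I would first establish the purely algebraic identity $E(M_1)\boxtimes^c E(M_2)\cong E(M_1\hat{\otimes}_{W(k)}M_2)$ by computing $\DieuP(E(M_1),E(M_2);N)$ for connected $N$: the pairing axioms $\varphi(Fx,y)=F\varphi(x,Vy)$ and $\varphi(x,Fy)=F\varphi(Vx,y)$, together with $FV=VF=p$, force any Dieudonné pairing to be determined by its restriction to $M_1\times M_2$, and conversely every continuous $\mathcal V$-bilinear map extends—the extension converging because $F$ is topologically nilpotent on the connected module $N$. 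The axiom $\varphi(Vx,Vy)=V\varphi(x,y)$ corresponds to $\mathcal V$-linearity with respect to the diagonal $V$ on $M_1\hat{\otimes}_{W(k)}M_2$. Hence
\[
\DieuP(E(M_1),E(M_2);N)\cong\Hom^c_{\mathcal V}(M_1\hat{\otimes}_{W(k)}M_2,N)\cong\Hom_{\DDModFc{k}}(E(M_1\hat{\otimes}_{W(k)}M_2),N),
\]
and Yoneda gives the claim. Comparing with Cor.~\ref{cor:dieuboxforverschiebunglift}, which identifies $\DieuFor(H_1\boxtimes^c H_2)\cong E(Q\tilde H_1\hat{\otimes}_{W(k)}Q\tilde H_2)$, and checking that $f\mapsto\mu_f$ realizes this identification on the generating submodule (using Cor.~\ref{cor:goersscorollary} and the construction of $\iota$) shows $\gamma$ is an isomorphism when $H_i\in\mathcal H_V$.

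Next I would pass to general $H_1,H_2\in\FHopfc$ by resolution. Every $N\in\DDModFc{k}$ admits a presentation $E(M_1)\to E(M_0)\to N\to0$ by induced modules: choosing a topologically free $\mathcal V$-module $M_0$ surjecting onto $N/FN$ yields, by $F$-adic completeness of $N$ and topological Nakayama, a surjection $E(M_0)\twoheadrightarrow N$, and iterating on the kernel produces $M_1$. Through the equivalence $\DieuFor$ (Thm.~\ref{thm:dieudonneforformalgroups}) this exhibits any $H\in\FHopfc$ as the cokernel of a map of smooth Hopf algebras arising from $\mathcal H_V$. Both monoidal structures are right exact in each variable: $-\boxtimes^c M_2$ and $-\boxtimes^c H_2$ are left adjoints (corepresenting Dieudonné pairings, respectively bilinear maps, in the first slot) and hence preserve cokernels, while $\DieuFor$ preserves cokernels as an equivalence. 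Feeding these presentations into both variables and invoking the naturality of $\gamma$, the smooth case of the previous step and the five lemma force $\gamma$ to be an isomorphism in general.

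The hard part will be the smooth case, and within it the verification that the abstract corepresenting isomorphism really coincides with the geometrically defined $\gamma$. Since $\gamma$ is built from $f\mapsto\mu_f$, which factors through $\iota\colon C_k\to C_k\boxtimes^c C_k$ and the base-change isomorphism of Lemma~\ref{lemma:boxtimescommuteswithbasechange}, I must track the defining element $1\otimes\eta\otimes\eta$ through Cor.~\ref{cor:dieuboxforverschiebunglift} and confirm that it corresponds to the class of $\eta\hat{\otimes}\eta$ in $QC_k\hat{\otimes}_{W(k)}QC_k$, so that $\gamma$ is the canonical map on the generating submodule $M_1\hat{\otimes}_{W(k)}M_2$. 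A secondary care point is that the right exactness of $\boxtimes^c$ on Dieudonné modules must be checked against the $F$-adic completions in Def.~\ref{defn: tensorproductofdieudonne}; because those completions are taken along a cofinal system of $\mathcal F$-submodules and the quotient by $\bar K$ is formed before completing, the left-adjoint (cocontinuity) property should survive, but confirming that the passage to $\bar K$ and the completion commute with the relevant colimits is the delicate bookkeeping step.
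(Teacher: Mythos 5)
Your proposal is correct and follows essentially the same route as the paper's proof: both reduce, via right exactness of $\boxtimes^c$ and presentations by ``free'' objects, to the case where $H_1$ and $H_2$ are base changes of objects of $\mathcal H_V$, and both then identify source and target with $\mathcal{R}\hat{\otimes}^F_{\mathcal V}(Q(H_1)\hat{\otimes}_{W(k)}Q(H_2))$ using Thm.~\ref{thm:indecomposablesdieudonne} and Cor.~\ref{cor:dieuboxforverschiebunglift}. The only cosmetic differences are that the paper constructs its presentation on the Hopf-algebra side (as a quotient of sums of $C_k$, which generates $\FHopfc$) rather than by topological Nakayama on the module side, and it settles the smooth case by writing down the explicit isomorphism $F^i\otimes(F^j\otimes h_1)\otimes(F^k\otimes h_2)\mapsto F^{i+j+k}\otimes V^k(h_1)\otimes V^j(h_2)$ rather than by your Yoneda computation of the functor of Dieudonn\'e pairings.
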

\begin{proof}
Note that $C_k$ generates the category of connected formal Hopf algebras, since the formal group scheme $CW^c_k$ is a cogenerator of the category of connected formal groups (this is used in the proof of Thm.~\ref{thm:dieudonneforformalgroups}). 
Thus $H_1$ has a presentation
\[
C_k^\alpha \rightarrow C_k^\beta \rightarrow H_1 \rightarrow 0,
\]
where $C_k^\alpha$ denotes the $\alpha$-fold sum in $\FHopfc$, which is the completed tensor product $\hat\otimes_k$, for a possibly infinite cardinal $\alpha$.
Since $\boxtimes^c$ is right exact and distributes over $\hat\otimes$, we have an exact sequence
\[
(C_k \boxtimes^c H_2)^\alpha \rightarrow (C_k \boxtimes^c H_2)^\beta \rightarrow H_1 \boxtimes^c H_2 \rightarrow 0.
\]
One then easily sees that it is enough to prove that the natural map $$\DieuFor(H_1) \boxtimes^c \DieuFor(H_2) \rightarrow \DieuFor(H_1 \boxtimes^c H_2)$$ is an isomorphism when $H_1$ is the base change to $k$ of a formal connected Hopf algebra with a Verschiebung lift, such as $C_k$. By symmetry, we can make the same assumption about $H_2$. By Thm.~\ref{thm:indecomposablesdieudonne}, $$\DieuFor(H_1 \boxtimes^c H_2) \cong \mathcal{R} \hat{\otimes}^F_{\mathcal V} (Q(H_1) \hat{\otimes}_{W(k)} Q(H_2)).$$  With notation as in Remark \ref{rmk:connectedofDieudonne}, we further see that $\DieuFor(H_1) \boxtimes^c \DieuFor(H_2)$ is isomorphic to
\[
\mathcal{R} \hat{\otimes}^F_{\mathcal V} (\mathcal{R} \hat{\otimes}^F_{\mathcal V} Q(H_1) ) \hat{\otimes}_{W(k)} (\mathcal{R} \hat{\otimes}^F_{\mathcal V} Q(H_2) )/\bar K.
\]
This Dieudonné module is isomorphic to $$\mathcal{R}  \hat{\otimes}^F_{\mathcal V} (Q(H_1) \hat{\otimes}_{W(k)} Q(H_2)),$$ by the map taking $F^i \otimes (F^j \otimes h_1) \otimes (F^k \otimes h_2)$ to $$F^{i+j+k} \otimes V^k(h_1) \otimes V^j(h_2).$$ A quick computation shows that this is precisely the map $\gamma.$  
\end{proof} 
\begin{example} \label{ex:apboxap}
Let $\alpha_{p}$ be as in Lemma~\ref{lemma:connectedtensorconnectednotconnected}. Then $\Reg{\alpha_p} \boxtimes^c \Reg{\alpha_p} \cong \Reg{\mathbb{G}^c_a},$ where $\mathbb{G}^c_a$ is the formal group defined by $$\mathbb{G}^c_a(A) = (\Nil(A),+).$$ To prove this isomorphism, we use Prop.~\ref{prop:Dieudonnemonoidal} to calculate $\DieuFor(\Reg{\alpha_p}) \boxtimes \DieuFor(\Reg{\alpha_p}).$ As is well-known and easy to show, $\DieuFor(\Reg{\alpha_p})= k,$ where $F=0=V$. Furthermore, we see that $$\DieuFor(\Reg{\alpha_p}) \boxtimes^c \DieuFor(\Reg{\alpha_p}) \cong k\pow{F}.$$ Since $$\DieuFor(\Reg{\mathbb{G}^c_a} ) = \varprojlim_n \DieuFor(\Reg{\alpha_{p^n}}) = k\pow{F},$$ the fact that the Dieudonné correspondence is an equivalence of categories gives that $$\Reg{\alpha_p} \boxtimes^c \Reg{\alpha_p} \cong \Reg{\mathbb{G}^c_a},$$ as claimed. Note that this example shows that in general, the tensor product of two finite formal Hopf algebras need not be finite.
\end{example}
We end this section by noting that by Matlis duality, $\DMod{k}$ gets a monoidal structure which we denote by $\boxast.$ For $K$, $L \in \DMod{k}$, define
\[
K \boxast L = I^c(I(K) \boxtimes I( L)).
\]
This equips the category $\DMod{k}$ with a symmetric monoidal structure with unit $CW(k).$ We denote by $K \boxast^u L$ the maximal unipotent Dieudonné submodule of $K \boxast L.$ Note that $K \boxast^u L = I^c(I(K) \boxtimes^c I(L)).$ In the following lemma, recall that if $K,L \in \DMod{k},$ then we write $K*L$ for $\Tor^1_{W(k)}(K,L).$ 
\begin{lemma} \label{lemma:boxast}
The symmetric monoidal structure $\boxast$ agrees with the one defined in the introduction.
\end{lemma}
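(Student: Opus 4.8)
The plan is to unwind the definition $K \boxast L = I^c(I(K) \boxtimes I(L))$ using the explicit formula for $\boxtimes$ in Def.~\ref{defn: tensorproductofdieudonne} together with the monoidal properties of Matlis duality (Prop.~\ref{prop:matlismonoidal}), and then to match the outcome term by term with the description in the introduction. Write $M_1 = I(K)$ and $M_2 = I(L)$; these are $\mathcal F$-profinite $\mathcal R$-modules, and by Prop.~\ref{prop:matlismonoidal}\eqref{matlismon:tensortor} there is a natural isomorphism $M_1 \hat\otimes_{W(k)} M_2 \cong I(K * L)$ of pseudocompact $W(k)$-modules.

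First I fix the dictionary between operators under $I$. By the formulas for the $\mathcal R$-action on a Matlis dual recalled in Section~\ref{sec:duality}, the operator $F$ on $I(K)$ is (up to the Frobenius twist on $CW(k)$) precomposition with $V$ on $K$, and $V$ on $I(K)$ is precomposition with $F$ on $K$; likewise for $L$. Consequently the diagonal $\mathcal V$-action on $M_1 \hat\otimes M_2$ used to form $\mathcal R \otimes_{\mathcal V}(M_1 \hat\otimes M_2)$ corresponds, under $M_1 \hat\otimes M_2 \cong I(K * L)$, exactly to the diagonal $\mathcal F$-action $F \mapsto F * F$ on $K * L$, i.e.\ to the $\mathcal F$-module structure on $K * L$ appearing in the introduction.

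The core of the argument is the duality identity
\[
I^c\bigl(\mathcal R \hat\otimes^{\mathbb D}_{\mathcal V} N\bigr) \cong \Hom_{\mathcal F}(\mathcal R, I^c N)
\]
of left $\mathcal R$-modules, for $N = M_1 \hat\otimes M_2$. This is the Matlis dual of extension of scalars along $\mathcal V \hookrightarrow \mathcal R$: contravariance of $I^c$ turns the induction $\mathcal R \otimes_{\mathcal V}(-)$ into the coinduction $\Hom(\mathcal R, -)$, while the fact that $I$ interchanges $F$ and $V$—formally, the anti-automorphism $\sigma$ of $\mathcal R$ fixing $W(k)$ with $\sigma(F) = V$ and $\sigma(V) = F$ sends $\mathcal V$ to $\mathcal F$—makes the $\Hom$ be taken over $\mathcal F$. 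The completion $\hat\otimes^{\mathbb D}_{\mathcal V}$ dualizes to a continuity requirement on the maps, but since $K * L$ is a torsion $W(k)$-module and such an $f$ is determined by finitely much data once the conditions below are imposed, this reproduces precisely the module $\Hom_{\mathcal F}(\mathcal R, K * L)$ of the introduction. Applying $I^c$ to the quotient by $\bar K$ in Def.~\ref{defn: tensorproductofdieudonne} then cuts out the submodule of those $f$ that annihilate the generators of $\bar K$.

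It remains to translate the two families of relations through this dictionary. In $F \otimes x \otimes Vy - 1 \otimes Fx \otimes y$ (with $x \in M_1$, $y \in M_2$), the $V$ applied to $y$ is $F$ on $L$, contributing the operator $1 * F$; the $F$ applied to $x$ is $V$ on $K$, contributing $V * 1$; and, after the $\sigma$-twist, left multiplication by the outer $\mathcal R$-factor $F$ versus $1$ becomes the shift $r \mapsto Vr$ versus $r$ in the argument of $f$. Dualizing thus yields exactly $(1 * F) f(Vr) = (V * 1) f(r)$, and symmetrically $F \otimes Vx \otimes y - 1 \otimes x \otimes Fy$ yields $(F * 1) f(Vr) = (1 * V) f(r)$. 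These are the two defining conditions of $K \boxast L$ in the introduction, so the two monoidal structures agree. The main obstacle is precisely this bookkeeping in the duality identity: tracking which of $F,V$ on which tensor factor corresponds to which operator after applying $I^c$—including the semilinear twists on $CW(k)$ and the role of $\sigma$—and verifying that $\hat\otimes^{\mathbb D}_{\mathcal V}$ dualizes to $\Hom_{\mathcal F}(\mathcal R, -)$ with no spurious continuity constraint; once the dictionary is fixed, the comparison of relations is immediate.
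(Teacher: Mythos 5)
Your proof is correct and follows exactly the paper's own route: the paper's proof is the one-line remark that the lemma ``follows from Proposition~\ref{prop:matlismonoidal} and the definition of $\boxtimes$ by straightforward dualization,'' and your argument is precisely that dualization carried out in detail (identifying $I(K)\hat\otimes I(L)\cong I(K*L)$, dualizing $\mathcal V$-induction to $\mathcal F$-coinduction via the $F\leftrightarrow V$ swap, and translating the relations of Def.~\ref{defn: tensorproductofdieudonne} into the two conditions of the introduction). The operator bookkeeping, including the role of the anti-automorphism exchanging $F$ and $V$, comes out right, so nothing further is needed.
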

\begin{proof}
This follows from Proposition ~\ref{prop:matlismonoidal} and the definition of $\boxtimes$ by straightforward dualization.
\end{proof}
Note that there is a canonical projection $\pi\colon K \boxast L \to K * L$ given by evaluation at $1 \in \mathcal R$. If $F$ acts as an isomorphism on $K$ then $\pi$ is in fact an isomorphism since by the second condition, $f(V^i) = (F^{-i} * V^i)f(1)$ has to hold, showing uniqueness, and
\[
(V * 1)f(r)=(pF^{-1} * 1)f(r) = (F^{-1} * 1)f(pr) = (1 * F)f(Vr),
\]
shows that the first condition holds if this is taken as a definition for $f$.

\subsection{The étale part of $\boxtimes$ for two connected formal Hopf algebras}
Let $H_1,H_2$ be connected formal Hopf algebras. To describe the étale part of $\Spf{H_1 \boxtimes H_2}$ is the same as to describe the Galois module structure of $$\Spf{(H_1 \boxtimes H_2)}(\bar{k}) = \colim_{k'}  \Hom_{\Alg_k}^c(H_1 \boxtimes H_2 , k')$$ where $k'$ ranges over the finite extensions of $k.$ The goal of this section is to find a description of $\Spf{(H_1 \boxtimes H_2)}(\bar{k})$ in terms of the Dieudonné modules of $H_1$ and $H_2.$ 
The following definition is the profinite version of \cite[Def.~4.5]{buchstaber-lazarev:dieudonne}.

\begin{defn} \label{def:bilinearpairing}
Let $H_1,H_2$ be formal connected Hopf algebras and $A$ be a finite length $k$-algebra. A $k$-linear map $\varphi\colon H_1 \hat{\otimes}_k H_2 \rightarrow A$ is a bilinear pairing if the following holds for any $a,a_1,a_2 \in H_1, b,b_1,b_2 \in H_2$ (using Sweedler notation):
\begin{itemize}
\item $ \varphi(a,b_1b_2) = \sum_{(a)} \varphi(a',b_1) \varphi(a'',b_2)$;
\item $\varphi(a_1a_2,b) = \sum_{(b)} \varphi(a_1,b')\varphi(a_2,b'')$ ;
\item $\varphi(1,b) = \epsilon_{H_2}(b)$;
\item $\varphi(a,1) = \epsilon_{H_1}(a).$ 
\end{itemize}
\end{defn}
Denote by $\Bil(H_1,H_2;A)$ the abelian group of bilinear pairings $H_1 \hat{\otimes}_k H_2 \rightarrow A$ and set $$\Bil(H_1,H_2;\bar{k}) = \colim_{k'} \Bil(H_1,H_2;k')$$ where $k'$ ranges through the finite extensions of $k$. Note that $\Bil(H_1,H_2;\bar{k})$ is a $\Gamma$-module in a natural way.

\begin{lemma} \label{lemma:bilinearrationalpoints}
Let $H_1$, $H_2$ be formal Hopf algebras. Then for any finite length $k$-algebra $A,$ there is an isomorphism $$\Bil(H_1,H_2;A) \cong \Spf{(H_1 \boxtimes H_2)}(A).$$ 
\end{lemma}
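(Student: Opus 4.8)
The plan is to read the bijection directly off the explicit presentation of $H_1 \boxtimes H_2$ recorded in Remark~\ref{rmk:structureoftensor}, combined with the universal property of the completed symmetric algebra. By definition $\Spf{(H_1 \boxtimes H_2)}(A) = \Hom^c_{\Alg_k}(H_1 \boxtimes H_2, A)$, so the task is to identify the continuous $k$-algebra homomorphisms out of $H_1 \boxtimes H_2$ into the discrete, finite length algebra $A$. Remark~\ref{rmk:structureoftensor} presents $H_1 \boxtimes H_2$ as the quotient of $\hat{S}_*(H_1 \hat{\otimes}_k H_2)$ by the closure $\bar I$ of the ideal generated by the four displayed families of elements. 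Hence a continuous algebra map $H_1 \boxtimes H_2 \to A$ is the same datum as a continuous algebra map $\phi\colon \hat{S}_*(H_1 \hat{\otimes}_k H_2) \to A$ that annihilates $\bar I$.

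First I would invoke the universal property of the completed symmetric algebra (\cite[Exposé VII, 1.2.5]{Gille:SGA3}): precomposition with the canonical inclusion $j\colon H_1 \hat{\otimes}_k H_2 \to \hat{S}_*(H_1 \hat{\otimes}_k H_2)$ identifies continuous $k$-algebra homomorphisms $\phi\colon \hat{S}_*(H_1 \hat{\otimes}_k H_2) \to A$ with continuous $k$-linear maps $\varphi = \phi\circ j\colon H_1 \hat{\otimes}_k H_2 \to A$. Under this correspondence, $\phi$ annihilates the generators of $\bar I$ precisely when $\varphi$ satisfies the four identities obtained by applying $\varphi$ to those generators, namely
\[
\varphi(a_1a_2,b) = \sum_{(b)}\varphi(a_1,b')\varphi(a_2,b''), \qquad \varphi(a,b_1b_2) = \sum_{(a)}\varphi(a',b_1)\varphi(a'',b_2),
\]
together with $\varphi(a,1) = \epsilon_{H_1}(a)$ and $\varphi(1,b) = \epsilon_{H_2}(b)$. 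These are exactly the defining conditions of a bilinear pairing in Definition~\ref{def:bilinearpairing}, so that $\phi$ descends to $H_1\boxtimes H_2$ if and only if $\varphi \in \Bil(H_1,H_2;A)$; conversely every bilinear pairing is continuous and extends uniquely to such a $\phi$. This produces the desired bijection $\Spf{(H_1 \boxtimes H_2)}(A) \cong \Bil(H_1,H_2;A)$.

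Two routine verifications remain. Since $A$ is of finite length and discrete, every continuous $\phi$ factors through an open (finite length) quotient of $\hat{S}_*(H_1 \hat{\otimes}_k H_2)$, whose kernel is closed; this is what lets me pass freely between the ideal generated by the four families and its closure $\bar I$. I would also record that the correspondence is natural in $A$, which upgrades the pointwise bijection to an isomorphism of functors, and that it intertwines the abelian group structures — the one on $\Spf{(H_1 \boxtimes H_2)}(A)$ induced by the coaddition of the formal Hopf algebra $H_1 \boxtimes H_2$, the one on $\Bil(H_1,H_2;A)$ by the corresponding convolution of pairings — so that the map is an isomorphism of abelian groups (and, in the limit over finite extensions $k'$, of $\Gamma$-modules). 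The only genuinely delicate point is pinning down the precise form of the universal property of $\hat{S}_*$ in the pseudocompact setting and the attendant continuity bookkeeping; granting Remark~\ref{rmk:structureoftensor} and \cite[Exposé VII]{Gille:SGA3}, everything else is formal. I note finally that the argument uses nothing about connectedness of $H_1$, $H_2$ and applies to arbitrary formal Hopf algebras.
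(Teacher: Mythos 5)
Your proposal is correct and follows exactly the route of the paper's own (much terser) proof: both read the bijection off the explicit presentation of $H_1 \boxtimes H_2$ in Remark~\ref{rmk:structureoftensor} as the completed symmetric algebra on $H_1 \hat{\otimes}_k H_2$ modulo the closure of the ideal encoding precisely the relations of Definition~\ref{def:bilinearpairing}. Your extra bookkeeping (universal property of $\hat{S}_*$, passing from the ideal to its closure using that $A$ is discrete of finite length, naturality in $A$) is exactly the content the paper leaves implicit behind the phrase ``follows readily.''
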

\begin{proof}
This follows readily from Remark \ref{rmk:structureoftensor}, since $H_1 \boxtimes H_2$ is the profinite completion of the symmetric algebra on $H_1 \hat{\otimes}_k H_2$ modulo the closure of the relations given in Definition \ref{def:bilinearpairing}. 
\end{proof}

Note that Lemma \ref{lemma:bilinearrationalpoints} implies that $\Bil(H_1,H_2;\bar{k}) \cong \Spf{(H_1 \boxtimes H_2)}(\bar{k}).$ 
\begin{lemma} \label{lemma:bilinearfactors}
Let $k'$ be a finite extension of $k,$ $H_1$ and $H_2$ connected formal Hopf algebras with Verschiebung $V_{1},V_{2}$, respectively. Then any bilinear pairing $\varphi\colon H_1 \hat{\otimes}_k H_2 \rightarrow k'$  factors through $H_1 /V^n_1 \hat{\otimes}_k H_2/V^m_2$ for some $m,n \geq 0,$ where we are taking the quotient in the category of Hopf algebras. 
\end{lemma}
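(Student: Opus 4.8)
The plan is to turn a bilinear pairing into a homomorphism of Hopf algebras landing in the Cartier dual of $G_2$, and then to read off the Verschiebung bound from the fact that this dual is unipotent. Since the target $k'$ is a field, a pairing $\varphi\colon H_1\hat\otimes_k H_2\to k'$ is the same datum as a $k'$-bilinear pairing of the base-changed Hopf algebras $H_1\otimes_k k'$, $H_2\otimes_k k'$; as the Verschiebung and the quotients $H_i/V_i^\bullet$ commute with base change, I may assume $k=k'$ throughout, the resulting factorization descending to $k$ by flatness.

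First I would fix the second variable and pass to the adjoint $\psi\colon H_1\to\Hom_k(H_2,k)$, $\psi(a)=\varphi(a,-)$, where $\Hom_k(H_2,k)$ carries the convolution product. The four conditions of Def.~\ref{def:bilinearpairing} translate into $\psi$ being a morphism of bialgebras: condition (2) is multiplicativity of $\psi$ for convolution; conditions (1) and (3) say that the coproduct of $\psi(a)$ dual to the multiplication of $H_2$ equals $\sum_{(a)}\psi(a')\otimes\psi(a'')$, i.e. $\psi$ is comultiplicative; condition (4) handles units. The relevant target is the continuous dual $\Hom^c_k(H_2,k)=\Reg{G_2^*}$, the Hopf algebra of the Cartier dual of $G_2=\Spf{H_2}$; since $G_2$ is a connected formal group, $G_2^*$ is a unipotent affine group by the splitting \eqref{eq:antiequivalenceswithsplitting}, so $\Reg{G_2^*}$ is a (commutative) unipotent Hopf algebra.

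Next I would extract finiteness and unipotence. As $\varphi$ is continuous and $k'$ has finite length, $\varphi$ factors through $(H_1/I_1)\otimes_k(H_2/I_2)$ for suitable open ideals $I_1,I_2$, so $\psi$ lands in the finite-dimensional subspace $\Hom_k(H_2/I_2,k)\subset\Reg{G_2^*}$. Being a Hopf algebra map, $B:=\operatorname{im}\psi$ is then a \emph{finite-dimensional} sub-Hopf-algebra of $\Reg{G_2^*}$, while simultaneously $B\cong H_1/\ker\psi$ is a finite-dimensional quotient Hopf algebra of $H_1$. Let $\bar G_1$ be the finite group scheme with $\Reg{\bar G_1}=B$. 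The surjection $H_1\twoheadrightarrow B$ realizes $\bar G_1$ as a subgroup of $G_1=\Spf{H_1}$, and the inclusion $B\hookrightarrow\Reg{G_2^*}$ realizes $\bar G_1$ as a quotient group of $G_2^*$; since a quotient of a unipotent group is unipotent, $\bar G_1$ is finite unipotent. Hence its Verschiebung is nilpotent, $V_{\bar G_1}^n=0$ for some $n\ge 1$, and $V$-equivariance of $\bar G_1\hookrightarrow G_1$ gives $\bar G_1\subseteq G_1[V_1^n]=\ker(V_1^n\colon G_1\to G_1)$. Therefore $\psi$ factors through the Hopf-algebra quotient $H_1/V_1^n:=\Reg{G_1[V_1^n]}$, i.e. $\varphi$ factors through $H_1/V_1^n$ in its first variable; running the symmetric argument produces $m$ with factoring through $H_2/V_2^m$, and combining the two gives the claim.

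\textbf{Main obstacle.} The delicate points are the dictionary of Def.~\ref{def:bilinearpairing} into ``morphism of Hopf algebras'' (keeping straight which structure on $\Reg{G_2^*}$ is dual to which on $H_2$) and the bookkeeping of the sub-versus-quotient correspondence under Cartier duality, which is what deposits unipotence on $\bar G_1$ rather than merely on $G_2^*$. The other point that genuinely needs care is deducing a \emph{single} exponent $n$ rather than only local nilpotence of $V$: this is exactly where the finiteness of $B$, coming from continuity of $\varphi$, is indispensable.
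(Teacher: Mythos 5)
Your proof is correct, and it takes a genuinely different route from the paper's. The paper argues directly on the pairing: it writes $H_i \cong k\pow{W_i}/I_i$ using Fontaine's structure theorem for connected formal Hopf algebras, uses continuity to see that $\ker\varphi$ contains an ideal of the form $(W_1)^{p^n}\hat\otimes_k H_2 + H_1\hat\otimes_k (W_2)^{p^m}$, and then applies the identity $\varphi(x,V_2^n(y))^{p^n}=\varphi(x^{p^n},y)$ together with reducedness of $k'$ to conclude that $\varphi$ vanishes on $H_1\hat\otimes_k V_2^n(W_2)$, and symmetrically in the other variable. You instead dualize in one variable: $\psi$ is a Hopf algebra map into $\Reg{G_2^*}$, continuity makes its image $B$ a finite-dimensional Hopf algebra, and $B$ is simultaneously a quotient of $H_1$ and a subcoalgebra of the conilpotent Hopf algebra $\Reg{G_2^*}$, hence finite unipotent with nilpotent Verschiebung; functoriality of the Verschiebung then forces $\Spec B\subseteq G_1[V_1^n]$. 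Both arguments extract the single exponent from continuity, and both use connectedness of each factor, but in transposed ways: for you, connectedness of $G_2$ (through unipotence of $G_2^*$) bounds the exponent in the \emph{first} variable, while for the paper the power-series presentation of $H_1$ bounds the exponent in the \emph{second}. Your route buys conceptual clarity: it exhibits the lemma as pure structure theory (Cartier duality connected $\leftrightarrow$ unipotent, nilpotence of $V$ on finite unipotent groups) and makes visible why the statement must fail for non-reduced targets $A$ — the image of the $A$-linear extension of $\psi$ need not be a flat $A$-Hopf algebra, so the group-theoretic step collapses — which is exactly the phenomenon behind Lemma~\ref{lemma:connectedtensorconnectednotconnected}. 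The paper's computation is more elementary and self-contained, with reducedness entering transparently as extraction of $p^n$th roots. Two small remarks: your dictionary interchanges the roles of conditions (3) and (4) of Def.~\ref{def:bilinearpairing} (condition (3), $\varphi(1,b)=\epsilon_{H_2}(b)$, is unitality $\psi(1)=1$, while condition (4) is counitality $\epsilon\circ\psi=\epsilon_{H_1}$), which is harmless; and your reading of the quotient, $H_1/V_1^n=\Reg{G_1[V_1^n]}$ with $G_1[V_1^n]$ the kernel of the $n$th Verschiebung of the formal group, is indeed the one the paper intends, as the phrase ``$[V^m]$ denotes the kernel of the $m$th Verschiebung'' in Cor.~\ref{cor:tensorcharp} confirms.
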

\begin{proof}
By \cite[\textsection I.9, Théorème 1]{fontaine:groupes-divisibles}, for $i=1,\;2$, $H_i \cong k\pow{W_i}/I_i$ for some $k$-vector spaces $W_i$ and closed ideals $I_i$ contained in the augmentation ideals $(W_i)$.

By continuity, $\ker \varphi$  must contain an open ideal of $H_1 \hat{\otimes}_k H_2.$ Any ideal of $H_1 \hat{\otimes}_k H_2$ contains a subideal of the form $(W_1)^{p^n} \otimes H_2 + H_1 \otimes (W_2)^{p^m}$ for some $n,m \geq 0.$ 

By the definition of bilinear pairings,
\[
\varphi(x,V^n_{2}(y))^{p^n} = \varphi(x^{p^n},y).
\]
Thus if $x \in (W_1)$ then $\varphi(x,V^n_{2}(y))^{p^n}=0$ and hence, since $k'$ is reduced, $\varphi(x,V^n_{2}(y)) = 0.$ If on the other hand $x \in k$ then $\varphi(x,V^n_2(y)) = x \epsilon_{H_2}(V^n_2(y)) = 0$ if $y \in (W_2)$. Thus $\varphi$ vanishes on $H_1 \hat\otimes_k V_2^n(W_2)$. Symmetrically, $\varphi$ vanishes an $V_1^m(W_1) \hat\otimes_k H_2$, concluding the proof.

\end{proof} 
\begin{corollary} \label{cor:étalefactors}
Let $H_1$, $H_2$ be connected formal Hopf algebras. Then $$\Spf{(H_1 \boxtimes H_2)}(\bar{k}) = \colim_{m,n} \Spf{(H_1/V^m_{1} \boxtimes H_2/V^n_{2})}(\bar{k}).$$ 
\end{corollary}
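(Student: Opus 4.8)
The plan is to reduce everything to the bilinear-pairing description already established and then recognize the claimed colimit as a reorganization of the filtered colimit defining $\Bil(H_1,H_2;\bar k)$. By Lemma~\ref{lemma:bilinearrationalpoints} and the remark following it, I have natural identifications
\[
\Spf{(H_1 \boxtimes H_2)}(\bar k) \cong \Bil(H_1,H_2;\bar k) = \colim_{k'} \Bil(H_1,H_2;k'),
\]
and likewise $\Spf{(H_1/V^m_1 \boxtimes H_2/V^n_2)}(\bar k) \cong \Bil(H_1/V^m_1, H_2/V^n_2;\bar k)$, applying Lemma~\ref{lemma:bilinearrationalpoints} to the (still connected) quotient Hopf algebras $H_i/V^m_i$. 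So it suffices to prove the corresponding statement with $\Spf{(-)}(\bar k)$ replaced by $\Bil(-,-;\bar k)$ throughout.

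Next I would set up the filtered system. For $m \le m'$ and $n \le n'$, imposing $V^m=0$ is a stronger condition than $V^{m'}=0$, so there are surjections of Hopf algebras $H_1/V^{m'}_1 \twoheadrightarrow H_1/V^m_1$ and $H_2/V^{n'}_2 \twoheadrightarrow H_2/V^n_2$. Precomposition of pairings with these surjections is injective (the quotient maps being surjective) and turns $(m,n) \mapsto \Bil(H_1/V^m_1, H_2/V^n_2;\bar k)$ into a filtered system over the directed poset of pairs $(m,n)$, equipped with a compatible family of injections into $\Bil(H_1,H_2;\bar k)$. Since each such map into the fixed target is injective, the induced map
\[
\colim_{m,n} \Bil(H_1/V^m_1, H_2/V^n_2;\bar k) \longrightarrow \Bil(H_1,H_2;\bar k)
\]
is injective.

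Finally I would prove surjectivity, which is where Lemma~\ref{lemma:bilinearfactors} does the real work: any class in the target is represented by a bilinear pairing $\varphi\colon H_1 \hat\otimes_k H_2 \to k'$ for some finite extension $k'/k$, and the lemma guarantees that $\varphi$ factors through $H_1/V^m_1 \hat\otimes_k H_2/V^n_2$ for suitable $m,n$, hence lies in the image of $\Bil(H_1/V^m_1, H_2/V^n_2;k') \subseteq \Bil(H_1/V^m_1, H_2/V^n_2;\bar k)$. Combining injectivity and surjectivity gives the isomorphism for $\Bil$, and transporting along the identifications of the first paragraph yields the stated formula; since every map in sight is natural in $k'$, the isomorphism is automatically $\Gamma$-equivariant.

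I expect the only genuine subtlety to be bookkeeping rather than substance: confirming that the two quotient towers really assemble into a filtered system with the transition maps pointing in the right direction (larger $m$ giving a \emph{bigger} quotient Hopf algebra), and that the filtered colimit over $(m,n)$ commutes with the filtered colimit over finite extensions $k'$ implicit in $\Bil(-,-;\bar k)$ — which it does, both being filtered. The mathematical heart, namely that a pairing valued in a reduced ring must annihilate high Verschiebung powers, has already been extracted in Lemma~\ref{lemma:bilinearfactors}, so here it is only invoked.
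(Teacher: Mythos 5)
Your proof is correct and follows exactly the paper's argument: the paper's own proof of this corollary is the one-line observation that it "follows immediately by combining Lemmas~\ref{lemma:bilinearrationalpoints} and~\ref{lemma:bilinearfactors}", which is precisely the reduction to bilinear pairings plus the factorization through $H_1/V^m_1 \hat\otimes_k H_2/V^n_2$ that you carry out. Your write-up merely makes explicit the bookkeeping (directedness of the quotient tower, injectivity of precomposition along surjections, commuting the two filtered colimits) that the paper leaves implicit.
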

\begin{proof}
This follows immediately by combining Lemmas~\ref{lemma:bilinearrationalpoints} and~\ref{lemma:bilinearfactors}.
\end{proof}
Using the above, we will now work toward giving a formula for $\Spf{(H_1 \boxtimes H_2)}(\bar{k}).$ 

In the category $\Hopf{k}$ of (bicommutative) Hopf algebras, the tensor product $\boxtimes_a$ exists by Thm.~\ref{thm:existence} end was studied in \cite{goerss:hopf-rings} and \cite{buchstaber-lazarev:dieudonne}. Given any Hopf algebra $H,$ profinite completion $\widehat{H}$ yields an object of $\FHopf.$ We denote by $\Reg{W^n_k}$ the Hopf algebra corepresenting the functor taking an algebra $A$ to its length $n$ Witt vectors. We let $\Reg{W^{n,c}_k}$ be the formal Hopf algebra corepresenting $W^{n,c} :=  \colim_m \ker (F_{W^n_k}^m:W^n_k \rightarrow W^n_k ).$ 
\begin{lemma} \label{lemma:completioncommbox}
Let $m,n \geq 0.$ Then $$(\Reg{W^{n}_k} \boxtimes_a \Reg{W^{m}_k})\;\widehat{}\; \cong \Reg{W^{n,c}_k} \boxtimes \Reg{W^{m,c}_k}$$ as formal Hopf algebras.
\end{lemma}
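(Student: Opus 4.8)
The plan is to show that both sides corepresent the same abelian-group-valued functor on finite-length $k$-algebras, namely the functor of bilinear pairings, and then to invoke the equivalence between formal groups and the functors of points they corepresent. First I would identify the functor of points of the left-hand side. By the (non-completed) analogue of the explicit description in Remark~\ref{rmk:structureoftensor}, the commutative tensor product $\Reg{W^n_k}\boxtimes_a\Reg{W^m_k}$ is, as an algebra, the symmetric algebra $S_*(\Reg{W^n_k}\otimes_k\Reg{W^m_k})$ modulo the bilinearity relations, so an algebra homomorphism into a $k$-algebra $A$ is exactly a bilinear pairing in the sense of Definition~\ref{def:bilinearpairing}. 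Since for finite-length $A$ every homomorphism out of $\Reg{W^n_k}\boxtimes_a\Reg{W^m_k}$ factors through a finite quotient, this yields a natural identification $\Spf\bigl((\Reg{W^n_k}\boxtimes_a\Reg{W^m_k})^{\wedge}\bigr)(A)\cong\Bil(\Reg{W^n_k},\Reg{W^m_k};A)$. On the other side, Lemma~\ref{lemma:bilinearrationalpoints} already gives $\Spf(\Reg{W^{n,c}_k}\boxtimes\Reg{W^{m,c}_k})(A)\cong\Bil(\Reg{W^{n,c}_k},\Reg{W^{m,c}_k};A)$.

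The heart of the proof is then to produce a natural isomorphism $\Bil(\Reg{W^n_k},\Reg{W^m_k};A)\cong\Bil(\Reg{W^{n,c}_k},\Reg{W^{m,c}_k};A)$. Here $\Reg{W^{n,c}_k}=k\pow{x_0,\dots,x_{n-1}}$ is the completion of $\Reg{W^n_k}=k[x_0,\dots,x_{n-1}]$ at its augmentation ideal $\mathfrak m_1$, and likewise $\Reg{W^{m,c}_k}=k\pow{y_0,\dots,y_{m-1}}$ completes $\Reg{W^m_k}=k[y_0,\dots,y_{m-1}]$ at $\mathfrak m_2$. Restriction along the dense inclusions $\Reg{W^n_k}\hookrightarrow\Reg{W^{n,c}_k}$ and $\Reg{W^m_k}\hookrightarrow\Reg{W^{m,c}_k}$ gives a map from the right-hand group to the left-hand one; it is injective because pairings on the completions are continuous and $\Reg{W^n_k}\otimes_k\Reg{W^m_k}$ is dense. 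For surjectivity I would adapt the argument of Lemma~\ref{lemma:bilinearfactors}: the Frobenius–Verschiebung relation $\varphi(z^{p^m},y)=\varphi(z,V_2^m y)^{p^m}$, together with the fact that the Verschiebung on the length-$m$ Witt Hopf algebra is the shift and hence satisfies $V_2^m=0$ on $\mathfrak m_2$, forces $\varphi(z^{p^m},y)=0$ for $z\in\mathfrak m_1$, $y\in\mathfrak m_2$. A short computation with the bilinearity relations (expanding $\varphi(z^{p^m}w,y)$ via the reduced coproduct of $y$) upgrades this to $\varphi(I\otimes\Reg{W^m_k})=0$ for the ideal $I=(x_0^{p^m},\dots,x_{n-1}^{p^m})$, and symmetrically $\varphi(\Reg{W^n_k}\otimes J)=0$ for $J=(y_0^{p^n},\dots,y_{m-1}^{p^n})$. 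Thus every bilinear pairing factors through the finite-length local quotients $\Reg{W^n_k}/I$ and $\Reg{W^m_k}/J$, which are quotients of $\Reg{W^{n,c}_k}$ and $\Reg{W^{m,c}_k}$; this supplies the continuous extension of $\varphi$ to the completions.

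Finally, since $\Bil(-,-;A)$ is abelian-group-valued (under pointwise addition of pairings) and the restriction isomorphisms are natural in $A$ and additive, they assemble into an isomorphism of the corepresented formal groups, hence of the formal Hopf algebras $(\Reg{W^n_k}\boxtimes_a\Reg{W^m_k})^{\wedge}$ and $\Reg{W^{n,c}_k}\boxtimes\Reg{W^{m,c}_k}$, which is what is claimed.

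I expect the surjectivity step to be the main obstacle: one must control \emph{honest} (non-complete) bilinear pairings of the polynomial Witt Hopf algebras into \emph{possibly non-reduced} finite-length algebras. The observation that makes this work, and that differs from the situation of Lemma~\ref{lemma:bilinearfactors} where reducedness of the target field was invoked, is that the nilpotency $V^m=0$ of the Verschiebung on truncated Witt vectors makes the relevant Frobenius-twisted values vanish outright, so no reducedness hypothesis on $A$ is needed and the pairing is automatically continuous for the augmentation-adic (i.e. connected) topology.
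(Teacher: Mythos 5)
Your proof is correct and follows essentially the same route as the paper's: both identify maps out of either side with bilinear pairings of the (truncated Witt) Hopf algebras into finite-length algebras, and both use the relation $\varphi(x^{p^m},y)=\varphi(x,V^m y)^{p^m}$ together with the nilpotency $V^m=0$ to show that every polynomial pairing automatically factors through an open finite-colength ideal, hence extends continuously to the completions. Your variant of generating the vanishing ideal by $x_0^{p^m},\dots,x_{n-1}^{p^m}$ rather than by a high power of the augmentation ideal (the paper's pigeonhole step) is an immaterial difference, and your closing observation that no reducedness of $A$ is needed is exactly the point that distinguishes this argument from Lemma~\ref{lemma:bilinearfactors}.
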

\begin{proof}
Both sides of the claimed isomorphism represent formal groups. We will show that the functors of points of the corresponding formal groups agree, and thus, that the underlying formal Hopf algebras are isomorphic. We will write $\Reg{W^n_k} \cong k[x_0, \ldots, x_{n-1}],$ and $\Reg{W^m_k} \cong k[y_0, \ldots, y_{m-1}].$ Let $A$ be a finite $k$-algebra and $\varphi \in \Hom_{\Alg_k}(\Reg{W^{n}_k} \boxtimes_a \Reg{W^{m}_k},A)$ be a morphism. 
Then $\varphi$ is represented by a bilinear pairing
\[
\varphi: \Reg{W^{n}_k} \otimes_k \Reg{W^{m}_k} \rightarrow A.
\]
For this to induce an element of  $\Hom_{\Alg_k}(\Reg{W^{n,c}_k} \boxtimes \Reg{W^{m,c}_k},A),$ we see that $\varphi$ must factor through a quotient by an ideal of the form $$(x_0, \ldots , x_{n-1})^{p^s} \otimes \Reg{W^m_k} + \Reg{W^n_k} \otimes (y_0, \ldots, y_{m-1})^{p^t}$$ for some $s,t \geq 0.$  Indeed, writing
\[
\Reg{W^{n,c}_k} \cong k\pow{x_0, \ldots , x_{n-1}},
\]
a fundamental open system of neighborhoods is given by $(x_0, \ldots,x_{n-1})^{p^s}$ for $s \geq 0$, and similarly for $\Reg{W^{m,c}_k}$. Using the relations 
\begin{align*}
\varphi(x,V_{W^m_k} y)^p = \varphi(x^{p},y) \\
 \varphi(V_{W^n_k}x,y)^p = \varphi(x,y^p),
\end{align*} and
the observation that $V^n_{W^n_k} =0, V^m_{W^m_k}=0, $ for large enough $N>0$, every monomial generator  $$x^I = x_0^{i_0} x_1^{i_1} \cdots x_{n-1}^{i_{n-1}} \in (x_0, \ldots , x_{n-1})^{p^N}$$ has some $i_k > p^m.$ Then the bilinearity relations imply that $\varphi(-,y),$ $y \in \Reg{W^{m}_k},$  vanishes on $(x_0, \ldots,x_{n-1})^{p^N}.$ Indeed, take a monomial generator $x^I = x_0^{i_0} x_1^{i_1} \cdots x_{n-1}^{i_{n-1}} \in (x_0, \ldots, x_{n-1})^{p^N}$ and assume without loss of generality that $i_0>p^m.$ Then $$\varphi(x_0^{i_0} x_1^{i_1} \cdots x_{n-1}^{i_{n-1}} , y) = \sum_j \varphi(x_0^{p^m},y_j^1) \varphi(x_0^{i_0-p^m} x_1^{i_1} \cdots x_{n-1}^{i_{n-1}},y_j^2).$$ Since $$\varphi(x_0^{p^m},y_j^1) = \varphi(x_0,V^m(y_j))^{p^m} = 0,$$ the sum vanishes. By doing the same argument for the other variable, we see that $\ker \varphi$ factors as claimed. Thus, every morphism $\varphi \in \Hom_{\Alg_{k}}(\Reg{W^{n}_k} \boxtimes^a \Reg{W^{m}_k},A)$ induces an element of $$\Hom_{\Alg_{k}}^c(\Reg{W^{n,c}_k} \boxtimes \Reg{W^{m,c}_k},A)$$ and it is easy to see that this gives an isomorphism. 
\end{proof}
We let $$\iota_n\colon \Reg{W^n_k} \rightarrow \Reg{W^n_k} \boxtimes_a \Reg{W^n_k}$$ be the mod-$p$ reduction of the map from Prop.~\ref{prop:HLiota}. After completion, using Lemma \ref{lemma:completioncommbox}, we obtain a map 
\[
\iota_n\colon \widehat{\Reg{W^n_k}} \rightarrow \Reg{W^{n,c}_k} \boxtimes \Reg{W^{n,c}_k}
\]
of formal Hopf algebras. Given two formal connected Hopf algebras $H_1,H_2,$ We now define a map
\[
\Spf{(H_1 \boxtimes H_2) }(\bar{k}) \rightarrow \colim_{m,n}\Hom_{\DDModF{k}}(\mathbb{D}^f(H_1/V^m_{H_1}) \boxtimes \mathbb{D}^f(H_2/V^n_{H_2}) , CW(\bar{k}))
\]
which will turn out to be an isomorphism of $\Gamma$-modules. By Cor.~\ref{cor:étalefactors}, any map $f \in \Spf{(H_1 \boxtimes H_2)}(\bar{k})$ factors as 
\[
f: H_1/V^n_{H_1} \boxtimes H_2/V^n_{H_2} \rightarrow k'
\]
for some $n \geq 0$ and $k'$ a finite extension of $k.$
As in \eqref{eq:muf}, we define
\begin{align*}
\mu_f\colon &\mathbb{D}^f(H_1/V^n_{H_1}) \times \mathbb{D}^f(H_2/V^n_{H_2})\\
 = &\Hom_{\FHopf}(\Reg{W^{n,c}_k},H_1/V^n_{H_1}) \times \Hom_{\FHopf}(\Reg{W^{n,c}_k},H_2/V^n_{H_2})\\
\xrightarrow{\boxtimes} & \Hom_{\FHopf}(\Reg{W^{n,c}_k} \boxtimes \Reg{W^{n,c}_k} , H_1/V^n_{H_1} \boxtimes H_2/V^n_{H_2})\\
\xrightarrow{\iota_n^*} & \Hom_{\FHopf}(\widehat{\Reg{W^n_k}} , H_1/V^n_{H_1} \boxtimes H_2/V^n_{H_2})\\
\xrightarrow{f_*} &\Hom_{\FAlg{k}}(\widehat{\Reg{W^n_k}},k') = W_n(k') \subset CW(\bar{k}).
\end{align*}
The following lemma is proved just as Lemma \ref{lemma:bilineargivespairing}.
\begin{lemma}\label{lemma:mapgivesdieudonne}
The map $\mu_f$ defines a Dieudonné pairing $$\mu_f: \DieuFor(H_1/V^n_{H_1}) \times \DieuFor(H_2/V^n_{H_2}) \rightarrow \Hom_{\FAlg{k}}(\widehat{\Reg{W^n_k}},k') = W_n(k') \subset CW(\bar{k})$$ and thus gives a map $$\mu_f: \DieuFor(H_1/V^n_{H_1}) \boxtimes \DieuFor(H_2/V^n_{H_2}) \rightarrow CW(\bar{k})$$ of Dieudonné modules.\qed
\end{lemma}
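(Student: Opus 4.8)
The plan is to check the three axioms of a Dieudonné pairing for $\mu_f$ by following the proof of Lemma~\ref{lemma:bilineargivespairing} essentially line by line, with the formal Hopf algebra $\Reg{W^{n,c}_k}$ taking the role of the cogenerator $C_k$ and the comparison map $\iota_n$ taking the role of $\iota$. As in that proof, every constituent of the composite defining $\mu_f$ --- the functorial pairing $\boxtimes$, the pushforward $f_*$, and the identification $\Hom_{\FAlg{k}}(\widehat{\Reg{W^n_k}},k')=W_n(k')$ --- is natural, so each required identity collapses to a corresponding identity for the single nonfunctorial ingredient $\iota_n$, which one verifies in the universal case.

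First I would verify $W(k)$-bilinearity. Additivity in each variable is obtained either by a direct Sweedler computation or by transcribing the argument of \cite[Lemma~7.4]{goerss:hopf-rings}, while $W(k)$-linearity reduces, in the universal case, to the identity $(a\boxtimes 1)\circ\iota_n=\iota_n\circ a$ for $a\in W(k)$, which holds because $\iota_n$ is a morphism of $\Reg{W(k)}$-Hopf algebras. Next, $\mu_f(Vx,Vy)=V\mu_f(x,y)$ reduces to the commutativity of
\[
\begin{tikzcd}
\widehat{\Reg{W^n_k}} \arrow[d,"V"] \arrow[r,"\iota_n"] & \Reg{W^{n,c}_k}\boxtimes\Reg{W^{n,c}_k} \arrow[d,"V\boxtimes V"]\\
\widehat{\Reg{W^n_k}} \arrow[r,"\iota_n"] & \Reg{W^{n,c}_k}\boxtimes\Reg{W^{n,c}_k}
\end{tikzcd}
\]
which is exactly the compatibility of $\iota_n$ with the Verschiebung recorded in Prop.~\ref{prop:HLiota}, together with the fact that $V\boxtimes V$ is the Verschiebung on the tensor product.

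For the remaining identities $\mu_f(Fx,y)=F\mu_f(x,Vy)$ and $\mu_f(x,Fy)=F\mu_f(Vx,y)$ --- the second following from the first by symmetry --- I would again reduce to the universal case and show that $(1\boxtimes V)\circ F_{\boxtimes}\circ\iota_n=(F\boxtimes 1)\circ\iota_n$, where $F_\boxtimes$ denotes the Frobenius on $\Reg{W^{n,c}_k}\boxtimes\Reg{W^{n,c}_k}$. Since $\iota_n$ is a Hopf algebra map it commutes with $F$, so this reduces to the identity $j(Fx,y)=Fj(x,Vy)$ on the topological generators supplied by the coalgebra map $j$ of Remark~\ref{rmk:structureoftensor}, checked after applying $V$ exactly as in Lemma~\ref{lemma:bilineargivespairing}. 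Continuity of $\mu_f$ follows as in Cor.~\ref{cor:contofcomp}, from the continuity of $\iota_n$ (the truncated analogue of Lemma~\ref{lemma:iotacont}) together with the fact that $f$ factors through a finite-length quotient. Once $\mu_f$ is known to be a Dieudonné pairing, the induced map $\DieuFor(H_1/V^n_{H_1})\boxtimes\DieuFor(H_2/V^n_{H_2})\to CW(\bar{k})$ exists by the universal property of $\boxtimes$ in Def.~\ref{defn: tensorproductofdieudonne}.

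I expect the main obstacle to be the $F$-compatibility: verifying $j(Fx,y)=Fj(x,Vy)$ and, in particular, confirming that the injectivity-of-$V$ cancellation used in Lemma~\ref{lemma:bilineargivespairing} survives passage to the truncated algebras $\Reg{W^{n,c}_k}$ and the completion $\widehat{\Reg{W^n_k}}$. This is precisely the place where the explicit structure of the tensor product from Remark~\ref{rmk:structureoftensor} must be combined with the ghost-polynomial description of $\iota_n$ from Prop.~\ref{prop:HLiota}.
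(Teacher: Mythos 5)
Your proposal is correct and coincides with the paper's own treatment: the paper's entire proof of this lemma is the one-line remark that it ``is proved just as Lemma~\ref{lemma:bilineargivespairing}'', which is precisely the transcription you carry out, with $\Reg{W^{n,c}_k}$ and $\iota_n$ playing the roles of $C_k$ and $\iota$. The obstacle you flag is real --- $V$ acts diagonally on $\Reg{W^{n,c}_k}\boxtimes^c\Reg{W^{n,c}_k}$ and kills the generators $x_{0,j}\boxtimes x_{i,0}$, so the injectivity-of-$V$ cancellation does not transcribe verbatim --- but the paper does not address it either; it can be repaired exactly as you suggest (via the ghost-polynomial formula of Prop.~\ref{prop:HLiota}) or by pushing the untruncated identity forward along the surjection $C_k\boxtimes^c C_k \to \Reg{W^{n,c}_k}\boxtimes^c\Reg{W^{n,c}_k}$ supplied by right-exactness of $\boxtimes^c$, so your account is, if anything, more careful than the paper's.
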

One easily shows that this map is independent of the choices of $n$ and $k'$. Given $ f \in \Hom_{\Alg_k}(H_1 \boxtimes H_2,\bar{k})$, we associate a map $\mu_f : \DieuFor(H_1/V^n_{H_1}) \boxtimes \DieuFor(H_2/V^n_{H_2}) \rightarrow CW(\bar{k}).$  This defines a morphism 
\[
t\colon \Hom_{\Alg_k}(H_1 \boxtimes H_2,\bar{k}) \rightarrow \colim_{m,n}  \colim_{k' \subset \bar{k}} \Hom_{\DMod{k}}^c(\DieuFor(H_1/V^n_{H_1}) \boxtimes \DieuFor(H_2/V^m_{H_2}), CW(k'))
\] of $\Gamma$-modules, where $\Hom_{\DMod{k}}^c$ denotes continuous morphisms of Dieudonné modules and $k'$ ranges over the finite extensions of $k$.  To simplify notation, let 
$$\Hom_{\DMod{k}}^c(-, CW(\bar{k})) = \colim_{k' \subset \bar{k}} \Hom_{\DMod{k}}^c(-, CW(k')).$$

\begin{prop} \label{prop:tgivesiso}
The map $$t\colon \Hom_{\Alg_k}(H_1 \boxtimes H_2,\bar{k}) \rightarrow \colim_{m,n}  \Hom_{\DMod{k}}^c(\DieuFor(H_1/V^n_{H_1}) \boxtimes \DieuFor(H_2/V^m_{H_2}), CW(\bar{k}))$$ is an isomorphism of $\Gamma$-modules. 
\end{prop}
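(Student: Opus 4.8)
The plan is to show that the map $t$, which is $\Gamma$-equivariant by its very construction, is a bijection, and to do so by a double reduction: first to the case where the Verschiebung is nilpotent on both factors, and then to the universal generators $\Reg{W^{m,c}_k}$. First I would rewrite the source: by Lemma~\ref{lemma:bilinearrationalpoints} it is $\Bil(H_1,H_2;\bar{k})$, and by Cor.~\ref{cor:étalefactors} this is the filtered colimit of $\Bil(H_1/V^m_{H_1},H_2/V^n_{H_2};\bar{k})$ over the quotient maps, indexed by the same $(m,n)$ appearing on the target. The construction of $\mu_f$ is visibly compatible with these colimit presentations: the transition maps on the Dieudonné side are governed by the relation between $\iota_n$ and $\iota_{n+1}$ coming from the Verschiebung-compatibility in Prop.~\ref{prop:HLiota}, which corresponds exactly to the identity $\pi_{n+1}(\DieuFor(F_{W^{\fin}_k})\circ f)=\pi_n(f)$ of Lemma~\ref{lemma:Dieudonnerep}. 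It therefore suffices to fix $m,n$ and prove that, for $H_1,H_2\in\FHopfc$ with $V^m_{H_1}=0=V^n_{H_2}$, the map
\[
\Bil(H_1,H_2;\bar{k}) \to \Hom^c_{\DMod{k}}(\DieuFor(H_1)\boxtimes\DieuFor(H_2),CW(\bar{k}))
\]
is a bijection.

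Next I would reduce to generators, exactly as in the proof of Prop.~\ref{prop:Dieudonnemonoidal}. Since $\Reg{W^{m,c}_k}$ is killed by $V^m$ (as $V^m$ vanishes on length-$m$ Witt vectors) and corepresents a cogenerator of the connected formal groups killed by $V^m$, the object $H_1$ admits a presentation $\Reg{W^{m,c}_k}^{\,\alpha}\to\Reg{W^{m,c}_k}^{\,\beta}\to H_1\to 0$ in that category, where the sums are completed tensor products, and similarly for $H_2$. Both sides are functorial in $(H_1,H_2)$: the functor $\boxtimes$ is right exact and distributes over $\hat{\otimes}$, the equivalence $\DieuFor$ is exact, and the two dualizing functors $\Bil(-,-;\bar{k})\cong\Spf{(-\boxtimes-)}(\bar{k})$ and $\Hom^c_{\DMod{k}}(-,CW(\bar{k}))$ both convert these right-exact sequences into left-exact (equalizer) diagrams. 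Hence, if $t$ is an isomorphism on the generators, it is an isomorphism in general, and I am reduced to the case $H_1=\Reg{W^{m,c}_k}$, $H_2=\Reg{W^{n,c}_k}$.

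For the universal case I would use Lemma~\ref{lemma:completioncommbox} to identify $\Reg{W^{m,c}_k}\boxtimes\Reg{W^{n,c}_k}$ with the completion $(\Reg{W^m_k}\boxtimes_a\Reg{W^n_k})\,\widehat{}\,$, so that its $\bar{k}$-points are read off from the explicit algebraic tensor product together with the universal map $\iota_n$ of Prop.~\ref{prop:HLiota}. On the Dieudonné side I would compute $\DieuFor(\Reg{W^{m,c}_k})$ and the corresponding box product, using that $W_n(\bar{k})=\Hom_{\FAlg{k}}(\widehat{\Reg{W^n_k}},\bar{k})\subset CW(\bar{k})$ and that $CW(\bar{k})=\colim_n W_n(\bar{k})$. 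By the very definition of $\mu_f$ through $\iota_n$, the map $t$ sends a bilinear pairing into $\bar{k}$ to the Dieudonné pairing determined by its ghost components, and the claim becomes that this assignment is a bijection onto the continuous Dieudonné pairings into $CW(\bar{k})$.

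The main obstacle is precisely this universal computation. The difficulty is not analytic—the convergence estimates of Lemma~\ref{lemma:polyconverge} already ensure that the ghost-polynomial expressions defining $\iota_n$ give honest continuous maps—but bookkeeping: one must make the ghost-coordinate description precise, verify that the pairing built from $\iota_n$ is exactly the universal Dieudonné pairing into $W_n(\bar{k})$, and track the twist interchanging $F$ and $V$ under Matlis/Cartier duality (so that the axioms $\varphi(Fx,y)=F\varphi(x,Vy)$ of a Dieudonné pairing are matched on the nose). One must also check that these identifications are compatible with the Frobenius maps $W_n\to W_{n+1}$, so that passing to the colimit over $n$ assembles the universal case correctly, and that every step is $\Gamma$-equivariant, completing the proof that $t$ is an isomorphism of $\Gamma$-modules.
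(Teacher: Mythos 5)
Your two reductions are sound and essentially reproduce the paper's own setup: the passage to fixed $(m,n)$ with $V^m_{H_1}=0=V^n_{H_2}$ via Lemma~\ref{lemma:bilinearrationalpoints} and Cor.~\ref{cor:étalefactors} is exactly how the paper begins, and the reduction to generators by presentations (as in Prop.~\ref{prop:Dieudonnemonoidal}), using that both contravariant functors turn cokernels into kernels, is a legitimate five-lemma argument. (One caveat: the presentations produce possibly infinite completed tensor powers $\widehat\bigotimes_{i\in I}\Reg{W^{n,c}_k}$, not single copies; neither side obviously commutes with these infinite products, which is why the paper carries the index sets $I,J$ through its whole argument rather than reducing to $\Reg{W^{m,c}_k}\boxtimes\Reg{W^{n,c}_k}$ alone.) The genuine gap is what you yourself call ``the main obstacle'': the universal case is never proved, and it is not bookkeeping --- it is the entire mathematical content of the proposition. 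Verifying directly that bilinear pairings $\Reg{W^{n,c}_k}\hat\otimes_k\Reg{W^{m,c}_k}\to\bar k$ biject with continuous Dieudonn\'e pairings into $CW(\bar k)$ for general $m,n$ is as hard as the proposition itself, and your proposed mechanism for it is flawed: a pairing into $\bar k$ is \emph{not} ``determined by its ghost components,'' since ghost coordinates only control Witt vectors over ($p$-)torsion-free rings. The map $\iota_n$ of Prop.~\ref{prop:HLiota} is defined by ghost polynomials over $W(k)$, where division by $p^n$ makes sense, and only then reduced mod $p$ (via the convergence Lemma~\ref{lemma:polyconverge}); over the characteristic-$p$ field $\bar k$ this description cannot be inverted.

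The paper avoids any direct computation at general $(m,n)$ by a further d\'evissage that your proposal is missing: an induction on $m$ and $n$ using the exact sequences $0\to\prod_J W^{m-1,c}_k\xrightarrow{\prod V}\prod_J W^{m,c}_k\to\prod_J W^{1,c}_k\to 0$. This requires proving that the induced sequence of $\bar k$-points \eqref{eq:exactsequenceoftensorproducts} is exact also on the right, which is a real argument (lifting bilinear pairings through the quotient, using the density of $H_1'(n)\otimes_k H_2'(m)$ and Goerss's symmetric-algebra description of the algebraic box product, plus a continuity check); the snake lemma then reduces everything to the base case $m=n=1$, i.e.\ $V_{H_1}=V_{H_2}=0$, which is Lemma~\ref{lemma:tforV}. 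There both sides are computed explicitly --- bilinear pairings and Dieudonn\'e pairings alike are identified with continuous maps $Q(H_1)\hat\otimes_k Q(H_2)\to k'$, because the Hopf algebras are power series algebras on primitives and any Dieudonn\'e pairing kills $F\DieuFor(H_1)$ and $F\DieuFor(H_2)$. Your proof never descends to this $V=0$ case; to repair it you would either have to carry out that induction (rediscovering the paper's argument) or genuinely perform the universal computation you deferred, for which no shortcut via ghost coordinates is available.
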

We first show a special case:
\begin{lemma} \label{lemma:tforV} 
Prop.~\ref{prop:tgivesiso} holds when $H_1,H_2$ are  connected smooth formal Hopf algebras with $V_{1} = 0$ and $V_{2} = 0.$
\end{lemma}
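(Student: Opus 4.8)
The plan is to collapse the colimit on the right and then reduce to a single universal generator. Since $V_{1}=V_{2}=0$, the Hopf-algebra quotients satisfy $H_i/V_i^n = H_i$ for every $n\geq 1$, so all transition maps in $\colim_{m,n}\Hom^c_{\DMod{k}}(\DieuFor(H_1/V^n_{1})\boxtimes \DieuFor(H_2/V^m_{2}),CW(\bar k))$ are identities and the target of $t$ is simply $\Hom^c_{\DMod{k}}(\DieuFor(H_1)\boxtimes\DieuFor(H_2),CW(\bar k))$, where now $V=0$ on both $\DieuFor(H_i)$. Both the source $\Bil(H_1,H_2;\bar k)\cong\Spf{(H_1\boxtimes H_2)}(\bar k)$ (Lemma~\ref{lemma:bilinearrationalpoints}) and this target are biadditive in $H_1$ and $H_2$ and send completed coproducts $\hat\otimes_k$ in either variable to products: on the left because a bilinear pairing out of $H\hat\otimes_k H'$ is determined additively by its restrictions, and on the right because $\DieuFor$ is additive, $\boxtimes$ is right exact and distributes over $\hat\otimes$ (as used in Prop.~\ref{prop:Dieudonnemonoidal}), and $\Hom^c(-,CW(\bar k))$ turns colimits into limits. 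A smooth connected formal group with $V=0$ has $\DieuFor$ topologically free over $k\pow{F}$ and is therefore a completed product of copies of $\mathbb{G}_a^c$, i.e. each $H_i$ is a completed coproduct of copies of $\Reg{W^{1,c}_k}=\Reg{\mathbb{G}_a^c}$. By naturality of $t$ it then suffices to treat the universal case $H_1=H_2=\Reg{\mathbb{G}_a^c}$.

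Next I would compute both sides in this universal case. Writing $\Reg{\mathbb{G}_a^c}\cong k\pow{x}$ with $x$ primitive, a bilinear pairing $\varphi\colon k\pow{x}\hat\otimes_k k\pow{y}\to k'$ is forced to vanish on $x^m\otimes y^n$ whenever $m\neq 1$ or $n\neq 1$: the coalgebra identities give $\varphi(x,y^n)=0$ for $n\neq 1$ directly from primitivity of $x$, and likewise $\varphi(x^m,y)=0$ for $m\geq 2$, while $\varphi(x^{p},y)=\varphi(x,V_2 y)^{p}=0$ because $V_2=0$. Hence $\varphi$ is determined by the single scalar $\varphi(x,y)\in k'$, and conversely any scalar extends to a unique pairing, so $\Bil(\Reg{\mathbb{G}_a^c},\Reg{\mathbb{G}_a^c};\bar k)\cong\bar k=\mathbb{G}_a(\bar k)$ as $\Gamma$-modules. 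On the other side, $\DieuFor(\Reg{\mathbb{G}_a^c})\cong k\pow{F}$ with $V=0$ by Example~\ref{ex:apboxap}, and unravelling Definition~\ref{defn: tensorproductofdieudonne} when $V=0$ on both factors collapses the defining relations to $1\otimes Fx\otimes y=0=1\otimes x\otimes Fy$, giving $k\pow{F}\boxtimes k\pow{F}\cong k\pow{F}$ with $F$ acting by multiplication and $V=0$. A continuous Dieudonné map $g\colon k\pow{F}\to CW(\bar k)$ is determined by $w=g(1)$, and $\mathcal R$-linearity forces $Vw=0$ (whence also $pw=FVw=0$), so $g\mapsto w$ identifies the target with $\ker\bigl(V\colon CW(\bar k)\to CW(\bar k)\bigr)\cong\bar k$, the continuity condition $F^n w\to 0$ being automatic since $F$ pushes the support toward $-\infty$. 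Thus both sides are $\bar k$ with the natural $\Gamma$-action.

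Finally I would check that $t$ itself realizes this identification, not merely that the two sides are abstractly isomorphic $\Gamma$-modules. Tracing $\mu_f$ through $\iota_1\colon\widehat{\Reg{W^1_k}}\to\Reg{\mathbb{G}_a^c}\boxtimes\Reg{\mathbb{G}_a^c}$ (Prop.~\ref{prop:HLiota}), which on indecomposables sends $x_0$ to $x_0\boxtimes x_0$, shows that $t$ carries the pairing with $\varphi(x,y)=c$ to the Dieudonné map taking the generator of $k\pow{F}\boxtimes k\pow{F}$ to the element $c\in\ker V\cong\bar k$; hence $t$ is a $\Gamma$-equivariant bijection. The main obstacle is the middle paragraph: carrying out the $\boxtimes$-computation of Definition~\ref{defn: tensorproductofdieudonne} while keeping precise track of the interaction of $F$, $V$, the scalar $W(k)$-action and the profinite topologies, and correctly identifying $\ker\bigl(V\mid CW(\bar k)\bigr)$ together with its $\Gamma$-action. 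The reduction in the first paragraph also rests on the (standard but not purely formal) structure statement that a smooth connected formal group with vanishing Verschiebung is a product of copies of $\mathbb{G}_a^c$.
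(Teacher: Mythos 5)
Your outer steps are sound: the colimit does collapse when $V_1=V_2=0$, your computation of the bilinear-pairing side ($\Bil(\Reg{\mathbb G_a^c},\Reg{\mathbb G_a^c};\bar k)\cong\bar k$) is correct, and matching $t$ via $\iota_1$ is the same final step the paper uses. The gap is in your middle paragraph, and it consists of two errors that happen to cancel. First, $k\pow{F}\boxtimes k\pow{F}$ is \emph{not} $k\pow{F}$: in the completion \eqref{eq:boxproductforFprofinites} the ideals $I$ range over \emph{all} $\mathcal F$-ideals of finite colength, not only the powers $(F^n)$; restricting to $(F^n)$ is by definition what produces $\boxtimes^c$ (Remark~\ref{rmk:connectedofDieudonne}). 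So what you computed is $k\pow{F}\boxtimes^c k\pow{F}\cong k\pow{F}$, whereas the full $\boxtimes$ has in addition a nontrivial étale part (a limit of finite-length quotients on which $F$ acts bijectively), which survives the relations $1\otimes Fx\otimes y=0=1\otimes x\otimes Fy$. Second, your continuity waiver has the $\mathcal R$-action on $CW$ backwards: $V$ is the shift $(\dots,a_{-1},a_0)\mapsto(\dots,a_{-2},a_{-1})$, while $F$ is the componentwise Frobenius and does not move supports. Hence for $w=g(1)\in\ker V\cong k'$ one has $F^Nw=w^{p^N}$, which lies in the open neighborhood $\{(a_i): a_0=0\}$ of $0$ in $CW(k')$ only if $w=0$; since every open $\mathcal F$-submodule of $k\pow{F}$ contains some $F^Nk\pow{F}$, every \emph{continuous} $\mathcal R$-linear map $k\pow{F}\to CW(k')$ vanishes. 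So with your identification of the box product, the right-hand side of $t$ would be $0$ and the lemma would be false; you land on the correct answer $\bar k$ only because the maps you wrote down actually live on (and exhaust) the étale part that your computation discarded --- precisely the phenomenon recorded in Example~\ref{ex:etaleofap} and Remark~\ref{rmk:dieufornotmonoidal}.

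The robust way to run the argument (and it is the paper's) is never to compute $\boxtimes$ at all: by construction, $\Hom^c_{\DMod{k}}(\DieuFor(H_1)\boxtimes\DieuFor(H_2),CW(\bar k))$ is the group of continuous Dieudonné pairings $\psi\colon\DieuFor(H_1)\times\DieuFor(H_2)\to CW(\bar k)$. When $V=0$ on both modules, $V\psi(x,y)=\psi(Vx,Vy)=0$ forces all values into $\ker V\cong\bar k$, and then $\psi(Fx,y)=F\psi(x,Vy)=0$ and $\psi(x,Fy)=F\psi(Vx,y)=0$ force $\psi$ to factor through $\DieuFor(H_1)/F\,\hat\otimes\,\DieuFor(H_2)/F\cong Q(H_1)\hat\otimes_kQ(H_2)$; the bilinear-pairing side is identified with continuous maps $Q(H_1)\hat\otimes_kQ(H_2)\to\bar k$ by exactly your primitivity argument, and $\iota_1(x)=x\boxtimes x$ matches the two identifications. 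Note that this works for arbitrary sets of primitive generators at once, so the reduction in your first paragraph becomes unnecessary --- which is fortunate, because as stated it is also off: neither side sends the completed coproducts $\hat\otimes_k$ to products; both send them to restricted direct sums (filtered colimits of finite products), and the sum-versus-product distinction is exactly what Remark~\ref{rmk:dieufornotmonoidal} shows cannot be glossed over.
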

\begin{proof}[Proof of Lemma \ref{lemma:tforV}]
Let us write $$H_1 \cong \Spf k\pow{(X_i)_{i \in S}}$$ and $$H_2 \cong \Spf k\pow{(Y_j)_{j \in T}}$$ for some index sets $S$ and $T$ such that each $X_i$ and each $Y_j$ is primitive. It is clear that a bilinear pairing $$\varphi: H_1 \hat{\otimes}_k H_2 \rightarrow k'$$ for $k'$ a finite extension of $k,$ is determined by what it does on elements of the form $X_i \otimes Y_j,$  for $m,n \geq 0,$ $i \in S$ and $j \in T.$ It is then clear that giving a pairing $\varphi$ as above is equivalent to giving a continuous map $Q(H_1) \hat{\otimes}_k Q(H_2) \rightarrow k'$. On the other hand, to give a Dieudonné pairing $$\psi: \DieuFor(H_1) \otimes \DieuFor(H_2) \rightarrow k',$$ we must have that $\psi(F(x),y) = \psi(x,V(y))^p = 0,$ for $x \in \DieuFor(H_1), y \in \DieuFor(H_2).$ Doing this for the other variable, we see that this means that $\psi$ factors through $$\dfrac{\DieuFor(H_1)}{F \DieuFor(H_1)} \hat{\otimes}_{W(k)}  \dfrac{\DieuFor(H_2)}{F\DieuFor(H_2)} \cong Q(H_1) \hat{\otimes}_k Q(H_2),$$ that is, giving a Dieudonné pairing $\DieuFor(H_1) \times \DieuFor(H_2) \rightarrow k'$ is equivalent to giving a continuous map $$Q(G_1) \hat{\otimes}_k Q(G_2) \rightarrow k'.$$ What remains is to show that the map $$t: H_1 \boxtimes H_2(\bar{k}) \rightarrow \Hom_{\DMod{k}}^c(\DieuFor(H_1) \boxtimes \DieuFor(H_2), \bar{k}),$$ is an isomorphism. This follows easily from noting that the map $$\iota_1: \Reg{W^1_k} \rightarrow \Reg{W^{1,c}_k} \boxtimes \Reg{W^{1,c}_k}$$ takes $x \in \Reg{W^1_k}$ to $x \boxtimes x.$ 
\end{proof}
\begin{proof}[Proof of Prop.~\ref{prop:tgivesiso}]
We will prove this by induction on $m$ and $n,$ following the lines of \cite[Prop.~1.3.28]{hopkins-lurie:ambidexterity}. We assume that $V^n_{H_1} =0$ and that $V^m_{H_2} = 0$ for some $m,n \geq 0.$  As in the proof of Prop.~\ref{prop:Dieudonnemonoidal}, one can assume that $\Spf{H_1} = \prod_{i \in I} W^{n,c}_k$ and $\Spf{H_2} = \prod_{j \in J} W^{m,c}_k$ for some index sets $I$ and $J$ respectively. Note that 
\[
H_1 = \widehat\bigotimes_{i \in I} \Reg{W^{n,c}_k}, \quad H_2 = \widehat\bigotimes_{j \in J} \Reg{W^{m,c}_k}
\] in the category of formal Hopf algebras over $k.$ We abbreviate
\[
 A_{n,m} = \Spf{(\widehat\bigotimes_{i \in I} \Reg{W^{n,c}_k} \boxtimes \widehat\bigotimes_{j \in J} \Reg{W^{n,c}_k})}(\bar{k}).
\]
By considering the exact sequence of formal groups
\[
0 \rightarrow \prod_{j \in J} W^{m-1,c}_k \xrightarrow{\prod V}  \prod_{j \in J} W^{m,c}_k \rightarrow \prod_{j \in J } W^{1,c}_k \rightarrow 0
\]
where the first map is the product of the Verschiebung maps $V\colon W^{m-1,c}_k \rightarrow W^{m,c}_k,$ we get an exact sequence 
\begin{equation}\label{eq:exactsequenceoftensorproducts}
\begin{tikzcd} 0 \arrow[r] &  A_{n,m-1} \arrow[r,"1 \boxtimes \prod V"]  & A_{n,m} \arrow[r] & A_{n,1}.\end{tikzcd}
\end{equation}
We need to show this sequence is also exact at the right. To see this, note that the subcoalgebra
\[
H_1'(n) \otimes_k H_2'(m) =_{\operatorname{def}} (\bigotimes_{i \in I} \Reg{W^n_k}) \otimes_k (\bigotimes_{j \in J} \Reg{W^{m}_k}) \subseteq (\widehat{\bigotimes_{i \in I}} \Reg{W^{n,c}_k} )\hat{\otimes}_k (\widehat{\bigotimes_{j \in J}} \Reg{W^{m,c}_k})
\]
is dense. By \cite[Lemma 7.1]{goerss:hopf-rings}\footnote{Goerss works with graded Hopf algebras, but his methods of proof goes through in the ungraded case, as explained in \cite{buchstaber-lazarev:dieudonne}},
\[
H_1'(n) \boxtimes_a H_2'(m) \cong \Sym\left((\bigoplus_{i \in I} Q(\Reg{W^n_k})) \otimes_k (\bigoplus_{j \in J} Q(\Reg{W^{m}_k}))\right).
\]
Thus to give a bilinear pairing $H_1'(n) \otimes_k H_2'(m) \to k'$ is equivalent to specifying where to send the indecomposables. It follows that every bilinear pairing $\overline{\varphi}\colon  H_1'(n) \otimes H_2'(1) \to k'$ can be lifted to a bilinear pairing $\varphi\colon H_1'(n) \otimes H_2'(m) \to k'$. Using an argument as in the proof of Lemma~\ref{lemma:completioncommbox}, all of these bilinear pairings are continuous. Thus the sequence \eqref{eq:exactsequenceoftensorproducts} is right exact. 

With the notation
\[
B_{n,m} =  \Hom^c_{\DMod{k}}(\DieuFor(H_1) \boxtimes \DieuFor(H_2),CW(\bar{k})),
\]
we then have an exact sequence 
\[
\begin{tikzcd} 0 \arrow[r] & B_{n,m-1} \arrow[r,"1 \boxtimes \Pi V"] &  B_{n,m} \arrow[r] &  B_{n,1} .\end{tikzcd}
\]
Together with \eqref{eq:exactsequenceoftensorproducts}, this gives rise to a commutative diagram of exact sequences
\[
\begin{tikzcd}0 \arrow[r] & A_{n,m-1} \arrow[d] \arrow[r,"1 \boxtimes \Pi V"] & A_{n,m} \arrow[r] \arrow[d] &  A_{n,1} \arrow[d] \arrow[r] &  0  \\   0 \arrow[r] & B_{n,m-1}  \arrow[r,,"1 \boxtimes \Pi V"] &  B_{n,m} \arrow[r] &  B_{n,1}, & \end{tikzcd}
\]
and by induction and the snake lemma, we are reduced to the case $m=1.$ By another induction, it is enough to show that $A_{1,1} \rightarrow  B_{1,1}$ is an isomorphism. But in this case, the statement follows from Lemma~\ref{lemma:tforV}.
\end{proof}

\begin{corollary} \label{cor:etaleofconn}
Let $H_1$ and $H_2$ be connected formal Hopf algebras over $k.$ Then the étale part of $\Spf{(H_1 \boxtimes H_2)}$ is the étale formal group associated with the $\Gamma$-module
\[
\colim_{m,n} \Hom_{\DMod{k}}(\DieuFor(H_1/V^n_{H_1}) \boxtimes \DieuFor(H_2/V^n_{H_2}) , CW(\bar{k})).
\]\qed
\end{corollary}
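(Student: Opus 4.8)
The plan is to deduce this directly from Proposition~\ref{prop:tgivesiso} together with the classification of étale formal groups in Theorem~\ref{thm:multiplicativegroupclassification}, so that essentially no new work is required. The governing observation is that the étale part of a formal group is completely determined by its $\bar k$-points as a $\Gamma$-module, the connected part contributing nothing: by the splitting \eqref{eq:antiequivalenceswithsplitting} we may write $\Spf{(H_1 \boxtimes H_2)} \cong G^e \times G^c$ with $G^e$ étale and $G^c$ connected, and since $G^c(k') = 0$ for every finite field extension $k'$ of $k$, evaluation on $\bar k$ gives $\Spf{(H_1 \boxtimes H_2)}(\bar k) = G^e(\bar k) = \colim_{k'} G^e(k')$.

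First I would invoke Theorem~\ref{thm:multiplicativegroupclassification}, under which $G \mapsto \colim_{k'} G(k')$ is an equivalence $\FGpse{k} \xrightarrow{\sim} \Mod_\Gamma$. Applied to $G^e$, this says that the étale part of $\Spf{(H_1 \boxtimes H_2)}$ is precisely the étale formal group associated with the $\Gamma$-module $\Spf{(H_1 \boxtimes H_2)}(\bar k) = \Hom_{\Alg_k}(H_1 \boxtimes H_2, \bar k)$. Then I would apply Proposition~\ref{prop:tgivesiso}, which furnishes an isomorphism of $\Gamma$-modules
\[
\Hom_{\Alg_k}(H_1 \boxtimes H_2, \bar k) \xrightarrow{\ \cong\ } \colim_{m,n} \Hom^c_{\DMod{k}}\bigl(\DieuFor(H_1/V^n_{H_1}) \boxtimes \DieuFor(H_2/V^m_{H_2}), CW(\bar{k})\bigr).
\]
Composing the two identifications realizes $G^e$ as the étale formal group attached to this colimit, which is the asserted description (the diagonal system $m=n$ is cofinal in the indexing, so the two forms of the colimit agree).

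Since Proposition~\ref{prop:tgivesiso} carries all the weight, there is no genuine obstacle here; the corollary is a formal consequence. The only points deserving a word of care are the (routine) verification that $\bar k$-points detect only the étale summand of $\Spf{(H_1 \boxtimes H_2)}$, which is exactly the connectedness statement recalled above, and the harmless reconciliation of the continuous-Hom appearing in Proposition~\ref{prop:tgivesiso} with the $\Hom$ written in the statement. Neither requires more than a sentence.
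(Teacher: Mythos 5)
Your proposal is correct and takes essentially the same route as the paper: the paper also treats this corollary as an immediate consequence of Prop.~\ref{prop:tgivesiso}, having noted at the start of the subsection that describing the étale part of $\Spf{(H_1 \boxtimes H_2)}$ is the same as describing the $\Gamma$-module $\Spf{(H_1 \boxtimes H_2)}(\bar k) = \colim_{k'} \Hom^c_{\Alg_k}(H_1 \boxtimes H_2, k')$, which is exactly your splitting-plus-Theorem~\ref{thm:multiplicativegroupclassification} observation. Your two points of care (cofinality of the diagonal indexing, and reconciling $\Hom^c$ with the plain $\Hom$ in the statement) in fact flag minor typographical slips in the paper's own formulation and are handled correctly.
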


\begin{example} \label{ex:etaleofap}
Let $\alpha_p$ be as in Lemma~\ref{lemma:connectedtensorconnectednotconnected}. As we saw there, $\alpha_p \boxtimes \alpha_p$ is not connected. To compute its étale part, we must  compute the Dieudonné pairings $$\DieuFor(\alpha_p) \boxtimes \DieuFor(\alpha_p) \rightarrow CW(\bar{k}).$$ We know that $\DieuFor(\alpha_p) = k,$ so a Dieudonné pairing, being bilinear, is determined by where it sends $$(1,1) \in k \times k \cong \DieuFor(\alpha_p) \times \DieuFor(\alpha_p).$$ We see that to be a Dieudonné pairing, $(1,1)$ must be mapped into a submodule of $CW(\bar{k})$ where $V=0,$ i.e into $\bar{k}.$ This argument shows that the $\Gamma$-module of Dieudonné pairings $\DieuFor(\alpha_p) \boxtimes \DieuFor(\alpha_p) \rightarrow CW(\bar{k})$ is isomorphic to $\bar{k},$ as expected.  
\end{example}

\begin{remark}\label{rmk:dieufornotmonoidal}
A natural hope would be for the functor $\DieuFor\colon \FHopfc \rightarrow \DDModF{k}$ to be monoidal, i.e. that for $H_1, H_2$ two formal connected Hopf algebras, we would have $\DieuFor(H_1 \boxtimes H_2) \cong \DieuFor(H_1) \boxtimes \DieuFor(H_2).$ Unfortunately, this is not true, we only have $\DieuFor(H_1 \boxtimes^c H_2) \cong \DieuFor(H_1) \boxtimes^c \DieuFor(H_2).$ To see this, suppose that $\DieuFor$ was monoidal with respect to $\boxtimes,$ let $H_1 = H_2 = \Reg{CW^c_k}$ and assume for simplicity that $k$ is algebraically closed. Then 
\[
\Hom_{\FGps{k}}(\underline{\smash{\Z/p\Z}} , \Spf{\Reg{CW^c_k} \boxtimes \Reg{CW^c_k}}) \cong \Hom_{\DDModF{k}}(\DieuFor(\Reg{CW^c_k}) \boxtimes \DieuFor(\Reg{CW^c_k}) , k)
\]
where $V(x)=0$ and $F(x)=x^p$ on $k.$ By the universal property of $\boxtimes,$ coupled with Thm.~\ref{thm:indecomposablesdieudonne}, this is the same as $V$-equivariant maps $Q(CW^c_{W(k)}) \hat{\otimes}_{W(k)} Q(CW^c_{W(k)}) \rightarrow k.$ An elementary calculation shows then that $$ \Hom_{\DDModF{k}}(\DieuFor(\Reg{CW^c_k}) \boxtimes \DieuFor(\Reg{CW^c_k}) , k) \cong \prod_{i=-\infty}^\infty k.$$

However, note that $$\Hom_{\FGps{k}}(\underline{\smash{\Z/p\Z}}, \Spf{\Reg{CW^c_k} \boxtimes \Reg{CW^c_k}})$$ is isomorphic to the $p$-torsion of $\Spf{\Reg{CW^c_k} \boxtimes \Reg{CW^c_k}}(k).$ Using Cor.~\ref{cor:etaleofconn}, one shows that this is isomorphic to $\bigoplus_{i=-\infty}^\infty k,$ which is not isomorphic to $\prod_{i=-\infty}^\infty k.$ 
\end{remark}

\section{Tensor products of affine abelian groups schemes in positive characteristic} \label{sec:affine-groups}

Let $k$ be a perfect field of characteristic $p>0.$ By Thm.~\ref{thm:existence}, the tensor product of any two affine groups exists. In this section we first use the results from Section \ref{sec:tensorprodHopf} to give a formula for the Dieudonné module of the tensor product of two unipotent affine groups.  We then use this result to competely describe the tensor product of two affine groups. Note that by Thm.~\ref{thm:dieudonneforpadic}, the category of unipotent groups over $k$ is equivalent to $ \DModV{k},$ those Dieudonné modules that are filtered colimits of its Verschiebung kernels. By Cartier duality, the category of unipotent groups is anti-equivalent to the category of connected formal groups. 
Suppose that we have a bilinear map $f\colon G_1 \times G_2 \rightarrow G_3$ of affine groups. By Cartier duality, we get a cobilinear map $G_3^* \rightarrow G_1^* \times G_2^* $ of formal groups, and by looking at the representing objects, we get a bilinear map of formal Hopf algebras $$\Reg{G_1^*} \hat{\otimes}_k \Reg{G_2^*} \rightarrow \Reg{G_3^*}.$$ Conversely, given a cobilinear map $H_3 \rightarrow H_1 \times H_2$ of formal groups, corresponding to a bilinear map $\Reg{H_1} \hat{\otimes} \Reg{H_2} \rightarrow \Reg{H_3}$ of formal Hopf algebras, Cartier duality produces a bilinear map $$H_1^* \times H_2^* \rightarrow H_3^*$$ of affine groups. This shows that there is a bijective correspondence between bilinear maps of formal Hopf algebras and bilinear maps of affine groups. We thus have:

\begin{thm}\label{thm:DieudonneCovariant}
Let $G_1,G_2$ be unipotent groups over a perfect field $k$ of positive characteristic. Then the connected part of $(G_1 \otimes G_2)^*$ is isomorphic to the formal connected group scheme associated to $$\DieuFor(G_1^*) \boxtimes^c \DieuFor(G_2^*),$$ while the étale part is isomorphic to the étale formal group scheme associated to $$\colim_{m,n} \Hom_{\DDModF{k}}(\DieuFor(G_1^*[V^n]) \boxtimes \DieuFor(G_2^*[V^m]) , CW(\bar{k}))$$
\end{thm}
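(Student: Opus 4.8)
Write $H_i := \Reg{G_i^*}$ for the formal Hopf algebras of the Cartier duals. Since $G_1,G_2$ are unipotent, the duals $G_i^*$ are connected formal groups and the $H_i$ are connected formal Hopf algebras, so everything we need about $H_1 \boxtimes H_2$ has been prepared in Section~\ref{sec:tensorprodHopf}. The plan is to reduce the theorem to that computation via Cartier duality and then read off the two parts from the splitting of $H_1 \boxtimes H_2$.

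The first step is to identify $(G_1 \otimes G_2)^*$ with $\Spf{(H_1 \boxtimes H_2)}$. By the correspondence recorded immediately before the statement, bilinear maps of affine groups correspond naturally to bilinear pairings of the associated formal Hopf algebras, so for every affine group $G_3$ there are natural isomorphisms
\begin{align*}
\Hom_{\AbSch{k}}(G_1 \otimes G_2, G_3) &\cong \Bil(G_1,G_2;G_3) \cong \Bil(H_1,H_2;\Reg{G_3^*})\\
&\cong \Hom_{\FHopf}(H_1 \boxtimes H_2, \Reg{G_3^*}),
\end{align*}
the outer isomorphisms being the defining universal properties of $\otimes$ on $\AbSch{k}$ and of $\boxtimes$ on $\FHopf$ (Def.~\ref{def:bilinearpairing}). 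Transporting the right-hand term through the anti-equivalences of \eqref{eq:antiequivalences} rewrites it as $\Hom_{\AbSch{k}}((\Spf{(H_1 \boxtimes H_2)})^*, G_3)$, whence Yoneda gives $(G_1 \otimes G_2)^* \cong \Spf{(H_1 \boxtimes H_2)}$. (Equivalently, this is just the unwinding of the fact that $\boxtimes$ was \emph{defined} as the transport of $\otimes$ along $G \mapsto \Reg{G^*}$.)

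Next I would feed this into the results on $\boxtimes$ for connected formal Hopf algebras. By Lemma~\ref{lemma:connectedtensorconnectednotconnected}, $H_1 \boxtimes H_2$ splits as $(H_1 \boxtimes^c H_2)\,\hat\otimes\,(H_1 \boxtimes^e H_2)$ into its connected and étale parts, yielding the corresponding splitting of $(G_1 \otimes G_2)^*$. For the connected part, Proposition~\ref{prop:Dieudonnemonoidal} gives $\DieuFor(H_1 \boxtimes^c H_2) \cong \DieuFor(H_1) \boxtimes^c \DieuFor(H_2) = \DieuFor(G_1^*) \boxtimes^c \DieuFor(G_2^*)$, and since $\DieuFor$ is an equivalence onto connected Dieudonné modules (Thm.~\ref{thm:dieudonneforformalgroups}) the connected part of $(G_1 \otimes G_2)^*$ is exactly the connected formal group attached to $\DieuFor(G_1^*) \boxtimes^c \DieuFor(G_2^*)$. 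For the étale part, Corollary~\ref{cor:etaleofconn} identifies it with the étale formal group attached to $\colim_{m,n}\Hom(\DieuFor(H_1/V^n_{H_1}) \boxtimes \DieuFor(H_2/V^m_{H_2}), CW(\bar{k}))$.

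It then only remains to rewrite the quotients in the étale term. The Verschiebung endomorphism of the formal group $G_i^* = \Spf{H_i}$ induces the Hopf-algebra Verschiebung $V_{H_i}$, and since a quotient Hopf algebra corresponds to a sub-formal-group, $\Spf{(H_i/V^n_{H_i})}$ is precisely the kernel $G_i^*[V^n]$ of $V^n$ on $G_i^*$; hence $\DieuFor(H_i/V^n_{H_i}) = \DieuFor(G_i^*[V^n])$ and the étale term takes the form in the statement. The argument is an assembly of results already in place, so no step is genuinely hard; the only places needing care are matching the variances in the Yoneda/Cartier step of the second paragraph so that the target $\Hom$ is covariant in $G_3$, and verifying that quotient-by-$V^n$ really represents the Verschiebung kernel with the Frobenius/Verschiebung conventions of $\DieuFor$ respected. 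I expect this last bookkeeping to be the main (if modest) obstacle.
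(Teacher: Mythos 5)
Your proposal is correct and follows essentially the same route as the paper's proof: establish that $G \mapsto \Reg{G^*}$ is monoidal via the Cartier-duality correspondence between bilinear maps of affine groups and bilinear pairings of formal Hopf algebras, then apply Prop.~\ref{prop:Dieudonnemonoidal} for the connected part and Cor.~\ref{cor:etaleofconn} for the étale part. The only difference is that you spell out the Yoneda step and the identification $\Spf{(H_i/V^n_{H_i})} \cong G_i^*[V^n]$, bookkeeping the paper leaves implicit.
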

\begin{proof}
From our discussion above, the functor taking an affine group $G$ to the formal Hopf algebra $\Reg{G^*}$ corepresenting its Cartier dual is monoidal. Thus, $\Reg{(G_1 \otimes G_2)^*} \cong \Reg{G_1^*} \boxtimes \Reg{G_2^*},$ where $^*$ denotes the Cartier dual. From Prop.~\ref{prop:Dieudonnemonoidal}, we get that the Dieudonné module of the connected part of $\Reg{G_1^* \boxtimes G_2^*}$ is given by $$\DieuFor(\Reg{G_1^*}) \boxtimes^c D(\Reg{G_2^*}) \cong \DieuFor(G_1^*) \boxtimes^c \DieuFor(G_2^*),$$ and the statement about the connected part follows. The étale part follows from Cor.~\ref{cor:etaleofconn}.
\end{proof}
We now give the description of the unipotent part of the Dieudonné module of $G_1 \otimes G_2,$ without first dualizing.
\begin{thm}\label{thm:DieudonneContravariant}
Let $G_1,G_2$ be unipotent groups over a perfect field $k$ of positive characteristic. Then the unipotent part of $D(G_1 \otimes G_2)$ is isomorphic to $$D(G_1) \boxast^u D(G_2).$$ 
\end{thm}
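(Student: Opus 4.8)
The plan is to deduce this contravariant statement from the already-established covariant one (Thm.~\ref{thm:DieudonneCovariant}) by applying Cartier duality together with the Matlis-duality comparison of Prop.~\ref{prop:DieudonneCommuteCartierUni}. The first step is to record how the relevant splittings interact. Under \eqref{eq:antiequivalenceswithsplitting}, Cartier duality matches the unipotent part of an affine group with the connected part of the dual formal group, and the Dieudonné functor $D$ carries the product decomposition $\AbSchm{k}\times\AbSchu{k}$ to the splitting of $\DMod{k}^p$ into its $V$-invertible and $V$-nilpotent summands. Hence, writing $U$ for the unipotent part of $G_1\otimes G_2$, the module $D(U)$ \emph{is} the unipotent part of $D(G_1\otimes G_2)$, and $U^*$ is the connected part of $(G_1\otimes G_2)^*$. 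Note also that $U$ is unipotent, hence $p$-adic, so $D(U)$ is defined via Thm.~\ref{thm:dieudonneforaffinegroups} without needing $G_1\otimes G_2$ itself to be $p$-adic.

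The core of the argument is then a chain of natural isomorphisms. By Thm.~\ref{thm:DieudonneCovariant}, $U^*$ is the connected formal group with $\DieuFor(U^*)\cong\DieuFor(G_1^*)\boxtimes^c\DieuFor(G_2^*)$. Applying the second half of Prop.~\ref{prop:DieudonneCommuteCartierUni} to the connected formal group $U^*$, whose Cartier dual is $U$, gives $D(U)\cong I^c(\DieuFor(U^*))$. Substituting the first half of the same proposition, namely $\DieuFor(G_i^*)\cong I(D(G_i))$ for the unipotent groups $G_i$, yields
\[
D(U)\;\cong\; I^c(\DieuFor(G_1^*)\boxtimes^c\DieuFor(G_2^*))\;\cong\; I^c\bigl(I(D(G_1))\boxtimes^c I(D(G_2))\bigr),
\]
which is exactly $D(G_1)\boxast^u D(G_2)$ by the formula for $\boxast^u$ recorded after Lemma~\ref{lemma:boxast}. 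Pleasantly, this substitution is a direct one and does not even require invoking $I^c\circ I\cong\id$.

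Each individual step is a citation, so the only genuine work is checking that the composite is an isomorphism of Dieudonné modules rather than merely of $W(k)$-modules; that is, that the isomorphisms supplied by Prop.~\ref{prop:DieudonneCommuteCartierUni} and Thm.~\ref{thm:DieudonneCovariant} are natural and $\mathcal R$-linear. The point to be careful about is the Frobenius--Verschiebung interchange effected by the duality functors $I$ and $I^c$: the connectedness condition (topological $F$-nilpotence) on the formal side must correspond to the unipotence condition ($V$-nilpotence) on the affine side under Matlis duality, and this is precisely the compatibility that turns $\boxtimes^c$ into $\boxast^u$. I expect no real obstacle beyond this bookkeeping, since all the substantive analytic content has already been absorbed into Prop.~\ref{prop:Dieudonnemonoidal}, Thm.~\ref{thm:CartierMatlis}, and Thm.~\ref{thm:DieudonneCovariant}.
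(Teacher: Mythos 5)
Your proposal is correct and follows essentially the same route as the paper: invoke Thm.~\ref{thm:DieudonneCovariant} to identify the connected part of $\DieuFor((G_1\otimes G_2)^*)$ with $\DieuFor(G_1^*)\boxtimes^c\DieuFor(G_2^*)$, rewrite this as $I(D(G_1))\boxtimes^c I(D(G_2))$ via Prop.~\ref{prop:DieudonneCommuteCartierUni}, and pass back through $I^c$ to land on the definition of $\boxast^u$. Your version is if anything slightly cleaner, since matching the formula $K\boxast^u L=I^c(I(K)\boxtimes^c I(L))$ directly spares the appeal to monoidality of $I$ and $I^c$ that the paper makes in its final step.
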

\begin{proof}
By Thm.~\ref{thm:DieudonneCovariant} and Prop.~\ref{prop:DieudonneCommuteCartierUni}, the connected part of $\DieuFor((G_1 \otimes G_2)^* )$ is isomorphic to  $$\DieuFor(G_1^*) \boxtimes^c \DieuFor(G_2^*) \cong I(D(G_1)) \boxtimes^c I(D(G_2)).$$ Since $D(G_1) \boxast^u D(G_2) = I^c (I(D(G_1)) \boxtimes^c I(D(G_2))),$  another application of Prop.~\ref{prop:DieudonneCommuteCartierUni} together with the monoidality of $I$ and $I^c$  gives the claimed result.
\end{proof}

Together with Prop.~\ref{prop:multiplicativetensor}, we conclude:
\begin{corollary} \label{cor:tensorcharp}
Let $G_1,G_2$ be affine groups over a perfect field of characteristic $p>0$ and split $G_i = G_i^u \times G_i^m, i=1,2$ into sums of a unipotent group and a multiplicative group. Denote by $M_i=\Gr(G_{i,\bar k}^m)$ the $\Gamma$-module associated with $G_i^m$. Then $G_1 \otimes G_2$ is the group whose unipotent part is characterized by $D(G_1^u \otimes G_2^u) \cong D(G_1^u) \boxast D(G_2^u)$ and whose multiplicative part is the multiplicative group scheme associated to the $\Gamma$-module
\begin{align*}
&\Z[1/p] \otimes M_1 * M_2\\
\oplus& \Hom^c_{\Ab}(\hat\pi_0(G_1^u)(\bar{k}),\Gr(G_{2,\bar{k}}))  \oplus \Hom^c_{\Mod_{\Ab}}(\hat\pi_0(G_2^u)(\bar{k}),\Gr(G_{1,\bar{k}}))  \\  
\oplus& \colim_{m,n} \Hom^c_{\DDMod{k}}(\DieuFor((G_1^u)^*[V^n]) \boxtimes \DieuFor((G_2^u)^*[V^m]) , CW(\bar{k})), 
\end{align*}
where $\Gamma$ acts on each Hom by conjugation and $[V^m]$ denotes the kernel of the $m$th Verschiebung. \qed
\end{corollary}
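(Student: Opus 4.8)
The plan is to reduce the corollary to an assembly of the four results already established, by exploiting the biadditivity of $\otimes$ together with the splitting $G_i = G_i^u \times G_i^m$ furnished by~\eqref{eq:antiequivalenceswithsplitting}. First I would record that the tensor product distributes over this biproduct. Since $\AbSch{k}$ is additive and $G_i^u \times G_i^m$ is a biproduct, a bilinear morphism out of it in either of its first two slots is determined by its restrictions to the two summands; hence $\Bil(-,-;-)$ is additive in each argument, and the defining universal property gives a natural isomorphism
\[
G_1 \otimes G_2 \cong (G_1^u \otimes G_2^u) \times (G_1^u \otimes G_2^m) \times (G_1^m \otimes G_2^u) \times (G_1^m \otimes G_2^m).
\]
I would then analyze the four factors separately, using that multiplicative groups correspond to $\Gamma$-modules under which products of groups become direct sums of modules, while unipotent contributions add as Dieudonné modules.

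Next I would dispatch the three factors that are purely of multiplicative type. The factor $G_1^m \otimes G_2^m$ is computed directly by Cor.~\ref{cor:tensorofmultiplicative}: in characteristic $p$ it is the multiplicative group attached to the $\Gamma$-module $\Z[1/p] \otimes M_1 * M_2$. Each mixed factor $G_1^u \otimes G_2^m$ and $G_1^m \otimes G_2^u$ is multiplicative by Prop.~\ref{prop:multiplicativetensor}, and the same proposition identifies their $\Gamma$-modules as $\Hom^c(\hat\pi_0(G_1^u)(\bar k), M_2)$ and $\Hom^c(\hat\pi_0(G_2^u)(\bar k), M_1)$ respectively. Here I would invoke that the grouplikes of a unipotent (conilpotent) Hopf algebra are trivial, so that $\Gr(G_{i,\bar k}) = \Gr(G_{i,\bar k}^m) = M_i$; this makes the two mixed terms agree with the second and third multiplicative summands in the statement.

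The only factor that can fail to be multiplicative is $G_1^u \otimes G_2^u$, as the examples ($\alpha_p \otimes \alpha_p$) show. Its unipotent part is supplied by Thm.~\ref{thm:DieudonneContravariant}, which gives Dieudonné module $D(G_1^u) \boxast^u D(G_2^u)$; since the other three factors are purely multiplicative, this is the entire unipotent part of $G_1 \otimes G_2$. Its remaining étale (hence multiplicative) part I would obtain by dualizing: by Thm.~\ref{thm:DieudonneCovariant} the étale part of $(G_1^u \otimes G_2^u)^*$ corresponds to the $\Gamma$-module
\[
\colim_{m,n} \Hom_{\DDModF{k}}(\DieuFor((G_1^u)^*[V^n]) \boxtimes \DieuFor((G_2^u)^*[V^m]), CW(\bar k)),
\]
which is exactly the third multiplicative summand. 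Combining these gives the stated direct-sum decomposition of the multiplicative part and $D(G_1^u) \boxast^u D(G_2^u)$ for the unipotent part.

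Since the substantive content lives in the cited theorems, the remaining work is organizational, and the main obstacle is conceptual cleanliness rather than computation. I expect the most delicate points to be, first, justifying in Step~1 that $\otimes$ genuinely distributes over the multiplicative/unipotent biproduct (this rests squarely on $\AbSch{k}$ being additive with the product as categorical biproduct, and on biadditivity of $\Bil$), and second, verifying that the $\Gamma$-actions induced on the various $\Hom^c$- and $\boxtimes$-modules are uniformly the conjugation actions, so that the four pieces truly fuse into the single $\Gamma$-module written in the statement rather than merely matching abstractly.
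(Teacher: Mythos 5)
Your proposal is correct and is essentially the paper's own (implicit) argument: the corollary is stated as an immediate consequence of Cor.~\ref{cor:tensorofmultiplicative}, Prop.~\ref{prop:multiplicativetensor}, Thm.~\ref{thm:DieudonneCovariant} and Thm.~\ref{thm:DieudonneContravariant}, assembled exactly via the biadditivity splitting into the four factors that you describe, with the mixed terms identified using triviality of grouplikes in the unipotent part and the \'etale part of the unipotent-unipotent factor transported through Cartier duality. Your rendering of the unipotent part as $D(G_1^u)\boxast^u D(G_2^u)$ is the reading consistent with Thm.~\ref{thm:DieudonneContravariant} and Remark~\ref{rmk:dieufornotmonoidal}, even though the corollary's displayed formula writes $\boxast$ without the superscript.
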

\printbibliography

\end{document}